\newtheorem{thm}{Theorem}[section]
\newtheorem{cor}[thm]{Corollary}
\newtheorem{prop}[thm]{Proposition}
\newtheorem{lem}[thm]{Lemma}
\theoremstyle{definition}
\newtheorem{dfn}[thm]{Definition}
\newtheorem{notation}[thm]{Notation}
\newtheorem{que}[thm]{Question}
\theoremstyle{remark}
\newtheorem{rmk}[thm]{Remark}
\newtheorem{rmks}[thm]{Remarks}
\def\vol[#1]{\mbox{\rm Vol}(#1)}
\def\dfar{\ensuremath{d_{\mbox{Farey}}}}
\def\pf{\subset_{p.f.}} 
\def\lv[#1]{\mbox{\rm LinkVol}(#1)}
\def\lvd[#1#2]{\mbox{\rm LinkVol}_{#1}(#2)}
\def\lva[#1]{\mbox{\rm LinkVol}_{{X},d}(#1)}
\def\lvsd[#1#2]{\mbox{\rm LinkVol}_{s,{#1}}(#2)}
\def\nos{\ensuremath\infty}
\def\mnh{minimally non hyperbolic}
\def\kv[#1]{\mbox{\rm KnotVol}(#1)}
\def\kvd[#1#2]{\mbox{\rm KnotVol}_{#1}(#2)}
\def\kvs[#1]{\mbox{\rm KnotVol}_{s}(#1)}
\def\kvsd[#1#2]{\mbox{\rm KnotVol}_{s,{#1}}(#2)}
\def\depth[#1]{\mbox{\rm depth}(#1)}
\title[The Link Volume of Hyperbolic 3-Manifolds]{Cosmetic Surgery \\[5pt] 
and The Link Volume of Hyperbolic 3-Manifolds}
\date{\today}  
\address{Department of mathematical Sciences, University of Arkansas, 
Fayetteville, AR 72701} %
\address{Department of Information and Computer Sciences, Nara Women's
  University  Kitauoya Nishimachi, Nara 630-8506, Japan} %
\email{yoav@uark.edu} %
\email{yamasita@ics.nara-wu.ac.jp} % 
\author{Yo'av Rieck} % 
\author{Yasushi Yamashita} %
\subjclass[2000]{57M12, 57M50}
\keywords{Link volume, branched covering, hyperbolic volume, cosmetic surgery, Dehn surgery}
\begin{document}
\begin{abstract}
We prove that for any $V>0$, there exist a hyperbolic manifold $M_{V}$, so that
$\vol[M_{V}] < 2.03$ and $\lv[M_{V}] > V$.  

The proof requires study of cosmetic surgery on links (equivalently, fillings
of manifolds with boundary tori).  There is no bound on the number of components
of the link (or boundary components).  For statements, see the second part of the 
introduction.  Here are two examples of the results we obtain:
	\begin{enumerate}
	\item Let $K$ be a component of a link $L$ in $S^{3}$.  Then ``most'' slopes on $K$
	cannot be completed to a cosmetic surgery on $L$, unless $K$ becomes a component
	of a Hopf link.
	\item Let $X$ be a manifold and $\epsilon>0$.  Then all but finitely
	many hyperbolic manifolds obtained by filling $X$ admit a geodesic shorter than 
	$\epsilon$.
	\end{enumerate}
\end{abstract}

\maketitle

\setcounter{tocdepth}{1}
\tableofcontents

\section{Introduction}
\label{sec:intro}

In~\cite{rieckyamashita} we defined an invariant of closed orientable 3-manifolds that measures how
efficiently a given manifold $M$ can be represented as a cover of $S^{3}$ where the
branch set is a hyperbolic link (such a cover can be constructed using Hilden~\cite{hilden}
or Montesinos~\cite{montesinos}).  
We use the notation $M \stackrel{p}\to (S^{3},L)$ to denote a $p$-fold cover $M \to S^{3}$ branched
over $L$.  To the cover $M \stackrel{p}\to (S^{3},L)$
we associate the complexity $p \vol[S^{3} \setminus L]$.
The {\it link volume} of $M$ is defined to be
the infimum of the complexities of all possible covers, that is
$$\lv[M] = \inf\{\ p \vol[S^{3} \setminus L]\ |\ M \stackrel{p}\to (S^{3},L)\}.$$
We observed that for any hyperbolic manifold $M$, $\lv[M] > \vol[M]$, and conjectured that the link volume
cannot be bounded in terms of the hyperbolic volume.  In this paper we prove this conjecture.  
Our main result is (see the next two sections for definitions; by $M(\alpha_{i})$ we mean the manifold obtained 
by filling $M$ along slope $\alpha_{i}$ and  $\dfar$
denotes the distance in the Farey graph):
\begin{thm}
\label{thm:main}
Let $M$ be a hyperbolic manifold with one cusp.  Let $\{\alpha_{i}\}$
be a set of slopes of $\partial M$.

Then there exists $L>0$ so that $\lv[M(\alpha_{i})] < L$ for all $i$ if and only if
there exists $d>0$ so that $\dfar(\alpha_{i},\alpha_{i'})<d$ for all $i,i'$.
\end{thm}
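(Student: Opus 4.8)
I treat the two implications separately; the implication \emph{bounded link volume $\Rightarrow$ bounded Farey distance} is much the harder one, and it is what forces the cosmetic surgery analysis.

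\textbf{Bounded Farey distance $\Rightarrow$ bounded link volume.} Fix $\alpha_{0}:=\alpha_{1}$, so $\dfar(\alpha_{0},\alpha_{i})<d$ for every $i$; it then suffices to bound $\lv[M(\alpha_{i})]$ in terms of $\lv[M(\alpha_{0})]$ and $d$. Let $K_{0}\subset M(\alpha_{0})$ be the core of the filling solid torus, so that $M=M(\alpha_{0})\setminus K_{0}$ and $M(\alpha_{i})$ is a Dehn surgery on $K_{0}$ whose coefficient, in the framing in which $\alpha_{0}$ is a meridian of $K_{0}$, has continued fraction length comparable to $\dfar(\alpha_{0},\alpha_{i})$, hence at most $Cd$ for a universal $C$. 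A bounded number of Rolfsen twists and slam dunks rewrites this as an integral surgery on $K_{0}\cup U_{1}\cup\cdots\cup U_{r}$ with $r\le Cd$, the $U_{j}$ forming an unknotted chain running through a small ball near $K_{0}$. It remains to show that link volume grows by a bounded amount under a surgery of bounded complexity performed inside a fixed manifold: starting from a cover $M(\alpha_{0})\stackrel{p_{0}}\to(S^{3},L_{0})$ that realizes $\lv[M(\alpha_{0})]$ up to $\varepsilon$, one isotopes the surgery link into the preimage of a small ball in $S^{3}\setminus L_{0}$ (over which the cover is $p_{0}$ disjoint balls), builds the surgery into the branch set by inserting a controlled tangle on each of the $p_{0}$ sheets, and, if necessary, enlarges the branch set a further bounded amount to restore hyperbolicity. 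The degree is unchanged and the new volume is bounded in terms of $\vol[S^{3}\setminus L_{0}]$ and $d$, so $\lv[M(\alpha_{i})]$ is bounded.

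\textbf{Bounded link volume $\Rightarrow$ bounded Farey distance.} Assume $\lv[M(\alpha_{i})]<L$ for all $i$, and choose covers $M(\alpha_{i})\stackrel{p_{i}}\to(S^{3},L_{i})$ with $p_{i}\vol[S^{3}\setminus L_{i}]<L$. Since hyperbolic link complements in $S^{3}$ have volume bounded below by a universal positive constant, $p_{i}\le P$ for a fixed $P$ and $\vol[S^{3}\setminus L_{i}]<L$ (and away from the branch set $M(\alpha_{i})\setminus\widetilde{L}_{i}\to S^{3}\setminus L_{i}$ is an honest degree-$p_{i}$ cover, $\widetilde{L}_{i}$ being the preimage of $L_{i}$). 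By J\o rgensen--Thurston the complements $S^{3}\setminus L_{i}$, of volume less than $L$, lie in the Dehn filling families of finitely many fixed cusped hyperbolic manifolds; in particular $L_{i}$ has boundedly many components, and after pigeonholing we may assume $S^{3}\setminus L_{i}=Y(\mathbf r_{i})$ for a single fixed cusped hyperbolic manifold $Y$. Refilling $Y(\mathbf r_{i})$ along the meridians $\mathbf m_{i}$ of the components of $L_{i}$ recovers $S^{3}$, so $S^{3}=Y(\mathbf r_{i},\mathbf m_{i})$: a family of cosmetic fillings of the fixed manifold $Y$.

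Now I invoke the cosmetic surgery results of the paper. Applied to $Y$, cusp by cusp, they show that on each cusp only finitely many slopes can occur in a tuple of slopes filling $Y$ to $S^{3}$ (the Hopf-type exception being excluded here since $S^{3}\setminus L_{i}$ is hyperbolic), so only finitely many pairs $(S^{3},L_{i})$ arise; and a fixed pair $(S^{3},L_{i})$ has only finitely many branched covers of degree at most $P$, so the manifolds $M(\alpha_{i})$ take only finitely many values. Applied to the single cusp of the hyperbolic manifold $M$, the same results show that for any fixed closed $3$-manifold $N$ only finitely many slopes $\alpha$ satisfy $M(\alpha)\cong N$ (again with no Hopf degeneration, $M$ being hyperbolic). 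Combining the two, $\{\alpha_{i}\}$ is contained in a finite union of finite sets, hence is finite, hence of bounded Farey diameter, so $\sup_{i,i'}\dfar(\alpha_{i},\alpha_{i'})<\infty$, as needed. (As a consequence one obtains the main inequality: with $M$ the figure-eight knot complement, for which $\vol[M]<2.03$, and $\{\alpha_{i}\}$ a family of slopes of unbounded Farey diameter, the hyperbolic fillings satisfy $\vol[M(\alpha_{i})]<2.03$ while $\sup_{i}\lv[M(\alpha_{i})]=\infty$.)

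The main obstacle is the cosmetic surgery input used twice above; supplying it is the business of the second half of the paper. The delicate points are: proving, for a fixed manifold, that only finitely many slopes on a given cusp can be completed to a cosmetic filling, uniformly over the cusps (the cosmetic surgery theorems are in fact established with no restriction on the number of components, which is of independent interest); identifying and correctly excluding the Hopf-type degenerations in both applications; and, inside the proof of those theorems, exploiting that all but finitely many hyperbolic fillings of a fixed manifold contain an arbitrarily short geodesic, so that the short core geodesic of $M(\alpha)$ furnishes a rigid invariant separating the fillings.
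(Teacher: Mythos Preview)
There is a genuine gap in the hard direction. Your argument hinges on the claim that the cosmetic surgery theorems, applied to a fixed cusped manifold $Y$, force only \emph{finitely} many slopes on each cusp to appear in a filling $Y(\mathbf r_i,\mathbf m_i)\cong S^3$. But the $S^3$ Cosmetic Surgery Theorem (Theorem~\ref{thm:cosmeticSurgeryOnS3}) asserts only that the set of slopes on a distinguished component is \emph{bounded} in the Farey graph, not finite---and a bounded set is typically infinite (on the Whitehead link exterior, every slope $1/n$ on one cusp completes to a cosmetic surgery yielding $S^3$). Even granting boundedness on each cusp, the product of bounded infinite sets is infinite, so you cannot conclude that only finitely many links $L_i$ arise; indeed there are infinitely many hyperbolic links in $S^3$ of volume below any given bound. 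Your dismissal of the Hopf exception is also off: the exception concerns whether, \emph{after} the cosmetic surgery, the core attached to the distinguished component forms a Hopf link with another core; hyperbolicity of $S^3\setminus L_i$ says nothing about this.

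The paper's route is structurally different. The Structure Theorem of \cite{rieckyamashita} produces not just a parent $E$ for $S^3\setminus L$ but a parent \emph{system}: an unbranched cover $\phi:X\to E$ so that $M(\beta)$ is a filling of $X$ and the square commutes. A specific boundary torus $T_1\subset\partial X$ is the one that becomes $\partial M$ after partial filling, and the task is to show that the slope $\alpha|_{T_1}$ (which is $\beta$ up to the identification $X(\hat\alpha)\cong M$ and the finite mapping class group of $M$) ranges over a bounded set. This splits into three cases according to whether the filling $X\to M(\beta)$ factors through $M$ (Theorem~\ref{thm:fillingsThatDontFactor} handles the case when it does not) and whether the induced filling of $E$ factors through $T^2\times[0,1]$; the Hopf degeneration is exactly this last case and is controlled by Proposition~\ref{pro:TheLastCase}, not dismissed. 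The conclusion is that $\{\beta\}$ is bounded, not finite---which is precisely the statement of the theorem and is in general the most one can hope for.
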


\begin{rmk} One direction is known:
if there exists $d>0$ so that $\dfar(\alpha_{i},\alpha_{i'})<d$ for all $i,i'$,
then~\cite{rieckyamashita} implies that  there exists $L>0$ so that $\lv[M(\alpha_{i})] < L$ for all $i$.
\end{rmk}

Let $M$ denote the figure eight knot exterior; then $\vol[M] = 2.02988\dots$.
By Cao and Meyerhoff~\cite{caomeyerhoff}, $M$ has the smallest volume
among all cusped hyperbolic 3-manifolds.
Applying Theorem~\ref{thm:main} to manifolds obtained by filling $M$ we get
the following corollary:  

\begin{cor}
For every $V>0$, there exists a hyperbolic manifold $M_{V}$, so that 
$$\vol[M_{V}] < 2.02988\dots \hspace{10pt} \mbox{and} \hspace{10pt} \lv[M_{V}] > V.$$
\end{cor}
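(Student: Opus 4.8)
The plan is to deduce the corollary from Theorem~\ref{thm:main} applied to the figure eight knot exterior $M$, combined with Thurston's hyperbolic Dehn surgery theorem. Recall that $\vol[M] = 2.02988\dots$, and that $M$ is a one-cusped hyperbolic manifold, so Theorem~\ref{thm:main} applies to any set of slopes on $\partial M$.

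First I would fix a geodesic ray $\alpha_{0},\alpha_{1},\alpha_{2},\dots$ in the Farey graph of $\partial M$, so that $\dfar(\alpha_{i},\alpha_{j})=|i-j|$ for all $i,j$; such a ray exists because the Farey graph has infinite diameter. In particular the slopes $\alpha_{i}$ are pairwise distinct and $\sup_{i,j}\dfar(\alpha_{i},\alpha_{j})=\infty$. Next I would invoke Thurston's hyperbolic Dehn surgery theorem: all but finitely many of the closed orientable manifolds $M(\alpha_{i})$ are hyperbolic, and every hyperbolic Dehn filling of $M$ has volume strictly less than $\vol[M]=2.02988\dots$ (Dehn filling strictly decreases hyperbolic volume). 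Let $I\subset\mathbb{N}$ be the cofinite set of indices $i$ for which $M(\alpha_{i})$ is hyperbolic; since deleting finitely many vertices from a geodesic ray leaves a set of unbounded pairwise distance, the subfamily $\{\alpha_{i}\}_{i\in I}$ still satisfies $\sup_{i,j\in I}\dfar(\alpha_{i},\alpha_{j})=\infty$.

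Then I would apply Theorem~\ref{thm:main} to the family of slopes $\{\alpha_{i}\}_{i\in I}$. Since there is no $d>0$ bounding all the distances $\dfar(\alpha_{i},\alpha_{j})$ with $i,j\in I$, the theorem yields that there is no $L>0$ bounding all the link volumes $\lv[M(\alpha_{i})]$ with $i\in I$; that is, $\sup_{i\in I}\lv[M(\alpha_{i})]=\infty$. Given $V>0$, I would pick $i\in I$ with $\lv[M(\alpha_{i})]>V$ and set $M_{V}=M(\alpha_{i})$. By construction $M_{V}$ is hyperbolic, $\vol[M_{V}]<\vol[M]=2.02988\dots$, and $\lv[M_{V}]>V$, which is the assertion.

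I do not expect a real obstacle at this stage: all of the substance is already contained in Theorem~\ref{thm:main} — concretely, in the new implication that unbounded Farey distance among the $\alpha_{i}$ forces unbounded link volume among the $M(\alpha_{i})$. The only point that needs a little care is that the filling with large link volume must simultaneously be hyperbolic and have volume below $2.02988\dots$, and I would handle this, as above, by first passing to the cofinite subfamily of slopes giving hyperbolic fillings and only then invoking Theorem~\ref{thm:main}.
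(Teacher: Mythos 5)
Your proposal is correct and is essentially the argument the paper intends: the corollary is deduced directly from Theorem~\ref{thm:main} applied to fillings of the figure eight knot exterior, using the new direction (unbounded Farey distance forces unbounded link volume) together with Thurston's Dehn surgery theorem and strict volume decrease under hyperbolic filling. The paper leaves these routine details implicit, and your handling of the finitely many exceptional slopes is exactly the care needed.
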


This corollary can be interpreted (negatively) as saying that representing 
manifolds as branched covers of $S^{3}$ is inefficient.  On the positive side,
it shows that the link volume is a truly new invariant.

Not much is known about the link volume of specific manifolds; Rieck and Remigio--Ju\'arez~\cite{remigiorieck}
calculated the link volume of certain prism manifolds (prism manifolds are small Seifert fibered spaces),
but the link volume is not known for {\it any} hyperbolic manifold.  We refer the reader to~\cite{rieckyamashita} 
for basic facts and open questions about the link volume.  In particular, the upper bound obtained in
\cite{rieckyamashita} is explicit, and linear in terms of distance in the Farey tessellation.  It would be
nice if a similar lower bound could be proved.
The following question is obtained by simply reversing the inequality
in the upper bound of \cite{rieckyamashita} (by $M(\alpha_{1},\dots,\alpha_{n})$ we
mean the manifold obtained by filling $M$ along slopes $\alpha_{1},\dots,\alpha_{n}$):

\begin{que}
Let $M$ be a compact orientable connected 3-manifold with toral boundary.
We will denote the components of $\partial M$ as $T_{1},\dots,T_{n}$.  For each $i$,
fix a slope $\beta_{i}$ of $T_{i}$.

Then do there exist $A,\ B>0$,
so that for any choice of slope $\alpha_{i}$ of $T_{i}$,
$$\lv[M(\alpha_{1},\dots,\alpha_{n})] > A + B(\Sigma_{i=1}^{n} \dfar(\alpha_{i},\beta_{i}))?$$
\end{que}

The work in this paper is an application of the Structure Theorem of~\cite{rieckyamashita}.
The Structure Theorem states that for any $V>0$, there is a finite set of ``parent systems''
$\{\phi_{i}:X_{i} \to E_{i}\}_{i=1}^{n}$, where $X_{i}$ and $E_{i}$ are hyperbolic manifolds and 
$\phi_{i}:X_{i} \to E_{i}$ is an unbranched cover, so that for any manifold $M$
with $\lv[M] < V$, there is some $i$ so that the following diagram commutes:
\medskip
\begin{center}
\begin{picture}(200,60)(0,0)
  \put(  0,  0){\makebox(0,0){$E_{i}$}}
  \put(  0,50){\makebox(0,0){$X_{i}$}}
  \put(  0, 40){\vector(0,-1){30}}   
  \put( 10,  0){\vector(1,0){70}}
  \put(100,  0){\makebox(0,0){$(S^3,L)$}}
  \put(100, 40){\vector(0,-1){30}}
  \put(10,28){\makebox(0,0){$/\phi_{i}$}}
  \put(108,28){\makebox(0,0){$/\phi$}}
  \put(108,28){\makebox(0,0)}
  \put(100,50){\makebox(0,0){$M$}}
  \put( 10,50){\vector(1,0){80}}
\end{picture}
\end{center}
\medskip
where here the horizontal arrows denote inclusions induced by fillings (that is, attaching solid tori) and 
$\phi:M \to S^{3}$ is a cover that realizes the link volume, that is, a cover for which
$\deg(\phi) \vol[S^{3} \setminus L] = \lv[M]$.
The reader may observe the similarity to the celebrated result of J\o rgensen and Thurston~\cite{thurston}
that states that for every $V>0$ there are finitely many ``parent manifolds'' $\{E_{i}\}_{i=1}^{n}$ so that
every hyperbolic manifold of volume less than $V$ can be obtained by filling $E_{i}$
for some $i$.
(For a detailed exposition see, for example,~\cite{kobayashirieck}.)  This is no coincidence;
the Structure Theorem is a consequence of J\o rgensen--Thurston and the manifolds $E_{i}$
appearing in it are the parent manifolds of 
J\o rgensen andThurston.

\bigskip\bigskip\bigskip\noindent
In order to obtain Theorem~\ref{thm:main} from the Structure Theorem above, 
we are forced to study several questions
about fillings.  Specifically, we study several questions about {\it cosmetic surgery},
that is, surgery on a link in a manifold $M$ that results in a manifold diffeomorphic to $M$.
Before describing our methods and the results obtained, we 
introduce the basic setup; detailed
description is given in Subsection~\ref{subsec:notaion}.

After two preliminary sections (\ref{sec:background} and~\ref{sec:BoundedSets}),
in Sections~\ref{sec:mnh} and~\ref{sec:T(X)} we construct our main tool, a rooted tree denoted
$T(X)$ which we associate with a manifold $X$.  The vertices of $T(X)$ are labeled by
labels that correspond to manifolds; $X$ itself corresponds to the root of $T(X)$.  
If $X$ is hyperbolic, its immediate descendants are certain non-hyperbolic
manifolds obtained by filling $X$; if $X$ is not prime, its immediate descendants are the factors of its 
prime decomposition; if $X$ is JSJ (that is, if $X$ is prime and the collection of tori in the
JSJ decomposition of $X$ is not empty),
its immediate descendants are the components of its torus decomposition.  If $X$ is Seifert
fibered or solv it has no descendants.  We prove
(Proposition~\ref{prop:T(X)isFinite}) that $T(X)$ is finite.  All the results described below
are proved by induction on $|T(X)|$, the number of vertices in $T(X)$.
The various applications of $T(X)$ are somewhat independent, and we made an effort
to make the following sections (especially 
Sections~\ref{sec:CosmeticSurgeryOnT2XI}--\ref{section:CosmeticSurgeryOnS3}
and~\ref{sec:FillingsThatDontFactorThroughM}) independently readable.
Throughout this paper a set of slopes of a torus is called {\it bounded}
if it is bounded in the Farey graph.

In Section~\ref{sec:CosmeticSurgeryOnT2XI} we study cosmetic surgery on a link $L \subset T^{2} \times [0,1]$.
Let $B$ be a bounded set of slopes of $T^2 \times \{1\}$. 
By a {\it multislope} $\alpha$ of $L$ we mean a vector whose components are slopes on the components
of $L$ or $\nos$ (see Subsection~\ref{subsec:notaion} for a precise definition).
We will denote the manifold obtained by surgery on $L$ with multislope $\alpha$ as $L(\alpha)$.
Let $\mathcal{A}$ be the multislopes of $L$ that yield cosmetic surgery, that is, 
$\mathcal{A} = \{\alpha\ |\ L(\alpha) \cong T^{2} \times [0,1]\}$.
Given $\alpha \in \mathcal{A}$, we may use the product
structure of $L(\alpha)$ to project the set $B$ and obtain a set of slopes of $T^{2} \times \{0\}$.
Since this set depends on $\alpha$ we will denote it as $B_{\alpha}$.  
We prove the $T^{2} \times [0,1]$ Cosmetic Surgery Theorem (\ref{thm:CosmeticSurgeryOnT2XI})
that says that
$$\bigcup_{\alpha \in \mathcal{A}} B_{\alpha}$$ 
is a bounded set of slopes of $T^{2} \times [0,1]$.

In Section~\ref{sec:CosmeticSurgeryOnSolidTorus} we study cosmetic surgery on a link
$L \subset D^{2} \times S^{1}$.
We prove the Solid Torus Cosmetic Surgery Theorem (\ref{thm:SlopesOnSolidTorus}), that 
says that the set of slopes of $\partial D^{2} \times S^{1}$
that bound a disk after cosmetic surgery on $L$ is bounded
(unless some component of $L$ is a core of the solid torus after surgery, in which case the 
claim is obviously false).   For use in later sections we also prove 
Proposition~\ref{pro:SolidTorusSurgery2}, which gives certain constraints
on multislopes of $L$ that yield a cosmetic surgery.

Sections~\ref{section:HyperSurgSlopes} and~\ref{section:HyperSurgRadInj} are
devoted to cosmetic surgery on hyperbolic manifolds.  Let $M$ be a hyperbolic
manifold, $L \subset M$ a link, $T$ a component of $\partial M$,
$B$ a bounded set of slopes of $T$, and
$\mathcal{X} = \{(\alpha,f_{\alpha})\}$ so that for every $(\alpha,f_{\alpha})$,
$\alpha$ is a multislope of $L$ and $f_{\alpha}$
is a diffeomorphism $f_{\alpha}:L(\alpha) \cong M$ that maps $T$ to itself.  
Then for every $(\alpha,f_{\alpha}) \in \mathcal{X}$ the image of $B$ under
$f_{\alpha}$ is a set of slopes of $T$ that we will denote as $B_{\alpha,f_{\alpha}}$.
In Section~\ref{section:HyperSurgSlopes} we prove 
Theorem~\ref{thm:CosmeticSurgeryOnM} that says that 
$$\bigcup_{(\alpha,f_{\alpha}) \in \mathcal{X}} B_{\alpha,f_{\alpha}}$$
is a bounded set of slopes of $T$.

In Section~\ref{section:HyperSurgRadInj} we consider multislopes $\alpha$ of a manifold $X$
that yield a hyperbolic manifold $X(\alpha)$, so that every geodesic in $X(\alpha)$ is
longer than $\epsilon$ (for some fixed $\epsilon>0$; here $\alpha$ is a multislope on
$\partial X$ and $X(\alpha)$ represents filling rather than surgery).  We prove two things: first, there
are only finitely many such manifolds $X(\alpha)$ (although there certainly may be infinitely 
many such multislopes $\alpha$).  Second, we prove that all but finitely many of these multislopes
factor through a non-hyperbolic filling.  In other words, there is a subset of the boundary components
so that the manifold obtained by filling only these components is not hyperbolic 
(we call this a non hyperbolic {\it partial filling}; this and
other useful terminology is introduced in Subsection~\ref{subsec:notaion}).  
Moreover, one of the non hyperbolic partial fillings corresponds to an edge out
of the root of $T(X)$; this is the key that allows us to use induction.

In Section~\ref{section:CosmeticSurgeryOnS3} we prove the $S^{3}$ Cosmetic Surgery 
Theorem~(\ref{thm:cosmeticSurgeryOnS3}): let $L \subset S^{3}$ be a link and $K$ a component
of $L$.  Let $\mathcal{A} = \{\alpha \ | \ L(\alpha) = S^{3}\}$, and  $\mathcal{A}' \subset \mathcal{A}$ 
be the multislopes for which the core of the solid torus attached to $\partial N(K)$ does
not form a Hopf link with the core of any other attached solid torus.  Given 
a multislope $\alpha$, we denote its value on $K$  as $\alpha|_{K}$.  We prove that
$$\{\alpha|_{K} \ | \ \alpha \in \mathcal{A}'\}$$
is bounded.

Finally, in Sections~\ref{sec:SetUpOfProof}--\ref{sec:BothFillingsFactor} we apply these
results to prove Theorem~\ref{thm:main}.  In particular, in Section~\ref{sec:FillingsThatDontFactorThroughM}
we prove Theorem~\ref{thm:fillingsThatDontFactor} which is of independent interest.  
In it we consider manifolds $X$ and $M$, where $M$ is a one cusped hyperbolic manifold.
We consider the set of multislopes $\mathcal{A}$ of $\partial X$ so that any $\alpha \in \mathcal{A}$
fulfills the following condition: 
the manifold obtained by filling all but one boundary component of $X$
is not $M$.  We describe this by saying that $\alpha$ does not admit a partial filling $\alpha'$
for which $X(\alpha') \cong M$.
In Theorem~\ref{thm:fillingsThatDontFactor} 
we show that the 
set of slopes $\beta$ so that $M(\beta) \cong X(\alpha)$ for some $\alpha \in \mathcal{A}$ 
is bounded.

%
%\bibliographystyle{alpha} % 
%\bibliography{LVvsVol} %
%\end{document}
%

\bigskip

\noindent{\bf Acknowledgements.}  We thank Matt Day, Chaim Goodman--Strauss,
Mike Hilden, Kazuhiro Ichihara, Tsuyoshi Kobayashi,
Marc Lackenby, Kimihiko Motegi, John Retcliffe, and Masakazu Teragaito for helpful conversations and
correspondence.  A very special thanks to Michael Yoshizawa whose patient
reading of an early version of this paper helped us greatly.
YR: this research took place during three visits to Nara Women's University.  I 
would like to thank the Math Department and the Department of Information and Computer Science 
for their hospitality during my long stays.

\section{Background}
\label{sec:background}

Throughout this paper, by {\it manifold} we mean a compact orientable connected 3-dimensional smooth 
manifold.  We only consider manifold with toral boundary, that is, manifolds
whose boundary consists of a (possibly empty) collection of tori.  A manifold is called {\it hyperbolic} if its
interior admits a complete finite volume Riemannian metric locally isometric to hyperbolic space $\mathbb H^{3}$;
we sometimes refer to the boundary components of a hyperbolic manifold as {\it cusps}.
We denote closed normal neighborhood, closure, and interior by $N(\cdot)$,
$\mbox{cl}$, and $\mbox{int}$, respectively. The geometric intersection number
between curves on a torus is denoted $\Delta(\cdot,\cdot)$.  Given a knot or a link $L \subset M$, 
we call $M \setminus \mbox{int}N(L)$ the {\it exterior} of $L$ and denote it as $E(L)$.
We {\it always} assume transversality.

\subsection{Notation}
\label{subsec:notaion}
The following notation will be used extensively throughout the paper. 
Let $X$ be a manifold, fix $n$ components of $\partial X$ denoted as $T_{1},\dots,T_{n}$, 
and denote their union as 
$\mathcal{T} = \cup_{i=1}^{n} T_{i}$ 
(note that $\mathcal{T} \subset \partial X$, but possibly $\mathcal{T} \neq \partial X$). 

\begin{enumerate}
\item By a {\it multislope} of $\mathcal{T}$, say $\alpha$, we mean a vector $\alpha = (\alpha_{1},\dots,\alpha_{n})$
so that for each $i$, $\alpha_{i}$ is either the homology class of a connected simple closed curve on $T_{i}$ or
$\alpha_{i} = \infty$.  By a multislope of a link $L$ we mean a multislope of $\partial N(L) \subset \partial E(L)$.  
\item By {\it filling} $X$ along $\alpha$ we mean the manifold $X(\alpha)$ that is obtained by the following 
operation:
	\begin{enumerate}
	\item To components $T_{i} \subset \mathcal{T}$ for which $\alpha_{i} \neq \infty$ we attach 
	a solid torus $V_{i}$ so that the meridian of  $V_{i}$ is identified with a connected simple closed 
	curve representing $\alpha_{i}$.
	\item Nothing is done to components $T_{i} \subset \mathcal{T}$ for which $\alpha_{i} = \nos$ and 
	components of $\partial X \setminus \mathcal{T}$. 
	\end{enumerate}
\item If $\alpha = (\alpha_{1},\dots,\alpha_{n})$ and $\alpha'= (\alpha'_{1},\dots,\alpha'_{n})$ are multislopes, 
we say that $\alpha'$ is a {\it partial filling} of $\alpha$, which we will
denote as $\alpha' \pf \alpha$, if for each $i$, $\alpha'_{i} \in \{\alpha_{i},\infty\}$.  If $\alpha' \pf \alpha$ and 
$\alpha' \neq \alpha$ we say that $\alpha'$ is a {\it strict} partial filling of $\alpha$. 
We will also use the notation $(\alpha_{1},\dots,\hat{\alpha}_{i},\dots,\alpha_{n})$ for the multislope
obtained from $(\alpha_{1},\dots,{\alpha}_{i},\dots,\alpha_{n})$ by replacing $\alpha_{i}$ with
$\nos$ (intuitively, toosing $\alpha_{i}$ out). 
\item Assume that $X$ is hyperbolic.  A multislope $\alpha$ is called {\it hyperbolic} if $X(\alpha)$ is hyperbolic;
$\alpha$ is called {\it totally hyperbolic} if every partial filling of $\alpha$ is hyperbolic.
\item Assume that $X$ is hyperbolic.  A multislope $\alpha$ is called {\it non-hyperbolic} 
if $X(\alpha)$ is not hyperbolic.  If $\alpha$ is non-hyperbolic, and every strict partial filling of $\alpha$ is hyperbolic, then $\alpha$ is called {\it \mnh}.  Minimally non hyperbolic fillings are
studies extensively in Section~\ref{sec:mnh}.
\item Let $\mathcal{T}' \subset \partial X$ be a union of components of $\partial X$ and $\alpha$
a multislope of $\mathcal{T}$.  Then $\alpha$ defines a multislope on $\mathcal{T}'$
by removing the components of  $\alpha$ that correspond to components of
$\mathcal{T} \setminus \mathcal{T}'$ and assigning the value $\nos$ to every 
component of $\mathcal{T}' \setminus \mathcal{T}$.  
This multislope is called the {\it restriction} of $\alpha$ to $\mathcal{T}'$ and
we will denote it as $\alpha|_{\mathcal{T}'}$.
In particular, we will denote the value of $\alpha$ on $T_{i}$ as $\alpha|_{T_{i}}$.  
$\alpha|_{\mathcal{T}'}$
is also called the multislopes {\it induced} by $\alpha$ on $\mathcal{T}'$.
\item Induced multislopes also appear in a more general setting: let $F$ be a collection of tori in $\mbox{int}(X)$.
Suppose that $X$ cut open along $F$ consists of $N_{1}$ and $N_{2}$, that is: $X = N_{1} \cup_{F} N_{2}$.
Here we are {\it not} assuming that $N_{1}$ or $N_{2}$ is connected.  Let $\alpha$ be a multislope of 
components of $\partial X$ (denoted $\mathcal T$) so that, for each component $Y$ of $N_{2}$,
$Y(\alpha|_{\partial Y}) \cong D^{2} \times S^{1}$.  
In other words, after filling, $N_{2}(\alpha|_{\partial N_{2}})$ consists of solid tori.
We will denote $(\mathcal{T} \cap \partial N_{1}) \cup F$ as $\mathcal{T}_{1} \subset \partial N_{1}$.
Then the multislope of $\mathcal{T}_{1}$ {\it induced} by $\alpha$ is the multislope defined by 
$\alpha|_{\mathcal{T} \cap \partial N_{1}}$
(on the components of $\mathcal{T} \cap \partial N_{1}$) and the homology classes of the meridians of 
$N_{2}(\alpha|_{\partial N_{2}})$  (on the components of $F$).
\item  If $L \subset M$ is a link then a {\it multislope} of $L$ is a multislope of 
$\partial N(L) \subset \partial E(L)$.  By {\it surgery} on $L$ with multislope $\alpha$,
which we will denote as $L(\alpha)$, we mean $E(L)(\alpha)$. 
\end{enumerate}

\subsection{JSJ-manifolds}

In this subsection we summarize the information we need about manifolds with non-trivial JSJ
decomposition.  JSJ decompositions were studied by Jaco and Shalen~\cite{jacoshalen} and,
independently, by Johannson~\cite{Johannson}.  We assume familiarity with this subject;
further details can be found in~\cite{JacoBook}.  We summarize what we need in the definition below;
note that since we restrict our attention to manifolds with boundary tori, we may assume that the JSJ
decomposition is along tori (and no annuli). 

\begin{dfn}[JSJ-manifold]
\label{dfn:jsj}
Let $X$ be a compact 3-manifold so that $\partial X$ consists of a (possibly empty) collection of tori.  We say that
$X$ is a {\it JSJ-manifold} if $X$ is irreducible and the JSJ-decomposition of $X$ consists
of a non-empty collection of tori which we will denote as $\mathcal{T}$.  In that case we also say that $X$
has a {\it non-trivial} JSJ decomposition.  The manifolds obtained by cutting $X$ along $\mathcal{T}$
are called the {\it components of the torus decomposition of $X$}.  The graph dual to the JSJ decomposition
of $X$ has one vertex for every component of the torus decomposition of $X$, and an edge for every
torus in $\mathcal{T}$; the edge corresponding to $T \in \mathcal{T}$ connects the (not necessarily distinct) 
vertices that correspond to the components of the torus decomposition of $X$ that have $T$ in their boundary.
\end{dfn}

\subsection{Topological preliminaries} 

We will need the following simple lemma which says that knot exteriors can't be 
``linked'' with each other: 

\begin{lem}
\label{lem:KnotExteriorsAreDisjoint}
Let $Y$ be a manifold and for $i=1,\dots,p$, let $E_{i}$ be a non-trivial knot exterior embedded in a ball
$D_{i} \subset Y$.  Suppose that for $i \neq i'$, $E_{i} \cap E_{i'} = \emptyset$.

Then we may choose the balls $D_{i}$ so that for $i \neq i'$, $D_{i} \cap D_{i'} = \emptyset$.
\end{lem}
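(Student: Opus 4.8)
The plan is to argue by induction on $p$, the number of knot exteriors, using an innermost-disk argument to separate them one at a time. First I would set up the key topological input: since each $E_i$ is a non-trivial knot exterior, its boundary torus $\partial E_i$ is incompressible in $E_i$, and (this is the crucial point) $\partial E_i$ is also incompressible in the complementary side $D_i \setminus \mathrm{int} E_i$, which is a knot complement in a ball and hence not a solid torus; in particular $\partial E_i$ cannot be compressed to one side without the knot being trivial. I would also record that a non-trivial knot exterior contains no essential sphere and no essential disk, so any sphere in $Y$ meeting $\partial E_i$ transversally can be pushed off $E_i$ after surgering along innermost disks of intersection that lie inside $E_i$ (an innermost such disk, being a disk in $E_i$ with boundary on $\partial E_i$, must be boundary-parallel by incompressibility, so the surgery is a homotopy of the sphere that reduces $|$sphere $\cap\ \partial E_i|$).

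Next, the main step. Given the balls $D_1,\dots,D_p$, consider the sphere $S_1 = \partial D_1$. After a small isotopy, $S_1$ meets each $E_i$ ($i \ge 2$) transversally in a (possibly empty) collection of circles on $\partial E_i$. Using the innermost-disk move described above — performed on disks of $S_1 \cap \partial E_i$ that bound in $E_i$, which exist whenever the intersection is nonempty and not parallel, and appealing to incompressibility on both sides to handle the remaining parallel curves by an isotopy of $E_i$ (or of $S_1$) across the product region — I would isotope $S_1$ off of $\bigcup_{i \ge 2} E_i$. This is legitimate because the $E_i$ are pairwise disjoint, so cleaning up the intersection with $E_2$ does not reintroduce intersections with $E_3,\dots,E_p$ (the moves take place in a neighborhood of a single $E_i$). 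Having made $\partial D_1$ disjoint from $E_2,\dots,E_p$, I then need to know that $D_1$ itself (the ball it bounds, suitably chosen) can be taken disjoint from them: since $\partial D_1 \cap \bigl(\bigcup_{i\ge 2} E_i\bigr) = \emptyset$ and each $E_i$ is connected with $E_1 \cap E_i = \emptyset$, each $E_i$ ($i \ge 2$) lies entirely in $D_1$ or entirely in its complement; by replacing $D_1$ with the closure of its complement if necessary (or by an isotopy absorbing the ones inside into a collar), I can arrange all of $E_2,\dots,E_p$ to lie in $Y \setminus \mathrm{int}(D_1)$ while still keeping $E_1 \subset D_1$. Then apply the inductive hypothesis inside $Y \setminus \mathrm{int}(D_1)$ to $E_2,\dots,E_p$ (each still embedded in its ball $D_i$, which we may take inside $Y \setminus \mathrm{int}(D_1)$) to get pairwise disjoint balls $D_2',\dots,D_p'$, and together with $D_1$ these are the desired balls.

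The main obstacle I expect is the honest handling of the circles of $S_1 \cap \partial E_i$ that are \emph{inessential on $\partial E_i$ but do not bound in $E_i$} — equivalently, after removing those that bound in $E_i$, ensuring the remaining ones can actually be removed rather than merely simplified. The resolution is that any circle on $\partial E_i$ bounding a disk in $S_1$ and not bounding in $E_i$ must bound a disk on the \emph{other} side, inside $D_i \setminus \mathrm{int}(E_i)$; but a disk there with boundary an essential curve of $\partial E_i$ would compress $\partial E_i$ in the knot complement, forcing the knot to be trivial, contrary to hypothesis. Hence every circle of intersection bounds a disk in $E_i$, and the innermost-disk surgeries (each one an isotopy of $S_1$ by boundary-parallelism) strictly reduce the count until $S_1 \cap \partial E_i = \emptyset$. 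The base case $p = 1$ is trivial. I would also remark that orientability and the standing transversality convention make all of these general-position moves available without comment.
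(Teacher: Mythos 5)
There is a genuine gap, and it sits exactly at the point you yourself flagged as the crux. Your removal of the essential circles of $S_{1} \cap \partial E_{i}$ rests on the claim that $\partial E_{i}$ is incompressible in $D_{i} \setminus \mbox{int}(E_{i})$, ``which is a knot complement in a ball and hence not a solid torus.'' That claim is false. Cap the ball $D_{i}$ off to $S^{3}$: by Alexander's theorem the torus $\partial E_{i}$ bounds a solid torus on at least one side, and since $E_{i}$ is a non-trivial knot exterior (hence not a solid torus), the solid torus is the \emph{other} side. So $D_{i} \setminus \mbox{int}(E_{i})$ is a once-punctured solid torus, and the meridian disk of that solid torus compresses $\partial E_{i}$ from the outside, no matter how knotted $K_{i}$ is. Consequently, an innermost disk $\delta \subset S_{1}$ whose boundary is essential on $\partial E_{i}$ is not a contradiction: it is (generically) exactly such an outside compressing disk. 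Your innermost-disk cleanup therefore has no way to dispose of these circles, and surgering $S_{1}$ along $\delta$ is not an isotopy of the sphere, so the argument does not terminate with $\partial D_{1}$ disjoint from $E_{2},\dots,E_{p}$. It is telling that the paper's proof runs into the same configuration and, instead of excluding it, \emph{uses} it: when an innermost disk $\delta$ has essential boundary on $\partial E_{p}$, it is taken as a meridian disk and $E_{p} \cup N(\delta)$ becomes the new ball. The paper also avoids moving the spheres at all; it keeps the balls in the form $E_{i} \cup N(m_{i})$ for meridian disks $m_{i}$ and isotopes (and surgers the disks against) the last exterior $E_{p}$.

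Two smaller problems in your reduction step would also need repair even if the cleanup worked. Replacing $D_{1}$ by the closure of its complement is not legitimate: $Y$ is an arbitrary manifold, so the complement of a ball need not be a ball, and if some $E_{i}$ ($i \geq 2$) ends up inside $D_{1}$ together with $E_{1}$ you cannot simply ``absorb it into a collar.'' And the inductive call inside $Y \setminus \mbox{int}(D_{1})$ assumes each remaining $E_{i}$ still lies in a ball contained in $Y \setminus \mbox{int}(D_{1})$; producing such a ball from the given $D_{i}$ (which may meet $D_{1}$) is essentially the statement being proved, so as written the induction is circular at that point.
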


\begin{proof}
We assume that $E_{i} \cap E_{i'} = \emptyset$ during all the isotopies considered in this proof (for $i \neq i'$).
Assume, for a contradiction, that the lemma is false and let $Y$ and
$E_{i}$ form a counterexample that minimizes $p$; note that $p \geq 2$.  Then 
there exist disjoint balls $D_{1},\dots,D_{p-1}$ so that
$E_{i} \subset D_{i}$.  
Assume first that there is no isotopy of $E_{p}$ so that
$E_{p} \cap (\cup_{i=1}^{p-1} D_{i}) = \emptyset$.  Let $m_{i} \subset D_{i}$ be a meridian disk for $E_{i}$,
$i=1,\dots,p-1$; note that $D_{i}$ is isotopic to $E_{i} \cup N(m_{i})$.  
Denote $M = \cup_{i=1}^{p-1} m_{i}$.  Minimize $|M \cap E_{p}|$;
since $E_{p}$ cannot be isotoped to be disjoint from $\cup_{i=1}^{p-1} D_{i}$,
$|M \cap E_{p}| > 0$.  Let $\delta \subset M$ be an innermost disk of $M \cap \partial E_{p}$. 
If $\partial \delta$ is inessential in $\partial E_{p}$, we use an innermost disk from
$\partial E_{p}$ to surger $M$ and reduce $|M \cap E_{p}|$.
%remove the $S^{2}$ component.  
This gives new (and not necessarily isotopic) 
meridian disks for $E_{1},\dots,E_{p-1}$; we still denote these disks by $m_{1},\dots,m_{p-1}$
and $\cup_{i=1}^{p-1}m_{i}$ by $M$.  We replace the balls $D_{i}$ with $E_{i} \cup N(m_{i})$,
which we will continue to denote by $D_{i}$.  We repeat this
process until one of the following holds:
	\begin{enumerate}
	\item $|M \cap E_{p}| > 0$ and $\partial\delta$ is essential in $\partial E_{p}$:  
	in that case, $\delta$ is a meridian disk for $E_{p}$.  
	Let $\hat D_{p} = E_{p} \cup N(\delta)$.  Then $\hat D_{p} \cap (\cup_{i=1}^{p-1} E_{i}) = \emptyset$.
	By isotopy, we move $M$ out of $\hat D_{p}$; thus we see that 
	$\{D_{i},\dots,D_{p-1},\hat D_{p}\}$ is a collection of disjointly embedded balls contradicting our assumption.
	\item $|M \cap E_{p}| = 0$: in this case $E_{p} \cap (\cup_{i=1}^{p-1} D_{i}) = \emptyset$.  Let $m_{p}$
	be a meridian disk for $E_{p}$; by isotopy we move
	$m_{p}$ out of $\cup_{i=1}^{p-1} D_{i}$.   Then 
	$\{D_{i},\dots,D_{p-1},E_{p} \cup N(m_{p})\}$ is a collection of 
	disjointly embedded balls contradicting our assumption.
	\end{enumerate}
\end{proof}

\begin{dfn}
\label{dfn:unlink}
Let $Y$ be a connected manifold.  Let $U \subset Y$ be a link.
We say the $L$ is an {\it unlink} if the components of $L$
bound disjointly embedded disks.  
\end{dfn}
In the following lemma we consider two links in a manifold $Y$, which we will denote as 
$\mathcal{L}$ and $\mathcal{U}$.  $\mathcal{U}$ is an unlink; hence we can perform 
$1/n$ surgery about any component $K$ of $\mathcal{U}$ without changing
the ambient manifold; here the framing is chosen so that the boundary of the disk bound by 
$K$ corresponds to $0/1$.  After this surgery say that the components of $\mathcal{L}$ 
are {\it twisted about $K$ $n$ times}.
This process may be repeated on the other component of $\mathcal{U}$.
We assume that $Y \setminus \mbox{int}N(\mathcal{L} \cup \mathcal{U})$ is irreducible,
but since $\mathcal{L}$ and $\mathcal{U}$ play different roles we phrase this condition differently,
see Condition~(2) below.
We are interested in the effect on $\mathcal{L}$ when $n$ is large:

\begin{lem}
\label{lem:TwistingToGetIrreducible}
Let $Y$ be a manifold and $\mathcal{L}$, $\mathcal{U}$ links in $Y$.  Assume that the following conditions hold:
	\begin{enumerate}
	\item $\mathcal{U}$ is an unlink.
	\item $\mathcal{L}$  is irreducible in the complement of $\mathcal{U}$.
	\end{enumerate}
Then the link $\mathcal{L}'$ obtained from $\mathcal{L}$ by twisting its components about each component of 
$\mathcal{U}$ sufficiently many times has an irreducible exterior.
\end{lem}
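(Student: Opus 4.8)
The plan is to reinterpret the conclusion as a Dehn filling statement and then argue by induction on the number of components of $\mathcal{U}$. Write $\mathcal{U}=K_{1}\cup\cdots\cup K_{m}$, choose disjointly embedded disks $D_{j}$ with $\partial D_{j}=K_{j}$, and for each $j$ let $\mu_{j}$ be a meridian of $K_{j}$ and $\lambda_{j}=\partial D_{j}$; then twisting $\mathcal{L}$ about $K_{j}$ exactly $n_{j}$ times is $1/n_{j}$ surgery on $K_{j}$, which removes $N(K_{j})$ and attaches a solid torus whose meridian is the slope $\gamma_{j}:=\mu_{j}+n_{j}\lambda_{j}$ on $\partial N(K_{j})$. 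Set $W:=E(\mathcal{L}\cup\mathcal{U})$, so that Condition~(2) says exactly that $W$ is irreducible. The first point to record is that, if $\mathcal{L}'$ is obtained from $\mathcal{L}$ by twisting about $K_{j}$ exactly $n_{j}$ times for every $j$, then $E(\mathcal{L}')$ is precisely the manifold obtained from $W$ by Dehn filling the torus $\partial N(K_{j})$ along $\gamma_{j}$, for $j=1,\dots,m$; more generally, twisting about only the components in a sub-unlink of $\mathcal{U}$ corresponds to filling only the corresponding tori. Thus the lemma is equivalent to the statement that filling $W$ along the slopes $\gamma_{j}$ yields an irreducible manifold as soon as the $n_{j}$ are large enough.

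The substantive case is $m=1$: for an irreducible manifold $W$ and a torus $T\subset\partial W$, the filling $W(\gamma)$ is irreducible for all but finitely many slopes $\gamma$ on $T$. This is the familiar statement that a manifold admits only finitely many reducing Dehn fillings along a given boundary torus. Concretely, given a reducing $2$-sphere $S$ of $W(\gamma)$, choose $S$ among all reducing spheres so as to meet the filling solid torus $V$ in as few meridian disks as possible. Then $S$ cannot be disjoint from $V$: otherwise $S\subset W$ bounds no ball in $W(\gamma)$ and hence none in $W$, contradicting irreducibility of $W$. The usual innermost-disk and outermost-arc arguments, using minimality of $|S\cap V|$, then show that $P:=S\cap W$ may be taken incompressible and $\partial$-incompressible in $W$, with every boundary component of slope $\gamma$ on $T$; so $P$ is either a compressing disk for $T$ (whence $\gamma$ bounds a disk in $W$, and there is at most one such slope) or an essential planar surface (whence $\gamma$ is a boundary slope of $W$ on $T$, of which there are only finitely many by Hatcher's theorem; its sole exception $W\cong T^{2}\times[0,1]$, with $T$ a boundary torus, is harmless since then every filling of $T$ is a solid torus). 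Alternatively one may invoke directly the theorem of Gordon and Luecke that any two reducing slopes on $T$ lie at distance at most one. In every case only finitely many slopes of the form $\mu_{1}+n\lambda_{1}$ are reducing, so $W(\gamma_{1})$ is irreducible for $|n_{1}|$ large.

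The general statement now follows by induction on $m=|\mathcal{U}|$. For $m=0$ there is nothing to twist and $E(\mathcal{L}')=W$ is irreducible by Condition~(2). For $m\ge 1$, first twist $\mathcal{L}$ about $K_{1}$ alone, $n_{1}$ times, obtaining a link $\mathcal{L}^{(1)}$ in $Y$; the ambient manifold is unchanged because $\mathcal{U}$ is an unlink. Since this twisting is supported in a neighbourhood of $D_{1}$, which we may take disjoint from $D_{2},\dots,D_{m}$, the link $\mathcal{U}'':=K_{2}\cup\cdots\cup K_{m}$ is still an unlink with the same spanning disks, so Condition~(1) holds for the pair $(\mathcal{L}^{(1)},\mathcal{U}'')$. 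Moreover $E(\mathcal{L}^{(1)}\cup\mathcal{U}'')$ is precisely $W$ filled along $\gamma_{1}$ on $\partial N(K_{1})$, so by the case $m=1$, applied to that boundary torus of $W$, we may choose $n_{1}$ large enough that $E(\mathcal{L}^{(1)}\cup\mathcal{U}'')$ is irreducible; this gives Condition~(2) for $(\mathcal{L}^{(1)},\mathcal{U}'')$. Now $(\mathcal{L}^{(1)},\mathcal{U}'')$ satisfies the hypotheses of the lemma with $|\mathcal{U}''|=m-1$, so by the inductive hypothesis, twisting $\mathcal{L}^{(1)}$ sufficiently many times about each of $K_{2},\dots,K_{m}$ produces a link with irreducible exterior; and this composite operation is exactly the operation of twisting $\mathcal{L}$ sufficiently many times about every component of $\mathcal{U}$. (In particular, ``sufficiently many times'' is to be read as meaning that the bounds on the successive twisting curves are chosen in order, each depending on the earlier choices.)

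I expect the only real obstacle to be the case $m=1$: namely, the standard but somewhat delicate minimization that turns a reducing sphere of $W(\gamma)$ into an essential planar surface of slope $\gamma$ in $W$, together with a correct appeal to the finiteness of boundary slopes (or to the distance bound for reducing slopes) and the small amount of bookkeeping needed for the solid-torus and $T^{2}\times[0,1]$ exceptions. The reduction to a Dehn filling problem and the induction itself should be routine.
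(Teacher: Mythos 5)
Your proposal is correct and follows essentially the same route as the paper's proof: induct on the number of components of $\mathcal{U}$, reinterpret twisting $n$ times about a component as a $1/n$ Dehn filling of its boundary torus in $E(\mathcal{L}\cup\mathcal{U})$, and rule out all but finitely many reducing slopes by passing from a reducing sphere (minimizing intersection with the filling solid torus) to an essential planar surface and invoking Hatcher's finiteness of boundary slopes. Your write-up merely supplies more detail than the paper at the $m=1$ step (the innermost-disk/outermost-arc minimization, the compressing-disk and $T^{2}\times[0,1]$ edge cases, and the optional appeal to Gordon--Luecke), and makes explicit the verification that the hypotheses persist under the induction.
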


\begin{proof}
We induct on $|\mathcal{U}|$, the number of components in $\mathcal{U}$.  If $|\mathcal{U}| = 0$
there is nothing to prove.  Otherwise, let $K$ be a component of $\mathcal{U}$.  
We will denote $Y \setminus \mbox{int}N(\mathcal{L} \cup \mathcal{U})$ as $X$.  Let $\alpha$
be any slope of $\partial N(K)$ so that the manifold obtained by filling $\partial N(K)$ with slope
$\alpha$ is reducible.  Denote a reducing sphere that minimizes the intersection with the attached solid torus 
by $S$.  Then it is easy to see that $S_{E} = S \cap Y \setminus \mbox{int}N(\mathcal{L} \cup \mathcal{U})$
is an essential surface with $\partial S_{E} \subset \partial N(K)$ a non-empty collection of essential curves,
all defining the slope $\alpha$.  By Hatcher~\cite{hatcher} only finitely many slopes of $\partial N(K)$
bounds such a surface.  Twisting $n$ times about $K$ is equivalent to filling $\partial N(K)$ with slope
$\alpha = 1/n$ (for any choice of basis for $H_{1}(\partial N(K);\mathbb Z)$ for which the
boundary of the disk bound by $K$ corresponds to $0/1$).  Thus for all but
finitely many values of $n$ the manifold obtained is irreducible.  We pick one such $n$, and after twisting
$\mathcal{L}$ about $K$ $n$ times, we remove $K$ from $\mathcal{U}$.  Induction completes the proof.
\end{proof}

The following lemma was proved in~\cite{BHW} by Bleiler, Hodgson and Weeks.
It says that the action induced by the mapping class group of $M$ on the
slopes on $\partial M$ (which is assumed to be a single torus) is trivial.
We bring a new argument here.  Our argument holds for
many non-hyperbolic manifolds as well: all we require is that $M$
does not admit infinitely many fillings that result in diffeomorphic manifolds;
this is well known to hold for hyperbolic manifolds as well as all Seifert
fibered spaces except $D^{2} \times S^{1}$.  In Conclusion~(2) below
we use a basis for $H_1(\partial M)$ to identify the slopes of $\partial M$
with $\bar{\mathbb{Q}} = \mathbb{Q} \cup \{1/0\}$.

\begin{lem}
\label{lem:slopes}
Let $M$ be a hyperbolic manifold with $\partial M$ a single torus.  Let $\phi:M \to M$ 
be a orientation preserving diffeomorphism.  
Then for any simple closed curve $\gamma \subset \partial M$,
$\gamma$ and $\phi\circ \gamma$ represent the same slope.

Moreover, if $M$ admits orientation reversing diffeomorphisms then one of the following holds:
\begin{enumerate}
\item For any orientation reversing diffeomorphism $\gamma$ and 
$\phi\circ \gamma$ represent the same slope.
\item  There is a basis for $H_{1}(\partial M)$ so that
for any orientation reversing diffeomorphism $\phi$, 
if a curve $\gamma$ represents the slope $p/q$ then 
$\phi\circ \gamma$ represent the slope $- p/q$.
\end{enumerate}
\end{lem}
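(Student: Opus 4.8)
The plan is to exploit the hypothesis that $M$ admits only finitely many fillings yielding diffeomorphic manifolds. Suppose, for contradiction, that $\phi:M\to M$ is an orientation preserving diffeomorphism and $\gamma\subset\partial M$ is a simple closed curve such that $\gamma$ and $\phi\circ\gamma$ represent distinct slopes, say $\alpha\neq\beta$. The key observation is that $\phi$ extends to a diffeomorphism $M(\alpha)\cong M(\beta)$: filling $M$ along $\alpha$ and then applying $\phi$ is the same as applying $\phi$ first (on the solid torus, matching the meridian $\gamma$ to $\phi\circ\gamma$, i.e.\ the slope $\beta$) and then filling, so $M(\alpha)\cong M(\beta)$. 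I would like to iterate this to produce infinitely many distinct slopes all giving the same manifold $M(\alpha)$, contradicting finiteness.

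The iteration works as follows. The diffeomorphism $\phi$ induces an action $\phi_*$ on $H_1(\partial M;\mathbb Z)\cong\mathbb Z^2$, hence on the set of slopes. Since $\phi$ is orientation preserving and preserves $\partial M$ (as a component of the boundary), $\phi_*\in GL_2(\mathbb Z)$; in fact one checks (using that $\phi$ preserves the orientation of $M$, hence reverses or preserves the orientation of $\partial M$ consistently with the "outward normal last" convention) that $\phi_*\in SL_2(\mathbb Z)$. First I would settle the question of whether $\phi_*$ has finite order. If $\phi_*$ has infinite order as an element of $SL_2(\mathbb Z)$, then the orbit $\{\phi_*^k(\alpha)\mid k\in\mathbb Z\}$ is infinite for any slope $\alpha$ that is not a fixed point; and by the extension argument above, $M(\alpha)\cong M(\phi_*^{-1}(\alpha))\cong M(\phi_*^{-2}(\alpha))\cong\cdots$, all diffeomorphic, contradicting the finiteness hypothesis. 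If $\phi_*$ fixes some slope $\alpha_0$ but moves $\gamma$, then I replace $\phi$ by $\phi^{-1}$ or argue directly that $M(\alpha_0)\cong M(\alpha_0)$ while $\phi$ carries the filling slope $\gamma$ of $M(\phi_*(\gamma\text{'s slope}))$... — more cleanly, the point is that an infinite-order $\phi_*$ automatically produces an infinite orbit of \emph{some} slope, so we get infinitely many diffeomorphic fillings. Hence $\phi_*$ has finite order.

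Now a finite-order element of $SL_2(\mathbb Z)$ has order $1,2,3,4$ or $6$. If $\phi_*=\mathrm{id}$ we are done. Otherwise $\phi_*$ has trace in $\{-1,0,1,-2\}$; if $\phi_*=-\mathrm{id}$ then every slope is still fixed (slopes are unoriented), so again $\gamma$ and $\phi\circ\gamma$ agree. The remaining cases are $\phi_*$ of order $3$, $4$, or $6$ (with $\phi_*\neq-\mathrm{id}$). In each such case $\phi_*$ acts on slopes (= $\mathbb{P}^1(\mathbb Q)$) with only finitely many fixed slopes — in fact $\phi_*^2\neq\pm\mathrm{id}$ so $\phi_*$ has no \emph{real} eigendirections, meaning \emph{no} slope is fixed. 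Then pick any slope $\alpha$; its $\langle\phi_*\rangle$-orbit has size $3,4$, or $6$, and by the extension argument $M(\alpha)\cong M(\phi_*(\alpha))\cong M(\phi_*^2(\alpha))$ are all diffeomorphic. This alone isn't a contradiction (only finitely many), so here I would compose with powers: consider instead the sequence obtained by mixing $\phi$ with other diffeomorphisms, or observe directly: since \emph{every} slope is moved by $\phi_*$, we get a contradiction with the statement only if we can rule out these finite-order cases by a separate argument. The cleanest route: use that $M$ hyperbolic implies $\mathrm{Isom}(M)$ is finite and its image in $SL_2(\mathbb Z)$ is the \emph{full} group of slope-symmetries, but more to the point, one uses Mostow rigidity so that $\phi$ is isotopic to an isometry; an isometry of finite order with $\phi_*$ of order $3,4,6$ would force a very restricted cusp shape, and combined with the two-bridge/exceptional-filling count one rules it out — however, the authors explicitly say they give a \emph{new} argument avoiding Mostow, so the intended contradiction must come purely from the finiteness of distinct fillings.

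\medskip
\noindent\textbf{The main obstacle.} The genuinely delicate point is the finite-order case of $\phi_*$ acting freely on slopes: the naive "infinitely many diffeomorphic fillings" contradiction only produces an orbit of bounded size. The resolution I would pursue: combine the diffeomorphism $\phi$ with the fact that $M$ has infinitely many slopes $\alpha$ for which $M(\alpha)$ is hyperbolic, and among those only finitely many coincidences; then for a generic hyperbolic filling slope $\alpha$, $M(\alpha)$ determines $\alpha$ up to the finite symmetry group, and one counts: if $\phi_*$ had order $k\in\{3,4,6\}$, then the number of slopes $\beta$ with $M(\beta)\cong M(\alpha)$ would be a nontrivial multiple of $k$ for \emph{all} but finitely many $\alpha$ — but one can exhibit (e.g.\ by a volume or a direct hyperbolic-geometry argument, or by Thurston's Dehn surgery controlling $\mathrm{Vol}(M(\alpha))$ as $\alpha\to\infty$) infinitely many slopes $\alpha$ realized by \emph{exactly one} filling, giving the contradiction. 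For the orientation-reversing addendum: the same analysis applies to the subgroup generated by $\phi_*$ for $\phi$ orientation reversing together with the orientation-preserving ones; an orientation-reversing $\phi$ gives $\phi_*\in GL_2(\mathbb Z)\setminus SL_2(\mathbb Z)$, and combined with the orientation-preserving conclusion ($\phi_*=\pm\mathrm{id}$ there) one gets that the orientation-reversing coset is represented by a single involution modulo $\pm\mathrm{id}$; diagonalizing that involution over $\mathbb Z$ gives either $\mathrm{diag}(1,-1)$ (case (2), $p/q\mapsto -p/q$ in that basis) or $\mathrm{diag}(1,1)=\mathrm{id}$ i.e.\ $-\mathrm{id}$ after the sign twist (case (1), all slopes fixed), exhausting the two alternatives.
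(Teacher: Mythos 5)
Your first reduction is sound and matches one ingredient of the paper's argument: if $\phi_*$ moved some slope along an infinite orbit, $M$ would admit infinitely many fillings yielding diffeomorphic manifolds, which is impossible for a hyperbolic $M$. The genuine gap is exactly the point you flag yourself: the elliptic possibilities, $\phi_*$ of order $3$, $4$ or $6$ in $SL_2(\mathbb Z)$. Your proposed way out --- exhibiting infinitely many slopes realized by \emph{exactly one} filling --- is nowhere justified, and it cannot be obtained from the tools you invoke: it is itself a strong cosmetic-surgery-type assertion (if such a $\phi$ existed, \emph{every} slope would sit in an orbit of size $3$, $4$ or $6$ of slopes with diffeomorphic fillings, so you would be assuming essentially what you must prove), volumes do not separate distinct fillings, and Mostow rigidity only makes $\phi$ isotopic to a finite-order isometry, which a priori is perfectly compatible with an elliptic action on the cusp torus. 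So as written the argument does not close.

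The paper kills the elliptic case before it can arise by a homological fact you never use: by the standard ``half lives, half dies'' argument, $\ker\bigl(H_1(\partial M;\mathbb Z)\to H_1(M;\mathbb Z)\bigr)$ is infinite cyclic, generated by a primitive class $[l]$ (the rational longitude), and any self-diffeomorphism of $M$ sends $[l]$ to $\pm[l]$. Hence in a basis $([m],[l])$ the matrix of $\phi_*$ is triangular, $[\phi\circ m]=\pm[m]+n[l]$; having an integral eigenvector rules out orders $3,4,6$ at once, and the only surviving danger is the parabolic shear $n\neq 0$, which is precisely what the finiteness-of-diffeomorphic-fillings argument eliminates (this is the only place hyperbolicity enters). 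The same eigenvector also settles the orientation-reversing addendum, where your plan is again incomplete: an involution in $GL_2(\mathbb Z)\setminus SL_2(\mathbb Z)$ need not be $\mathbb Z$-conjugate to $\mathrm{diag}(1,-1)$ (consider $(\begin{smallmatrix}0&1\\1&0\end{smallmatrix})$), so ``diagonalizing over $\mathbb Z$'' does not by itself yield the dichotomy (1)/(2); with the longitude pinned as an eigenvector the matrix is forced into the four forms the paper lists, from which both conclusions follow. Adding that kernel/longitude step at the start removes both your obstruction and the orientation-reversing ambiguity.
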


\begin{proof}
We use the notation introduced in the statement of the lemma.
By a well known homological argument $\ker i_{*} \cong \mathbb Z$, where here 
$i_{*}:H_{1}(\partial M;\mathbb Z) \to H_{1}(M;\mathbb Z)$ is the homomorphism
induce by the inclusion; moreover, any generator of $\ker i_{*}$ is primitive in
$H_{1}(\partial M;\mathbb Z) $.  Let $l$ be a simple curve so that $[l]$ is a generator of
$\ker i_{*}$ (we denote homology classes with $[ \ ]$).  Then $[\phi \circ l] = \pm [l]$.  

Let $m$ be a simple curve so that $[l]$ and $[m]$ generate $H_{1}(\partial M;\mathbb Z)$.
Since $\phi$ induces an isomorphism on $H_{1}(\partial M;\mathbb Z)$,
$[\phi \circ m]$ and $[\phi \circ l]$  generate $H_{1}(\partial M;\mathbb Z)$.
Thus $[\phi \circ m] = \pm [m] + n [l]$, for some $n \in \mathbb Z$.  
If $n \neq \pm 1$, the orbit of $[m]$ is infinite.  But then the slope
represented by $m$ has an infinite orbit, and hence $M$ admits infinitely many fillings
(namely, fillings along the slopes represented by $\phi^{k}\circ m$ for $k \in \mathbb Z$)
that produce diffeomorphic manifolds.  As $M$ is hyperbolic, this is impossible.
Thus $[\phi\circ m] = \pm m$.
We will use $[m]$ and $[l]$ as a basis for $H_{1}(\partial M)$.  
We conclude that $\phi_*$ is one of the following maps:
\begin{enumerate}
\item $\phi_{*}((p,q)) = (p,q)$.
\item $\phi_{*}((p,q)) = (-p,-q)$.
\item $\phi_{*}((p,q)) = (-p,q)$.
\item $\phi_{*}((p,q)) = (p,-q)$.
\end{enumerate}

Note that cases~(3) and ~(4) imply that $\phi$ is orientation reversing;
hence if $\phi$ is orientation preserving then 
$\phi_{*}:H_{1}(\partial M;\mathbb Z) \to H_{1}(\partial M;\mathbb Z)$
is either the identity or the antipodal map; since a slope is defined as the homology class
of an {\it unoriented} curve, both maps fix all slopes.  

All that remain is to show that if $\phi_{1}$ and $\phi_{2}$ are orientation reversing 
diffeomorphisms of $M$, then $\phi_{1 *}$ and $\phi_{2*}$ are both
as in Cases~(1) or~(2), or both as in Cases~(3) or~(4).   If one is 
as in Cases~(1) or~(2) and the other as in Cases~(3) or~(4), then
$\phi_{2}^{-1} \circ \phi_{1}$ is an orientation preserving diffeomorphism
and $(\phi_{2}^{-1} \circ \phi_{1})_{*}$ is as in Case~(3) or~(4), which is impossible.  
\end{proof}

\subsection{Cores of solid tori}
In this subsection we prove the following necessary condition for a curve in $T^{2} \times [0,1]$
to become a core of the solid torus obtained by filling:

\begin{lem}
\label{lem:CoresOfSolidTori}
Let $K \subset T^{2} \times [0,1]$ and
$c \subset T^{2} \times \{0\}$ be curves, and assume that $c$ is simple and
essential.    
Let $V$ be the solid torus obtained by filling $T^{2} \times \{0\}$ along $c$.  
Suppose that $[c]$ and $[K]$ do not generate $H_{1}(T^{2} \times [0,1];\mathbb Z)$.

Then $K$ is not isotopic to a core of $V$.
\end{lem}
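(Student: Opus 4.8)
The plan is to use homology to obstruct $K$ from being a core of $V$. Let me set up coordinates. Fix a basis $\{a,b\}$ for $H_1(T^2 \times [0,1];\mathbb{Z}) \cong \mathbb{Z}^2$ (identifying this with $H_1(T^2 \times \{0\})$ and $H_1(T^2 \times \{1\})$ via the product structure). Write $[c] = c_1 a + c_2 b$ and $[K] = k_1 a + k_2 b$; the hypothesis is that the matrix $\left(\begin{smallmatrix} c_1 & c_2 \\ k_1 & k_2 \end{smallmatrix}\right)$ is not invertible over $\mathbb{Z}$, i.e. $c_1 k_2 - c_2 k_1 \neq \pm 1$.

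The key computation is to identify $H_1(V;\mathbb{Z})$. Since $V$ is obtained from $T^2 \times [0,1]$ by attaching a solid torus along $c \subset T^2 \times \{0\}$ (killing the class $[c]$), a Mayer–Vietoris argument gives $H_1(V;\mathbb{Z}) \cong H_1(T^2 \times [0,1];\mathbb{Z})/\langle [c] \rangle \cong \mathbb{Z}^2/\langle (c_1,c_2)\rangle$. Because $c$ is essential and simple, $(c_1,c_2)$ is a primitive vector (or $(0,0)$ if $c$ is null-homologous — but a simple essential curve on $T^2$ is never null-homologous, so $(c_1,c_2)$ is primitive). Hence $H_1(V;\mathbb{Z}) \cong \mathbb{Z}$, generated by the class of any curve $d$ with $\{[c],[d]\}$ a basis of $\mathbb{Z}^2$; and the image of $[K]$ in $H_1(V;\mathbb{Z}) \cong \mathbb{Z}$ is $c_1 k_2 - c_2 k_1$ times this generator (up to sign; this is just the determinant expressing $[K]$ in the $\{[c],[d]\}$ basis modulo $[c]$).

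Now if $K$ were isotopic to a core of $V$, then the inclusion $K \hookrightarrow V$ would be a homotopy equivalence, so it would induce an isomorphism on $H_1$; in particular $[K]$ would generate $H_1(V;\mathbb{Z})$, forcing $c_1 k_2 - c_2 k_1 = \pm 1$. This contradicts the hypothesis, so $K$ is not isotopic to a core of $V$. I expect no serious obstacle here; the only point requiring a line of care is the Mayer–Vietoris / van Kampen identification of $H_1(V)$ and the fact that a core generates $H_1$ of a solid torus, both of which are standard. One should also remark that $K$ is automatically disjoint from the attached solid torus after the filling (it lies in $T^2 \times [0,1] \subset V$), so "isotopic to a core of $V$" makes sense and the induced map on $H_1$ is exactly the composite $H_1(K) \to H_1(T^2\times[0,1]) \to H_1(V)$, which is the argument above.
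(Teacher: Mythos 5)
Your proof is correct, and it rests on the same elementary homological obstruction as the paper's argument: a core of $V$ must pair $\pm 1$ with the meridian, which is exactly the determinant condition $c_1k_2-c_2k_1=\pm 1$ forcing $[c]$ and $[K]$ to generate $H_1(T^{2}\times[0,1];\mathbb Z)$. The only difference is packaging: the paper projects $K$ and $c$ to the torus and counts the algebraic intersection of $K$ with the meridian disk of $V$, whereas you compute $H_1(V)\cong H_1(T^{2}\times[0,1];\mathbb Z)/\langle[c]\rangle\cong\mathbb Z$ by Mayer--Vietoris and note that a core must generate it; these are the same determinant computation in different clothes.
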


\begin{proof}
We identify  $T^{2} \times \{1\}$ with $T^{2}$.   We will denote the projections 
of $c$ and $K$ to $T^{2}$ as $c'$ and $K'$ respectively.  
We obtain two classes in $H_{1}(T^{2};\mathbb Z)$, defined up-to sign, 
which we will denote as $\pm [c']$ and $\pm [K']$.  

Suppose that $K$ is isotopic to a core of $V$.  
Then the signed intersection of $K$ and the meridian disk of $V$ is $\pm 1$.  
Up-to isotopy, $c'$ is the boundary of the meridian disk of $V$.
Therefore, the signed intersection of $K'$ with $c'$ 
as curves in $T^{2}$ is $\pm 1$ (the sign may not agree with that of the intersection of $K$ and the meridian disk of $V$).
Any class of $H_{1}(T^{2};\mathbb{Z})$ is represented by $n$ parallel copies of a simple closed curve (possibly $n=0$).
Let $\gamma$ be a simple closed curve so that $\pm [n\gamma] = \pm [c']$.  Since $c'$ intersects
$K'$ algebraically once, $n = \pm 1$ and we may assume that $\gamma$ and $K'$ intersect once.
Thus $[\gamma]$ and $[K']$ generate  $H_{1}(T^{2};\mathbb Z)$.  
Since $[\gamma] = [c'] = [c]$ and $[K'] = [K]$, 
$[c]$ and $[K]$ generate $H_{1}(T^{2} \times [0,1];\mathbb Z)$.
\end{proof}

\subsection{Hyperbolic Dehn Surgery}
Let $M$ be a one cusped hyperbolic manifold.  It is well known that for all but finitely many slopes
on $\partial M$, $M(\beta)$ is hyperbolic and the core of the attached solid torus, which we will denote as
$\gamma$, is a geodesic.  As we vary $\beta$, the length of the geodesics $\gamma$ obtained fulfill
$$\lim_{l(\beta) \to \infty} l(\gamma) = 0$$
where here $l(\beta)$ is measured in the Euclidean metric induced on $\partial M$ after some truncation
of the cusp.  Moreover, after an appropriate choice of basepoints, 
as $l(\beta) \to \infty$, the manifolds $M(\beta)$ converge in the sense of
Gromov--Hausdorff to $M$.  With this we obtain the following lemma, which is well known to many experts,
but since we could not find a reference we sketch its proof here.

\begin{lem}
\label{lem:HyperbolicDehnSurgery}
With the notation as above, there exists $\epsilon>0$ and a finite set of slopes of $\partial M$,
which we will denote as $B_{f}$, so that for any slopes $\beta$ of $\partial M$, if $\beta \not \in B_{f}$
then the following three conditions hold:
\begin{enumerate}
\item $M(\beta)$ is hyperbolic.
\item $l(\gamma) < \epsilon$.
\item If $\delta \subset M(\beta)$ is a geodesic and $l(\delta) < \epsilon$, then $\delta = \gamma^{n}$ for some $n$.
\end{enumerate}
\end{lem}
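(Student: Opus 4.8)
The plan is to deduce everything from the two standard facts recalled just above the lemma: (i) Thurston's hyperbolic Dehn surgery theorem, which gives a finite set of exceptional slopes outside of which $M(\beta)$ is hyperbolic and the core $\gamma$ of the filling solid torus is a geodesic with $l(\gamma)\to 0$ as $l(\beta)\to\infty$; and (ii) the geometric convergence $M(\beta)\to M$ (in the Gromov--Hausdorff, or geometric, sense) as $l(\beta)\to\infty$, with the short geodesic $\gamma$ being exactly the core curve that is being ``drilled back out''. The strategy is to fix $\epsilon$ small enough that the $\epsilon$-thin part of $M$ itself is just the cusp neighborhood (this is possible since $M$ is a finite-volume hyperbolic manifold with one cusp, so its thick-thin decomposition has no short geodesics for $\epsilon$ below the shortest-geodesic length of $M$; take $\epsilon$ smaller than the Margulis constant as well). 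Then I want to argue that for all but finitely many $\beta$, every geodesic of length $<\epsilon$ in $M(\beta)$ must be a power of $\gamma$.

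First I would set up the finite exceptional set. By hyperbolic Dehn surgery only finitely many $\beta$ give non-hyperbolic $M(\beta)$ or fail to make $\gamma$ a geodesic; put these in $B_f$. This handles Conclusion~(1). Next, since $l(\beta)\to\infty$ forces $l(\gamma)\to 0$, all but finitely many $\beta$ have $l(\gamma)<\epsilon$; enlarge $B_f$ to absorb the remaining finite set of exceptions. This gives Conclusion~(2). The content is Conclusion~(3).

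For Conclusion~(3) I would argue by contradiction using geometric convergence. Suppose there is an infinite sequence $\beta_k$ (with $l(\beta_k)\to\infty$, which we may assume after discarding the finitely many slopes of bounded length) such that $M(\beta_k)$ contains a geodesic $\delta_k$ with $l(\delta_k)<\epsilon$ and $\delta_k$ not a power of $\gamma_k$. Because $M(\beta_k)\to M$ geometrically, for large $k$ there are bilipschitz embeddings (with bilipschitz constant $\to 1$) of larger and larger compact submanifolds of $M$ into $M(\beta_k)$, whose complement is a small tubular neighborhood of $\gamma_k$; equivalently, $M(\beta_k)$ minus a short tube around $\gamma_k$ is almost isometric to $M$ minus a large cusp neighborhood. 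Now a closed geodesic $\delta_k$ of length $<\epsilon$ lies in the $\epsilon$-thin part of $M(\beta_k)$. The thin part is a disjoint union of tubes around short geodesics and cusps; $M(\beta_k)$ has no cusps, so $\delta_k$ sits inside a Margulis tube $T(\delta_k)$. Either $\delta_k$ is isotopic into the tube around $\gamma_k$ — in which case, since a solid-torus Margulis tube contains a unique closed geodesic, $\delta_k=\gamma_k^n$, contradicting our assumption — or the tube $T(\delta_k)$ is disjoint from the $\gamma_k$-tube. In the latter case the whole tube $T(\delta_k)$, which has definite (Margulis-constant-controlled) radius, embeds into the almost-isometric copy of (a compact piece of) $M$; pulling back, for large $k$ we would obtain a closed geodesic of length $<\epsilon'$ (with $\epsilon'\to\epsilon$) in $M$ itself, contradicting our choice of $\epsilon$ (recall $M$ has no geodesic that short). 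So after discarding finitely many more $\beta$ — those for which such a $\delta$ exists, which by the compactness argument is a finite set — Conclusion~(3) holds.

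The main obstacle is making the ``almost-isometric piece'' argument rigorous enough to conclude that a short geodesic in $M(\beta_k)$ not near $\gamma_k$ produces a short geodesic in $M$: one must be careful that a closed geodesic sitting in a Margulis tube disjoint from the $\gamma_k$-tube genuinely lies in the region where the bilipschitz comparison with $M$ is valid, and that the bilipschitz distortion is small enough (for $k$ large) not to push its length above $\epsilon$. This is exactly where one uses that $\gamma_k$ is \emph{the} core of the filling, so that $M(\beta_k)\setminus\gamma_k\to M$ with control; the Margulis lemma then guarantees the offending tube has radius bounded below independent of $k$, so it cannot ``escape to infinity'' into the degenerating part. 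An alternative, cleaner packaging of the same idea is to invoke directly the statement (e.g. from Neumann--Zagier / Thurston, or Kojima, or the effective versions of Hodgson--Kerckhoff) that under long Dehn filling the short geodesics of $M(\beta)$ are precisely the core curves of the filling solid tori; for a one-cusped $M$ that is exactly Conclusion~(3), and I would cite this rather than re-prove the convergence estimates, given that the lemma is stated as ``well known to experts'' with only a sketch intended.
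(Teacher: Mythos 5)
Your overall route is the same as the paper's: conclusions (1) and (2) are quoted from Thurston's hyperbolic Dehn surgery theorem, and (3) is proved by contradiction, extracting a sequence of slopes with short non-core geodesics $\delta_k$ and transferring them into $M$ via the geometric (Gromov--Hausdorff) convergence $M(\beta_k)\to M$, with the Margulis constant chosen below the length of the shortest geodesic of $M$. The paper's own argument is likewise only a sketch along these lines, so there is no divergence of method to report.

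There is, however, a gap in your final contradiction. In your second case you claim that transferring the Margulis tube of $\delta_k$ into $M$ ``would obtain a closed geodesic of length $<\epsilon'$ in $M$, contradicting our choice of $\epsilon$.'' That does not follow: the image of $\delta_k$ under the comparison map is only a short closed \emph{curve} in $M$, and a short curve bounds the injectivity radius (hence forces a thin component, hence a short geodesic or cusp) only if it is essential. In fact, since for large $k$ this curve lies in the thick part of $M$, where the injectivity radius is at least the Margulis constant $\mu$, the generic outcome is that it is null-homotopic in $M$ --- so no short geodesic in $M$ appears and your stated contradiction evaporates; the other possibility is that it is homotopic into the cusp. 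The contradiction has to be pulled back to $M(\beta_k)$: if the image curve is null-homotopic, take an immersed disk it bounds, isotope it off the cusp so it lies in the region where the comparison map is invertible, and pull it back to conclude that the closed geodesic $\delta_k$ is null-homotopic, which is absurd; if it is homotopic into the cusp, pull back the immersed annulus to conclude that $\delta_k$ is homotopic into a neighborhood of $\gamma_k$ and hence is a power of $\gamma_k$, contradicting its choice. These two pull-back cases are exactly how the paper's sketch concludes, and your write-up needs them (or, as you suggest, an outright citation of the statement that the short geodesics of long fillings are precisely the core curves, e.g.\ Neumann--Zagier or Hodgson--Kerckhoff, which would be acceptable given that the paper itself presents the lemma as well known and only sketches it).
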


\begin{proof}[Sketch of proof]
Fix $\mu>0$ a Margulis constant that is shorter than half the length of the shortest geodesic in $M$.
The {\it thick part} of $M$, which we will denote $M_{\geq \mu}$, consists of all the points
of $M$ that have radius of injectivity at least $\mu$; note that $M_{\geq \mu}$ is $M$ with its
cusp truncated.  The thick part of $M(\beta_{i})$, which we will denote as 
$M(\beta_{i})_{\geq \mu}$, is defined similarly.

By the discussion above,~(1) and ~(2) are established in the literature.  Assume, for a contradiction, that
there does not exist a finite set $B_{f}$ for which~(3)
holds.  Then there is a sequence $\beta_{i}$ of slopes of $\partial M$ with  $l(\beta_{i}) \to \infty$,
and geodesics $\delta_{i} \subset M(\beta_{i})$ so that $l(\delta_{i}) < 1/i$ and $\delta_{i}$ is
not a power of $\gamma_{i}$, the core of the attached solid torus.  
Let $V_{i}$ be the component of $M(\beta_{i}) \setminus M(\beta_{i})_{\geq \mu}$ that contains
$\gamma_{i}$.  We will denote $N_{\frac{1}{2i}}(M(\beta_{i}) \setminus V_{i})$ as $N_{i}$, where
here $N_{\frac{1}{2i}}$ denotes the $\frac{1}{2i}$ neighborhood.  By construction, $\delta_{i} \subset N_{i}$.

For an appropriate choice of basepoints  $x_{i} \in M(\beta_{i})$ and $x \in M$, $(M(\beta_{i}),x_{i})$
converges to $(M,x)$ is the Gromov--Hausdorff sense.
%Therefore, for large enough $i$, there is a bilipschits map $f_{i}:N_{i} \to M_{\geq \mu}$
%with bilipschitz constant arbitrarily close to 1 (see, for exmple,~\cite{sakai}).  
Then $f_{i} \circ \delta_{i}$
are closed curves in $M_{\geq \mu}$ with $l(f_{i} \circ \delta_{i}) \to 0$.  Thus, for sufficiently large $i$,
$l(f_{i} \circ \delta_{i}) < \mu$, and hence $f_{i} \circ \delta_{i}$ is null homotopic
or is homotopic into the cusp.  In the former case, let $D \subset M_{\geq \mu}$ be an immersed 
disk whose boundary is  $f_{i} \circ \delta_{i}$.   By isotopy we may move $D$ out of the cusp.
The image of $D$ under $f_{i}^{-1}$
shows that $\delta_{i}$ bounds an immersed disk, which is a contradiction.   In the latter case,
we may use an immersed annulus given by the trace of the homotopy of $f_{i} \circ \delta_{i}$
to the cusp to conclude that $\delta_{i}$ is homotopic into a neighborhood of the core
of the attached solid torus; this implies that the geodesic $\delta_{i}$ is itself a power of that core.
\end{proof}

\section{Bounded sets in the Farey graph}
\label{sec:BoundedSets}

\bigskip

\begin{figure}[h!]
\includegraphics[height=3.5in]{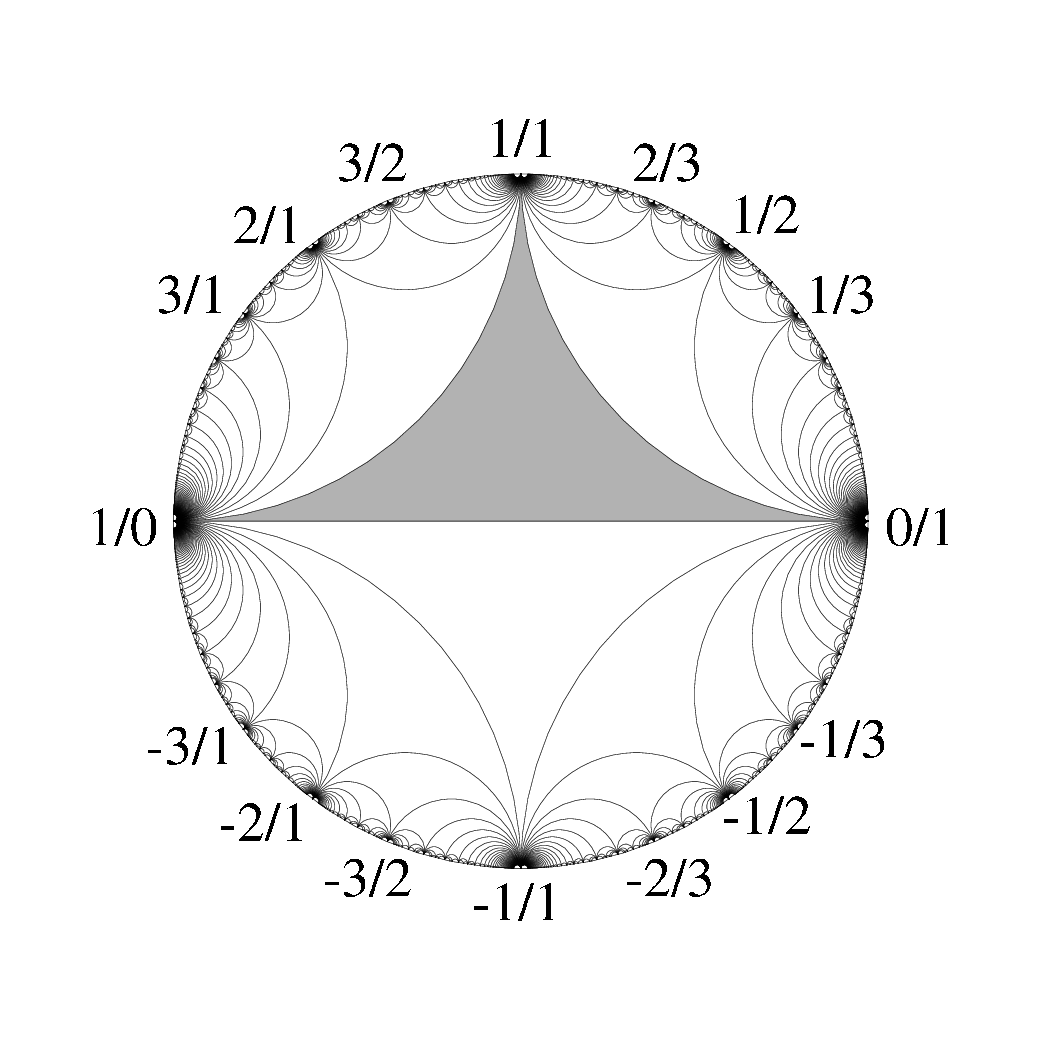}
\caption{The Farey tessellation}
\label{fig:Farey}
\end{figure}

\noindent
The Farey graph is a connected graph whose vertices encodes the slopes of the torus;
we begin this section with a brief description of this well known construction,
see Figure~\ref{fig:Farey}.
By viewing each edge as a length one interval the Farey graph induces a metric on the slopes 
which is instrumental for our study:  throughout this paper we make extensive use of sets of 
slopes that are bounded in this metric.    In~\cite{rieckyamashita} we noted that 
that any cover between tori induces a bijection on their slopes and argued that
branched covers between manifolds can be completed after filling if and only if the 
slopes filled correspond under this bijection; in Subsection~\ref{subsection:FareyAndCovering}
we recall these facts and prove that this bijection is a bilipschitz map, and hence the image
of a bounded set is bounded (in either direction).  In Subsection~\ref{subsection:boundedsets}
we prove that bounded sets are closed under certain operations, notably Dehn twists and 
adding slopes of bounded intersection (see Proposition~\ref{prop:PropertiesOfBoundedSets} 
for a precise statement).  Moreover, we show that bounded sets form the smallest non empty 
collection of sets that is closed under these operations;
hence, from our point of view, they are the smallest collection of sets we may use.

\bigskip
\noindent
We now describe the Farey graph; it is best seen as embedded in 
$\mathbb H^{2} \cup S^{1}_{\infty}$ as the 1-skeleton of the Farey tessellation of
the hyperbolic plane, although
the metric we will use (as described above) is {\it not} induced by the hyperbolic metric.  
For this construction see Figure~\ref{fig:Farey}.  Pick an ideal triangle in $\mathbb H^{2}$
and label its vertices as $0/1$, $1/0$, and $1/1$.  The Farey tessellation is constructed
recursively: after reflecting a triangle by an edge whose endpoints are labeled 
$p/q$ and $r/s$, we obtain a new triangle and
label the new vertex by $(p \pm r)/(q \pm s)$; the sign
depends on the direction of the reflection.  This addition rule
is induced by the addition in $\mathbb Z \times \mathbb Z$.  
At the end of the day we obtain a tessellation of $\mathbb H^{2}$ by ideal triangles and 
it is a well known consequence of Euclid's Algorithm that every element of 
$\bar{\mathbb{Q}} = \mathbb{Q} \cup \{1/0\}$ appears as
the label for exactly one vertex.  The {\it Farey graph} is the graph
given by the 1-skeleton of the Farey tessellation.
After choosing a basis for $H_{1}(T^{2};\mathbb Z)$ we 
identify the slopes of $T^{2}$ with $\bar{\mathbb{Q}}$.
Thus we have a bijection between the slopes of $T^{2}$ and the vertices of the
Farey tessellation.  It is easy to see that the claims below
do not depend on the choice of  basis, as a change of basis induces an isomorphism
of the Farey graphs.

Let $x$ and $y$ be vertices that correspond to slopes $\alpha_{1}$
and $\alpha_{2}$; it is well known that $x$ and $y$ are connected by an edge in the Farey
tessellation if and only if $\alpha_{1}$ and $\alpha_{2}$ can be represented by
curves that intersect exactly once.
The {\it distance}
in the Farey tessellation is the minimal number of edges traversed to get
from one vertex to another.  
We define the distance between slopes $\alpha_{1}$ and $\alpha_{2}$ to be the distance
between the corresponding vertices of the Farey tessellation and denote it as
$$\dfar(\alpha_{1},\alpha_{2}).$$
Using this mertic we can define:

\begin{dfn}
\label{dfn:bounded}
A set of slopes is called {\it bounded} if it is has a bounded diameter using the 
metric induced by $\dfar$.  
\end{dfn}

It is well known that the Farey graph has infinite diameter, hence:

\begin{lem}
If $B$ is a bounded set of slopes and then there are infinitely many slopes not in $B$.
\end{lem}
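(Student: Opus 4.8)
The plan is to deduce the lemma directly from the fact recalled immediately above, namely that the Farey graph has infinite diameter. First I would dispose of the degenerate case: if $B = \emptyset$ then, since $\bar{\mathbb{Q}} = \mathbb{Q} \cup \{1/0\}$ is infinite, every slope lies outside $B$ and we are done. So assume $B \neq \emptyset$, fix a slope $\alpha_{0} \in B$, and let $D$ be the diameter of $B$ (finite, by hypothesis), so that $\dfar(\alpha_{0},\beta) \leq D$ for every $\beta \in B$.

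Next I would produce, for each integer $k > D$, a slope $\beta_{k}$ with $\dfar(\alpha_{0},\beta_{k}) = k$. Since the Farey graph has infinite diameter, for each such $k$ there is a slope $\gamma$ with $\dfar(\alpha_{0},\gamma) \geq k$; choosing a geodesic path in the Farey graph from $\alpha_{0}$ to $\gamma$ and taking the vertex of that path at distance exactly $k$ from $\alpha_{0}$ gives the desired $\beta_{k}$. (Equivalently, one can argue that the set of slopes lying at distance $> D$ from $\alpha_{0}$ must be infinite: otherwise all but finitely many slopes would lie in the ball of radius $D$ about $\alpha_{0}$, and joining the remaining finitely many slopes to $\alpha_{0}$ by paths in the connected Farey graph would bound the diameter of the whole graph, contradicting infinite diameter.) The slopes $\beta_{k}$ with $k > D$ are pairwise distinct, since they sit at distinct distances from $\alpha_{0}$, and none of them lies in $B$, since $\dfar(\alpha_{0},\beta_{k}) = k > D \geq \dfar(\alpha_{0},\beta)$ for every $\beta \in B$. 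Hence $\{\beta_{k} : k > D\}$ is an infinite set of slopes not in $B$, which is what we wanted.

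There is essentially no obstacle here: the argument is immediate from the stated infinite-diameter property, and the only point meriting a word is that the Farey graph, although not locally finite, is connected, so that a geodesic realizing a large distance from $\alpha_{0}$ automatically passes through vertices at every intermediate distance; this is precisely what lets us pass from ``infinite diameter'' to ``infinitely many slopes outside a fixed bounded set.''
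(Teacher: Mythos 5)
Your argument is correct, and it is essentially the route the paper takes: the paper simply records the lemma as an immediate consequence of the Farey graph having infinite diameter (the word ``hence'' is its entire proof), and your write-up just fills in the routine details of that deduction, including the harmless triangle-inequality step needed to find slopes far from a fixed $\alpha_{0}$. No gaps; nothing further is needed.
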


\subsection{Coverings and slopes} 
\label{subsection:FareyAndCovering}
For this subsection fix tori $T$, $T'$ and $\phi:T \to T'$ a cover. 
In~\cite{rieckyamashita} we showed that $\phi$ induces a bijection between the
slopes of $T$ and those of $T'$; we refer the reader to that paper for a detailed discussion. 
The correspondence is defined as follows: let $\alpha$ be a slope of $T$ and $\gamma$ an essential connected
simple closed curve on $T$ representing $\alpha$.  Then $\phi \circ \gamma$ is an essential connected
(not necessarily simple) closed curve on $T'$, and hence there exist a positive integer $m$ and
$\beta'$, a connected simple closed curve on $T'$,
so that $(\beta')^{m}$ is freely homotpic to $\phi \circ \gamma$.  We define the 
slope represented by $\beta'$ to be the slope that corresponds to $\alpha$.  Conversely, let $\alpha'$
be a slope of $T'$ represented by an essential simple closed curve $\gamma'$.  Then $\phi^{-1}(\gamma')$
is an essential (not necessarily connected) simple closed curve.  We define  
the slope represented by a component of $\phi^{-1}(\gamma')$ to be the slope that corresponds to $\alpha'$.  
It is not hard to see that these correspondences are inverses of each other.  

We refer to slopes that correspond under this bijection as {\it corresponding slopes}.
The branched covers between manifolds
that we consider in this paper induce covers on boundary components.
In~\cite{rieckyamashita} we showed that a branched cover $X \to E$ (where $X$ and $E$ are manifolds
with toral boundary and the branch set is closed, that is, disjoint from $\partial E$)
extends to a branched cover after filling if and only if the slopes filled are corresponding slopes. 
{\it This simple fact will often be used without reference throughout this paper.}

\begin{rmk}
In the example below it will be convenient to take an alternate view of the correspondence between the slopes of
$T$ and those of $T'$, where $\phi: T \to T$ is a cover.
By lifting $\phi:T \to T'$ appropriately to the universal covers
we obtain an matrix in $SL(2,\mathbb Q)$. Any such matrix gives a correspondence between  
lines of rational slope in the universal covers, which are naturally identified with slopes on the tori; 
the reader can verify that this correspondence is equivalent to the correspomndence described above.
\end{rmk}

Let $T$ and $T'$ be tori and $\phi:T \to T'$ a cover as above.
We will often consider bounded sets of $T$ or $T'$; our goal in this subsection 
is to understand their behavior under the correspondence induced by $\phi$.
We begin with a simple example: consider $T$ and $T'$ as
$\mathbb{R}^{2}/\sim$, where $\sim$ is given by $(x,y)\sim (x+n,y+m)$ for $n,m \in \mathbb Z$. 
Let $\phi:T \to T'$ be the double cover induced by the map $\tilde\phi:\mathbb{R}^{2} \to \mathbb{R}^{2}$
given by $\tilde\phi((x,y)) = (x,2y)$.  The four segments in $\mathbb{R}^{2}$ connecting
$(0,0)$ to $(1,0)$, $(2,1)$, $(1,1)$, and $(0,1)$ map to four curves on $T$, defining slopes
that can be naturally denoted as $0/1$, $1/2$, $1/1$, and $1/0$, see Figure~\ref{fig:Farey2}.  
\begin{figure}
\includegraphics[height=2.5in]{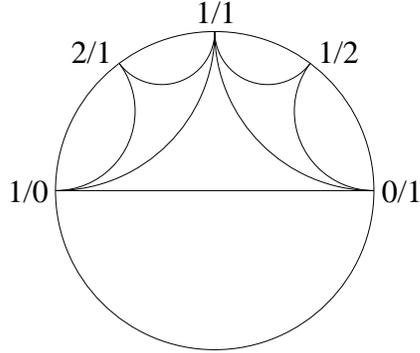}
\caption{The two quadrilaterals in the Farey tessellation}
\label{fig:Farey2}
\end{figure}
The images these segments
under $\tilde\phi$ are the segments connecting $(0,0)$ to $(1,0)$, $(2,2)$, $(1,2)$, and $(0,2)$,  
and the corresponding slopes are $0/1$, $1/1$, $2/1$, and $1/0$.  
Thus the correspondence maps a quadrilateral in the Farey graph of $T$ 
to a quadrilateral in the Farey graph of $T'$; each quadrilateral has exactly one diagonal edge.
In the first, the diagonal edge connects $0/1$ to $1/1$, and in the second it 
connects $1/0$ to $1/1$.
These edges do {\it not} correspond, showing that the correspondence
between the slopes of $T$ and those of $T'$ (viewed as a bijection between the vertices of
the Farey graphs) does {\it not} induce an isomorphism of the Farey
graphs.  Equivalently, the bijection between the slopes of $T$ and $T'$ (considered as metric
spaces with the metric $\dfar$) is {\it not} an isometry.  It is easy it see that in this example
some distances decrease while others increase.
Since we always consider Farey graphs as metric spaces with the metric $\dfar$ we need
the following lemma:
%Lemma ~\ref{lem:CorrespondingSlopesFeray} below.

\begin{lem}
\label{lem:CorrespondingSlopesFeray}
Let $T$, $T'$ be tori and $\phi:T \to T'$ be a cover.
Then the correspondence between slopes of $T$ and $T'$ is bilipschitz 
and hence it sends bounded sets of $T$ to
bounded sets of $T'$ and vice versa.
\end{lem}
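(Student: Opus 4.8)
The plan is to exploit the fact that distance in the Farey graph is closely controlled by geometric intersection number, and that a cover $\phi:T\to T'$ distorts intersection numbers by at most a bounded multiplicative factor (the degree). First I would recall the standard two-sided estimate: there are universal constants (or, more conveniently, a crude comparison) relating $\dfar(\alpha,\beta)$ to $\Delta(\alpha,\beta)$. Precisely, $\dfar(\alpha,\beta)=1$ iff $\Delta(\alpha,\beta)=1$, and in general one has the well known bounds $\dfar(\alpha,\beta)\le 2+2\log_2\Delta(\alpha,\beta)$ for $\Delta(\alpha,\beta)\ge 1$, while trivially $\Delta(\alpha,\beta)\le$ (a function growing with $\dfar$) — for the bilipschitz statement it is cleanest to phrase everything in terms of a path of length $\dfar$ and bound how $\Delta$ grows along it, then invert. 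So the real content is: how does the slope correspondence interact with $\Delta$?

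The key step is the following claim: if $\alpha_1,\alpha_2$ are slopes of $T$ and $\alpha_1',\alpha_2'$ the corresponding slopes of $T'$, then
$$\tfrac{1}{m}\,\Delta(\alpha_1,\alpha_2)\ \le\ \Delta(\alpha_1',\alpha_2')\ \le\ m\,\Delta(\alpha_1,\alpha_2),$$
where $m=\deg(\phi)$. To see this I would pass to the universal covers and represent $\phi$ by a matrix $A\in SL(2,\mathbb{Q})$ as in the Remark preceding the lemma (or equivalently work with the sublattice $\Lambda\subset\Lambda'$ of index $m$). A slope $\alpha$ of $T$ is a primitive vector $v\in\Lambda$; its corresponding slope $\alpha'$ of $T'$ is the primitive vector $v'\in\Lambda'$ proportional to $Av$, say $Av=k v'$ with $k$ a positive integer dividing... in any case $1\le k\le m$. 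Then $\Delta(\alpha_1,\alpha_2)=|\det(v_1\,v_2)|$ (determinant computed in a fixed basis of $\Lambda$) and $\Delta(\alpha_1',\alpha_2')=|\det(v_1'\,v_2')|$ in a basis of $\Lambda'$; writing $v_i'=\frac{1}{k_i}Av_i$ and using $|\det A|=[\Lambda':\Lambda]^{-1}=1/m$ together with $1\le k_i\le m$ gives exactly the two-sided bound above (with $m^2$ to spare, which is harmless). I should be slightly careful about whether the slope correspondence as defined in Subsection~\ref{subsection:FareyAndCovering} is realized by $A$ on $\Lambda$ or by $A^{-1}$ on $\Lambda'$ — but the Remark asserts these two descriptions agree, so either way the estimate holds with the roles of the lattices swapped, and the bound is symmetric in form.

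Granting the claim, the bilipschitz property follows by a routine path argument. Given $\alpha_1,\alpha_2$ with $\dfar(\alpha_1,\alpha_2)=d$, take a geodesic edge-path $\alpha_1=\gamma_0,\gamma_1,\dots,\gamma_d=\alpha_2$; consecutive slopes satisfy $\Delta(\gamma_j,\gamma_{j+1})=1$, hence their images satisfy $\Delta(\gamma_j',\gamma_{j+1}')\le m$, hence $\dfar(\gamma_j',\gamma_{j+1}')\le C(m)$ for the constant $C(m)=2+2\log_2 m$ coming from the Farey-distance/intersection estimate; summing, $\dfar(\alpha_1',\alpha_2')\le C(m)\,d$. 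Applying the same argument to $\phi$ in the reverse direction (using that the inverse correspondence is induced by the index-$m$ inclusion the other way, so the same multiplicative bound $m$ on $\Delta$ holds) gives $\dfar(\alpha_1,\alpha_2)\le C(m)\,\dfar(\alpha_1',\alpha_2')$. Thus the correspondence is bilipschitz with constant $C(m)=2+2\log_2 m$, and in particular the image of any set of bounded diameter has bounded diameter, in both directions. I expect the main obstacle to be purely bookkeeping: pinning down precisely which lattice/matrix implements the correspondence of Subsection~\ref{subsection:FareyAndCovering} and the integers $k_i$ relating $Av_i$ to the primitive representative, so that the multiplicative constant is honestly bounded by a function of $\deg(\phi)$ alone; once that is set up, the Farey-distance estimate is standard and the path argument is immediate. (If one prefers to avoid citing the explicit $\log$ bound for Farey distance, it suffices to note that only finitely many slopes of $T'$ lie within Farey-distance $1$-worth of intersection bound $m$ of a given slope, which already yields a finite $C(m)$ and hence the bilipschitz conclusion with an unspecified constant — enough for every application in this paper.)
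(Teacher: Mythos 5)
Your proof is correct, and its overall skeleton is the same as the paper's: reduce everything to a bound on geometric intersection numbers of corresponding slopes in terms of $\deg(\phi)$, convert that to a Farey-distance bound via the Euclid/continued-fraction estimate $\dfar \leq 2\log_{2}\Delta$, and sum along a geodesic edge-path. Where you differ is in how the key intersection estimate is obtained. The paper only proves what the path argument needs -- if $\Delta(\gamma_{i-1},\gamma_i)=1$ then the corresponding slopes satisfy $\Delta(\gamma_{i-1}',\gamma_i')\leq\deg(\phi)$ -- and does so geometrically, by counting components of the preimages $\Gamma_i=\phi^{-1}(\phi(\gamma_i))$ (the identity $c_i\deg(\phi)=n_{i-1}n_i\leq\deg(\phi)^2$), with a separate, even easier, preimage count for the reverse direction. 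You instead prove the stronger two-sided statement $\tfrac{1}{m}\Delta(\alpha_1,\alpha_2)\leq\Delta(\alpha_1',\alpha_2')\leq m\,\Delta(\alpha_1,\alpha_2)$ for \emph{all} pairs of slopes, via the lattice inclusion $\Lambda\subset\Lambda'$ and a determinant computation; this buys a cleaner symmetric statement (the reverse Lipschitz bound is then immediate, so you do not really need the remark about "the index-$m$ inclusion the other way", which as stated is not quite right since $\Lambda'$ does not include into $\Lambda$), at the cost of the bookkeeping you yourself flag. One small slip there: for the inclusion $\Lambda\hookrightarrow\Lambda'$ written in bases of $\Lambda$ and $\Lambda'$ one has $|\det A|=[\Lambda':\Lambda]=m$, not $1/m$; with $\Delta'=\tfrac{|\det A|}{k_1k_2}\Delta$ and $1\leq k_i\leq m$ this gives exactly your two-sided bound, and in any case the convention ambiguity only changes the constant by a power of $m$, which is harmless for the bilipschitz conclusion and for every use of the lemma in the paper.
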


\begin{proof}
For convenience we endow $T'$ with a Euclidean metric and $T$ with the pullback metric.
Since we are only interested in the homology classes represented by curves, throughout the proof of 
this lemma we assume as we may that the essential curves of $T$ and $T'$ considered are geodesic.

We first show that the map from the slopes of $T$ to those of $T'$ is Lipschitz.  
Fix a positive integer $d$ and two slopes of $T$ of distance exactly $d$, which we will
denote as $\alpha_{0}$ and $\alpha_{d}$.  By definition of $\dfar$,
there exist slopes $\alpha_{1},\dots,\alpha_{d-1}$ so that
$\dfar(\alpha_{i-1},\alpha_{i}) = 1$ (for $i=1,\dots,d$).  
Let $\gamma_{0},\dots,\gamma_{d}$ be geodesics on $T$ representing 
$\alpha_{0},\dots,\alpha_{d}$ (respectively).  Since geodesics on a torus minimize intersection, 
$\dfar(\alpha_{i-1},\alpha_{i}) = 1$ is equivalent to 
$|\gamma_{i-1}\cap \gamma_{i}|=1$.  
%We will denote $\phi(\gamma_i)$ as $\gamma_{i}'$.   
Let $\gamma_{i}'$ be   a connected simple closed geodesic on $T'$
and $m_{i}$ a positive integer so that $\phi(\gamma_{i})$ is obtained by 
traversing $\gamma_{i}'$ exactly $m_{i}$ times.  
By definition, $\gamma_{i}'$ is a geodesic representing $\alpha_{i}'$,
where here $\alpha_{i}'$ is the slope of $T'$ that corresponds to $\alpha_{i}$.
We will denote $|\gamma'_{i-1} \cap \gamma'_{i}|$ as $c_{i}$.
Let $\Gamma_{i}$ denote $\phi^{-1}(\phi(\gamma'_{i}))$ and denote the number of components of 
$\Gamma_{i}$ as $n_{i}$.  
Each component of $\Gamma_{i}$ is a geodesic parallel to $\gamma_{i}$ and is an $m_{i}$ fold
cover of $\beta_{i}$; it follows that $n_{i} m_{i} = \deg(\phi)$, and in particular 
$$n_{i} \leq \deg(\phi).$$  
Since $\Gamma_{i-1} \cap \Gamma_{i} = \phi^{-1}(\gamma_{i-1} \cap \gamma_{i})$, 
$|\Gamma_{i-1} \cap \Gamma_{i}| = c_{i} \deg(\phi)$.
Since $|\gamma_{i-1} \cap \gamma_{i}|=1$, every component of $\Gamma_{i-1}$ intersects
every component of $\Gamma_{i}$ exactly once; if follows that 
$|\Gamma_{i-1} \cap \Gamma_{i}| = n_{i-1} n_{i}$, the number of pairs of
curves from $\Gamma_{i-1}$ and $\Gamma_{i}$.  Combining these facts we see
that
$$c_{i} \deg(\phi) = |\Gamma_{i-1} \cap \Gamma_{i}| = n_{i-1}n_{i} \leq \deg(\phi)^{2},$$
showing that $c_{i} \leq \deg(\phi)$.

Thus $\Delta(\gamma_{i-1}',\gamma_{i}') \leq |\gamma_{i-1}' \cap \gamma_{i}'| \leq \deg(\phi)$.  
It then follows from Euclid's algorithm
that $\dfar(\alpha_{i-1}',\alpha_{i}') \leq 2 \log_{2}{(\deg(\phi))}$.
(For the relation between Euclid's Algorithm and the Farey tessellation see, for example,
\cite{hardywright}.)
We get:
\begin{eqnarray*}
\dfar(\alpha'_{0},\alpha'_{d}) &\leq& \sum_{i=1}^{d} \dfar(\alpha'_{i-1},\alpha'_{i}) \\
	&\leq& 2 \log_{2}(\deg(\phi)) d \\
	&=&  2 \log_{2}(\deg(\phi)) \dfar(\alpha_{0},\alpha_{d}).
\end{eqnarray*}
%
%$$\dfar(\alpha'_{0},\alpha'_{d}) \leq \sum_{i=1}^{d} \dfar(\alpha'_{i-1},\alpha'_{i})
%\leq 2 \log_{2}(\deg(\phi)) d =  2 \log_{2}(\deg(\phi)) \dfar(\alpha_{0},\alpha_{d}).$$  
As $d$, $\alpha_{0},$
and $\alpha_{d}$ were arbitrary we conclude that:
$$\dfar(\alpha_{0}',\alpha_{d}') \leq 2 \log_{2}{(\deg(\phi))} \dfar(\alpha_{0},\alpha_{d}),$$
showing that the map from the slopes of $T$ to the slopes of $T'$ is Lipschitz with
constant $2\log_{2}(\deg(\phi))$.

\medskip

The converse is essentially identical and we only sketch it here.  
Similar to the argument above, fix $d$
and let $\alpha_{0}',\alpha_{d}'$ be slopes of $T'$ so that $\dfar(\alpha'_{0},\alpha'_{d})=d$.
Let $\alpha_{0}',\dots,\alpha_{d}'$
be a sequence of slopes that realizes the distance, that is, $\dfar(\alpha_{i-1}',\alpha_{i}') = 1$.
Let $\gamma_{i}'$ be a geodesic on $T'$ that represents $\alpha_{i}'$; 
then $|\gamma_{i-1}' \cap \gamma_{i}'|=1$.
Let $\gamma_{i}$ be a component of $\phi^{-1}(\gamma_{i})$.  By definition, $\gamma_{i}$
represents $\alpha_{i}$, the slope of $T$ that corresponds to $\alpha_{i}'$.
Since $\gamma_{i-1} \cap \gamma_{i} \subset \phi^{-1}(\gamma'_{i-1} \cap \gamma_{i}')$ we have:
$$|\gamma_{i-1} \cap \gamma_{i}| \leq |\phi^{-1}(\gamma'_{i-1} \cap \gamma_{i}')|
= \deg(\phi) |\gamma'_{i-1} \cap \gamma_{i}'| = \deg(\phi).$$
As above this implies that $\dfar(\alpha_{i-1},\alpha_{i}) \leq 2 \log_{2}{(\deg(\phi))}$,
and hence
%$\dfar(\alpha_{0},\alpha_{d}) \leq 2 \log_{2}{(\deg(\phi))} \dfar(\alpha_{0}',\alpha'_{d})$.
$$\dfar(\alpha_{0},\alpha_{d}) \leq 2 \log_{2}{(\deg(\phi))} \dfar(\alpha_{0}',\alpha'_{d}),$$
showing that the map from the slopes of $T'$ to the slopes of $T$ is Lipschitz with
constant $2\log_{2}(\deg(\phi))$.
\end{proof}

\bigskip

\noindent

\subsection{On bounded sets}
\label{subsection:boundedsets}
In this subsection we study properties of bounded sets, that is, sets of slopes that are bounded
in the metric $\dfar$ (as defined in~\ref{dfn:bounded}).
Our main goal is to show (Proposition~\ref{prop:PropertiesOfBoundedSets}) that bounded sets are closed
under certain operations, the most important of which is Dehn twists, which we explain below.  
We then note (Lemma~\ref{lem:BoundedIsNecessary}) 
that any non-empty collection of sets of slopes 
that is closed under the operations discussed in Proposition~\ref{prop:PropertiesOfBoundedSets} 
contains all bounded sets; therefore boundedness 
is the {\it weakest} condition that suffices for our work.

Let $\alpha$ be a slope on $T$ represented by a connected simple closed curve $\gamma$.
By definition $\alpha$ determines $\gamma$ up-to isotopy.  Hence the Dehn twist about $\gamma$
is completely determined by $\alpha$.   The Dehn twist about $\gamma$ induces
a map on the slopes of $T$, which we will denote as $D_{\alpha}$.   Although the proper way
to refer to $D_{\alpha}$ is ``the map induced on the slopes of $T$ by Dehn twisting
about a simple connected curve representing $\alpha$'' we will, for simplicity's sake,
refer to it as {\it the Dehn twist about $\alpha$}.

Since any Dehn twist is a homeomorphism of $T$, it sends
any pair of curves that intersect once to a pair of curves that intersect once;
hence, for any slope $\alpha$, $D_{\alpha}$ induces a graph isomorphism 
of the Farey graph, and in particular
$D_{\alpha}$ induces an isometry on the slopes
of $T$ with the metric given by $\dfar$.
We are now ready to state:

\begin{prop}
\label{prop:PropertiesOfBoundedSets}
Let $T^{2}$ be a torus.  The following conditions hold for sets of slopes on $T^{2}$: 
	\begin{enumerate}
	\item A subset of a bounded set is bounded.
	\item Finite unions of bounded sets are bounded.
	\item For any slope $\alpha$ of $T^{2}$ and any bounded set of slopes $B$, the set
			$$\cup_{n \in \mathbb Z}\{D_{\alpha}^{n}(\beta) \ | \ \beta \in B \}$$ 
	is bounded (where $D_{\alpha}$ is as above).
	\item For any integer $c \geq 0$ and any bounded set $B$, the set of slopes
		$$\{\alpha \ | \ \exists \beta \in B, \Delta(\alpha,\beta) \leq c \}$$
	is bounded (where here we use $\Delta(\alpha,\beta)$ to represent the geometric intersection
	between simple closed curves that represent $\alpha$ and $\beta$).
	\end{enumerate}
\end{prop}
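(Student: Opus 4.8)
The plan is to treat the four items largely independently, since (1), (2), and (4) are elementary consequences of the metric description of boundedness, while (3) requires a short geometric argument using the fact that a Dehn twist acts as an isometry of the Farey graph.

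First, for (1) and (2): boundedness is by Definition~\ref{dfn:bounded} the condition that the set have finite diameter in the metric $\dfar$. A subset of a set of finite diameter has finite diameter, giving (1). For (2), if $B_1,\dots,B_k$ are bounded, pick a point $\beta_j$ in each nonempty $B_j$; by the triangle inequality any two points of $\bigcup_j B_j$ are at distance at most $\max_j \operatorname{diam}(B_j) + \max_{j,j'}\dfar(\beta_j,\beta_{j'})$, which is finite, so the union is bounded.

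For (3): I would use the observation already recorded in the text, namely that for any slope $\alpha$ the Dehn twist $D_\alpha$ induces an isometry of the slopes of $T^2$ in the metric $\dfar$ (because it is a homeomorphism of $T^2$, hence sends a once-intersecting pair of curves to a once-intersecting pair, hence is a graph automorphism of the Farey graph). Fix $\alpha$; I claim that for any slope $\beta$, $\dfar(\beta, D_\alpha(\beta))\le 2$. Indeed, one can find a slope $\delta$ with $\Delta(\delta,\alpha)=1$ and $\Delta(\delta,\beta)\le 1$ — for instance, if $\Delta(\alpha,\beta)=1$ take $\delta=\alpha$; in general one checks directly on representative curves that there is a curve meeting $\alpha$ once and $\beta$ at most once — and then $D_\alpha(\delta)$ also meets $\alpha$ once and meets $\beta$ at most once (as $D_\alpha$ fixes $\alpha$ and is a homeomorphism), so $\beta, \delta, D_\alpha(\delta), D_\alpha(\beta)$ is a path of length $\le 3$; a cleaner route is to note $D_\alpha$ fixes $\alpha$ and $\dfar(\beta,\alpha)=\dfar(D_\alpha(\beta),\alpha)$, so $\dfar(\beta, D_\alpha(\beta))\le 2\dfar(\beta,\alpha)$, which is still unbounded, so instead I will argue as follows. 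Since $D_\alpha$ is an isometry, $\dfar(D_\alpha^{\,n}(\beta), D_\alpha^{\,n}(\beta'))=\dfar(\beta,\beta')$ for all $n$ and all $\beta,\beta'\in B$; thus it suffices to bound $\dfar(\beta, D_\alpha^{\,n}(\beta))$ for a single fixed $\beta_0\in B$, uniformly in $n$. For this, write $\beta_0$ as represented by a curve meeting a curve representing $\alpha$ in $k=\Delta(\alpha,\beta_0)$ points; an explicit computation in the basis where $\alpha=1/0$ shows $D_\alpha^{\,n}(\beta_0)$ differs from $\beta_0$ by the shear $\binom{p}{q}\mapsto\binom{p+nq}{q}$, and the Farey distance from $p/q$ to $(p+nq)/q$ is $O(\log(|n|+2))$ — wait, this is unbounded. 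I therefore need the correct statement: the union $\bigcup_n\{D_\alpha^{\,n}(\beta)\mid\beta\in B\}$ need not have small diameter pointwise, but it is still \emph{bounded} only if $\alpha$ itself is... Let me reconsider: the honest argument is that $B$ bounded means $B\subset$ a ball of radius $R$; the union $\bigcup_n D_\alpha^{\,n}(B)$ is contained in the union over $n$ of balls of radius $R$ about $D_\alpha^{\,n}(\beta_0)$, and the points $D_\alpha^{\,n}(\beta_0)$ all lie within Farey distance $1$ of $\alpha$ once $|n|\ge 1$ (since a curve meeting $\alpha$ at most... ) — the correct and standard fact is: for any $\beta$, all the iterates $D_\alpha^{\,n}(\beta)$ with $n\neq 0$ together with $\alpha$ lie in a single Farey triangle's worth of neighborhood, more precisely $\dfar(\alpha, D_\alpha^{\,n}(\beta))\le \dfar(\alpha,\beta)$ for all $n$, because $\Delta(\alpha, D_\alpha^{\,n}(\beta))=\Delta(\alpha,\beta)$ as $D_\alpha$ fixes $\alpha$, combined with the fact from Euclid's algorithm that $\dfar(\alpha,\gamma)\le 2\log_2(\Delta(\alpha,\gamma))+1$. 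Hence every iterate $D_\alpha^{\,n}(\beta)$ for $\beta\in B$ lies within distance $2\log_2(c_B)+1+R$ of $\alpha$, where $c_B=\sup_{\beta\in B}\Delta(\alpha,\beta)<\infty$ (finite because $B$ is bounded, using part (4) in the contrapositive or directly the Euclid bound), and the union is bounded. The main obstacle is getting this bookkeeping right, i.e. correctly invoking the relation between $\Delta$ and $\dfar$ via Euclid's algorithm (as used already in Lemma~\ref{lem:CorrespondingSlopesFeray}).

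For (4): given $B$ bounded and $c\ge 0$, for each $\beta\in B$ and each $\alpha$ with $\Delta(\alpha,\beta)\le c$ the Euclid-algorithm estimate gives $\dfar(\alpha,\beta)\le 2\log_2(c)+1$ (with the convention that $\Delta=0$ means $\alpha=\beta$), so $\{\alpha\mid\exists\beta\in B,\ \Delta(\alpha,\beta)\le c\}$ is contained in the $(2\log_2(c)+1)$-neighborhood of $B$, which has diameter at most $\operatorname{diam}(B)+2(2\log_2(c)+1)$, hence is bounded. This finishes all four parts; the only non-formal input throughout is the Euclid-algorithm bound $\dfar\le 2\log_2\Delta+O(1)$, already invoked in the proof of Lemma~\ref{lem:CorrespondingSlopesFeray}, and the observation that Dehn twists act as Farey-graph isometries, stated just before Proposition~\ref{prop:PropertiesOfBoundedSets}.
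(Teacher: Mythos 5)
Parts (1), (2), and (4) of your proposal are fine and essentially match the paper's own argument: (1) and (2) are immediate metric facts, and (4) is the Euclid--algorithm estimate $\dfar(\alpha,\beta)\leq 2\log_{2}(\Delta(\alpha,\beta))+O(1)$ applied pointwise, followed by the triangle inequality, which is exactly what the paper does (singleton case first, then the general case).

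Part (3), however, has a genuine gap. Your final argument bounds $\dfar(\alpha,D_{\alpha}^{n}(\beta))$ by $2\log_{2}(c_{B})+1$ with $c_{B}=\sup_{\beta\in B}\Delta(\alpha,\beta)$, and you assert $c_{B}<\infty$ ``because $B$ is bounded.'' That implication is false: boundedness in $\dfar$ does not control geometric intersection with a fixed slope. For example, take $\alpha=0/1$ and $B=\{\,n/1 \mid n\in\mathbb{Z}\,\}$: every element of $B$ is Farey--adjacent to $1/0$, so $B$ has diameter at most $2$, yet $\Delta(\alpha,n/1)=|n|$ is unbounded. The Euclid bound only goes one way (bounded $\Delta$ forces bounded $\dfar$, not conversely), so neither ``part (4) in the contrapositive'' nor the Euclid bound rescues the finiteness of $c_{B}$. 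The intermediate claim you call a standard fact, $\dfar(\alpha,D_{\alpha}^{n}(\beta))\leq\dfar(\alpha,\beta)$, is indeed true---with equality---but not for the reason you give; it follows from the observation you wrote down and then discarded: $D_{\alpha}^{n}$ is an isometry of the Farey graph fixing $\alpha$, so $\dfar(\alpha,D_{\alpha}^{n}(\beta))=\dfar(D_{\alpha}^{n}(\alpha),D_{\alpha}^{n}(\beta))=\dfar(\alpha,\beta)$. With that in hand the proof closes at once, and this is the paper's route: fixing $\beta_{0}\in B$ and writing $d=\operatorname{diam}(B)$, every point of $\cup_{n}D_{\alpha}^{n}(B)$ lies within $\dfar(\alpha,\beta_{0})+d$ of $\alpha$, so the union has diameter at most $2d+2\dfar(\alpha,\beta_{0})$. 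The estimate $\dfar(\beta,D_{\alpha}^{n}(\beta))\leq 2\dfar(\beta,\alpha)$ that you dismissed as ``still unbounded'' was never a problem: after your (correct) reduction to a single $\beta_{0}$, you need uniformity in $n$, not in $\beta$, and that estimate provides exactly that.
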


\begin{proof}
\begin{enumerate}
\item Obvious from the definition of a metric space.
\item Obvious from the definition of a metric space.
\item  Since $D_{\alpha}$ is induced by a homeomorphism
of $T^{2}$, it is clearly an isometry of the Farey graph; 
moreover, $\alpha$ is fixed under $D_{\alpha}$.  
Let $d>0$ be the diameter of $B$; fix $\beta \in B$.  
By the triangle inequality, for any $n_{1},n_{2}\in\mathbb Z$ 
and any $\beta_{1},\beta_{2} \in B$ we have: 
\begin{eqnarray*}
\dfar(D_{\alpha}^{n_{1}}(\beta_{1}),D_{\alpha}^{n_{2}}(\beta_{2})) 
&\leq& \dfar(D_{\alpha}^{n_{1}}(\beta_{1}),D_{\alpha}^{n_{1}}(\beta)) \\
 && +\dfar(D_{\alpha}^{n_{1}}(\beta),\alpha) \\
 & & + \dfar(\alpha,D_{\alpha}^{n_{2}}(\beta)) \\
 && + \dfar(D_{\alpha}^{n_{2}}(\beta),D_{\alpha}^{n_{2}}(\beta_{2})).
\end{eqnarray*}
Since $\alpha$ is fixed under the isometry $D^{n_{1}}_{\alpha}$, for the second term
we have 
$$\dfar(D_{\alpha}^{n_{1}}(\beta),\alpha) = 
\dfar(D_{\alpha}^{n_{1}}(\beta),D^{n_{1}}_{\alpha}(\alpha)) =
\dfar(\beta,\alpha).$$
Similarly, the third term is bounded above by $\dfar(\beta,\alpha)$.
For the first term we have
$$ \dfar(D_{\alpha}^{n_{1}}(\beta_{1}),D_{\alpha}^{n_{1}}(\beta)) \leq
 \dfar(\beta_{1},\beta) \leq d.$$ 
Similarly, the fourth term is bounded by $d$.
Combining these bounds we see that set 
$\cup_{n \in \mathbb Z}\{D_{\alpha}^{n}(\beta) \ | \ \beta \in B \}$
is bounded with diameter at most 
$$2d+2\dfar(\alpha,\beta).$$

\item  We first prove the claim when $B$ has only one element which we will 
denote as $\beta$.%; let $\gamma$ be a simple closed curve representing $\beta$.  
By an appropriate choice of basis for $H_{1}(T^{2};\mathbb Z)$ we may identify the slopes of $T$
with $\mathbb Q \cup \{1/0\}$ so that $\beta$ corresponds to $1/0$.  It is well know that for any slope
$p/q$, $\dfar(1/0,p/q)$ is exactly the length of the shortest continued fraction expansion of $p/q$
(see, for example, \cite{series}).
On the other hand, $\Delta(1/0,p/q) = |\det  (\begin{smallmatrix} 1&p\\ 0&q \end{smallmatrix})| = |q|$.  
Thus the slopes under consideration correspond to $p/q$ with $|q| \leq c$.  
By a well known application of Euclid's Algorithm, such a number has a continued
fraction expansion of length at most $2 \log_{2}(|q|)$.  Thus every slope in $B$
has distance at most $2 \log_{2}(|q|)$ from $\beta$, showing that $B$ is bounded with diameter
at most $4 \log_{2}(c)$.

For the general case, let $B$ be a bounded set of slopes and $c \geq 0$ an integer.  
We will denote the diameter 
of $B$ as $d$.  Let $\alpha_{1}$ and $\alpha_{2}$ be slopes in
$\{\alpha \ | \ \exists \beta \in B, \Delta(\alpha,\beta) \leq c \}$.  Let $\beta_{1}$
and $\beta_{2}$ be slopes of $B$ so that $\Delta(\alpha_{1},\beta_{1}) \leq c$ and
$\Delta(\alpha_{2},\beta_{2}) \leq c$.  By the argument above,
$\dfar(\alpha_{1},\beta_{1}), \dfar(\alpha_{2},\beta_{2}) \leq 4 \log_{2}(c)$.  
Since $\beta_{1},\beta_{2} \in B$,
$\dfar(\beta_{1},\beta_{2}) \leq d$.  By the triangle inequality 
$\{\alpha \ | \ \exists \beta \in B, \Delta(\alpha,\beta) \leq c \}$
in bounded with diameter at most $4 \log_{2}(c) + d$.

\end{enumerate}
\end{proof}

\bigskip\noindent
The reader may wonder if ``bounded sets'' is the right concept to use.   First we give an example that will explain
why using unbounded sets may be tricky.  Using the upper half plane model of $\mathbb H^{2}$, we construct the
Farey tessellation starting with the triangle 0, 1, and $1/0$ (the point at infinity).   
Then the slopes are naturally identified with $\mathbb Q \cup 1/0$; this identification 
is used throughout this example.  
Let $q_{i}'$ be an enumeration of $\mathbb{Q} \cap [0,1)$, $i=1,2,\dots$.  Let $D_{1/0}$ denote
the map on the slopes induced by Dehn twist about $1/0$; it is not hard to see that for any slope $\alpha \neq 1/0$, 
$D_{1/0}(\alpha) = \alpha+1$.  Let $q_{i} = D^{i}(q_{i}')$.  Finally, let $S = \{q_{i}\}$.  
Now $S$ is a fairly ``thin'' set of slopes; it has only one member in every interval $[n,n+1), n=1,2,\dots$, and so 
its only accumulation point is $1/0$.  However, if we allow twisting about $1/0$, we see the following: for any slope 
$\alpha \neq 1/0$,
there is a unique $j \in \mathbb Z$, so that $\alpha + j \in [0,1)$.  Hence, 
there is a unique $i$ so that $\alpha+j = q_{i}'$.  Thus
$\alpha = D_{1/0}^{-j}(q_{i}') = D_{1/0}^{-(i+j)}(q_{i})$.  In other words, 
$\{D_{1/0}^{n}(q_{i}) | q_{i} \in S, n \in \mathbb Z\}$ is the set of {\it all} slopes but $1/0$.
As we shall see below repeatedly, when considering cosmetic surgery 
one must allow for Dehn twists; thus, the analogue
of Proposition~\ref{prop:PropertiesOfBoundedSets} for the set $S$ is 
very much false, and such a set
cannot be used in our work.  

On the other hand we have the following lemma, that tells us that all 
bounded sets {\it must} be permitted; in that sense,
our results cannot be improved:

\begin{lem}
\label{lem:BoundedIsNecessary}
Let $S$ denote the set of all slopes and suppose $Z \subset \mathcal{P}(S)$ fulfills 
the following condition:
	\begin{enumerate}
	\item For some slopes $\alpha$, $\{\alpha\} \in Z$.
	\item $Z$ is closed under conditions~(1)--(4) of Proposition~\ref{prop:PropertiesOfBoundedSets}.
	\end{enumerate}
Then $Z$ contains all the bounded sets.
\end{lem}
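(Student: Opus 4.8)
The plan is to reduce the whole statement to two elementary facts about the Farey graph: it is connected, and two slopes $\gamma,\delta$ satisfy $\dfar(\gamma,\delta)\le 1$ if and only if $\Delta(\gamma,\delta)\le 1$. Granting these, I will first show that $Z$ contains every singleton, then that $Z$ contains every metric ball $B_R(\beta)=\{\gamma \mid \dfar(\gamma,\beta)\le R\}$, and finally conclude, since every bounded set is contained in such a ball and $Z$ is closed under taking subsets (condition~(1) of Proposition~\ref{prop:PropertiesOfBoundedSets}).

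For the singletons, recall that by hypothesis there is a slope $\alpha_{0}$ with $\{\alpha_{0}\}\in Z$. Given an arbitrary slope $\gamma$, connectedness of the Farey graph gives a path $\alpha_{0}=\gamma_{0},\gamma_{1},\dots,\gamma_{k}=\gamma$ with consecutive terms Farey-adjacent, i.e.\ $\Delta(\gamma_{i},\gamma_{i+1})=1$. I then induct on $i$: if $\{\gamma_{i}\}\in Z$, applying the operation in condition~(4) with $c=1$ puts the set $\{\delta \mid \Delta(\delta,\gamma_{i})\le 1\}$ into $Z$; this set contains $\gamma_{i+1}$, so $\{\gamma_{i+1}\}\in Z$ by condition~(1). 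Hence $\{\gamma\}\in Z$. (Also $\emptyset\in Z$, being a subset of a member of the nonempty family $Z$.)

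For the balls, the key observation is the bookkeeping identity
$$B_{R+1}(\beta)=\{\gamma \mid \exists\,\delta\in B_{R}(\beta),\ \Delta(\gamma,\delta)\le 1\},$$
which follows from the triangle inequality together with the equivalence $\dfar(\gamma,\delta)\le 1 \iff \Delta(\gamma,\delta)\le 1$ (one direction: truncate the last edge of a shortest Farey path from $\gamma$ to $\beta$; the other: the triangle inequality). Starting from $B_{0}(\beta)=\{\beta\}\in Z$ and applying the operation in condition~(4) with $c=1$ repeatedly, induction on $R$ gives $B_{R}(\beta)\in Z$ for every $R\ge 0$. Finally, an arbitrary bounded set $B$ is either empty or, choosing $\beta\in B$ and letting $d$ be its diameter, satisfies $B\subseteq B_{d}(\beta)\in Z$; so $B\in Z$ by condition~(1). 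Notice that conditions~(2) and~(3) are never used.

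The main (and essentially only) obstacle is establishing and correctly using the identity $B_{R+1}(\beta)=\{\gamma \mid \exists\,\delta\in B_{R}(\beta),\ \Delta(\gamma,\delta)\le 1\}$: it is precisely what allows the ``enlarge by bounded intersection'' operation to simulate ``enlarge the metric ball by one'', and it rests on the standard fact that Farey-adjacency is the same as geometric intersection number one. Everything else is routine induction on the path length and on the radius.
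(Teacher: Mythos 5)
Your proof is correct and follows essentially the same route as the paper: put singletons into $Z$ using condition~(4) and subset-closure, then grow the metric balls $B_{R}(\beta)$ by induction on the radius via condition~(4) with $c=1$ (using that Farey-adjacency is the same as $\Delta=1$), and finish by enclosing an arbitrary bounded set in such a ball and applying condition~(1). The only cosmetic difference is at the singleton step, where the paper applies condition~(4) a single time with $c=\Delta(\alpha,\alpha')$ rather than walking along a Farey path with $c=1$.
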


\begin{proof}
By assumption there exists a slope $\alpha$ so that $\{\alpha\} \in Z$.  Let $\alpha'$ be any other slope.
By condition~(4), $\{\alpha'' | \Delta(\alpha,\alpha'') \leq \Delta(\alpha,\alpha')\} \in Z$. 
By~(1), $\{\alpha'\} \in Z$.  Hence $Z$ contains all the singletons.  
%By~(2), $Z$ contains all finite sets of slopes.

For a slope $\alpha_{0}$ and a positive integer $d$, we will denote the set
of all slopes of distance at most $d$ from $\alpha_{0}$ as 
$D_{d}(\alpha_{0})$.   We claim that for any $\alpha_{0}$ and any $d$,
$D_{d}(\alpha_{0}) \in Z$.  The proof is inductive.  For $d=0$,
$D_{0}(\alpha_{0}) = \{\alpha_{0}\}$ and hence $D_{0}(\alpha_{0}) \in Z$. 
Assume that $d>0$ and that $D_{d-1}(\alpha_{0}) \in Z$. Then
for any $\alpha_{d} \in D_{d}(\alpha_{0}) \setminus D_{d-1}(\alpha_{0})$, there is
$\alpha_{d-1}(\alpha_{0}) \in D_{d-1}(\alpha_{0})$, so that $\dfar(\alpha_{d},\alpha_{d-1}) = 1$.
Hence $\Delta(\alpha_{d},\alpha_{d-1}) = 1$.  Thus
$$D_{d}(\alpha_{0}) \subset \{\alpha | (\exists \beta \in D_{d-1}) \Delta(\alpha,\beta) \leq 1 \}.$$
By (4), the latter is in $Z$.  By~(1), $D_{d} \in Z$.

Let $B$ be a non empty bounded set.  We will denote the diameter of $B$ by $d$.
Then for any slope $\beta \in B$, 
$$B \subset D_{d}(\beta).$$
Thus by~(1), $B \in Z$.
\end{proof}

We conclude this section by revisiting Lemma~\ref{lem:slopes}:

\begin{lem}
\label{lem:MCGisometryOfSlopes}
Let $M$ be a hyperbolic manifold, $\partial M$ a single torus.
There exists a (possibly trivial) involution $i$ on the set of slopes of $\partial M$,
that induces an isometry on the Farey graph,
so that if $\phi:M \to M$ is a diffeomorphism and $\alpha$ a slopes of $\partial M$,
the image of $\alpha$ under $\phi$ is either $\alpha$ or $i(\alpha)$.
\end{lem}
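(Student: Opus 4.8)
The plan is to deduce this directly from Lemma~\ref{lem:slopes}. The content of Lemma~\ref{lem:MCGisometryOfSlopes} is essentially a repackaging: the conclusions of Lemma~\ref{lem:slopes} already tell us that every diffeomorphism of $M$ acts on slopes either trivially or by a single fixed orientation-reversing involution, and it only remains to observe that this involution is an isometry of the Farey graph. So first I would fix the basis for $H_{1}(\partial M;\mathbb Z)$ supplied by Lemma~\ref{lem:slopes} (the basis $[m],[l]$ constructed there), which identifies the slopes of $\partial M$ with $\bar{\mathbb Q}=\mathbb Q\cup\{1/0\}$.

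Next I would define the involution $i$. There are two cases. If $M$ admits no orientation-reversing diffeomorphism, or if it does but we are in Case~(1) of Lemma~\ref{lem:slopes}, then by the orientation-preserving part of that lemma every diffeomorphism fixes all slopes, and I set $i$ to be the identity involution; then ``the image of $\alpha$ under $\phi$ is either $\alpha$ or $i(\alpha)$'' holds trivially. Otherwise $M$ admits an orientation-reversing diffeomorphism and we are in Case~(2) of Lemma~\ref{lem:slopes}; I define $i(p/q)=-p/q$ in the chosen basis. This is clearly an involution on $\bar{\mathbb Q}$. Now let $\phi:M\to M$ be any diffeomorphism. If $\phi$ is orientation preserving, the first conclusion of Lemma~\ref{lem:slopes} gives that $\phi$ fixes every slope, so the image of $\alpha$ is $\alpha$. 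If $\phi$ is orientation reversing, then since we are in Case~(2), the ``moreover'' clause of Lemma~\ref{lem:slopes} says $\phi$ sends the slope $p/q$ to $-p/q=i(p/q)$. So in all cases the image of $\alpha$ under $\phi$ is $\alpha$ or $i(\alpha)$, as required.

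It remains to check that $i$ induces an isometry of the Farey graph; this is the only step with any content, and it is the part I would expect to be the (minor) obstacle — though it is routine. When $i$ is the identity there is nothing to prove. When $i(p/q)=-p/q$, I observe that $i$ is induced by the linear map $(p,q)\mapsto(-p,q)$ on $H_{1}(\partial M;\mathbb Z)$, which lies in $GL(2,\mathbb Z)$; any element of $GL(2,\mathbb Z)$ preserves the property that two primitive classes form a basis, equivalently (as recalled in Section~\ref{sec:BoundedSets}) that two slopes have geometric intersection number one, equivalently that the corresponding vertices of the Farey graph are joined by an edge. Hence $i$ maps edges of the Farey graph to edges bijectively, so it is a graph automorphism and therefore an isometry for $\dfar$. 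This completes the proof. (As a sanity check: with the standard Farey triangle $0/1$, $1/0$, $1/1$, the map $p/q\mapsto -p/q$ fixes $0/1$ and $1/0$ and swaps $1/1$ with $-1/1$, visibly a reflection of the tessellation.)
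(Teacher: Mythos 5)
Your proposal is correct and follows essentially the same route as the paper: the paper's proof also deduces the lemma directly from Lemma~\ref{lem:slopes} together with the observation that $(p,q)\mapsto(-p,q)$ induces an isometry of the Farey graph (the paper identifies it as the reflection in the edge joining $0/1$ and $1/0$, while you verify it preserves edges via invariance of intersection number one under $GL(2,\mathbb Z)$ — the same fact). Your write-up simply spells out the case bookkeeping that the paper leaves implicit.
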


\begin{proof}
This follows immediately from Lemma~\ref{lem:slopes} and the fact that
$(p,q) \mapsto (-p,q)$ induces an isometry on the Farey graph (this isometry is the
reflection by the edge connecting $1/0$ to $0/1$, and the endpoints of this edge are its  
fixed slopes).
\end{proof}

\section{Minimally non hyperbolic fillings}
\label{sec:mnh}

\noindent
From this section on, the notation introduced in Subsection~\ref{subsec:notaion} will be used regularly;
we assume the reader is comfortable with it.
In this section we study {\it \mnh\ fillings}, a concept which is designed for studying exceptional 
surgeries on links with many boundary components, or equivalently, exceptional fillings
on manifold with many boundary components.
There are two reasons for studying \mnh\ fillings: first, every non hyperbolic filling admits a \mnh\ partial filling; 
second, in Proposition~\ref{pro:mnhIsFinite} we
show that any manifold admits only {\it finitely many} \mnh\ fillings.  
This is essential for finiteness of $T(X)$, the tree that we will construct in the next section.

\bigskip\bigskip\noindent
Let us begin with a simple example.  Let $X$ be the exterior of the Whitehead link endowed with the 
natural meridian and longitude on each boundary component.  
For $j=1,2\dots$, let $(\alpha_{1}^{j},\alpha_{2}^{j}) = (1/0,p^{j}/q^{j})$
for some $p_{j},q_{j}$.  Then for 
each $j$, $X(\alpha_{1}^{j},\alpha_{2}^{j})$ is a lens space and hence $(\alpha_{1}^{j},\alpha_{2}^{j})$ 
is a non hyperbolic multislope.  There is no mystery here: all the multislopes $(\alpha_{1}^{j},\alpha_{2}^{j})$ 
have a common partial filling, namely $(1/0,\nos)$,
and $X(1/0, \nos) \cong D^{2} \times S^{1}$ is non-hyperbolic.  
In this situation, it is better to consider the single multislope $(1/0,\infty)$ and not the 
infinite set of multislopes $\{(\alpha_{1}^{j},\alpha_{2}^{j}) \}_{j=1}^{\infty}$.  
This leads us to the following definition
which is central to our work:

\begin{dfn}[\mnh\ filling]
Let $X$ be a hyperbolic manifold, $\mathcal{T} =T_1 \cup \cdots \cup T_{n}$ components of $\partial X$,
and $\alpha$ a multislope of $\mathcal{T}$.  
We say that $\alpha$ is {\it \mnh} if 
$X(\alpha)$ is non-hyperbolic  and any strict partial 
filling $\alpha' \pf \alpha$ is hyperbolic.
\end{dfn}

So in the example of the Whitehead link exterior discussed above, the slopes $(1/0,p^{j}/q^{j})$ are not \mnh\
and  $(1/0, \nos )$ is.  

\bigskip\noindent
We prove:

\begin{prop}
\label{pro:mnhIsFinite}
Let $X$ be a hyperbolic manifold and $\mathcal{T} = T_{1} \cup \cdots \cup T_{n}$ 
components of $\partial X$.  Then there are only finitely
many \mnh\ fillings on $\mathcal{T}$.
\end{prop}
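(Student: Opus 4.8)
The plan is to prove this by induction on $n$, the number of components of $\mathcal{T}$, using Thurston's hyperbolic Dehn surgery theorem as the engine. The base case $n = 1$ is exactly Thurston–J\o rgensen: a one-cusped hyperbolic manifold has only finitely many non-hyperbolic fillings, and every such filling is trivially \mnh\ since the only strict partial filling is the empty one (which gives $X$ itself, hyperbolic). For the inductive step, suppose $n > 1$ and let $\alpha = (\alpha_1, \dots, \alpha_n)$ be a \mnh\ multislope of $\mathcal{T}$. The key dichotomy is whether $\alpha$ has all coordinates finite or has some coordinate equal to $\nos$.

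First I would dispose of the multislopes with a $\nos$ coordinate: if $\alpha|_{T_i} = \nos$ for some $i$, then $\alpha$ is really a multislope on $\mathcal{T} \setminus T_i$, and $\alpha$ \mnh\ on $\mathcal{T}$ is equivalent to $\alpha|_{\mathcal{T} \setminus T_i}$ being \mnh\ on $\mathcal{T} \setminus T_i$. By the induction hypothesis applied to $X$ with the $n-1$ chosen cusps $\mathcal{T} \setminus T_i$, there are only finitely many of these (and only finitely many choices of which $T_i$ to drop). So it remains to bound the \mnh\ multislopes with every coordinate finite. Fix such an $\alpha$; since $\alpha$ is \mnh, for each $i$ the partial filling $\widehat{\alpha}_i = (\alpha_1, \dots, \hat{\alpha}_i, \dots, \alpha_n)$ is hyperbolic, so $X(\widehat{\alpha}_i)$ is a one-cusped hyperbolic manifold obtained by filling $n-1$ of the cusps of $X$ along $\alpha_1, \dots, \alpha_{i-1}, \alpha_{i+1}, \dots, \alpha_n$. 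The idea is to argue that if the "filling slopes" $\alpha_1, \dots, \alpha_n$ are all long enough (large enough in some auxiliary sense, e.g.\ all of $\sum_j \dfar(\alpha_j, \beta_j)$ large for fixed base slopes, or each $\Delta$-distance to a fixed slope large), then $X(\alpha)$ is hyperbolic — contradicting \mnh. This is where Thurston's theorem is used in the multi-cusped form: there is a neighborhood of $\infty$ in the space of multislopes outside of which $X(\alpha)$ is hyperbolic.

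The main obstacle is that a single global "all slopes long" bound does not immediately finish the job, because a \mnh\ multislope could have some short coordinates: it is the \emph{minimality} that controls this. If $\alpha$ is \mnh\ and, say, $\alpha_1$ is "short" (lies in some fixed finite bad set $B_1$ for $T_1$), I would like to fix $\alpha_1$ and reduce to a problem on the remaining $n-1$ cusps — but with $\alpha_1$ filled, $X(\alpha_1, \nos, \dots, \nos)$ need not be hyperbolic, so I cannot literally invoke the induction hypothesis as stated. The fix is to observe that $X(\alpha_1, \nos, \dots, \nos)$ \emph{is} hyperbolic whenever $\alpha_1 \notin B_f$ (the finite exceptional set of Lemma~\ref{lem:HyperbolicDehnSurgery} / Thurston), because $(\alpha_1, \nos, \dots, \nos)$ is a strict partial filling of $\alpha$ and $\alpha$ is \mnh; so either $\alpha_1$ lies in a fixed finite set (finitely many cases, each handled by induction on the hyperbolic manifold $X(\alpha_1, \nos, \dots, \nos)$ with $n-1$ cusps), or we may assume $\alpha_1 \notin B_f$ and proceed. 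Iterating this reasoning over all coordinates: either some coordinate is "exceptional" (finitely many such, reducing to induction), or \emph{all} coordinates avoid the relevant finite exceptional sets, in which case I claim $X(\alpha)$ is hyperbolic by the multi-cusped hyperbolic Dehn surgery theorem — the point being that, coordinate by coordinate, each $\alpha_i$ is a non-exceptional filling slope of the (hyperbolic, one- or multi-cusped) manifold obtained by filling the others along $\alpha_{j}$, $j \neq i$, and one assembles these into hyperbolicity of the total filling. This final assembly — making precise that "each coordinate individually nonexceptional" forces the joint filling to be hyperbolic, uniformly over the infinitely many remaining $\alpha$ — is the delicate part, and I expect the actual proof to phrase it via a compactness/limiting argument (a putative infinite sequence of distinct \mnh\ multislopes, none reducible to a lower case, whose fillings would Gromov–Hausdorff converge to $X$ and hence eventually be hyperbolic, a contradiction) rather than the inductive bookkeeping I have sketched. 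Either way, the finiteness of $T(X)$-type exceptional sets at each stage, combined with the induction hypothesis on the number of cusps, yields the claimed finiteness.
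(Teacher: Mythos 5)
Your reductions are fine as far as they go (the base case, discarding coordinates equal to $\nos$, and the observation that if $\alpha$ is \mnh\ with all coordinates finite and $\alpha_{1}$ is held fixed, then $X(\alpha_{1},\nos,\dots,\nos)$ is hyperbolic because it is a strict partial filling, so the remaining coordinates give a \mnh\ filling of a manifold with fewer chosen cusps and induction applies). But the proof as written has a genuine gap exactly where you flag it: to run the ``finitely many exceptional cases, otherwise hyperbolic'' dichotomy you need, for each cusp $T_{i}$, a \emph{fixed} finite set of slopes, independent of what is filled on the other cusps, such that avoiding all of these sets forces $X(\alpha)$ to be hyperbolic. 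What you actually have is weaker: for each $i$, a finite exceptional set for filling the one-cusped manifold $X(\alpha_{1},\dots,\hat{\alpha}_{i},\dots,\alpha_{n})$, and this set depends on the other coordinates of $\alpha$, so ranging over the infinitely many candidate multislopes it does not reduce to finitely many cases. Closing this requires either the uniform multi-cusp form of Thurston's hyperbolic Dehn surgery theorem (a finite exceptional set per cusp, uniform over all fillings of the remaining cusps), which you never state, or the compactness argument you defer to -- so the central step of the proposition is not actually proved in your write-up.

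For comparison, the paper's proof is precisely the limiting argument you anticipated, and it needs no induction on $n$ and no uniform exceptional sets: given infinitely many distinct non-hyperbolic multislopes $\alpha^{j}$, pass to a subsequence so that each coordinate is either constant in $j$ or consists of distinct finite slopes (hence leaves every compact set); fill the constant coordinates to obtain $\widehat{X}$. If $\widehat{X}$ were hyperbolic, Thurston's theorem (only the ``neighborhood of infinity'' version) would make $\widehat{X}(\alpha_{1}^{j},\dots,\alpha_{k}^{j}) = X(\alpha^{j})$ hyperbolic for large $j$, a contradiction; hence the constant-coordinate partial filling is a strict non-hyperbolic partial filling of $\alpha^{j}$, so $\alpha^{j}$ is not \mnh. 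Thus any infinite family of \mnh\ fillings is impossible. If you prefer your inductive scheme, it can be salvaged by citing the strong form of Thurston's theorem with per-cusp exceptional sets independent of the other filling coefficients; as written, however, that statement is the missing ingredient, not a bookkeeping detail.
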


\begin{proof}
We assume as we may that there are infinitely many multislopes on 
$\mathcal{T}$ yielding non hyperbolic manifolds.
Let $\alpha^{j} = (\alpha_{1}^{j},\dots,\alpha_{n}^{j})$ be an infinite set of distinct 
non hyperbolic fillings  ($j=1,2,\dots$).  We will prove the theorem by showing that 
one of these is not \mnh.  After subsequencing $n$ times we may assume that 
for each $1 \leq i \leq n$, $\alpha_{i}^{j}$ (the restrictions of
$\alpha^{j}$ to $T_{i}$) fulfill one of the following conditions:
	\begin{enumerate}
	\item For $j \neq j'$, $\alpha^{j}_{i} \neq \alpha^{j'}_{i}$; we assume in addition 
	that for all $j$, $\alpha_{{i}}^{j} \neq \nos$.
	\item For any $j,j'$, $\alpha^{j}_{{i}} = \alpha^{j'}_{i}$ (possibly $\alpha^{j}_{i} = \nos$).
	\end{enumerate}
To avoid overly complicated notation we do not rename $\alpha^{j} = (\alpha_{1}^{j},\dots,\alpha_{n}^{j})$.
After reordering the boundary components if necessary, we may assume that 
$\alpha_{i}^{j}$ are distinct for $1 \leq i \leq k$, and $\alpha_{i}^{j}$ is constant for $k+1 \leq i \leq n$
(for some $0 \leq k \leq n+1$).  Note that $k=0$ is impossible since $\{\alpha^{j}\}$ 
is infinite.  We drop the superscript from $\alpha_{i}$ for $i>k$.

Let $\widehat X = X(\nos ,\dots,\nos,\alpha_{k+1},\dots,\alpha_{n})$.  Then the manifolds 
$\widehat X(\alpha_{1}^{j},\dots,\alpha_{k}^{j}) = X(\alpha_{1}^{j},\dots,\alpha_{n}^{j})$
are all non hyperbolic by assumption.  We claim that $\widehat X$ is not hyperbolic.
Assume for a contradiction it is.  Since $\alpha_{i}^{j} \to \nos$ for all $i$,
by Thurston's Dehn Surgery Theorem, for large enough $j$, 
$\widehat X(\alpha_1^{j},\dots,\alpha_{k}^{j})$ is hyperbolic as well.
This contradicting our assumptions and shows that
$\widehat X$ is not hyperbolic.  Recall that $\alpha_i^{j} \neq \nos$ for $1 \leq i \leq k$ and $k\geq 1$;
hence $(\nos,\dots,\nos,\alpha^{j}_{k+1},\dots,\alpha^{j}_{n})$
is a strict partial filling of $(\alpha^{j}_{1},\dots,\alpha^{j}_{n})$, showing that the latter is not \mnh.
The proposition follows.
\end{proof}

\section{$T(X)$}
\label{sec:T(X)}
\bigskip

\noindent
In this section we construct the tree $T(X)$, which is the main tool for our work on cosmetic
surgery.  The construction relies heavily on the concept of {\it \mnh\ fillings} from the previous section.
After constructing $T(X)$ we prove (Proposition~\ref{prop:T(X)isFinite}) that it is
finite.  We then explain (Proposition~\ref{prop:ObtainedByFilling}) 
why $T(X)$ can be used to study fillings.  Since $T(X)$ was designed
for studying exceptional surgeries on hyperbolic manifolds, it is perhaps a little
surprising that it can also be used to study hyperbolic fillings; this
is explained and proved in Proposition~\ref{prop:UsingT(X)forHyperbolicFilling}.

\bigskip\bigskip\noindent
Let $X$ be a compact orientable manifold whose boundary consists of tori
and $\mathcal{T} = T_{1} \cup \dots \cup T_{n}$ 
components of $\partial X$.
We wish to associate to $(X,\mathcal{T})$ a finite rooted tree, 
denoted $T(X,\mathcal{T})$ (or $T(X)$ when no confusion can arise), that 
encodes exceptional fillings on $X$.   

Before constructing $T(X)$ we comment about its structure. The vertices of $T(X)$
correspond to manifolds with $X$ as the root.  We direct every edge away from the root. 
Branches, which are always assumed to follow this direction, encode the filling process, and
therefore we may have distinct vertices that correspond to diffeomorphic manifolds.
It follows from the construction below that the vertices are arranged along levels.  The levels are grouped
into block of the form $3m$, $3m+1$, and $3m+2$ and obey the following rules:
	\begin{enumerate}
	\item Geometric manifolds (that is, hyperbolic manifolds, Seifert manifolds, and sol manifolds)
	are arranged on levels $3m$ (for $m\in \mathbb Z_{\geq 0}$).
	\item Reducible manifolds are arranged on levels $3m+1$ (for $m\in \mathbb Z_{\geq 0}$).
	\item JSJ manifolds (recall Definition~\ref{dfn:jsj})  are arranged on levels $3m+2$ (for $m\in \mathbb Z_{\geq 0}$).
	\item Every edge in $T(X)$ is directed from the initial vertex to the terminal vertex (say from $u$ 
	to $v$) so that if $u$ is at level $3m$, $3m+1$, or $3m+2$, then $v$ is at level $3m+1$, $3m+2$, 
	or $3m+3$; moreover, the level of $v$ is strictly greater than that of $u$.  We call  $u$ the {\it predecessor} of 
	$v$ and $v$ is the {\it direct descendant} of $u$.
	\end{enumerate}

\bigskip\noindent
We are now ready to construct $T(X)$.  The root of $T(X)$ is a vertex labeled $X$.  
Assume first that $X$ is geometric.  Then $X$ is placed in
level $0$.  If $X$ is Seifert fibered or a sol manifold, 
then the corresponding vertex is a leaf: there
are no edges out of $X$.  If $X$ is hyperbolic, we place one edge $e$ out of $X$ for each 
\mnh\ filling on $X$ (say $\alpha$), and the terminal vertex of $e$ is labeled $X(\alpha)$.
Since $\alpha$ is a non-hyperbolic filling, $X(\alpha)$ is one of the following:
	\begin{enumerate}
	\item Reducible: then $X(\alpha)$ is placed at level $1$.
	\item JSJ: then $X(\alpha)$ is placed at level $2$. 
	\item Seifert fibered or sol manifold: then $X(\alpha)$ is placed at level $3$.
	\end{enumerate}

Next suppose that $X$ is not prime.  Then the corresponding vertex is placed at
level $1$.  Let $X_{1},\dots,X_{k}$ be the factors of the prime decomposition of $X$.
We place $k$ edges out of
$X$ with terminal vertices labeled $X_{1},\dots,X_{k}$.  Each $X_{i}$ is either
JSJ, hyperbolic, Seifert fibered, or sol.
Accordingly, the corresponding vertex is placed at level $2$ (if it is JSJ) or $3$ (in all other cases).
Note that if $i \neq i'$ then $X_{i}$ and $X_{i'}$ correspond to distinct vertices
even if $X_{i} \cong X_{i'}$.

Finally, let $X$ be a JSJ manifold.  The corresponding vertex is placed at level
$2$.  Let $X_{1},\dots,X_{k}$
be the components of the torus decomposition of $X$.  We place $k$ edges out of
$X$, with terminal vertices labeled $X_{1},\dots,X_{k}$.  Each $X_{i}$ is 
hyperbolic or Seifert fibered.  Accordingly, it is placed at level $3$.
As above,  if $i \neq i'$ then $X_{i}$ and $X_{i'}$ correspond to distinct vertices
even if $X_{i} \cong X_{i'}$.  

The construction is recursive, and if $X_{1}$ is a
direct descendant of $X$, then we place $T(X_{1})$ with the root at the vertex
labeled $X_{1}$; since $X_{1}$ is a direct descendant of $X$ its level is
$1$, $2$ or $3$.  If the level is $3$ we shift all the levels in $T(X_{1})$ by $+3$.

\bigskip\noindent
Let us discuss an example.  Let $X_{3}$ be a hyperbolic manifold with $m+1$ boundary components.  
Let $X_{2}$ be double of $X_{3}$ along
$m$ boundary components.  Hence $X_{2}$ is a toroidal manifold with $2$ boundary components.  
Let $X_{0}$ be the manifold obtained from $X_{2}$
by drilling a hyperbolic knot (which is known to exist,
essentially by Myers~\cite{Myers}).  
Hence $X_{0}$ is a hyperbolic manifold with $3$ boundary components.
Now in $T(X_{0})$ there is an edge connecting $X_{0}$ to $X_{2}$, 
corresponding to a \mnh\ filling (since we fill only one boundary component, the filling
must be minimal).  Next we see two edges from $X_{2}$ to two copies of $X_{3}$.  
Thus as we move down $T(X_{0})$ we start with a single hyperbolic manifold with 
three boundary components, and later encounter 
two hyperbolic manifolds, each with $m+1$ boundary components for an arbitrary $m$
(and, perhaps, other manifolds as well---this may not be all the edges between the levels $0$
and $3$).  
As the tree of $T(X_{0})$ contains two copies of $T(X_{3})$ it can be quite big.
We leave it as an exercise to the reader to construct other complicated examples; for instance, 
given an integer $m$, construct a hyperbolic manifold $X$ with one 
boundary component, so that $T(X)$ admits a directed path of length $m$.  

\bigskip\noindent
Our goal is to use $|T(X)|$, the number of vertices in $T(X)$, 
as a basis for induction.  For that we need:

\begin{prop}
\label{prop:T(X)isFinite}
$T(X)$ is finite.
\end{prop}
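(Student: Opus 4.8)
The plan is to show $T(X)$ is finite by combining three finiteness facts, one per type of branching, and then invoking König's lemma. First I would establish the branching bounds. The key point is that every vertex has finitely many direct descendants: if a vertex is labeled by a hyperbolic manifold $Y$, its descendants are the terminal vertices of edges corresponding to \mnh\ fillings on the chosen boundary components of $Y$, and by Proposition~\ref{pro:mnhIsFinite} there are only finitely many such; if $Y$ is reducible, its descendants are the prime factors of $Y$, finite in number by the uniqueness of prime decomposition; if $Y$ is a JSJ-manifold, its descendants are the components of its torus decomposition, again finite. Seifert fibered and sol manifolds are leaves. So $T(X)$ is finitely branching.

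Next I would bound the depth of $T(X)$ — this is the main obstacle, since finite branching alone does not prevent an infinite ray. The strategy is to find a quantity that strictly decreases along every directed edge. A natural first candidate is something like ``hyperbolic volume of the manifold'' or the number of boundary components, but these do not obviously work directly: drilling-then-doubling type examples (as in the $X_0,X_2,X_3$ example in the text) show the number of boundary components can jump up, and fillings decrease volume only in the hyperbolic-to-hyperbolic case. I would instead argue along a directed ray and derive a contradiction: consider an infinite directed path $X = Y_0 \to Y_1 \to Y_2 \to \cdots$. Passing from a reducible vertex to a prime factor strictly decreases a Haken-type complexity (e.g. the number of prime summands, or more robustly, reduces to a prime manifold in one step); passing from a JSJ vertex to a torus-decomposition piece strictly decreases the number of JSJ tori and lands on a geometric (hyperbolic or Seifert) piece in one step; passing from a hyperbolic vertex via a \mnh\ filling lands on a non-hyperbolic manifold. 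The crucial observation is that the block structure (levels $3m, 3m+1, 3m+2$) forces the ray to pass through hyperbolic vertices infinitely often, each time performing a genuine (non-trivial) filling on at least one boundary torus. Along the subsequence of hyperbolic vertices $Y_{n_0}, Y_{n_1}, \dots$ encountered down the ray, each is obtained from an earlier geometric ancestor by a sequence of fillings and decomposition steps; I would track the hyperbolic volume (using the fact that the geometric pieces appearing are pieces of decompositions of fillings of earlier hyperbolic manifolds, so by Thurston's Dehn surgery theorem their volumes are bounded above by the volume of the original hyperbolic ancestor, and each \mnh\ filling step either strictly drops the volume or changes the topological type — and one cannot have infinitely many filling steps of bounded volume without repetition, which is ruled out by the JSJ/prime decomposition steps in between resetting the topology).

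More carefully, the cleanest route I expect to take is: suppose for contradiction $T(X)$ is infinite; since it is finitely branching, by König's lemma it contains an infinite directed ray $R$. Along $R$, by the level/block structure, after the initial segment the ray alternates (hyperbolic or geometric) $\to$ (reducible or JSJ or geometric) in a controlled way, and in particular passes through hyperbolic vertices at arbitrarily large levels; let $H_1, H_2, \dots$ be these, with $H_{k+1}$ a strict descendant of $H_k$. Each $H_{k+1}$ is a geometric piece of the JSJ/prime decomposition of $H_k(\alpha_k)$ for some \mnh\ filling $\alpha_k$ of (some boundary components of) $H_k$. I would then use Thurston's hyperbolic Dehn surgery theorem to conclude $\vol(H_{k+1}) \le \vol(H_k)$, with strict inequality unless $H_k(\alpha_k)$ is itself hyperbolic and equal to a single decomposition piece — but a \mnh\ filling yields a \emph{non}-hyperbolic manifold, so $H_k(\alpha_k)$ must genuinely decompose (reducibly or along tori) before reaching $H_{k+1}$, and this forces $\vol(H_{k+1}) < \vol(H_k)$ strictly (a proper JSJ piece or proper connect-summand of a non-hyperbolic manifold has strictly smaller volume). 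Hence $\vol(H_1) > \vol(H_2) > \cdots$ is an infinite strictly decreasing sequence; but the set of volumes of hyperbolic $3$-manifolds that arise as pieces of decompositions of fillings of the fixed manifold $H_1$ is bounded below by $0$ and, more to the point, only finitely many hyperbolic manifolds have volume below any given bound (Jørgensen–Thurston, cited in the paper) while here the successive $H_k$ are obtained by strictly reducing operations so cannot repeat — giving a contradiction once one checks the descending sequence must stabilize or terminate. The main obstacle is making this volume-strict-decrease argument airtight across the mixed steps (reducible, JSJ, Seifert pieces), since Seifert fibered pieces have volume zero and one must instead track a secondary complexity (number of JSJ tori plus number of prime summands plus a bound on how many consecutive non-hyperbolic steps can occur) to handle the case where the ray gets ``stuck'' among volume-zero geometric pieces; I would handle this by noting that once a Seifert or sol manifold appears it is a leaf, so such pieces terminate the ray, and JSJ/reducible steps strictly decrease Haken complexity and hence cannot repeat indefinitely between consecutive hyperbolic vertices, so the hyperbolic vertices along $R$ must indeed be infinite in number, completing the contradiction.
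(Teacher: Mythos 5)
Your overall skeleton --- finite branching via Proposition~\ref{pro:mnhIsFinite}, finiteness of prime and JSJ decompositions, K\"onig's lemma, an infinite ray forced to meet hyperbolic vertices at every level $3m$, and then a strictly decreasing sequence of volumes --- is exactly the paper's strategy. But the two steps that actually carry the argument are justified by claims that are false, and this is a genuine gap. First, you locate the strict volume decrease in the decomposition step: ``a proper JSJ piece or proper connect-summand of a non-hyperbolic manifold has strictly smaller volume.'' This is not true: connect-summing with a lens space, or adding Seifert fibered JSJ pieces, does not change the simplicial volume at all, and these are precisely the typical ways a \mnh\ filling fails to be hyperbolic, so equality is the expected case rather than the exception. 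Likewise, Thurston's hyperbolic Dehn surgery theorem compares volumes of \emph{hyperbolic} fillings and says nothing directly about the non-hyperbolic manifold $H_k(\alpha_k)$. The missing tool is the Gromov norm: it strictly decreases under any filling when it is nonzero (Thurston; see \cite{soma}), it is additive --- hence merely non-increasing on pieces --- under prime and torus decompositions, and it is proportional to hyperbolic volume on hyperbolic manifolds. So the strict inequality $\mathrm{vol}(H_{k+1})<\mathrm{vol}(H_k)$ must come from the filling step, with the decomposition steps contributing only ``$\leq$''; your attribution of strictness to the decomposition would fail exactly in the cases that occur.

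Second, your terminal contradiction rests on ``only finitely many hyperbolic manifolds have volume below any given bound,'' which is false (a single cusped hyperbolic manifold already has infinitely many hyperbolic fillings, all of smaller volume), and a strictly decreasing sequence bounded below by $0$ is not by itself contradictory, so the ``must stabilize or terminate'' remark does not close the argument. What J\o rgensen--Thurston gives, and what the paper uses, is that the set of volumes of hyperbolic $3$-manifolds is well-ordered, so no infinite strictly decreasing sequence of hyperbolic volumes exists. With these two repairs --- replace the decomposition-strictness claim by the Gromov-norm argument and the finiteness claim by well-ordering of the volume spectrum --- your proof becomes the paper's; the auxiliary ``secondary complexity'' for Seifert pieces is then unnecessary, since Seifert and sol vertices are leaves and the level structure already forces a hyperbolic vertex at every level $3m$ along any infinite branch.
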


\begin{proof}
Let the {\it degree} of a vertex $v$ be the number of direct descendants of $v$ (that is, the number 
of edges {\it out} of $v$).
It is easy to see that the degree of every vertex $v$ is finite:
	\begin{enumerate}
	\item If $v$ corresponds to a hyperbolic manifold $X$, the degree is finite because $X$
	admits only finitely many \mnh\ fillings (Proposition~\ref{pro:mnhIsFinite}).
	\item If $v$ corresponds to a Seifert manifold or a sol manifold the degree is zero by construction. 
	\item If $v$ corresponds to a reducible manifold the degree is the number of factors in its prime decomposition
	and is therefore finite.
	\item If $v$ corresponds to a JSJ manifold the degree is finite by the finiteness of the torus decomposition~\cite{jacoshalen}
	and~\cite{Johannson}.
	\end{enumerate}

The problem is avoiding an infinite branch.   Assume there is such a branch.
Now by construction every edge starting on level $3m$, $3m+1$, or $3m+2$ ends at a level $3m+1$, $m+2$, or
$3m+3$.  The vertices at level $3m+3$ that do not correspond to hyperbolic manifolds are leaves; hence
the branch must admit a vertex corresponding to a hyperbolic manifold on every level $3m$. 
Let $X_{3m}$ denote this hyperbolic manifold.

We will use the {\it Gromov Norm}; for definition see \cite{GromovBoundedCohomology}.
The Gromov norm has the following properties, proved in~\cite{GromovBoundedCohomology}
and~\cite[Theorem~1]{soma}.  Here, $X$ is a compact orientable manifold so that $\partial X$ consist of tori.
	\begin{enumerate}
	\item If the Gromov norm is non-zero, then it strictly decreases under any filling.  
	\item The Gromov norm of $X$ equals the sum of the Gromov norms of the components of the 
	prime decomposition of $X$.
	\item The Gromov norm is additive under decomposition along essential tori.  
	\item The Gromov norm is additive under disjoint union.
	\item The Gromov norm is non-negative.
	\end{enumerate} 
Since $X_{3m+3}$ is obtained from $X_{3m}$ by filling, then (possibly) reducing along
essential spheres and discarding components, and (possibly) decomposing along essential tori
and discarding components, we see that the Gromov norm of $X_{3m+3}$ is strictly smaller than
that of $X_{3m}$.  It is well known that the hyperbolic volume is
a constant multiple of the Gromov norm, and so we see that $X_{0},X_{3},X_{6},\dots$ 
forms a sequence of hyperbolic manifolds with 
$\vol[X_{0}] > \vol[X_{3}]> \vol[X_{6}] > \cdots$.  But this cannot be, as hyperbolic volumes are well ordered. 
\end{proof}

\bigskip\bigskip\noindent
In the following sections, we will use $T(X)$ inductively.  The problem is the we are {\it only} dealing with
filling, while non-prime and JSJ manifolds are treated differently on $T(X)$
(reduction along spheres and essential tori, respectively). 
Let $\alpha = (\alpha_{1},\dots,\alpha_{n})$ be a 
multislope.  If there exist a \mnh\ filling $\alpha'$ so that $\alpha' \pf \alpha$
(possibly $\alpha' = \alpha$) we say that {\it $\alpha$ admits a  
\mnh\ partial filling}.  Note that if $\alpha$ is a non hyperbolic
multislope, which by definition means that $X$ is hyperbolic and $X(\alpha)$ is not, then
$\alpha$ admits a \mnh\ partial filling.

We will need the following proposition; it is useful, for example, when $\alpha$ is a hyperbolic
multislope that admits a \mnh\ partial filling (note that such multislopes do exist;
constructing examples is quite easy).

\begin{prop}
\label{prop:ObtainedByFilling}
Let $X$ be a hyperbolic manifold and 
$\alpha$ a multislope of $\partial X$.  Suppose that $\alpha$ admits a \mnh\ partial filling $\alpha'$
so that either $X(\alpha')$ is not prime or it is JSJ.  Suppose further that
$X(\alpha)$ is irreducible and a toroidal.

Then $X(\alpha)$ is obtained by filling some descendant of $X(\alpha')$ on $T(X)$. 
\end{prop}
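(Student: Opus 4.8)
The plan is to track the manifold $X(\alpha)$ through the construction of $T(X)$, starting from the edge that $\alpha'$ determines. Since $\alpha'$ is a \mnh\ filling of $X$, by construction there is an edge in $T(X)$ from the root $X$ to the vertex labeled $X(\alpha')$; this is the base of the descent. Write $\alpha = \alpha' \cup \alpha''$ in the obvious sense, meaning $\alpha$ is obtained from $\alpha'$ by further filling the boundary components that $\alpha'$ leaves unfilled (those on which $\alpha'$ takes the value $\nos$). Thus $X(\alpha) = X(\alpha')(\alpha'')$, and the goal becomes: $X(\alpha)$ is obtained by filling some descendant of $X(\alpha')$ on $T(X)$. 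I would prove this by an inner induction on the ``distance'' of $X(\alpha')$ from a geometric vertex in $T(X)$, i.e., on how many of the reduction steps (prime decomposition, torus decomposition) separate $X(\alpha')$ from its geometric descendants; equivalently, an induction following the block structure of levels $3m, 3m+1, 3m+2$.

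The inductive step splits according to the two cases allowed by the hypothesis. \emph{Case 1: $X(\alpha')$ is not prime.} Its direct descendants in $T(X)$ are the prime factors $Y_1, \dots, Y_k$ of $X(\alpha')$. Now $X(\alpha) = X(\alpha')(\alpha'')$ is obtained by filling $X(\alpha')$ along $\alpha''$; since filling is a local operation taking place in a neighborhood of the boundary, and $X(\alpha)$ is assumed \emph{irreducible}, the connect-sum sphere(s) of $X(\alpha')$ must persist (the filling solid tori lie in one factor) so that $X(\alpha)$ is obtained by filling exactly one of the $Y_i$ along the restriction of $\alpha''$ to that factor, while the other factors must die --- but a filling of a nontrivial connect sum is irreducible only if all but one factor is $S^3$, which forces the filled factor to be all of $X(\alpha)$ up to the irrelevant $S^3$ summands. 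Here one must be a little careful: $\alpha''$ fills boundary tori of $X(\alpha')$, and each such torus lies in the boundary of a unique prime factor $Y_i$; the filling is performed factor by factor, and irreducibility of the result rules out all factors except possibly one surviving non-$S^3$ factor. So $X(\alpha)$ is obtained by filling that $Y_i$. If $Y_i$ is Seifert fibered, sol, or hyperbolic we are done immediately (it is a direct descendant); if $Y_i$ is JSJ, we pass to Case 2 applied to the pair $(Y_i, \alpha''|_{Y_i})$, noting $Y_i(\alpha''|_{Y_i}) = X(\alpha)$ is irreducible and atoroidal by hypothesis. \emph{Case 2: $X(\alpha')$ is JSJ.} Its direct descendants are the components $Z_1, \dots, Z_k$ of its torus decomposition. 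Again $X(\alpha)$ is obtained by filling $X(\alpha')$ along $\alpha''$, and each filled torus lies in the boundary of a unique JSJ piece $Z_j$. Since $X(\alpha)$ is assumed \emph{atoroidal}, the JSJ tori of $X(\alpha')$ cannot all survive as essential tori in $X(\alpha)$; a filling is atoroidal only if essentially all the decomposition tori become compressible or boundary-parallel, which (together with irreducibility) forces $X(\alpha)$ to be carried by a single piece $Z_j$ --- that is, $X(\alpha)$ is obtained by filling $Z_j$. If $Z_j$ is Seifert fibered or hyperbolic, it is a direct descendant and we are done. If $Z_j$ turns out, after filling, to be reducible or toroidal we would need to continue; but in this construction the components of a torus decomposition are Seifert fibered or hyperbolic, so $Z_j$ itself is one of these, and the recursion in $T(X)$ attaches $T(Z_j)$ at $Z_j$. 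The point is that $X(\alpha)$ is obtained by filling the \emph{vertex} $Z_j$ of $T(X)$, which is what is claimed.

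To package this cleanly I would actually phrase the inner induction as: \emph{if $W$ is the label of a vertex $v$ of $T(X)$, and $\beta$ is a multislope of $\partial W$ with $W(\beta)$ irreducible and atoroidal, then $W(\beta)$ is obtained by filling some descendant of $v$ (allowing $v$ itself).} The base case is $W$ geometric, where $v$ is its own descendant and there is nothing to prove. The inductive cases are exactly Cases 1 and 2 above, using that each filled torus of $\partial W$ belongs to the boundary of a unique direct descendant, that irreducibility/atoroidality of the final manifold forces all but one descendant to be discarded in the passage down $T(X)$, and that filling commutes with discarding the irrelevant summands/pieces. Applying this with $W = X(\alpha')$, $v$ the terminal vertex of the \mnh\ edge out of the root, and $\beta = \alpha''$ gives the proposition.

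\textbf{Main obstacle.} The delicate point is the topological claim that filling an irreducible-but-non-prime or toroidal manifold and obtaining an irreducible \emph{and} atoroidal result forces the surviving summand/JSJ-piece to be unique and to ``be'' the whole answer. For the sphere case this is essentially the behavior of reducing spheres under Dehn filling (a reducing sphere of $W$ either survives --- contradicting irreducibility of $W(\beta)$ unless the factors on one side are all $S^3$ --- or bounds a ball after filling, which means the filling tori were concentrated on one side). For the torus case one must argue that an essential torus in $W$ either remains essential in $W(\beta)$ (contradicting atoroidality) or becomes compressible/$\partial$-parallel after filling, and that this forces $W(\beta)$ to be recoverable from a single JSJ piece; this is where I would spend the most care, possibly invoking standard facts about incompressible surfaces and Dehn filling (in the spirit of the cut-and-paste arguments already used in Lemmas~\ref{lem:KnotExteriorsAreDisjoint} and~\ref{lem:TwistingToGetIrreducible}). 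Everything else is bookkeeping along the levels of $T(X)$.
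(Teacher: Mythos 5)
Your overall reduction is the same as the paper's: since $\alpha' \pf \alpha$, the manifold $X(\alpha)$ is a filling of $X(\alpha')$, and in the non-prime case the argument you give (irreducibility of $X(\alpha)$ forces all but one prime factor to fill to $S^3$, so $X(\alpha)$ is a filling of a single factor, which is a direct descendant) is essentially the paper's argument and is fine. The genuine gap is in the JSJ case, which is the heart of the proposition and which you only assert: ``atoroidality forces $X(\alpha)$ to be carried by a single piece $Z_j$.'' Two specific things are missing, and the second shows the claim is not even literally true in the naive form you state it. First, in an irreducible atoroidal manifold a JSJ torus of $X(\alpha')$ need not become boundary-parallel or bound a solid torus after filling: the third possibility is that it bounds a nontrivial knot exterior contained in a ball. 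In that case the adjacent piece does \emph{not} ``die'' under the filling --- its filling is a nontrivial knot exterior sitting inside a ball of $X(\alpha)$ --- and $X(\alpha)$ is not obtained from $Z_j$ by filling along any slope naturally produced by the adjacent piece. The paper handles this with the ``unknotting in a ball'' move: since $X(\alpha)$ is atoroidal, such a torus compresses away from the knot exterior, and replacing the knot exterior by a solid torus whose meridian meets the boundary of the compressing disk once does not change the ambient manifold; only after this replacement does one get a genuine filling slope on the corresponding boundary torus of the chosen piece. Your sketch (``becomes compressible or boundary-parallel'') glosses over exactly this case.

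Second, you give no mechanism for selecting the single JSJ piece, and this selection is not automatic when the JSJ graph has several vertices with the surrounding tori behaving differently. The paper does this by co-orienting each JSJ torus of $X(\alpha')$ (toward the boundary-parallel region, toward the solid torus, or toward the knot exterior in a ball, as the case may be), observing that the dual graph is a tree (no non-separating tori, since $X(\alpha)$ is irreducible and atoroidal), orienting its edges by these co-orientations, and extracting a vertex all of whose incident edges point away from it; that vertex is the component $X'$ from which $X(\alpha)$ is recovered, absorbing each adjacent region as a product, a solid-torus filling, or via the unknotting replacement. Without an argument of this kind, ``all but one piece is discarded'' is an assertion, not a proof; and without the knot-exterior case the discarding step is false as stated. (Your outer recursion down $T(X)$ is harmless but unnecessary: the paper, like your Case 1, only needs to descend one level, since the statement asks for \emph{some} descendant.)
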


\begin{proof}
Since $\alpha' \pf \alpha$, it is clear that
$X(\alpha)$ is obtained from $X(\alpha')$ by filling.  

Assume first that $X(\alpha')$ is reducible.  Then the descendants of $X(\alpha')$ 
are, by construction of $T(X)$, the factors of the prime decomposition of $X(\alpha')$.
Since $X(\alpha)$ is a prime manifold that is obtained from $X(\alpha')$ by
filling, it is easy to see that in that filling all the components in the prime decomposition
of $X(\alpha')$ become balls except at most one.  The proposition follows in this case.

Next assume that $X(\alpha')$ is JSJ.   By assumption, $X(\alpha)$ is prime 
and a toroidal.  Therefore every torus $T$ in $X(\alpha)$ fulfills at least one of the following
three conditions:
	\begin{enumerate}
	\item $T$ is boundary parallel.
	\item $T$ bounds a solid torus.
	\item $T$ bounds a knot exterior contained in a ball.
	\end{enumerate}
Since $X(\alpha)$ is obtained from $X(\alpha')$ by Dehn filling, $X(\alpha') \subset X(\alpha)$.	
Let $T \subset X(\alpha')$ be a torus.  Considering $T$ as a torus in $X(\alpha)$ allows us 
to endow it with a co-orientation as follows:
	\begin{enumerate}
	\item If $T$ is boundary parallel, we co-orient it towards the boundary.
	\item If $T$ bounds a solid torus, we co-orient it towards the solid torus.
	\item If $T$ bounds a knot exterior contained in a ball, we co-orient it towards the knot exterior.
	\end{enumerate}
Note that some tori may get both co-orientations (for example, a boundary parallel torus in a solid torus
or an unknotted torus in a ball).
In that event we pick an orientation arbitrarily.

Let $\mathcal{T}$ be the tori of the JSJ decomposition of $X(\alpha')$.  
Let $\Gamma$ be the {\it dual graph} to $\mathcal{T}$,
which we defined to be the graph that has one vertex for each component 
of the torus decomposition of $X(\alpha')$ and one edge
for each torus $T \in \mathcal{T}$, connecting the vertices that correspond 
to the components adjacent to $T$.  
Since $X$ is connected so is $X(\alpha')$;
hence $\Gamma$ is connected.  Since $X(\alpha)$ is a toroidal and irreducible
it contains no non-separating tori, and is follows that neither does $X(\alpha')$; hence $\Gamma$ contains 
no cycles.  We conclude that $\Gamma$ is a tree.  We endow each 
edge of $\Gamma$ with an orientation consistent with the co-orientation of the corresponding torus of $\mathcal{T}$.
Using induction, it is easy to see that $\Gamma$ admits a sink (a vertex connected only to edges that point away from it):
since $\mathcal{T} \neq \emptyset$, $\Gamma$ contains an edge.  
Thus $\Gamma$ admits a leaf (a vertex connected to only one other vertex), say $v$.  If the edge connected to
$v$ points away from $v$, $v$ is a sink.  Otherwise, removing $v$ and the edge attached to it we obtain a tree
with fewer vertices than $\Gamma$.  
If the tree obtained contains only one
vertex (and hence no edges), that vertex is a sink of $\Gamma$.  Otheriwse,
by induction the tree obtained admits a sink; it is clear that the same vertex is a sink for 
$\Gamma$ as well.

Let $X'$ be a component of the JSJ decomposition of $X(\alpha')$ that corresponds to a sink.  
By construction of $T(X)$, $X'$ is a direct descendant of $(\alpha')$.  
We claim that $X(\alpha)$ is obtained from $X'$ by filling (this allows for the
possibility that $X' \cong X(\alpha)$ and no boundary component is filled).  
To see this, let  $T$ be a component 
of $\partial X'$.  We will denote the component of $X(\alpha')$ 
cut open along $T$ that is disjoint from $X'$ as $Y$.  
As above, considering $T \subset X(\alpha)$ we see three cases:
	\begin{enumerate}
	\item $T$ is parallel to a component of $\partial X(\alpha)$:  equivalently,
	$Y(\alpha|_{\partial Y}) \cong T^{2} \times [0,1]$.  We remove $Y$ and the solid tori
	attached to it.  The manifold obtained is not changed.
	\item $T$ bounds a solid torus outside $X'$: equivalently, 
	$Y(\alpha|_{\partial Y}) \cong D^{2} \times S^{1}$.
	Again we remove $Y$, and in the process of obtaining 
	$X(\alpha)$ from $X'$ we consider attaching a solid torus to $T$ along the 
	slope defined by $Y(\alpha|_{\partial Y})$.
	\item $Y$ is a knot exterior contained in a ball $D$: if $\partial Y$ is compressible then
	$Y \cong D^{2} \times S^{1}$; this was treated in case~(2), 
	and we assume as we may that this does not happen.
	Thus $T$ is essential in $Y(\alpha|_{\partial Y})$, and since $X(\alpha)$ is a toroidal $T$ must compress
	in $X(\alpha)$ away from $Y$.  Let $D$ be a compressing disk.  
	It is now easy to see that replacing $Y$ with a solid torus so that the meridian of
	the solid torus intersects $\partial D$ exactly once does not change the ambient manifold.
	This can be seen as ``unknotting'' $T$ in $D$, see Figure~\ref{fig:unknotting}.
\begin{figure}
\includegraphics[width=3in]{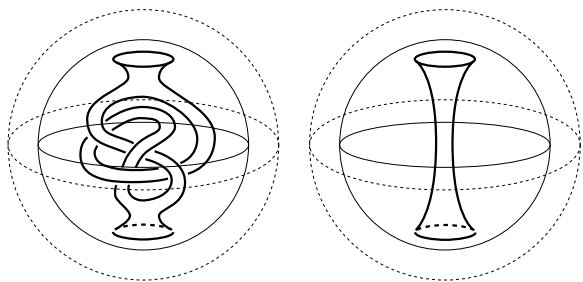}
\caption{Unknotting in a ball}
\label{fig:unknotting}
\end{figure}
	\end{enumerate}
Repeating this process on all the components of $\partial X'$ we see that $X(\alpha)$ is 
obtained from $X'$ by attaching solid tori.  

This completes the proof of Proposition~\ref{prop:ObtainedByFilling}.
\end{proof}

\bigskip\bigskip\noindent
So here is where we stand: if we fill to obtain a prime a-toroidal manifold, Proposition~\ref{prop:ObtainedByFilling}
allows us to travel down $T(X)$ from a vertex at level $3m+1$ or $3m+2$.  
Assume, in addition, that 
$X(\alpha)$ is not hyperbolic.   A manifold corresponding to a vertex labeled $3m$ 
is Seifert fibered, sol, or a hyperbolic.  
The first two cases require direct analysis; in the final
case we are guaranteed to have a \mnh\ filling that allows us to keep going down $T(X)$.  

The problem occurs when we want to study hyperbolic fillings.  Obviously, one cannot 
expect {\it every} filling to admit a \mnh\ partial filling;
this is simply false.  Somewhat surprisingly we have the following proposition, that allows us to go down $T(X)$
in certain circumstances; recall that a multislope that does not admit a \mnh\ partial filling is called 
totally hyperbolic:

\begin{prop}
\label{prop:UsingT(X)forHyperbolicFilling}
Let $X$ be a hyperbolic manifold and  $\epsilon > 0$.  Let $\mathcal{A}$ be the set
of multislopes of $\partial X$ so that every $\alpha \in \mathcal{A}$ we have that
$X(\alpha)$ is hyperbolic, and every geodesic in $X(\alpha)$ is longer then $\epsilon$.

Then there are only finitely many totally hyperbolic mutlislopes in $\mathcal{A}$ 
\end{prop}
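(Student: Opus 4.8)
The plan is to argue by contradiction, mimicking the structure of the proof of Proposition~\ref{pro:mnhIsFinite}. Write $\partial X = T_{1}\cup\cdots\cup T_{n}$ and suppose $\mathcal{A}$ contained infinitely many distinct totally hyperbolic multislopes $\alpha^{j}=(\alpha^{j}_{1},\dots,\alpha^{j}_{n})$, $j=1,2,\dots$. Passing to a subsequence $n$ times, I would arrange that for each $i$ either the $\alpha^{j}_{i}$ are pairwise distinct or $\alpha^{j}_{i}$ does not depend on $j$; after reordering the boundary components this holds with the first alternative for $1\le i\le k$ and the second for $k+1\le i\le n$, and $k\ge 1$ since the $\alpha^{j}$ are distinct. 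For each $i\le k$ the set $\{\alpha^{j}_{i}\}_{j}$ is infinite, and since only finitely many slopes of a torus have length below any given bound, a further subsequence makes $l(\alpha^{j}_{i})\to\infty$ for all $i\le k$ (here $l$ is Euclidean length measured on a fixed horospherical truncation, as in the discussion preceding Lemma~\ref{lem:HyperbolicDehnSurgery}). Let $\alpha_{i}$ denote the stable value of $\alpha^{j}_{i}$ for $i>k$.

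Next I would set $\widehat{X}=X(\infty,\dots,\infty,\alpha_{k+1},\dots,\alpha_{n})$. The multislope $(\infty,\dots,\infty,\alpha_{k+1},\dots,\alpha_{n})$ is a partial filling of every $\alpha^{j}$, so, since each $\alpha^{j}$ is totally hyperbolic, $\widehat{X}$ is hyperbolic; note it has at least the $k\ge 1$ cusps coming from $T_{1},\dots,T_{k}$. Now $X(\alpha^{j})=\widehat{X}(\alpha^{j}_{1},\dots,\alpha^{j}_{k})$ is obtained from $\widehat{X}$ by filling these $k$ cusps along slopes whose lengths tend to infinity. By Thurston's hyperbolic Dehn surgery theorem in the multi-cusped form, for all sufficiently large $j$ the manifold $X(\alpha^{j})$ is hyperbolic and the cores of the attached solid tori are geodesics whose lengths tend to $0$ as $j\to\infty$; in particular, for $j$ large $X(\alpha^{j})$ contains a closed geodesic of length less than $\epsilon$. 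This contradicts $\alpha^{j}\in\mathcal{A}$, proving the proposition.

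The one step that I expect needs care is the last one: invoking the correct version of hyperbolic Dehn surgery for a manifold with several cusps, all of which are filled, so that the filled manifolds converge geometrically to $\widehat{X}$ and the core geodesics become arbitrarily short. This is standard (it is the multi-cusped analogue of Lemma~\ref{lem:HyperbolicDehnSurgery}), but it should be cited precisely. I would also emphasize in the writeup that the hypothesis \emph{totally hyperbolic} is used exactly to guarantee that the intermediate manifold $\widehat{X}$ — obtained by filling only those coordinates that stabilize along the subsequence — is itself hyperbolic, so that Thurston's theorem applies to it; without this one loses all control on the geometry of $X(\alpha^{j})$, which is precisely why the statement fails for general hyperbolic fillings.
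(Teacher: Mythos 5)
Your proposal is correct and follows essentially the same route as the paper: subsequencing the coordinates, forming $\widehat{X}$ from the stabilized slopes, and invoking Thurston's Dehn surgery theorem to produce geodesics shorter than $\epsilon$; the paper merely phrases the contradiction contrapositively (showing $\widehat{X}$ cannot be hyperbolic, hence the multislopes are not totally hyperbolic), while you assume total hyperbolicity up front to conclude $\widehat{X}$ is hyperbolic and then contradict membership in $\mathcal{A}$. No substantive difference.
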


\begin{rmk}
Since $\partial X$ may have arbitrarily many components, it is easy to construct examples
of hyperbolic manifolds with infinitely many multislopes $\alpha$ so that $X(\alpha)$
is hyperbolic but does not admit a geodesic shorter than $\epsilon$,
for a fixed $\epsilon>0$.  This proposition allows us to study those:
there is a {\it finite set} of totally hyperbolic mulitslopes, and every other multislope in
$\mathcal{A}$ admits a \mnh\ partial filling; these correspond to moving down $T(X)$.
\end{rmk}

\begin{proof}
Denote the components of $\partial X$ by $T_{1},\dots,T_{n}$.  We assume as we may that $\mathcal{A}$
is infinite.  Let $\alpha^{j} = (\alpha_{1}^{j},\dots,\alpha_{n}^{j})$ be an infinite
set of multislopes in $\mathcal{A}$; we will prove the theorem by showing that
some $\alpha^{j}$ admits a non hyperbolic partial filling.
The remainder of the proof is very similar to the proof of Proposition~\ref{pro:mnhIsFinite}
and we only paraphrase it here.

After subsequencing we may  assume that for each $i$ one of the following holds:
	\begin{enumerate}
	\item For $j \neq j'$, $\alpha^{j}_{i} \neq \alpha^{j'}_{i}$; we assume in addition 
	that for all $j$, $\alpha_{{i}}^{j} \neq \nos$.
	\item For any $j,j'$, $\alpha^{j}_{{i}} = \alpha^{j'}_{i}$ (possibly $\alpha^{j}_{i} = \nos$).
	\end{enumerate}
After renumbering we may assume that the constant slopes are $\alpha_{i}^{j}$ 
for $i > k$  (for some $0 \leq k \leq n$).  Since 
$\mathcal{A}$ is infinite, $k \geq 1$.   For $i > k$, we drop the superscript from $\alpha_{i}$.

Let $\widehat X = X(\nos,\dots,\nos,\alpha_{k+1},\dots,\alpha_{n})$.  Then 
$X(\alpha^{j}) = \widehat X(\alpha_{1}^{j},\dots,\alpha_{k}^{j})$.  If $\widehat X$ were
hyperbolic, then by Thurston's Dehn Surgery Theorem 
for large enough $j$, we would obtain a hyperbolic manifold with $k$ 
geodesics of length less than $\epsilon$; as $k \geq 1$ this violates our assumption.  
Thus $\widehat X$ is non-hyperbolic, and 
$(\nos,\dots,\nos,\alpha_{k},\dots,\alpha_{n})  \pf (\alpha_{1}^{j},\dots,\alpha_{n}^{j})$ 
is a non hyperbolic partial filling.
The proposition follows.
\end{proof}

We end this section with the following lemma:

\begin{lem}
\label{lem:TreeOfJSJ}

Let $X$ be a JSJ manifold and $X_{0}$ a connected manifold that is obtained 
as the union of a strict subset of the components of
the torus decomposition of $X$.  

Then $|T(X_{0})| < |T(X)|$.
\end{lem}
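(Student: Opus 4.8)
The plan is to build $T(X)$ from $T(X_0)$ by grafting on the pieces that $X_0$ omits, and count vertices. Since $X_0$ is a connected union of some, but not all, of the components of the torus decomposition of $X$, the JSJ decomposition of $X_0$ uses a proper, non-empty subset of the torus-decomposition components of $X$; call the components of the torus decomposition of $X$ that are used $Y_1,\dots,Y_s$ and the ones left out $Z_1,\dots,Z_t$, with $t\geq 1$. First I would observe that $X$ is a JSJ manifold (given), so in $T(X)$ the root $X$ sits at level $2$ and its direct descendants are exactly $Y_1,\dots,Y_s,Z_1,\dots,Z_t$, each a geometric (hyperbolic or Seifert) piece placed at level $3$, with $T(X)$ obtained by grafting $T(Y_a)$ and $T(Z_b)$ at those vertices (shifting levels by $+3$). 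Hence
$$|T(X)| = 1 + \sum_{a=1}^{s} |T(Y_a)| + \sum_{b=1}^{t} |T(Z_b)|.$$

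Next I would analyze $T(X_0)$. Here there are two cases. If the torus decomposition of $X_0$ is non-trivial, i.e. $X_0$ is itself a JSJ manifold, then its torus-decomposition components are precisely $Y_1,\dots,Y_s$ (the JSJ decomposition is canonical, and cutting $X$ along the tori bounding the $Y_a$'s among themselves recovers the JSJ tori of $X_0$), so $X_0$ is the root of $T(X_0)$ at level $2$ with direct descendants $Y_1,\dots,Y_s$, giving
$$|T(X_0)| = 1 + \sum_{a=1}^{s} |T(Y_a)|.$$
If instead $s=1$, so $X_0 = Y_1$ is a single geometric piece (hyperbolic or Seifert), then $T(X_0) = T(Y_1)$ and $|T(X_0)| = |T(Y_1)|$; in either subcase $|T(X_0)| \leq 1 + \sum_{a=1}^s |T(Y_a)|$. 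Combining with the count of $|T(X)|$ above,
$$|T(X)| - |T(X_0)| \geq \sum_{b=1}^{t} |T(Z_b)| \geq t \geq 1 > 0,$$
using that each $|T(Z_b)| \geq 1$ (every tree has at least its root) and $t\geq 1$ since $X_0$ omits at least one component.

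The main obstacle I anticipate is the bookkeeping around which vertex $X_0$ occupies in $T(X_0)$ and the level-shift convention: the statement of the lemma only asserts $|T(X_0)| < |T(X)|$, so I do not actually need to track levels, only the grafting structure of the tree, which makes the argument cleaner. The one genuinely substantive point to justify carefully is that the torus decomposition of a connected union of torus-decomposition pieces of $X$ is exactly what one expects — namely that no new essential tori appear and none of the $Y_a$-boundary tori become inessential in $X_0$ — but this is standard for JSJ decompositions of manifolds with toral boundary (one can also simply note that even if $X_0$ fails to be a JSJ manifold it is geometric, and then $T(X_0)$ has at most $1+\sum_a|T(Y_a)|$ vertices trivially, which suffices). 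So the only real content is the additive vertex count, and the strict inequality is immediate from $t\geq 1$.
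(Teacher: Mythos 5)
Your proof is correct and is essentially the paper's argument: the paper splits into the same two cases ($X_0$ a single piece versus a union of several pieces) and embeds $T(X_0)$ into $T(X)$ as a proper subtree consisting of the root together with the grafted trees $T(Y_a)$ of the retained pieces, which is exactly your additive vertex count $|T(X)| = 1 + \sum_a|T(Y_a)| + \sum_b|T(Z_b)|$ with $t\geq 1$ omitted subtrees. Both arguments rest on the same (implicit in the paper as well) identification of the torus decomposition of $X_0$ with the induced decomposition by $Y_1,\dots,Y_s$, so your proposal matches the paper's proof in substance.
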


\begin{proof}
If $X_{0}$ is a component of the torus decomposition of $X$ then it corresponds to
a direct descendant of the root of $T(X)$, and clearly $T(X_{0}) \subsetneqq (X)$.
The lemma follows in this case.

If $X_{0}$ is the union of more than one component of the torus decomposition of
$X$, we embed $T(X_{0})$ into $T(X)$ by placing the root of $T(X_{0})$ at the
root of $T(X)$ and using the edges the correspond to the components of the torus
decomposition of $X$ that appear in $X_{0}$.  By assumption this is a strict subset
of the components of the torus decomposition of $X$, and hence no all the edges are used.
After this embedding we can view $T(X_{0})$ and a subtree of $T(X)$, and we
see that  $T(X_{0}) \subsetneqq (X)$.  The lemma follows.
\end{proof}

\section{Cosmetic surgery on $T^2 \times I$}
\label{sec:CosmeticSurgeryOnT2XI}

\bigskip
\noindent
There are two types of theorems proved using $T(X)$.  The first asks ``how much can a manifold get twisted 
when performing cosmetic surgery'' and the second asks ``how many fillings can
result in a manifold fulfilling such-and-such condition''.  
In the next three section we prove theorems of the first type. 
Recall that a cosmetic surgery on $L \subset M$ is a surgery with multislope $\alpha$ 
so that $L(\alpha) \cong M$.  
Below we consider cosmetic surgery on links in $T^{2} \times [0,1]$.
Note that $T^{2} \times [0,1]$ gives a natural projection from $T^{2} \times \{1\}$ to $T^{2} \times \{0\}$;
however, after cosmetic surgery, this identification may change.  Hence the image of a specific slope in
$T^{2} \times \{1\}$ may give an infinite set after cosmetic surgeries (and this does in fact happen).  The theorem
below controls this set, and more generally, the image of a bounded set:

\bigskip\bigskip
\noindent

\begin{thm}
\label{thm:CosmeticSurgeryOnT2XI}

Let $B$ a bounded set of slopes of $T^{2} \times \{1\}$, $L$ be a link in 
$T^{2} \times [0,1]$, and $\mathcal{A} = \{\alpha \ | \ L(\alpha) \cong T^{2} \times [0,1]\}$.  
For $\alpha \in \mathcal{A}$, 
let $B_{\alpha}$ be the set of slopes of $T^{2} \times \{0\}$ that are obtained by projecting $B$
via the natural projection.  

Then $\cup_{\alpha \in \mathcal{A}} B_{\alpha}$ is bounded.
\end{thm}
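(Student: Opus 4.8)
The plan is to run an induction on $|T(E(L))|$, the number of vertices in the tree associated to the exterior of $L$ inside $T^2 \times [0,1]$ (more precisely, to the pair $(E(L), \partial N(L))$, but the boundary tori of $T^2\times[0,1]$ must be carried along as ``marked'' tori whose slopes we track). The base case is when $E(L)$ is already geometric and admits no \mnh\ filling that is relevant, i.e.\ $T(E(L))$ is as small as possible; there $L$ is essentially a trivial link and the set of cosmetic multislopes is controlled directly --- either the surgery changes nothing (so $B_\alpha = B$ for all $\alpha$) or it amounts to Dehn twisting along curves in the product, and then Proposition~\ref{prop:PropertiesOfBoundedSets}(3) finishes it. The inductive step is where all the work lies.

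For the inductive step I would first reduce to analyzing a single cosmetic multislope $\alpha \in \mathcal{A}$ and dichotomize according to whether $\alpha$ admits a \mnh\ partial filling. If $\alpha$ does \emph{not} admit one (the ``totally hyperbolic'' case), then $E(L)(\alpha)$ is hyperbolic at every stage; but $E(L)(\alpha) = T^2\times[0,1]$ minus (nothing, since the surgery refills), so this case can only occur if $E(L)$ itself was not hyperbolic, i.e.\ there is no link to speak of --- or, more carefully, one invokes Proposition~\ref{prop:UsingT(X)forHyperbolicFilling} (with a fixed $\epsilon$) to conclude that only \emph{finitely many} such multislopes exist, and finitely many sets $B_\alpha$ contribute a bounded union by Proposition~\ref{prop:PropertiesOfBoundedSets}(2). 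If $\alpha$ \emph{does} admit a \mnh\ partial filling $\alpha'$, then since $E(L)(\alpha) \cong T^2\times[0,1]$ is irreducible and toroidal (it contains parallel copies of $T^2$), Proposition~\ref{prop:ObtainedByFilling} applies: $E(L)(\alpha)$ is obtained by filling some descendant $X'$ of $E(L)(\alpha')$ in the tree. The descendant $X'$ is a piece of the prime-or-JSJ decomposition of $E(L)(\alpha')$, so $|T(X')| < |T(E(L))|$ by Lemma~\ref{lem:TreeOfJSJ} (and the analogous trivial statement for prime decomposition). Now the induction hypothesis applies to $X'$: the bounded set of slopes we are tracking on the marked torus $T^2\times\{1\}$, when pushed through the decomposition into $X'$ (using the ``induced multislope'' machinery of Subsection~\ref{subsec:notaion}(7) to record how the discarded pieces --- solid tori, balls, knot exteriors in balls --- get refilled), is still bounded; and cosmetic surgery realizing $X'(\text{filling}) \cong T^2\times[0,1]$ keeps the projected image of that bounded set on $T^2\times\{0\}$ bounded.

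The main obstacle --- the step I expect to be genuinely delicate --- is the bookkeeping in passing the bounded set $B$ down through the decomposition and back up, keeping track of \emph{how the marked boundary tori $T^2\times\{0,1\}$ sit relative to the decomposing spheres/tori of $E(L)(\alpha')$}, together with the Dehn twisting introduced along the way. Each time a torus of the JSJ decomposition bounds a solid torus or a knot exterior in a ball that gets removed and refilled (cases (2),(3) in Proposition~\ref{prop:ObtainedByFilling}), the gluing can differ from the product gluing by an arbitrary power of a Dehn twist along the meridian slope; Proposition~\ref{prop:PropertiesOfBoundedSets}(3) is precisely designed to absorb this, but one must check that only \emph{finitely many distinct slopes} (independent of $\alpha$) can serve as twisting axes --- this should follow because the relevant tori live in the fixed manifolds appearing on the finitely many vertices of $T(E(L))$, of which there are only finitely many. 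One must also handle the possibility that a component of $L$ becomes a core of one of the product-complementary solid tori after surgery; here Lemma~\ref{lem:CoresOfSolidTori} controls when this can happen, and when it does the analysis reduces to the solid-torus cosmetic surgery situation (which, in the logical organization of the paper, is Theorem~\ref{thm:SlopesOnSolidTorus}, proved in the next section --- so in practice one either cross-references it or proves the two theorems in tandem by a simultaneous induction on tree size).
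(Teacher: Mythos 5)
Your overall framework is the paper's: induct on $|T(X)|$ for $X = E(L)$, use the fact that every $\alpha\in\mathcal{A}$ admits a \mnh\ partial filling when $X$ is hyperbolic (so the finitely many \mnh\ fillings let you descend the tree), handle prime decomposition by noting all but one factor becomes a ball, and absorb the residual ambiguity with Proposition~\ref{prop:PropertiesOfBoundedSets}. (Two side remarks: your ``totally hyperbolic'' branch is vacuous --- since $T^{2}\times[0,1]$ is not hyperbolic, every $\alpha\in\mathcal{A}$ is a non-hyperbolic multislope and automatically admits a \mnh\ partial filling, so Proposition~\ref{prop:UsingT(X)forHyperbolicFilling} is not needed; and the paper's proof uses neither Lemma~\ref{lem:CoresOfSolidTori} nor Theorem~\ref{thm:SlopesOnSolidTorus} --- indeed Theorem~\ref{thm:SlopesOnSolidTorus} is proved \emph{from} this theorem, so a simultaneous induction is not how the logic runs.)

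The genuine gap is in the JSJ step, which you delegate entirely to Proposition~\ref{prop:ObtainedByFilling}. That proposition only produces \emph{some} descendant (a sink of the co-oriented dual tree) from which $X(\alpha)$ is obtained by filling; it gives no control over where the two marked tori $T^{2}\times\{0\}$ and $T^{2}\times\{1\}$ land, and the statement you are inducting on is precisely about the projection between those two specific boundary tori, so the inductive hypothesis cannot be applied to an arbitrary descendant. The paper instead splits the JSJ case by hand: if $T^{2}\times\{0\}$ and $T^{2}\times\{1\}$ lie in different JSJ pieces, it chooses a decomposing torus $F$ separating them and \emph{composes} two applications of the induction (project $B$ from $T^{2}\times\{1\}$ to a bounded set $B^{F}$ of slopes of $F$, then project $B^{F}$ to $T^{2}\times\{0\}$); if they lie in the same piece $X_{0}$, it replaces the complementary pieces (which fill to solid tori or knot exteriors in balls) by suitably framed solid tori so that $X_{0}(\alpha_{0})\cong X(\alpha)$ with the same product structure, and inducts on $X_{0}$ via Lemma~\ref{lem:TreeOfJSJ}. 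Neither the composition trick nor the marked-tori bookkeeping is supplied by your appeal to Proposition~\ref{prop:ObtainedByFilling}. A second, smaller gap is the Seifert fibered case (the leaves of $T(X)$, which you dismiss as ``essentially a trivial link'' or ``Dehn twisting along curves''): the essential content there is to normalize the Seifert invariants by twisting along vertical annuli so that the effect of every cosmetic surgery on slopes is a power of a Dehn twist about a \emph{single} fixed curve $\gamma$; only then does Proposition~\ref{prop:PropertiesOfBoundedSets}~(3) apply, since compositions of twists about different slopes can carry a bounded set to an unbounded one (this is exactly the pitfall illustrated in Section~\ref{sec:BoundedSets}).
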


\begin{proof}
We will denote $T^{2} \times [0,1] \setminus N(L)$ as $X$ and $\partial N(L)$ as $\mathcal{T}$.
Note that we may regard $\mathcal{A}$ as a set of multislopes on $\mathcal{T} \subset \partial X$.

The proof is an induction on $|T(X)|$.

\bigskip

\noindent {\bf Assume that $X$ is not prime.}  Let $X_{1}$ be the factor of the prime decomposition
of $X$ that contains $T^{2} \times \{0\}$ and  $T^{2} \times \{1\}$ (note that both are contained
in the same factor).  Then any $\alpha \in \mathcal{A}$ induces $\alpha|_{\partial X_{1}}$,
and $X_{1}(\alpha|_{\partial X_{1}}) \cong T^{2} \times S^{1}$.  Moreover, all the other factors of 
the prime decomposition of $X$ become balls after filling and therefore do not effect the
identification of $T^{2} \times \{1\}$ with  $T^{2} \times \{0\}$.  Denote the image of $B$
under the natural projection using the product structure of 
$X_{1}(\alpha|_{\partial X_{1}}) \cong T^{2} \times [0,1]$
by $B_{\alpha|_{\partial X_{1}}}$.
Thus for every $\alpha \in \mathcal{A}$,
$B_{\alpha} = B_{\alpha|_{\partial X_{1}}}$; therefore
 $\cup_{\alpha \in \mathcal{A}} B_{\alpha}  = \cup_{\alpha \in \mathcal{A}} B_{\alpha|_{\partial X_{1}}}$.
Since $|T(X_{1})| < |T(X)|$, $B_{\alpha|_{\partial X_{1}}}$ is bounded  by induction.
The proposition follows in this case.

We assume from now on that $X$ is prime.

\bigskip

\noindent {\bf Assume that $X$ is Seifert fibered.}
Fix a Seifert fiberation of $X$ and a multislope $\alpha \in \mathcal{A}$.
If, for some $T_{i} \subset \mathcal{T}$, $\alpha|_{T_{i}}$ is a fiber 
then $X(\alpha)$ contains a 
sphere that separates $T^{2} \times \{0\}$ from $T^{2} \times \{1\}$, which is impossible since
$X(\alpha) \cong T^{2} \times [0,1]$.  Hence the fiberaion of $X$ extends to a finration
of $T^{2} \times [0,1]$.  We conclude that $X$ is obtained from $T^{2} \times [0,1]$ by
removing fibers.

Note that up to diffeomorphism the only Seifert fiberation of $T^{2} \times [0,1]$ is annulus cross $S^{1}$.
To see this, simply note that if the base orbifold of a Seifert fibered manifold
has positive genus or positive number of orbifold points then it admits a filling
that is not a lens space.  Thus $X$ is obtained from an annulus cross $S^{1}$
by removing a set of $n$ curves that has the form 
$\{p_{1},\dots,p_{n}\} \times S^{1}$, showing that $X$ 
an $n$-times punctured annulus cross $S^{1}$.
Since $X(\alpha)$ results in a Seifert fibered manifold with no exceptional fiber,
the core of the solid torus attached to $T_{i}$ is not an exceptional fiber, and 
hence has the form $p_{i}/1$ in the 
Seifert notation.  Suppose $n>1$.  Following Seifert's original work \cite{seifert}, by performing $k$ twists about an annulus 
connecting fibers with Seifert invariants $\frac {p_{1}}{q_{1}}$ and 
$\frac {p_{i}}{q_{i}}$ the invariants change as follows (for an arbitrary $k \in \mathbb Z$):
$$(p_{1}/q_{1}) \mapsto (p_{1} + k q_{1}/q_{1}) \mbox{ and } (p_{i}/q_{i}) \mapsto (p_{i} - k q_{i}/q_{i}).$$
As in our case $q_{i} = 1$, by choosing $k$ appropriately, we may assume that the filling of $T_{i}$ is
of the form $0/1$, and ignore it (for $i>1$).  Thus we have reduced the problem to the
case $n=1$.   Let $A \subset X$ be an embedded vertical annulus connecting $T_{1}$ with a curve
on $T^{2} \times \{0\}$ that we will denote as $\gamma$. 
By twisting about $A$, we see that the effect of the cosmetic surgery 
is the same as $D_{\gamma}^{n}$, an $n$ power of a Dehn twist about $\gamma$, for some $n \in \mathbb Z$.  
Hence the image of $B$ after all possible cosmetic surgeries on $L$ is:
$$\{D_{\gamma}^{n}(\beta)\ |\ \beta \in B, n \in \mathbb{Z}\}.$$
By Proposition~\ref{prop:PropertiesOfBoundedSets}~(3), this set is bounded.

We assume from now on that $X$ is prime and not Seifert fibered.

\bigskip
\bigskip

\noindent  {\bf Assume that $X$ is a JSJ manifold.}  We will denote the tori of the JSJ
decomposition as $\mathcal{F}$.  Let $\Gamma$ be the graph dual to the torus decomposition
as defined in~\ref{dfn:jsj}.  Since $T^{2} \times [0,1]$
is connected and admits no non-separating tori, $\Gamma$ is a tree.
Note that $\Gamma$ has two (not necessarily distinct) vertices, denoted $v_{0}$
and $v_{1}$, that correspond to the components of the torus decomposition of $X$
that contain $T^{2} \times \{0\}$ and $T^{2} \times \{1\}$.  There are two cases
to consider:

\bigskip
\noindent{\bf Case One:}  $v_{0} \neq v_{1}$.  Let $e$ be an edge on the shortest path 
connecting $v_{0}$ and $v_{1}$ and $F \in \mathcal{F}$ the torus corresponding to $e$.  
In any filling, $F$ separates $T^{2} \times \{0\}$
from $T^{2} \times \{1\}$.  Thus for every $\alpha \in \mathcal{A}$,
we have that $F \subset X(\alpha) \cong T^{2} \times [0,1]$  
is isotopic to $T^{2} \times \{1/2\}$.   Let $X_{0}$ and $X_{1}$ be the components of 
$X$ cut open along $F$ so that $T^{2} \times \{i\} \subset X_{i}$ ($i=1,2$).  
We see that the fillings induced by $\alpha \in \mathcal{A}$
fulfill:
$$X_{i}(\alpha|_{\partial X_{i}}) \cong T^{2} \times [0,1].$$
For $i=0,1$, let
$$\mathcal{A}_{i} = \{\alpha|_{\mathcal{F} \cap \partial X_{i}}\ | \ \alpha \in \mathcal{A}\}.$$
Then for any $\alpha \in \mathcal{A}$, $X(\alpha) = X_{0}(\alpha_{0}) \cup X_{1}(\alpha_{1})$,
where here $\alpha_{i} = \alpha|_{\mathcal{F} \cap \partial X_{i}}$.  
By Lemma~\ref{lem:TreeOfJSJ}, $|T(X_{0})|, |T(X_{1})| < |T(X)|$.
Applying induction to $X_{1}$ we conclude that
$$\cup_{\alpha_{1} \in \mathcal{A}_{1}} B_{\alpha_{1}}$$
is a bounded set of slopes of $F$; we will denote it as $B^{F}$.  Next we apply induction to $X_{0}$
and conclude that
$$\cup_{\alpha_{0} \in \mathcal{A}_{0}} B^{F}_{\alpha_{0}}$$
is  bounded.  It is clear from the discussion above that 
$\cup_{\alpha \in \mathcal{A}} B_{\alpha} \subset \cup_{\alpha_{0} \in \mathcal{A}_{0}} B^{F}_{\alpha_{0}}$;
the theorem follows in case one.

\bigskip
\noindent{\bf Case Two:}  $v_{0} = v_{1}$, that is, that both $T^{2} \times \{0\}$ and $T^{2} \times \{1\}$
are contained in the same component of the torus decomposition of $X$, say $X_{0}$.
We will denote 
%$\partial X_{0} \setminus (T^{2} \times \{0,1\})$ as $\mathcal{T}_{0}$,
%the components of $\mathcal{T}_{0}$ as $T'_{1},\dots,T'_{k}$,
%and 
the components of $\mbox{cl}(X \setminus X_{0})$ as $X_1,\dots,X_{k}$.
%, numbered so that $T'_{i} \subset X_{i}$.  
Since $T^{2} \times [0,1]$ does not admit a non separating torus, if
$X_{i} \cap X_{0}$ consists of more than one component (for some $i$) then
$\mathcal{A} = \emptyset$ and there is nothing to prove.  We assume as we may
that for every $i$, $X_{i} \cap X_{0}$ is a single torus which  we will denote as $T'_{i}$. 
Note that $T'_{i}$ is a component of the JSJ decomposition of $X$ and 
under the assumptions of case two it cannot be boundary parallel in 
$X(\alpha) \cong T^{2} \times [0,1]$.
Hence every $\alpha \in \mathcal{A}$ induces
a filling on every $X_{i}$ fulfilling exactly one of the following conditions:
\begin{enumerate}
\item $X_{i}(\alpha|_{\partial X_{i}}) \cong D^{2} \times S^{1}$.
\item $X_{i}(\alpha|_{\partial X_{i}}) \cong E(K_{i})$, where $K_{i} \subset S^{3}$ is a non
trivial knot and $X_{i}(\alpha|_{\partial X_{i}}) \subset D_{i}$ for some ball $D_{i} \subset X(\alpha)$. 
\end{enumerate}
By Lemma~\ref{lem:KnotExteriorsAreDisjoint} we assume as we may that the balls
$D_i \subset X(\alpha)$ are disjointly embedded.
Given $\alpha \in \mathcal{A}$ we construct a multislope $\alpha_{0}$ of $\mathcal{T}_{0}$ as follows:
\begin{enumerate}
\item If $X_{i}(\alpha|_{\partial X_{i}}) \cong D^{2} \times S^{1}$, then $\alpha_{0}|_{T_{i}}$
is the meridian of the solid torus $X_{i}(\alpha|_{\partial X_{i}})$.
\item If $X_{i}(\alpha|_{\partial X_{i}}) \cong E(K_{i})$ we pick a slope that intersects the
meridian of $E(K_{i})$ exactly once (recall Figure~\ref{fig:unknotting}).
\end{enumerate}
Then $X_{0}(\alpha_{0}) \cong X(\alpha)$ and by construction the product structures of   
$X_{0}(\alpha_{0})$ and $X(\alpha)$ induce the same projection from the slopes
of $T^{2} \times \{1\}$ to those of of $T^{2} \times \{0\}$.

Let $\mathcal{A}_{0}$ be the set of all multislopes of $X_{0}$ so that 
$X_{0}(\alpha_{0}) \cong T^{2} \times [0,1]$.  For each $\alpha_{0}\in \mathcal{A}_{0}$
we will denote the image of $B$ under the projection induced by the product structure as 
$ B_{\alpha_{0}}$.   If $\alpha_{0}$ was constructed as above for some $\alpha \in \mathcal{A}$,
then $B_{\alpha} = B_{\alpha_{0}}$.  Thus we see:
$$\cup_{\alpha \in \mathcal{A}} B_{\alpha} \subset \cup_{\alpha_{0} \in \mathcal{A}_{0}} B_{\alpha_{0}}.$$
By Lemma~\ref{lem:TreeOfJSJ}, $|T(X_{0})| < |T(X)|$; hence by induction
$\cup_{\alpha_{0} \in \mathcal{A}_{0}} B_{\alpha_{0}}$ is bounded.
The theorem follows in case two.
 
We assume from now on the $X$ is prime, not Seifert
fibered, and not a JSJ manifold.

\bigskip

\noindent  {\bf Assume that $X$ is hyperbolic.}   As $T^{2} \times [0,1]$ is non-hyperbolic,
any multislope $\alpha$ with $X(\alpha) \cong T^{2} \times [0,1]$ admits a \mnh\ partial
filling.  By Proposition~\ref{pro:mnhIsFinite} $X$ admits only finitely many
\mnh\ multislopes, say denoted by $\alpha_{1},\dots,\alpha_{k}$.  For $1 \leq i \leq k$,
we will denote $X(\alpha_{i})$ as $X_{i}$.
Let $\mathcal{A}_{i} = \{\alpha|_{\partial{X}_{i}} \ | \ \alpha \in \mathcal{A}, \ \alpha_{i} \pf \alpha\}$,
that is, $\mathcal{A}_{i}$ consists of the multislopes induced on $X_{i}$ by multislopes $\mathcal{A}$
that admits $\alpha_{i}$ as a partial filling.
For $\alpha_{i} \in\mathcal{A}_{i}$, we will denote the image of $B$ under the 
projection induced by the product structure of $X_{i}(\alpha_{i})$ as $B_{\alpha_{i}}$. 
Since $X_{i}$ is a direct descendant of the root of $T(X)$, $|T(X_{i})| < |T(X)|$.
By induction $\cup_{\alpha_{i} \in \mathcal{A}_{i}} B_{\alpha_{i}}$ is bounded.  
Since every $\alpha \in \mathcal{A}$ admits a \mnh\ partial filling, for every such $\alpha$
there exists $i$ so that
$$X(\alpha) = X_{i}(\alpha_{i}),$$
where here  $\alpha_{i} = \alpha|_{\partial X_{i}} \in \mathcal{A}_{i}$.  Hence:
$$\cup_{\alpha \in \mathcal{A}} B_{\alpha} = \cup_{i=1}^{k} 
\cup_{\alpha_{i} \in \mathcal{A}_{i}} B_{\alpha_{i}}.$$
Since bounded sets are closed under finite union, $\cup_{\alpha \in \mathcal{A}} B_{\alpha}$
is bounded.

This concludes the proof of Theorem~\ref{thm:CosmeticSurgeryOnT2XI}.
\end{proof}

\section{Cosmetic surgery on solid torus}
\label{sec:CosmeticSurgeryOnSolidTorus}

\bigskip
\noindent
Let $V$ be a solid torus and $L \subset V$ a link.  In this section we address the following question: 
how many slopes one $\partial V$ become the boundary of a meridian disk after cosmetic surgery on $L$?  
Before stating the main theorem of this section, we consider some examples.  If some component of 
$L$ is the core of $V$, then (trivially) the answer is {\it every slope.}  Next, let $L' = K'_{1} \cup K'_{2}$ 
be a two component link, where $K_{1}'$ is a knot that is not a torus knot and admits a non-trivial cosmetic 
surgery (see Gabai~\cite{gabai} and Berge~\cite{berge}) and $K_{2}'$ is a core.  Assume further that $K'_{2}$ 
was isotoped to be in a very ``complicated'' configuration with respect to $K_{1}'$ (we allow $K_{2}'$ to pass 
through $K_{1}'$ during this isotopy, changing $L'$ but not $K_{1}'$ or $K'_{2}$).  
Let $L = K_{1} \cup K_{2}$ be the image of $K_{1}'$ and $K_{2}'$
in the solid torus obtained by cosmetic surgery on $K_{1}'$.  Then, due to the isotopy discussed above, we 
expect that $K_{2}$ is not a core of the solid torus and certainly $K_{1}$ is not.  However, $L$ admits 
cosmetic surgeries that realize every slope on the boundary of the solid torus as the boundary of the meridian disk.  
The trouble is that although $K_{2}$ is not a core
of the solid torus it becomes a core after cosmetic surgery.

\bigskip\noindent
With this in mind we state the main result of this section; note that condition~(2)
of the theorem is equivalent to requiring that non of the cores of the attached solid tori
is a core of $L(\alpha)$:

\begin{thm}
\label{thm:SlopesOnSolidTorus}
Let $V$ be a solid torus and $L \subset V$ a link.  Consider
the set of multislopes $\mathcal{A}$ of $L$ so that for any 
$\alpha \in \mathcal{A}$ the following two conditions hold:
	\begin{enumerate}
	\item $L(\alpha) \cong D^{2} \times S^{1}$.
	\item For any $\alpha' \pf \alpha$, $L(\alpha') \not\cong T^{2} \times [0,1]$. 
	\end{enumerate}
Then the set of slopes on $\partial V$ that bounds a disk in $\{L(\alpha)\ |\ \alpha \in S\}$ is bounded.
\end{thm}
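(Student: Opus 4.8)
The plan is to prove the theorem by induction on $|T(X)|$, where $X=E(L)=V\setminus\mathrm{int}N(L)$, following the same four‑case scheme (non‑prime / Seifert fibered / JSJ / hyperbolic) used in the proof of Theorem~\ref{thm:CosmeticSurgeryOnT2XI}. It is convenient first to reformulate the statement in a way that makes the induction close: let $X$ be any manifold, $T$ a component of $\partial X$, and $\mathcal A$ the set of multislopes $\alpha$ of $\partial X\setminus T$ with $X(\alpha)\cong D^2\times S^1$ and such that no partial filling $\alpha'\pf\alpha$ has $X(\alpha')\cong T^2\times[0,1]$; writing $\mu(\alpha)$ for the slope on $T$ bounding a meridian disk of $X(\alpha)$, the claim becomes that $\{\mu(\alpha)\mid\alpha\in\mathcal A\}$ is bounded. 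Taking $(X,T)=(E(L),\partial V)$ recovers the stated theorem (condition~(2) is exactly the requirement that no $\alpha'\pf\alpha$ give $T^2\times[0,1]$), and this formulation applies verbatim to every manifold produced below, each of which is again the exterior of a link in a solid torus.

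If $X$ is Seifert fibered it is a leaf of $T(X)$ (a solv $X$ cannot occur, since $E(L)$ has non‑empty boundary), and we argue directly, in parallel with the Seifert case of Theorem~\ref{thm:CosmeticSurgeryOnT2XI}. Condition~(2) already rules out $X\cong T^2\times[0,1]$, and together with Lemma~\ref{lem:CoresOfSolidTori} it rules out the degenerate configurations in which a component of $L$ becomes a core or an exceptional fiber of $X(\alpha)\cong D^2\times S^1$. In the remaining situations the Seifert fibration of $X$ extends to that of $X(\alpha)$; as $X$ admits only finitely many Seifert fibrations there are finitely many possible fiber slopes $\phi_1,\dots,\phi_r$ on $T$, and Seifert's twisting formula (with the exponents in the role of the $q_i=1$ case of Theorem~\ref{thm:CosmeticSurgeryOnT2XI}) shows that every $\mu(\alpha)$ lies in $\bigcup_{t}\bigcup_{\nu}\{D_{\phi_t}^{m}(\nu)\mid m\in\mathbb Z\}$ for a finite set of reference slopes $\nu$; this set is bounded by Proposition~\ref{prop:PropertiesOfBoundedSets}(3) and~(2).

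If $X$ is reducible, let $X_1$ be the prime factor containing $T$. Every $\alpha\in\mathcal A$ induces $\alpha|_{\partial X_1}$ with $X_1(\alpha|_{\partial X_1})\cong D^2\times S^1$, the other factors become $S^3$ after filling by $\alpha$ and so do not affect the product structure, whence $\mu(\alpha)=\mu(\alpha|_{\partial X_1})$. A partial filling of $\alpha|_{\partial X_1}$ yielding $T^2\times[0,1]$ would extend, by $\alpha$ on the other factors, to a partial filling of $\alpha$ whose result is $(T^2\times[0,1])\#S^3\#\dots\#S^3\cong T^2\times[0,1]$, contradicting condition~(2) for $\alpha$; hence $\alpha|_{\partial X_1}\in\mathcal A(X_1)$, and since $|T(X_1)|<|T(X)|$ induction applies. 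The hyperbolic case is handled the same way: $D^2\times S^1$ is non‑hyperbolic, so every $\alpha\in\mathcal A$ admits a \mnh\ partial filling; there are only finitely many \mnh\ multislopes $\alpha_1,\dots,\alpha_k$ (Proposition~\ref{pro:mnhIsFinite}); for $\alpha_i\pf\alpha$ we have $X(\alpha)=X_i(\alpha|_{\partial X_i})$ and $\mu(\alpha)=\mu(\alpha|_{\partial X_i})$ where $X_i:=X(\alpha_i)$; condition~(2) is inherited because a $T^2\times[0,1]$ partial filling of $\alpha|_{\partial X_i}$ extends (using $\alpha_i$ on the components $\alpha_i$ fills) to one of $\alpha$; and $|T(X_i)|<|T(X)|$, so induction plus closure of bounded sets under finite unions finishes the case.

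The remaining, and hardest, case is $X$ a JSJ manifold, where I would follow Case Two of the proof of Theorem~\ref{thm:CosmeticSurgeryOnT2XI}: the dual graph $\Gamma$ of the JSJ decomposition is a tree (since $D^2\times S^1$ has no non‑separating torus), $T$ lies in a single torus‑decomposition piece $X_0$, and for each JSJ torus $F_j$ meeting $X_0$ the piece $\bar Y_j$ on the far side of $F_j$ (its link‑meridian components filled by $\alpha$) is, because $\pi_1(D^2\times S^1)$ is abelian, either a solid torus or a non‑trivial knot exterior contained in a ball (use Lemma~\ref{lem:KnotExteriorsAreDisjoint} to disjoin the balls). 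Replacing each $\bar Y_j$ by a solid torus glued along $F_j$ — with meridian the meridian of $\bar Y_j$ in the solid‑torus case, or the unknotting slope of Figure~\ref{fig:unknotting} in the knot‑exterior case — produces a multislope $\alpha_0$ of $\partial X_0\setminus T$ with $X_0(\alpha_0)\cong X(\alpha)$ and the same $\mu$, and $|T(X_0)|<|T(X)|$ by Lemma~\ref{lem:TreeOfJSJ}. The delicate point, which I expect to be the main obstacle, is that — unlike in the reducible and hyperbolic cases — condition~(2) is not automatically inherited by $\alpha_0$: when $F_j$ cuts off a non‑trivial knot exterior in a ball, a partial filling of $\alpha_0$ may give $T^2\times[0,1]$ without the corresponding partial filling of $\alpha$ doing so, because the unknotting equivalence of Figure~\ref{fig:unknotting} is valid only inside the ambient $D^2\times S^1$. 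To handle this I would use Lemma~\ref{lem:CoresOfSolidTori} and Proposition~\ref{pro:SolidTorusSurgery2} to show that the $\alpha\in\mathcal A$ for which $\alpha_0\notin\mathcal A(X_0)$ contribute only a bounded set of slopes of $T$ (such an $\alpha$ forces one of the new cores $c_j$ to become a core of $X(\alpha)$, which — via the induced product structure and the bilipschitz behaviour of corresponding slopes, Lemma~\ref{lem:CorrespondingSlopesFeray} — pins $\mu(\alpha)$ down up to twisting along a bounded family of slopes), and then apply the induction hypothesis to $X_0$ for the remaining $\alpha$'s, whose induced $\alpha_0$ do lie in $\mathcal A(X_0)$. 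Assembling the four cases completes the induction and hence the proof.
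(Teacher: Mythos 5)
Your overall plan is the paper's plan (induction on $|T(X)|$ with the non-prime, Seifert, JSJ, hyperbolic cases), and your non-prime and hyperbolic cases, as well as the verification that condition~(2) is inherited there, match the actual proof; the Seifert case is sketchier than the paper's (which shows the meridian is either the fiber slope or meets the fiber exactly $p$ times, $p$ the multiplicity of the one exceptional fiber, and then quotes Proposition~\ref{prop:PropertiesOfBoundedSets}(4)), but your twisting normalization is workable. The genuine gap is exactly at the ``delicate point'' you flag in the JSJ case, and it is twofold. First, your diagnosis of where condition~(2) fails to pass to $\alpha_0$ is misplaced: if the piece on the far side of $F_j$ fills to a non-trivial knot exterior contained in a ball, then $F_j$ lies in a ball of $X(\alpha)\cong D^{2}\times S^{1}$ and can never become parallel to $\partial V$, so the unknotting replacement causes no loss there (this is precisely how the paper verifies condition~(2) for $\hat\alpha$ in that subcase). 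The real failure occurs when the far side of $F_j$ fills to a solid torus whose core is a core of $X(\alpha)$ without being a component of the surgered link: then $F_j$ is parallel to $\partial V$, the filling of $X_0$ along the other $F_{j'}$ is $T^{2}\times[0,1]$, and condition~(2) fails for $\alpha_0$ even though it holds for $\alpha$ --- note there is no ``corresponding partial filling of $\alpha$'' at all, since $F_j$ is not a component of $\partial E(L)$.

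Second, and more seriously, the tools you propose for this bad set do not close the argument. Lemma~\ref{lem:CorrespondingSlopesFeray} is about covers of tori and has no role here; Proposition~\ref{pro:SolidTorusSurgery2} assumes no component of the link is a core and bounds the surgery slopes on the filled tori, not the meridian on $\partial V$ --- the opposite of the configuration you are in; Lemma~\ref{lem:CoresOfSolidTori} gives only a homological necessary condition. What is actually needed (and what the paper does) for a multislope $\alpha$ with $F_j$ parallel to $\partial V$: apply the inductive hypothesis to the far-side piece $X_j$ (smaller tree by Lemma~\ref{lem:TreeOfJSJ}; its condition~(2) is checked by gluing a hypothetical $T^{2}\times[0,1]$ partial filling of $X_j$ to the product region between $F_j$ and $\partial V$, which would violate condition~(2) for $\alpha$), obtaining a bounded set of meridian slopes on $F_j$; and then transfer this bounded set from $F_j$ to $\partial V$ across the intermediate region, whose product structure varies with $\alpha$, by invoking the $T^{2}\times[0,1]$ Cosmetic Surgery Theorem~\ref{thm:CosmeticSurgeryOnT2XI}. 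Your proposal never uses Theorem~\ref{thm:CosmeticSurgeryOnT2XI} and never bounds the far-side meridians, so the claim that the core condition ``pins $\mu(\alpha)$ down up to twisting along a bounded family of slopes'' is unjustified: the projection from $F_j$ to $\partial V$ depends on the cosmetic filling of the product region, and controlling that dependence is exactly the content of Theorem~\ref{thm:CosmeticSurgeryOnT2XI}.
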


\begin{proof}
Let $X = V \setminus N(L)$ and denote the components of $\partial X$ by 
$T,T_{1},\dots,T_{n}$, with $T=\partial X$.  We will denote $L(\alpha)$ as
$X(\alpha)$; this is consistent with the notation of subsection~\ref{subsec:notaion}
and should cause no confusion.  We induct on $|T(X)|$.

\bigskip
\noindent  {\bf Assume $X$ is not prime.}  Let $S \subset X$ be an essential separating sphere that
realizes the decomposition $X = X' \# X''$, so that $X'$ is prime and $T \subset X'$ 
(we are not assuming that $X''$ is prime).
For any $\alpha \in \mathcal{A}$, $X(\alpha) \cong D^{2} \times S^{1}$ which is prime.  
Therefore after filling one side of $S$ becomes a ball $D$ 
and the other becomes a solid torus whose boundary is $T$.  We conclude that $\alpha$ induces $\alpha|_{\partial X'}$
and the following conditions hold:
	\begin{enumerate}
	\item $X'(\alpha|_{\partial X'}) \cong D^{2} \times S^{1}$ and its meridian is the same slope of $T$ 
	as the meridian of $X(\alpha)$.
	\item There does not exist $\alpha' \pf \alpha|_{\partial X'}$ so that $X'(\alpha') \cong T^{2} \times [0,1]$. 
	Otherwise, there would be $\alpha'' \pf \alpha$ so that $X(\alpha'') \cong T^{2} \times [0,1]$, 
	contradicting our assumptions.
	\item $|T(X')| < |T(X)|$: this is immediate from the construction of $T(X)$, 
	since $X'$ is a direct descendant of the root $X$.
	\end{enumerate}
Applying the inductive hypothesis, we conclude that the set of meridians of $X'(\alpha|_{\partial X'})$ 
for $\alpha \in \mathcal{A}$ is bounded; the theorem follows in this case.

We assume from now on that $X$ is prime.

\bigskip
\noindent {\bf Assume $X$ is Seifert fibered.}  Fix a Seifert fiberation on $X$.  
First assume that, for some $\alpha \in \mathcal{A}$ and some $i$, $\alpha|_{T_{i}}$
is the fiber; we will denote the meridian disk of the solid torus attached to $T_{i}$ 
as $D_{i}$.  
Then the result of gluing $D_{i}$ to a vertical annulus connecting $\partial D_{i}$
to the regular fiber on $T$ is a meridian disk for $X(\alpha)$.  Hence the meridian
of $X(\alpha)$ is the regular fiber.  From now on we consider the subset of $\mathcal{A}$
consisting of multislopes for which
$\alpha|_{T_{i}}$ is not a fiber in the fiberation of $X$ (for all $i$).  To avoid overly complicated
notation we do not rename $\mathcal{A}$.

Thus the fiberation of $X$ extends to a fiberation of $X(\alpha)$.
Since $X(\alpha) \cong D^{2} \times S^{1}$ 
its base orbifold is $D^{2}$ with at most one orbifold point.  
We see that the base orbifold for $X$ is a punctured disk with at most one orbifold point.  
If there is no orbifold point, then there is an index $i$ so that for any $i' \neq i$, 
$\alpha|_{T_{i'}}$ intersects the fiber exactly once (possibly, $\alpha|_{T_{i}}$
intersects the fiber once as well).
Define $\alpha' \pf \alpha$ by setting $\alpha'|_{T_{i}} = \nos$
and $\alpha'|_{T_{i'}} = \alpha|_{T_{i'}}$ for all $i' \neq i$.
Then $X(\alpha')$ is a Seifert fiber space over an annulus with no exceptional fibers
and hence $X(\alpha') \cong T^{2} \times [0,1]$, contradicting condition~(2) of the theorem.  
We assume as we may that the base orbifold of $X$ has exactly one orbifold point.
Denote the multiplicity of the critical fiber by $p \geq 2$.  Then the meridian of
$X(\alpha)$ intersects a fiber on $\partial X$ exactly $p$ times; this gives a
bounded set by Proposition~\ref{prop:PropertiesOfBoundedSets}~(4).

We assume from now on that $X$ is prime and not Seifert fibered.

\bigskip
\noindent  {\bf Assume $X$ is a JSJ manifold.}   Let $X_{0}$ be the component of the JSJ decomposition
of $X$ that contains $T$ and denote the components of $\partial X_{0} \setminus T$ as $F_{j}$, $j=1,\dots,{m}$.  We will denote the
closures of the components of $X \setminus X_{0}$ as $X_{j}$.  
To avoid the situation where $X_{j} = \emptyset$, if $F_{j} \subset \partial X$ we push it slightly 
into the interior of $X_{0}$ so that $X_{j} \cong T^{2} \times [0,1]$ in that case.  
Since $D^{2} \times S^{1}$ admits no non-separating tori, we assume as we may 
that $X_{j} \cap X_{j'} = \emptyset$ for $j \neq j'$.
By reordering the indices of $X_{j}$ if necessary we may assume that $F_{j} \subset X_{j}$.
Finally, given $\alpha \in \mathcal{A}$ and $1 \leq j \leq m$, we will denote the components of 
$X(\alpha)$ cut open along $F_{j}$ as $X(\alpha)_{j}^{+}$ and $X(\alpha)_{j}^{-}$, with 
$\partial X(\alpha)_{j}^{+} = T \cup F_{j}$
and $\partial X(\alpha)_{j}^{-} = F_{j}$.   Consider the following subsets 
$\mathcal{A}_{j} \subset \mathcal{A}$ (for $j=0,\dots,m$):
	\begin{enumerate}
	\item $\mathcal{A}_{0}$ consist of all the multislopes $\alpha \in \mathcal{A}$ so that for all $j$, 
	$X(\alpha)_{j}^{+} \not\cong T^{2} \times [0,1]$.
	\item For $1 \leq j \leq m$, $\mathcal{A}_{j}$ consist of all the multislopes $\alpha \in \mathcal{A}$ 
	so that $X(\alpha)_{j}^{+} \cong T^{2} \times [0,1]$.
	\end{enumerate}
It is immediate from the definitions that
$$\mathcal{A} = \cup_{j=0}^{m} \mathcal{A}_{j}.$$

We first consider multislopes $\alpha \in \mathcal{A}_{0}$.  
By definition of $\mathcal{A}_{0}$, no
$F_{j}$ is boundary parallel in $X(\alpha) \cong D^{2} \times S^{1}$; thus 
every torus $F_{j}$ bounds a solid torus
or a non-trivial knot exterior $E(K_{j})$.  For each $F_{j}$ that bounds a solid torus, 
let $\hat\alpha|_{F_{j}}$ be the slope of $F_{j}$ defined by the 
meridian of that solid torus.  For each torus $F_{j}$ that bounds a non trivial
knot exterior $E(K_{j})$ we do the following:
by Lemma~\ref{lem:KnotExteriorsAreDisjoint} we may assume that $E(K_{j}) \subset D_{j}$
for disjointly embedded balls $D_{j}$.
We replace every $E(K_{j})$ with a solid torus (which we will denote as $V_{j}$) so that the meridian of $V_{j}$
intersects that meridian of $E(K_{j})$ exactly once.  
This does no change the ambient manifold and, since the changes
are contained in balls, the slope of $T$ that is the meridian of the solid 
torus $X(\alpha)$ is not changed.  
Let $\hat\alpha|_{F_{j}}$ be the slope of $F_{j}$ defined by the meridian of 
$V_{j}$.  Thus we have defined a slopes $\hat\alpha|_{F_{j}}$ for every $1 \leq j\leq m$;
together they induce a multislope of $\{F_{j}\}_{j=1}^{m}$ which we will denote by $\hat\alpha$.
We claim that $\hat\alpha$ fulfills the following two conditions:
	\begin{enumerate}
	\item $X_{0}(\hat\alpha) \cong D^{2} \times S^{1}$: this is immediate from the construction.
	\item there is no partial filling $\hat\alpha' \pf \hat\alpha$ so that 
	$X_{0}(\hat\alpha') \not\cong T^{2} \times [0,1]$:
	assume, for a contradiction, that such a partial filling $\hat\alpha'$ exists.
	Since $T^{2} \times [0,1]$ has two boundary components, $\hat\alpha'$ is
	obtained from $\hat\alpha$ by setting the value of $\hat\alpha|_{F_{i}}$ to $\nos$
	on exactly one torus $F_{j}$.  By the defining condition for $\mathcal{A}_{0}$
	this is impossible for values of $j$ for which $X(\alpha)_{j}^{-} \cong D^{2} \times S^{1}$, 
	and for other values of $j$ this is impossible because $F_{j}$ is contained in
	the ball $D_{j} \subset X(\alpha)$.
	\end{enumerate}
Thus $\hat\alpha$ satisfies the assumptions of Theorem~\ref{thm:SlopesOnSolidTorus}.  
By construction $X_{0}$ corrsponds to a 
direct descendant of the root of $T(X)$, and therefore $|T(X_{0})| < |T(X)|$.
By induction, the set of meridians of the solid tori $X_{0}(\hat\alpha)$ (as 
$\hat\alpha$ varies over all possible multislope that correspond to multislope 
$\alpha \in \mathcal{A}_{0}$) form a bounded set of slopes of $T$, which we will
denote as $B_{0}$.   By construction,
the set of meridians of $X(\alpha)$ for $\alpha \in \mathcal{A}_{0}$ is $B_{0}$.

Next fix $1 \leq j \leq n$ and consider $\alpha \in \mathcal{A}_{j}$.  By the defining 
condition for $\mathcal{A}_{j}$, 
$X(\alpha)_{j}^{+} \cong T^{2} \times S^{1}$.  Hence 
$X(\alpha)_{j}^{-} \cong X(\alpha) \cong D^{2} \times S^{1}$.
Note that $X(\alpha)_{j}^{-}$ is obtained by filling a component of $X$ cut open along $F_{j}$ 
denoted above as $X_{j}$; the induced filling is given by $\alpha|_{\partial X_{j}}$.  We
claim that the following conditions hold:
	\begin{enumerate}
	\item $X_{j}(\alpha|_{\partial X_{j}}) \cong D^{2} \times S^{1}$:  this is immediate from the construction.
	\item There is no partial filling $\alpha|_{\partial X_{j}}' \pf \alpha|_{\partial X_{j}}$ with
	$X_{j}(\alpha|_{\partial X_{j}}') \cong T^{2} \times [0,1]$: otherwise, there would be a corresponding 
	partial filling $\alpha' \pf \alpha$ so that 
	$$X(\alpha') \cong X(\alpha)_{j}^{+} \cup_{F_{j}} 
	X_{j}(\alpha|_{\partial X_{j}}') \cong T^{2} \times [0,1],$$
	contradicting the assumptions of the theorem.  
	\item $|T(X_{j})| < |T(X)|$: this follows from Lemma~\ref{lem:TreeOfJSJ}.
	\end{enumerate}
By induction, the set of 
slopes of meridians of $X_{j}(\alpha|_{\partial X_{j}})$ is bounded; we will denote it as $B_{j}'$.
By Theorem~\ref{thm:CosmeticSurgeryOnT2XI} 
the set of slopes of $T$ obtained by projecting $B_{j}'$ 
after all possible cosmetic surgeries on $L \cap X(\alpha)_{j}^{+}$ 
is bounded; we will it as $B_{j}$.  Clearly, the set of meridians on $X(\alpha)$ 
(for $\alpha \in \mathcal{A}_{j}$) is contained in $B_{j}$.

We have obtained $m+1$ bounded sets, namely, $B_{0},\dots,B_{m}$, so that the meridians of $X(\alpha)$ 
(for $\alpha \in \mathcal{A}$) are contained in $\cup_{j=0}^{m} B_{j}$.  The theorem follows in this case.

We assume from now on that $X$ is prime, not Seifert fibered, and not a JSJ manifold.

\bigskip
\noindent  {\bf Assume $X$ is hyperbolic.}  By Proposition~\ref{pro:mnhIsFinite}
$X$ admits only finitely many \mnh\ fillings, which we will denote as
$\alpha_{1},\dots,\alpha_{k}$.  Fix $1 \leq j \leq k$.  If there is no $\alpha \in \mathcal{A}$
for which $\alpha_{j} \pf \alpha$, we set $B_{j} = \emptyset$; otherwise,
any $\alpha \in \mathcal{A}$ for which $\alpha_{j} \pf \alpha$ induces the 
multislope $\alpha|_{\partial X(\alpha_{j})}$ on $\partial X(\alpha_{j})$.
We claim that $X(\alpha_{j})$ and $\alpha|_{\partial X(\alpha_{j})}$ fulfill the following four conditions:
\begin{enumerate}
\item $X(\alpha_{j})(\alpha|_{\partial X(\alpha_{j})}) 
\cong D^{2} \times S^{1}$: this is immediate, 
as $X(\alpha_{j})(\alpha|_{\partial X(\alpha_{j})}) = X(\alpha)$ (we emphasize that
this is equality, not up to diffeomorphism).
\item $\partial X(\alpha_{j})(\alpha|_{\partial X(\alpha_{j})}) = T$: 
this is immediate, as above.
\item $\alpha|_{\partial X(\alpha_{j})}$ does not admit a partial filling
 $\alpha' \pf \alpha|_{\partial X(\alpha_{j})}$ so that
$X(\alpha_{j})(\alpha') \cong T^{2} \times [0,1]$:  otherwise the corresponding partial filling
of $\alpha$ would yield $T^{2} \times [0,1]$, violating the second assumption of the theorem.
\item The meridians of the solid tori $X(\alpha_{j})(\alpha|_{\partial X(\alpha_{j})})$ 
and $X(\alpha)$ define the same slope of $T$: this is immediate,
as~(1).
\end{enumerate}
We will denote as $\mathcal{A}_{j} \subset \mathcal{A}$ the set
$$\mathcal{A}_{j} = \{ \alpha|_{X(\alpha_{j})} \ | \ \alpha \in \mathcal{A}, \\ \alpha_{j} \pf \alpha\}.$$ 
By points~(1)-(3) above, $X(\alpha_{j})$ and $\mathcal{A}_{j}$ 
%$\alpha|_{\partial X(\alpha_{j})} \in \mathcal{A}_{j}$ 
fulfill the assumptions of the theorem.  
By construction $T(X(\alpha_{j}))$ corresponds to a direct descendant of the root of $T(X)$; 
therefore $|T(X(\alpha_{j}))| < |T(X)|$.   By induction, the meridians of
$$\{X(\alpha_{j})(\alpha|_{\partial X(\alpha_{j})}) \ | \ \alpha \in \mathcal{A}_{j}\}$$
form a bounded set of slopes of $T$, 
which we will denote as $B_{j}$.  Since $D^{2} \times S^{1}$ is not hyperbolic, 
for every $\alpha \in \mathcal{A}$, there is $1 \leq j \leq k$, so that 
$\alpha_{j} \pf \alpha$; by point~(4) above the meridian
of $X(\alpha)$ is in $B_{j}$.  
We see that
the meridians of $\{X(\alpha) | \alpha \in \mathcal{A}\}$ are 
$$\cup_{j=1}^{k} B_{j}.$$
The theorem follows, as the finite union of bounded sets is bounded.
\end{proof}

\bigskip\noindent  
Next we prove a proposition about fillings that yield $D^{2} \times S^{1}$; it will be
used in the proof of the ultimate claim in the paper, Proposition~\ref{pro:TheLastCase}.

\begin{prop}
\label{pro:SolidTorusSurgery2}
Let $X$ be a compact orientable connected manifold so that $\partial {X}$ consists of tori.
Denote the components of $\partial X$ by $T,T_{1},\dots,T_{n}$.  
Fix $\mathcal{T}$ a non empty subset of $\{T_{1},\dots,T_{n}\}$.
For a multislope $\alpha$ on $T_{1},\dots,T_{n}$, we will denote
the link formed by the cores of the solid tori attached to $\mathcal{T}$ as $\mathcal{L}$.  
Let $\mathcal{A}$ be a set of multislopes of $\partial X$ so that every $\alpha \in \mathcal{A}$
satisfies the following conditions:
	\begin{enumerate}
	\item $X(\alpha) \cong D^{2} \times S^{1}$.
	\item $X(\alpha)  \setminus \mbox{int}N(\mathcal{L})$ is irreducible.
	\item No component of $\mathcal L$ is a core of $X(\alpha)$.
	\item $\alpha|_{T} = \nos$.
	\end{enumerate}
	
Then for each $T \in \mathcal{T}$, there exists a bounded set $B_{T}$ of 
the slopes of $T$, so that for any $\alpha \in \mathcal A$ there exists $T \in \mathcal{T}$ so that
$$\alpha|_{T} \in B_{T}.$$
\end{prop}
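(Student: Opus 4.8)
The plan is to prove the proposition by induction on $|T(X)|$, following the template of the proofs of Theorems~\ref{thm:CosmeticSurgeryOnT2XI} and~\ref{thm:SlopesOnSolidTorus}. After reindexing write $\mathcal{T} = T_{1}\cup\cdots\cup T_{m}$ with $m\geq 1$. Two observations hold for every $\alpha\in\mathcal{A}$: since $X(\alpha)\cong D^{2}\times S^{1}$ has the single boundary torus $T$ and $\alpha|_{T}=\nos$, no component of $\alpha$ on $T_{1},\dots,T_{n}$ equals $\nos$, so each core is a genuinely embedded knot; and $X(\alpha)\setminus\mbox{int}N(\mathcal{L})$ is exactly $X$ with $T_{m+1},\dots,T_{n}$ filled along $\alpha$. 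The induction splits into the usual four cases according to the position of $X$ in $T(X)$ — $X$ reducible, Seifert fibered, a JSJ-manifold, or hyperbolic — with the Seifert case (and the vacuous sol case, since $\partial X\supseteq T\cup T_{1}$) the base case. If $X$ is not prime, let $X'$ be the prime factor containing $T$; since $X(\alpha)$ is prime all other factors become balls after filling, and condition~(2) forces every component of $\mathcal{T}$ into $\partial X'$, because a component of $\mathcal{T}$ in another factor would make the corresponding sphere of the prime decomposition essential in $X(\alpha)\setminus\mbox{int}N(\mathcal{L})$. Then $X'$ with the induced data satisfies (1)--(4) and $|T(X')|<|T(X)|$, so the inductive hypothesis applies.

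If $X$ is hyperbolic, then $X(\alpha)\cong D^{2}\times S^{1}$ is non-hyperbolic, so every $\alpha\in\mathcal{A}$ admits a \mnh\ partial filling $\alpha'\pf\alpha$, and there are finitely many \mnh\ fillings of $X$ (Proposition~\ref{pro:mnhIsFinite}); if $\alpha'$ fills a component $T_{i}\in\mathcal{T}$ then $\alpha|_{T_{i}}=\alpha'|_{T_{i}}$ takes one of finitely many values, which we put into $B_{T_{i}}$, while if $\alpha'$ fills only components outside $\mathcal{T}$ then $X(\alpha')$ is a direct descendant of the root, $\mathcal{T}\subseteq\partial X(\alpha')$, the induced data satisfies (1)--(4), and $|T(X(\alpha'))|<|T(X)|$, so induction applies; a finite union finishes this case. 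If $X$ is a JSJ-manifold the reduction is as in the JSJ case of Theorem~\ref{thm:SlopesOnSolidTorus}: let $X_{0}$ be the torus-decomposition piece containing $T$, let $X_{1},\dots,X_{k}$ be the others, and let $F_{j}$ be a JSJ torus between $X_{j}$ and $X_{0}$; inside $X(\alpha)\cong D^{2}\times S^{1}$ (irreducible and atoroidal) each $F_{j}$ is boundary parallel, bounds a solid torus, or bounds a non-trivial knot exterior inside a ball, the balls taken disjoint by Lemma~\ref{lem:KnotExteriorsAreDisjoint}, and in every case the ``small'' side is the side away from $T$. Condition~(2) implies that no component of $\mathcal{T}$ lies in a piece $X_{j}$ bounded off by a knot exterior in a ball (otherwise the boundary sphere of that ball would be essential in $X(\alpha)\setminus\mbox{int}N(\mathcal{L})$). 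Hence a component $T_{i}\in\mathcal{T}$ in a side piece $X_{j}$ lies in a piece that fills to a solid torus, and we apply the proposition inductively to $X_{j}$ with $F_{j}$ in the role of $T$, using $|T(X_{j})|<|T(X)|$ from Lemma~\ref{lem:TreeOfJSJ}; a component $T_{i}\in\mathcal{T}$ in $X_{0}$ is handled by replacing every $X_{j}$-side by the appropriate solid torus (``unknotting in a ball'', Figure~\ref{fig:unknotting}), so that $X(\alpha)$ is recovered from $X_{0}$ by filling $\partial X_{0}\setminus T$, and applying the proposition inductively to $X_{0}$, invoking Theorem~\ref{thm:CosmeticSurgeryOnT2XI} to transport slopes across product regions where needed. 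The finitely many bounded sets obtained are collected.

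The main obstacle is the Seifert case, where $X$ is a leaf of $T(X)$ and no induction on $|T(X)|$ is available; instead one runs a secondary induction on $n$. Fix a Seifert fibration of $X$, with base orbifold $O$; as $\partial X\supseteq T\cup T_{1}$ the manifold $X$ is not a solid torus, and if $X\cong T^{2}\times[0,1]$ then condition~(3) gives $\mathcal{A}=\emptyset$. For $\alpha\in\mathcal{A}$ set $d_{i}=\Delta(\alpha|_{T_{i}},\mbox{fiber of }T_{i})$. If $d_{i}=0$ for some $i\leq m$ then $\alpha|_{T_{i}}$ is the single fiber slope and we are done; the crux is to show otherwise that $d_{i}\leq 1$ for some $i\leq m$, for then Proposition~\ref{prop:PropertiesOfBoundedSets}~(4) gives the bounded set $B_{T_{i}}=\{\beta\mid\Delta(\beta,\mbox{fiber of }T_{i})\leq 1\}$. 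I would distinguish whether the fibration extends over $X(\alpha)\cong D^{2}\times S^{1}$: if it does, then $X(\alpha)$ is Seifert over a disk with at most one cone point, the cores $K_{i}$ are fibers of $X(\alpha)$, and conditions~(1) and~(3) force $d_{i}=1$ for every $i\leq m$; if it does not, some $\alpha|_{T_{j}}$ is a fiber slope, and when $j>m$ one passes to $X^{\dagger}=X(\alpha|_{T_{j}}=\mbox{fiber})$ and invokes condition~(2): if $O$ has positive genus or a cone point then $X^{\dagger}$ is reducible, with a reducing sphere persisting through all remaining fillings, contradicting irreducibility of $X(\alpha)\setminus\mbox{int}N(\mathcal{L})$, so this cannot occur; and if $O$ is planar without cone points then $X^{\dagger}$ is again Seifert fibered over a planar surface with no cone points and with one fewer boundary component, the induced data satisfies (1)--(4), and the secondary induction on $n$ applies. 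The delicate part is the bookkeeping of which features of the base orbifold are present, which boundary components receive fiber slopes, and precisely how conditions~(2) and~(3) exclude the offending configurations; this is the main technical content.
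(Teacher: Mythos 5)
Your overall skeleton (induction on $|T(X)|$, with the non-prime and hyperbolic cases handled via the prime factor containing $T$ and via \mnh\ partial fillings respectively) matches the paper, and those two cases are fine. The genuine gap is in the JSJ case. When a component $T_{i}\in\mathcal{T}$ lies in a side piece $X_{j}$ with $X_{j}(\alpha_{j})\cong D^{2}\times S^{1}$, you propose to ``apply the proposition inductively to $X_{j}$ with $F_{j}$ in the role of $T$,'' but you never verify hypothesis~(3) for $X_{j}$: a component of $\mathcal{L}\cap X_{j}(\alpha_{j})$ may very well be a core of the solid torus $X_{j}(\alpha_{j})$ (this is not excluded by hypothesis~(3) upstairs, which only forbids it from being a core of $X(\alpha)$), and in that situation the inductive conclusion is simply false for that piece --- the slope on that $T_{i}$ is unconstrained by the local picture. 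This is exactly the case the paper must treat separately: it passes to $X_{0}$ with a \emph{new} link formed by the cores attached to the JSJ tori $F_{j}$, must re-establish hypothesis~(2) downstairs (which fails in general after the knot-exterior pieces are replaced by solid tori and is restored only by choosing those filling slopes via Lemma~\ref{lem:TwistingToGetIrreducible}), deduces hypothesis~(3) downstairs from hypothesis~(3) upstairs, and only then applies induction to $X_{0}$ and uses Theorem~\ref{thm:CosmeticSurgeryOnT2XI} to push the resulting bounded set of slopes on $F_{j}$ back out to the relevant $T_{i}\subset\partial X_{j}$ through the product region $X_{j}(\alpha_{j})\setminus N(L_{j})$. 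Your single sentence invoking Theorem~\ref{thm:CosmeticSurgeryOnT2XI} ``where needed'' is attached to the wrong sub-case and skips all of these verifications. Also, your claim that hypothesis~(2) forbids components of $\mathcal{T}$ behind a knot-exterior piece is not justified as stated: the ball containing $E(K_{j})$ is produced by compressing $F_{j}$ on the $T$-side, and that compressing disk (hence the boundary sphere of the ball) may meet other components of $\mathcal{L}$, so the sphere need not live in $X(\alpha)\setminus\mbox{int}N(\mathcal{L})$; the paper accordingly keeps such pieces in play (the set $\mathcal{F}_{\mbox{k}}$) rather than excluding them.

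The Seifert case also contains an error. If some $\alpha|_{T_{j}}$ with $j>m$ is the fiber slope, filling $T_{j}$ along the fiber does \emph{not} produce a Seifert fibered space over a planar surface with one fewer boundary component: since the base has at least two boundary circles, such a filling always creates an essential sphere (a vertical annulus over an essential arc capped with meridian disks), so $X^{\dagger}$ is reducible and your secondary induction on $n$ never gets off the ground; moreover the created sphere need not survive as a reducing sphere of the link exterior after the remaining fillings, so the contradiction with hypothesis~(2) you invoke in the ``positive genus or cone point'' branch also needs an argument. The paper avoids all of this with a direct one-step argument: the vertical annulus from $T_{j}$ to $T_{i}$ ($T_{i}\in\mathcal{T}$) capped with a meridian disk of the solid torus attached to $T_{j}$ is a compressing disk for $T_{i}$ whose boundary is a regular fiber, and since $D^{2}\times S^{1}$ has no non-separating spheres or lens space summands this forces $\Delta(\alpha|_{T_{i}},\mbox{fiber})=1$; together with your (correct) treatment of the case where the fibration extends, this gives the bound via Proposition~\ref{prop:PropertiesOfBoundedSets}~(4) without any secondary induction.
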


\begin{proof}
We will induct on $|T(X)|$.  Parts of the proof are similar to the proof of Theorem~\ref{thm:SlopesOnSolidTorus}
and we will only sketch them here.

\bigskip\noindent
{\bf Assume that $X$ is hyperbolic.}  Since $X(\alpha) \cong D^{2} \times S^{1}$ is not hyperbolic, any
$\alpha \in \mathcal A$ factors through a \mnh\ filling.  Let $\alpha' \pf \alpha$ be a \mnh\ filling.
If, for some $T \in \mathcal{T}$, $\alpha'|_{T} \neq \nos$, we add $\alpha'|_{T}$ to $B_{T}$.
We assume as we may that  for all $T \in \mathcal{T}$, $\alpha'|_{T} = \nos$.  Then 
$\mathcal T \subset \partial X(\alpha')$.  We see that the filling $\alpha|_{\partial X(\alpha')}$ 
induced by $\alpha$ on $X(\alpha')$ fulfills conditions (1)$\sim$(4) of the proposition; since 
$|T(X(\alpha'))| < |T(X)|$, the proposition follows from the inductive hypothesis.

We assume from now on that $X$ is not hyperbolic.

\bigskip\noindent
{\bf Assume that $X$ is not prime.}  Let $S$ be a sphere  embedded in $X$ that realizes the 
decomposition $X' \# X''$, where $X'$ is prime and 
$T \subset X'$ (we are not assuming that $X''$ is prime).  
By condition~(2) of the proposition,  $\mathcal T \subset \partial X'$.  Let $\mathcal{A}'$
be the restrictions defined by
$$ \mathcal{A}' = \{ \alpha|_{\partial X'} | \alpha \in \mathcal{A}\}.$$  
It is easy to see that conditions (1)$\sim$(4) of the proposition hold, and since $|T(X')| < |T(X)|$, 
the proposition follows from the inductive hypothesis.

We assume from now on that $X$ is prime and not hyperbolic.

\bigskip\noindent
{\bf Assume that $X$ is JSJ.}  We first fix the notation that will be used in this case.
Let $X_{0}$ be the component of the torus decomposition of $X$ that contains
$T$.  We will denote the components of $\partial X_{0}$ as $T$, $F_{1},\dots,F_{k}$ and the components of 
$\mbox{cl}(X \setminus X_{0})$ as $X_{1},\dots,X_{k}$, numbered so that $F_{j} \subset \partial X_{j}$.  
To avoid the situation $X_{j} = \emptyset$, if $F_{j} \subset \partial X$ we push it slightly into the 
interior so that $X_{j} \cong T^{2} \times [0,1]$ in this case. 
Since $D^{2} \times S^{1}$ contains no non-separating tori, 
we assume as we may that $X_{j} \cap X_{j'} = \emptyset$ for $j \neq j'$.  We will denote  as 
$\mathcal{F} \subset \{F_{1},\dots,F_{k}\}$ the components $F_{j}$ that bound $X_{j}$ for which 
$\mathcal{T} \cap \partial X_{j} \neq \emptyset$.
Given $\alpha \in \mathcal{A}$, we will denote $\alpha|_{\partial X_{j}}$ by $\alpha_{j}$.
(Note that by definition of $\mathcal{F}$, $F_{j} \in \mathcal{F}$ if and only if 
$\mathcal{L} \cap X_{j}(\alpha_{j}) \neq \emptyset$.)
Clearly, $\partial X_{j}(\alpha_{j}) = F_{j}$ and $X_{j}(\alpha_{j})$ is either a solid torus or 
the exterior of a non trivial knot which we will denote as $E(K_{j})$.  
Up-to finite ambiguity, we fix $\mathcal{F}_{\mbox{st}}$ and $\mathcal{F}_{\mbox{k}}$ 
so that $\mathcal{F} = \mathcal{F}_{\mbox{st}} \sqcup \mathcal{F}_{\mbox{k}}$ 
and consider the multislopes $\alpha \in \mathcal{A}$ for which 
$X_{j}(\alpha_{j}) \cong D^2 \times S^{1}$ whenever $F_{j} \in \mathcal{F}_{\mbox{st}}$ and
$X_{j}(\alpha_{j}) \cong E(K_{j})$ whenever $F_{j} \in \mathcal{F}_{\mbox{k}}$.
To avoid overly complicated notation we do not rename $\mathcal{A}$.
There are two cases to consider:

\medskip\noindent
{\bf Case One: some $X_{j}(\alpha_{j})$ has no core.}
Let $\mathcal{A}_{1} \subset \mathcal{A}$ be defined by requiring that 
for some $F_j \in \mathcal{F}_{\mbox{st}}$, no component of $\mathcal{L} \cap X_{j}(\alpha_{j})$ 
is a core of the solid torus $X_{j}(\alpha_{j})$.
The second assumption of the proposition implies that $\mathcal{L} \cap X_{j}(\alpha_{j})$ 
is irreducible.  By Lemma~\ref{lem:TreeOfJSJ}, $|T(X_{j})| < |T(X)|$.
Applying the inductive hypothesis to $X_{j}$ we see that 
for each $T \in \mathcal{T} \cap \partial X_{j}$,
there is a bounded set $B_{T}$, so that $\alpha|_{T} \in B_{T}$ for some such $T$.  
The proposition follows for $\mathcal{A}_{1}$.

\medskip\noindent
{\bf Case Two: every $X_{j}(\alpha_{j})$ has core.}  
Let $\mathcal{A}_{2} = \mathcal{A} \setminus \mathcal{A}_{1}$.
Then for every $F_j \in \mathcal{F}_{\mbox{st}}$ and every $\alpha \in \mathcal{A}_{2}$,
the core of the solid torus attached to one of the components of $\mathcal{T} \cap \partial X_{j}$
is a core of $X_{j}(\alpha_{j})$;  we will denote it as $L_{j}$ (there may be more than
one such component; we pick one).  
By Lemma~\ref{lem:KnotExteriorsAreDisjoint},
 for every $j$ for which $F_{j} \in \mathcal{F}_{\mbox{k}}$,
there exists an embedded ball $D_{j} \subset X_{0}(\alpha_{0})$, so that $X_{j}(\alpha_{j}) \subset D_{j}$
and $D_{j} \cap D_{j'} = \emptyset$ for $j \neq j'$.  
Thus the second assumption of the proposition implies that $\mathcal{F}_{\mbox{st}} \neq \emptyset$.
Any $\alpha \in \mathcal{A}_{2}$ induces a multislope on $\partial X_{0} = T,F_1,\dots,F_{k}$,
which we will denote as $\alpha_{0}$, that consists of following slopes:
\begin{enumerate}
\item the meridian of $X_{j}(\alpha_{j})$ (on components $F_{j}$ 
that bound $X_{j}(\alpha_{j}) \cong D^{2} \times S^{1}$),
\item a slope that intersects the meridian of $X_{j}(\alpha_{j})$ exactly once 
(on components $F_{j}$ that bound 
$X_{j}(\alpha_{j}) \cong E(K_{j})$, a non trivial knot exterior),
\item $\nos$ (on $T$).
\end{enumerate}
For $F_j \in \mathcal{F}_{\mbox{k}}$, the core of the solid tori attached to $F_{j}$ is an unknot
in $D_{j}$; thus the cores of the solid tori attached to $\cup_{F_j \in \mathcal{F}_{\mbox{\tiny k}}} F_{j}$
form a (possibly empty) unlink, which we will denote as $\mathcal{U}$.  
The cores of the solid tori attached to $\cup_{F_j \in \mathcal{F}_{\mbox{\tiny st}}} F_{j}$
form a (non empty) link which we will denote as $\mathcal{L}_{0}$.

We claim that $\mathcal{L}_{0}$ is 
irreducible in the complement of $\mathcal{U}$.  
Assume, for a contradiction, that $X(\alpha) \setminus \mbox{int}N(\mathcal{L}_{0} \cup \mathcal{U})$ 
is reducible and let $S$ be a reducing sphere.  Since $S$ is disjoint form the cores
of the solid tori attached to $F_{j}$ (for every $F_{j} \in \mathcal{F}$)
we may isotope $S$ out of them.  It is now easy to see that $S \subset X(\alpha)$
and is disjoint from $\mathcal{L}$.  Since $S$ is a reducing sphere for 
$\mathcal{L}_{0} \cup \mathcal{U}$, there are components of
$\mathcal{L}_{0} \cup \mathcal{U}$ in the ball bound by $S$ in $X_{0}(\alpha_{0})$.  Clearly,
there are components of $\mathcal{L}$ in the ball bound by $S$ in $X(\alpha)$.  Thus $S$ is a reducing
sphere for $\mathcal{L}$, contradicting the second assumption of the proposition.

We assume as we may by Lemma~\ref{lem:TwistingToGetIrreducible} that 
the slopes on $\mathcal{U}$ were chosen so that $\mathcal{L}_0$ is irreducible.  
We claim that no component of $\mathcal{L}_{0}$ is the core of
$X_{0}(\alpha_{0})$.  Assume for a contradiction
that this is not the case and fix  $F_j \in \mathcal{F}_{\mbox{st}}$
 for which the core of the solid torus attached to $F_{j}$
is a core of $X_{0}(\alpha_{0})$.  
Recall that $L_{j}$ is a core of a solid torus attached to a component of
$\mathcal{T} \cap \partial X_{j}$, and is the core of $X_{j}(\alpha_{j})$.  Thus
$L_{j}$ is a component of $\mathcal{L}$, and is the core of $X(\alpha)$;
this is impossible as it violates the third assumption of the proposition.

Thus $X_{0}$, $\mathcal{L}_{0}$, and the multislopes induced by $\mathcal{A}_{2}$ satisfy the assumptions of the proposition.
By Lemma~\ref{lem:TreeOfJSJ}, $|T(X_{0})| < T(X)|$.  Hence by induction,
for every $F_{j} \in \mathcal{F}_{\mbox{st}}$, there is a bounded set of slopes of $F_{j}$,
which we will denote as $B_{F_{j}}$, so that for every multislope $\alpha_{0}$ (induced by some 
$\alpha \in \mathcal{A}_{2}$), there is $F_j \in \mathcal{F}_{\mbox{st}}$, 
for which $\alpha_{0}|_{F_{j}} \in B_{F_{j}}$.  The slope $\alpha|_{\partial N(L_{j})}$ 
is the projection of $\alpha_{0}|_{F_{j}}$ by the product structure
on $X_{j}(\alpha_{j}) \setminus N(L_{j}) \cong T^{2} \times [0,1]$. 
By the $T^{2} \times [0,1]$ Cosmetic Surgery 
Theorem~(\ref{thm:CosmeticSurgeryOnT2XI}) the projections of $B_{F_{j}}$
under all possible fillings of $X_{j}$ that yield $T^{2} \times [0,1]$
is a bounded set of slopes of $\partial N(L_{j})$, which we will denote as $B_{j}$.
Thus, for every $\alpha \in \mathcal{A}_{2}$, there exists 
$F_j \in \mathcal{F}_{\mbox{st}}$, for which $\alpha|_{\partial N(L_{j})} \in B_{j}$, proving the proposition
in case two.

We assume from now on that $X$ is irreducible and not hyperbolic, and not JSJ.

\bigskip\noindent
{\bf Assume that $X$ is a Seifert fibered space.}
Fix a Seifert fiberation on $X$.  We consider three cases, depending
on the intersection of $\alpha|_{T_{i}}$ with slopes defined by the 
Seifert fiber on $T_{i}$:
\begin{enumerate}
\item If (for some $T_{i} \in \mathcal{T}$) $\alpha|_{T_{i}}$ is the fiber in the 
Seifert fiberation then the intersection number 
of $\alpha|_{T_{i}}$ with the fiber is zero.
\item If (for some $T_{i'} \not \in \mathcal{T}$)  $\alpha|_{T_{i'}}$ is the fiber in the 
Seifert fiberation, then for every $T_{i} \in \mathcal{T}$ the disk obtained by gluing 
a vertical annulus connecting $T_{i'}$ to $T_{i}$ with
a meridian disk of the solid torus attached to $T_{i'}$
is a compressing disk for $T_{i}$ and its boundary is a regular fiber.  Since 
$X(\alpha) \cong D^{2} \times S^{1}$ contains no non separating spheres
or lens space summands, $\alpha|_{T_{i}}$ intersects the regular fiber exactly once.
\item If $\alpha_{T_{i}}$ is not the fiber for any $1 \leq i \leq n$, 
then the fiberation on $X$ extends to a fiberation of $X(\alpha)$, 
which is a fiberation over
$D^{2}$ with at most one exceptional fiber.  The exceptional fiber, if exists, is the core of $X(\alpha)$.  
Thus by assumption~(3) of the proposition every component of $\mathcal L$ is a regular fiber, implying that
for every $T_{i} \in \mathcal{T}$, $\alpha|_{T_{i}}$
intersects the fiber in the Seifert fiberation of $X$ exactly once.

\end{enumerate}

We conclude that for every $\alpha \in \mathcal{A}$ there exists $T \in \mathcal{T}$ so that
$\alpha|_{T}$ intersects the fiber at most once.
The proposition follows from Proposition~\ref{prop:PropertiesOfBoundedSets}~(4).

\bigskip\noindent
This completes the proof of Proposition~\ref{pro:SolidTorusSurgery2}.
\end{proof}

\section{Hyperbolic cosmetic surgery: slopes}
\label{section:HyperSurgSlopes}

\bigskip\noindent
In this section we ask ``how much can a hyperbolic manifold get twisted by performing
cosmetic surgery''?   (Recall that in this paper by {\it hyperbolic manifold} we mean a
connected compact manifold whose interior
admits a complete finite volume hyperbolic metric.)
Consider the following problem: let $M$ be a hyperbolic manifold so that $\partial M$
is a single torus, $B$ a bounded set of slopes of $\partial M$, and  $L \subset M$ a link.  If $\alpha$
is a multislope of cosmetic surgery (that is, $L(\alpha) \cong M$), then an identification
of $L(\alpha)$ with $M$ induces a bijection on the slopes of $\partial M$.  Our goal is to show that the
union of the images of $B$ under all such bijections is bounded.  The theorem below
is stated in terms of fillings (with $X$ corresponding to $M \setminus N(L)$) and is
slightly more general as it allows for more boundary components.

\begin{thm}
\label{thm:CosmeticSurgeryOnM}
Let $M$ be an orientable hyperbolic manifold, $T_{M}$ a component of $\partial M$,
$X$ a compact orientable connected manifold so that $\partial X$ consists of tori that we will denote
as $T,T_{1},\dots,T_{n}$, and $B$ a bounded set of slopes of $T$.  
Let $\mathcal{X} = \{(\alpha,f_{\alpha})\}$ be a set of pairs
so that every $(\alpha,f_{\alpha}) \in \mathcal{X}$ satisfies the following conditions:
	\begin{enumerate}
	\item $\alpha$ is a multislope of $X$.
	\item $f_{\alpha}:X(\alpha) \to M$ is a diffeomorphism.
	\item $f_{\alpha}$ maps $T$ to $T_{M}$. 
	\end{enumerate}
For every $x = (\alpha,f_{\alpha}) \in \mathcal{X}$, we will denote the image of $B$ under
the bijection induced by $f_{\alpha}$ from the slopes of $T$ to those of $T_{M}$ as $B_{x}$.

Then $\cup_{x \in \mathcal{X}} B_{x}$ is a bounded set of slopes of $T_{M}$.
\end{thm}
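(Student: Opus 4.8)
The plan is to induct on $|T(X)|$, which is finite by Proposition~\ref{prop:T(X)isFinite}, following the template of the proofs of Theorems~\ref{thm:CosmeticSurgeryOnT2XI} and~\ref{thm:SlopesOnSolidTorus}. Write $\mathcal{T}=T_{1}\cup\cdots\cup T_{n}$ and regard each $\alpha$ as a multislope of $\mathcal{T}$ with $\alpha|_{T}=\nos$, which is forced since $f_{\alpha}$ carries $T$ to the boundary component $T_{M}$ of $M$. Throughout we use that $X(\alpha)\cong M$ is irreducible and atoroidal, and that $f_{\alpha}|_{T}$, being a homeomorphism of tori, induces an isometry of Farey graphs, so each individual $B_{x}$ is bounded; the content of the theorem is that the union over $\mathcal{X}$ is bounded. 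As usual we treat the structural cases of $X$ in turn. If $X$ is Seifert fibered or a sol manifold, every filling of $X$ is again Seifert fibered, sol, or reducible, hence never hyperbolic, so $\mathcal{X}=\emptyset$ and there is nothing to prove. If $X$ is reducible, let $X'$ be the prime factor containing $T$; since $M$ is prime, every $\alpha$ with $(\alpha,f_{\alpha})\in\mathcal{X}$ kills all other factors (each becomes a ball), so $(\alpha|_{\partial X'},f_{\alpha})$ witnesses $X'(\alpha|_{\partial X'})\cong M$ with $T\mapsto T_{M}$, and $|T(X')|<|T(X)|$ since $X'$ labels a direct descendant of the root; induction finishes this case.

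Now suppose $X$ is a JSJ manifold. Let $X_{0}$ be the component of the torus decomposition containing $T$, let $F_{1},\dots,F_{m}$ be the remaining boundary tori of $X_{0}$, and let $X_{j}$ be the union of the decomposition pieces lying beyond $F_{j}$, so $X=X_{0}\cup_{F_{1}}X_{1}\cup\dots\cup_{F_{m}}X_{m}$ and the dual graph is a tree (because $M$ contains no non-separating torus). Partition $\mathcal{X}$ according to the position in $M$ of the images of the $F_{j}$. If no $F_{j}$ is boundary-parallel in $M$, then (as in the proof of Proposition~\ref{prop:ObtainedByFilling}) each $F_{j}$ bounds on its $X_{j}$-side either a solid torus or a knot exterior in a ball; using Lemma~\ref{lem:KnotExteriorsAreDisjoint} to disjointly embed these balls, we replace each $X_{j}(\alpha|_{\partial X_{j}})$ by a solid torus, changing neither $M$ nor a neighborhood of $T$, and obtain a pair for $X_{0}$; since $X_{0}$ labels a direct descendant, $|T(X_{0})|<|T(X)|$ and induction applies. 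If some $F_{j}$ is boundary-parallel in $M$ there are two possibilities. If $X_{j}(\alpha|_{\partial X_{j}})\cong T^{2}\times[0,1]$, absorbing this collar identifies $M$ with a filling of $\widehat{X}_{j}^{+}$, the result of cutting $X$ along $F_{j}$ on the side of $T$; this is a union of a strict subset of the decomposition pieces, so $|T(\widehat{X}_{j}^{+})|<|T(X)|$ by Lemma~\ref{lem:TreeOfJSJ}, and we recurse with the same $T$ and $B$. If instead the $T$-side is the collar, i.e.\ $\widehat{X}_{j}^{+}$ filled is $\cong T^{2}\times[0,1]$ with boundary $T\cup F_{j}$, we first transport $B$ across this product: realizing $\widehat{X}_{j}^{+}$ as $(T^{2}\times[0,1])\setminus N(L^{+})$ for the appropriate $L^{+}$, Theorem~\ref{thm:CosmeticSurgeryOnT2XI} shows that the union over all such $\alpha$ of the images of $B$ on $F_{j}$ is a bounded set $\widetilde{B}_{j}$; then $M\cong X_{j}(\alpha|_{\partial X_{j}})$ with $F_{j}\mapsto T_{M}$, and since $|T(X_{j})|<|T(X)|$ by Lemma~\ref{lem:TreeOfJSJ}, we recurse on $X_{j}$ with $\widetilde{B}_{j}$ in place of $B$. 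Each of the finitely many classes yields a bounded set of slopes of $T_{M}$, and bounded sets are closed under finite union (Proposition~\ref{prop:PropertiesOfBoundedSets}).

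Finally suppose $X$ is hyperbolic. By Proposition~\ref{pro:mnhIsFinite} there are finitely many \mnh\ fillings $\alpha_{1},\dots,\alpha_{k}$ of $X$. If $\alpha$ admits some $\alpha_{\ell}\pf\alpha$, then $X(\alpha)=X(\alpha_{\ell})(\alpha|_{\partial X(\alpha_{\ell})})$, so $(\alpha|_{\partial X(\alpha_{\ell})},f_{\alpha})$ is a pair for the direct descendant $X(\alpha_{\ell})$, and induction ($|T(X(\alpha_{\ell}))|<|T(X)|$) bounds the union over these pairs for each of the finitely many $\ell$. The remaining pairs have $\alpha$ totally hyperbolic, and I claim only finitely many such $\alpha$ can occur. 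Otherwise, arguing as in the proof of Proposition~\ref{prop:UsingT(X)forHyperbolicFilling}, pass to an infinite subsequence $\{\alpha^{j}\}$ and split $\mathcal{T}$ into components along which the $\alpha^{j}$ vary (nonempty) and components along which they are constant; filling the constant ones gives a strict partial filling $\widehat{X}$, which is hyperbolic because $\alpha^{j}$ is totally hyperbolic, and Thurston's hyperbolic Dehn surgery theorem gives $\vol[X(\alpha^{j})]\to\vol[\widehat{X}]$ from below. This is impossible, since $X(\alpha^{j})\cong M$ has constant volume and Dehn filling strictly decreases volume, so $\vol[X(\alpha^{j})]=\vol[M]<\vol[\widehat{X}]$. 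For each of the finitely many totally hyperbolic $\alpha$, the diffeomorphisms $f_{\alpha}$ form, up to isotopy, a torsor over the mapping class group of $M$, which is finite by Mostow rigidity; hence only finitely many slope bijections $T\to T_{M}$, and thus finitely many bounded sets $B_{x}$, arise. Their union, together with the bounded sets from the $\alpha_{\ell}$-cases, is bounded, completing the induction.

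The main obstacle is this last case: showing that totally hyperbolic cosmetic fillings to a fixed $M$ form a finite set (the volume and Dehn-surgery argument) and controlling the finitely many diffeomorphisms to $M$, since this is the one situation where $T(X)$ provides no descendant to descend into. By contrast, the JSJ case, while the longest, is a routine adaptation of the bookkeeping in the proof of Theorem~\ref{thm:SlopesOnSolidTorus}, the only new point being the need to move $B$ across a $T^{2}\times[0,1]$ collar before recursing, which Theorem~\ref{thm:CosmeticSurgeryOnT2XI} is designed to handle.
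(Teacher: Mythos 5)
Your proof is correct and follows essentially the same route as the paper's: induction on $|T(X)|$ with the same case analysis on the type of $X$, Theorem~\ref{thm:CosmeticSurgeryOnT2XI} to carry $B$ across the boundary collar in the JSJ case, and passage to \mnh\ descendants together with finiteness of the mapping class group of $M$ in the hyperbolic case. The only deviations are organizational: you handle the JSJ case by an explicit trichotomy on the position of the decomposition tori in $M$ (the paper funnels this through Proposition~\ref{prop:ObtainedByFilling}, doing the inductive step and the collar transfer in the opposite order), and you prove finiteness of totally hyperbolic fillings yielding $M$ by a direct volume argument instead of invoking Proposition~\ref{prop:UsingT(X)forHyperbolicFilling} with $\epsilon$ below the systole of $M$.
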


\begin{proof}
We induct on $|T(X)|$.   

\bigskip\noindent
{\bf Assume that $X$ is Seifert fibered or sol.} Then $X$ admits no hyperbolic filling.

We assume from now on that $X$ is not Seifert fibered or sol.

\bigskip\noindent
{\bf Assume that $X$ is not prime.}  Let $S \subset X$ be a decomposing sphere that realizes
the decomposition $X = X' \cup_{S} X''$, where here $X'$ is a prime manifold containing $T$ (we do not
assume that $X''$ is prime).  Then for every $(\alpha,f_{\alpha}) \in \mathcal{A}$
we have
$$X(\alpha) = X'(\alpha|_{\partial X'}) \cup_{S} X''(\alpha|_{\partial X''}).$$
Since $(\alpha) \cong M$ is hyperbolic, $S \subset X(\alpha)$ bounds a ball.
Condition~(3) of the theorem implies that $\alpha|_{T} = \nos$; hence $X''(\alpha|_{\partial X''})$
is a ball.  Thus $f_{\alpha}$ induces a diffeomorphism that we will denote as
$$f_{\alpha|_{\partial X'}}:X'(\alpha|_{\partial X'}) \to M.$$  
We will denote the set of pairs $\{(\alpha|_{\partial X'},f_{\alpha|_{\partial X'}})\}$
induced by pairs $(\alpha,f_{\alpha}) \in \mathcal{X}$ as $\mathcal{X}'$.
For  $(\alpha|_{\partial X'},f_{\alpha|_{\partial X'}}) = x' \in \mathcal{X}'$,
we will denote the image of $B$ under the bijection induced by $f_{\alpha|_{\partial X'}}$
from the slopes of $T$ to those of $T_{M}$ as $B_{x'}$.  
Since $X'$ corresponds to a direct descendant of the root of $T(X)$,
$|T(X')| < |T(X)|$.  By induction
$$\cup_{x' \in \mathcal{X}'} B_{x'}$$
is bounded set of slopes of $T_{M}$.  By construction, every $x' \in \mathcal{X}'$ is induced by 
$x \in \mathcal{X}$ for which $f_{\alpha|_{\partial X'}}|_{T} = f_{\alpha}|_{T}$.  Thus $B_{x'} = B_{x}$
and therefore 
$$\cup_{x \in \mathcal{X}} B_{x} = \cup_{x' \in \mathcal{X}'} B_{x'}$$ 
is a bounded set of slopes of $T_{M}$.  The theorem follows in this case.

We assume from now on that $X$ is prime and not Seifert fibered or sol.

\bigskip\noindent
{\bf Assume that $X$ is JSJ.}  By Lemma~\ref{prop:ObtainedByFilling}, 
each $\alpha \in \mathcal{A}$ induces a filling on one of the  
components of the torus decomposition of $X$ that yields $M$.  Up-to finite ambiguity we
fix one component of the torus decomposition of $X$, that we will denote as $X_{0}$,
and consider only multislopes $\alpha$ that induce a filling on $X_{0}$.
We will use the following notation:
\begin{enumerate}
\item The multislope induced by $\alpha$ on $\partial X_{0}$ will be denoted as $\alpha_{0}$. 
\item  The closure of the component of $X \setminus X_{0}$ that contains $T$ will be denoted as $X_{1}$
(to avoid the situation $X_{1} = \emptyset$, if $T \subset \partial X_{0}$ we push it slightly into the interior 
so that $X_{1} \cong T^{2} \times [0,1]$ in this case).  
\item The torus $X_{0} \cap X_{1}$ will be denoted as $F$.  
\end{enumerate}
Let $F_{M}$ be a boundary parallel torus in $M$; it follows from the construction in
Lemma~\ref{prop:ObtainedByFilling} that after isotopy of $f_{\alpha}$ if necessary
we may assume that $f_{\alpha}(F) = F_{M}$.  We will denote the components of
$M$ cut open along $F_{M}$ as $M_{0}$ and $M_{1}$, where $M_{0}$ is 
the component not containing $T_{M}$.
By the construction in Lemma~\ref{prop:ObtainedByFilling},  $f_{\alpha}$ induces a diffeomorphism
$f_{\alpha_{0}}:X_{0}(\alpha_{0}) \to M_{0}$ that maps $F$ to $F_{M}$.
By Lemma~\ref{lem:TreeOfJSJ}, $|T(X_{0})| < |T(X)|$.  Therefore by induction
the union of the images of $B$ under the bijections induced by $f_{\alpha_{0}}$
is a bounded set of slopes of  $F_{M}$ that we will denote as $B_{F_{M}}$.

Every slope of  $\cup_{x \in \mathcal{X}} B_{x}$ is obtained from a slope in $B_{F_{M}}$
by projecting using the product structure on $M_{1} \cong T^{2} \times [0,1]$.  Since
$M_{1}$ is obtained from $X_{1}$ by filling, by the $T^{2} \times [0,1]$ Cosmetic
Surgery Theorem~(\ref{thm:CosmeticSurgeryOnT2XI}),  $\cup_{x \in \mathcal{X}} B_{x}$
is a bounded set of slopes of $T_{M}$. 
This completes the proof for JSJ manifolds.

We assume from now on that $X$ is prime and not Seifert fibered, sol, or JSJ.

\bigskip\noindent
{\bf Assume $X$ is hyperbolic.}  
Let $\mathcal{X}^{1} \subset \mathcal{X}$ be all pairs $(\alpha,f_{\alpha})$ for which
$\alpha$ is totally hyperbolic.  Fix $\alpha$ a totally
hyperbolic filling and let $(\alpha,f_{j})$ be all the elements of $\mathcal{X}^{1}$
that have $\alpha$ as their multislope, and different diffeomorphisms
$f_{j}:X(\alpha) \to M$  (for $j\in J$ for some index set $J$).  We will denote
the image of $B$ under the bijection induced by $f_{j}$ between the slopes of $T$
and those of $T_{M}$ as $B_{j}$.
Then $f_{j} \circ f_{1}^{-1}:M \to M$ is a diffeomorphism that sends $T_{M}$ to itself
and $f_{j} = (f_{j} \circ f_{1}^{-1}) \circ f_{1}$.   Thus $B_{j} = \phi_{j}(B_{1})$,
where $\phi_{j}$ is a bijection induce by an element of the mapping class group of $M$.
It is straightforward to see that the bijection induced by $\phi_{j}$ is an isometry
of the slopes; hence $\phi_{j}(B_{1})$ is bounded. 
Since the mapping class group of hyperbolic manifolds is finite, we see that 
$\cup_{j \in J}\phi_{j}(B_{1})$ is bounded as well.
By Proposition~\ref{prop:UsingT(X)forHyperbolicFilling}, $X$ admits 
only finitely many totally hyperbolic fillings, and hence 
$$\cup_{x \in \mathcal{X}^{1}} B_{x}$$
is a bounded set of slopes.

Next we consider $\mathcal{X}^{2} = \mathcal{X} \setminus \mathcal{X}^{1}$.
By Proposition~\ref{pro:mnhIsFinite}, $X$ admits 
only finitely many \mnh\ fillings.   We will denote as $\alpha_{1},\dots,\alpha_{k}$
the \mnh\ fillings of $X$ for which $\alpha_{j}|_{T} = \nos$.  For  
$1 \leq j \leq k$, let $\mathcal{X}_{j}$ be the set of all pairs 
pairs $(\alpha,f_{\alpha})$ satisfying the following conditions:
	\begin{enumerate}
	\item $\alpha$ is a multislope of $X(\alpha_{j})$.
	\item $f_{\alpha}:X(\alpha_{j})(\alpha) \to M$ is a diffeomorphism.
	\item $f_{\alpha}$ maps $T$ to $T_{M}$. 
	\end{enumerate}
For $x_{j} =(\alpha,f_{\alpha}) \in \mathcal{X}_{j}$, we will denote the image of $B$ 
under the bijection induced by
$f_{\alpha}$ between the slopes of $T$ and those of $T_{M}$ as $B_{x_{j}}$. 
Since $X(\alpha_{j})$ corresponds to a direct descendant of the root of $T(X)$, 
$|T(X(\alpha_{j}))| <|T(X)|$.  Thus by induction
$$\cup_{x_{j} \in \mathcal{X}_{j}} B_{x_{j}}$$
is a bounded set of slopes of $T_{M}$.

By definition of $\mathcal{X}^{2}$, for every $x = (\alpha,f_{\alpha}) \in \mathcal{X}^{2}$, $\alpha$ is not \mnh.
Hence $\alpha$ factors through some \mnh\ filling $\alpha_{j}$ (for some $1 \leq j \leq k$), 
that is,
$$X(\alpha) = X(\alpha_{j})(\alpha|_{\partial X(\alpha_{j})}).$$
We can view $f_{\alpha}:X(\alpha) \to M$ as a diffeomorphism 
$f_{\alpha}:X(\alpha_{j})(\alpha|_{\partial X(\alpha_{j}}) \to M$;
thus we obtain $(\alpha|_{\partial X(\alpha_{j})}, f_{\alpha})  \in \mathcal{X}_{j}$
for which $B_{(\alpha|_{\partial X(\alpha_{j})}, f_{\alpha})} = B_{x}$.
This shows that
$$\cup_{x \in \mathcal{X}^{2}}B_{x} \subset \cup_{j=1}^{k}(\cup_{x_{j} \in \mathcal{X}_{j}}B_{x_{j}}).$$
Thus $\cup_{x \in \mathcal{X}^{2}}B_{x}$ is contained in a finite union of 
bounded sets, and hence is itself bounded.

The theorem follows in this final case.

\bigskip\noindent
This completes the proof of Theorem~\ref{thm:CosmeticSurgeryOnM}.
\end{proof}

\section{Hyperbolic cosmetic surgery: radius of injectivity}
\label{section:HyperSurgRadInj}

\bigskip\noindent
Let $X$ be a hyperbolic manifold.  A generic filling on $\mathcal{T} \subset \partial X$, with {\it all} the slopes very
long, yields a hyperbolic manifold with at least $|\mathcal{T}|$ short geodesic.  However, if $|\mathcal{T}|>1$,
this requires excluding infinitely many multislopes.  It is easy to construct examples where
$X$ can be filled to give infinitely many manifolds that violate this rule, for example,
every lens space is obtained by filling the Whitehead link exterior.  As another example, 
given any hyperbolic manifold $M$, let $K \subset M \# T^{2} \times [0,1]$ be a simple
knot.  Then the exterior of $K$ is a hyperbolic manifold
that admits infinitely many distinct multislopes $\alpha^{j}$ so that
$X(\alpha^{j}) \cong M$  for every $j$ without the expected three short geodesics.
In this section we show that although the set of multislopes yielding manifolds without
a short geodesic may be infinite, only finitely many manifolds can be obtained.  

\bigskip\noindent
\begin{thm}
\label{thm:HyperbolicFillingShortGeos}
Let $X$ be a compact connected oriented manifold so that $\partial X$ consists of tori.  Fix $\epsilon>0$.  Then 
all but finitely many hyperbolic manifolds that are obtained by filling  $X$ admit a geodesic of length 
less than $\epsilon$.

If in addition $X$ is hyperbolic, then there are only finitely many totally hyperbolic fillings $\alpha$ on $X$ so 
that $X(\alpha)$ does not admit a geodesic of length less than $\epsilon$.
\end{thm}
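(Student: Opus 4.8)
The second statement is a reformulation of Proposition~\ref{prop:UsingT(X)forHyperbolicFilling}: taking there $\mathcal{A}$ to be the set of multislopes $\alpha$ for which $X(\alpha)$ is hyperbolic and admits no geodesic shorter than $\epsilon$, a totally hyperbolic $\alpha\in\mathcal{A}$ is exactly a totally hyperbolic filling of the required kind, and Proposition~\ref{prop:UsingT(X)forHyperbolicFilling} asserts there are only finitely many of these. So the work is in the first statement, which I plan to prove by induction on $|T(X)|$, following the case split built into the construction of $T(X)$. If $\partial X=\emptyset$ the only filling of $X$ is $X$ itself and there is nothing to prove, so assume $\partial X\neq\emptyset$. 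If $X$ is Seifert fibered with non-empty boundary, or a sol manifold, then $X$ admits no hyperbolic filling and the claim holds vacuously. Since the leaves of $T(X)$ are precisely the Seifert fibered and sol manifolds together with the hyperbolic manifolds having no \mnh\ filling, the base case $|T(X)|=1$ is covered by the preceding remarks and by the hyperbolic case below (in which, when $X$ has no \mnh\ filling, every multislope of $X$ is totally hyperbolic).

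Suppose $X$ is hyperbolic, and let $\alpha$ be a multislope with $X(\alpha)$ hyperbolic and with no geodesic shorter than $\epsilon$. If $\alpha$ is totally hyperbolic, then by Proposition~\ref{prop:UsingT(X)forHyperbolicFilling} there are only finitely many such $\alpha$, hence only finitely many such $X(\alpha)$. Otherwise $\alpha$ admits a \mnh\ partial filling, which by Proposition~\ref{pro:mnhIsFinite} is one of the finitely many \mnh\ fillings $\alpha_1,\dots,\alpha_k$ of $X$, and then $X(\alpha)=X(\alpha_j)\bigl(\alpha|_{\partial X(\alpha_j)}\bigr)$ exhibits $X(\alpha)$ as a hyperbolic filling of $X(\alpha_j)$ with no geodesic shorter than $\epsilon$. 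Each $X(\alpha_j)$ is a direct descendant of the root of $T(X)$, so $|T(X(\alpha_j))|<|T(X)|$, and the inductive hypothesis bounds the number of such manifolds arising from each $j$; a finite union finishes this case.

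If $X$ is not prime, write $X=X_1\#\cdots\#X_m$ with $m\ge 2$; whenever $X(\alpha)$ is hyperbolic it is irreducible, so all but one of the factors $X_i(\alpha|_{\partial X_i})$ is a ball and $X(\alpha)$ is then a hyperbolic filling of the remaining factor $X_i$ (or equals $X_i$, if $X_i$ is closed). Each $X_i$ is a direct descendant of the root of $T(X)$, so $|T(X_i)|<|T(X)|$, and induction together with a finite union again suffices. If $X$ is JSJ, then $X(\alpha)$ hyperbolic forces $X(\alpha)$ to be irreducible and atoroidal, hence to contain no non-separating torus, and so neither does $X$; the graph dual to the torus decomposition of $X$ is therefore a tree, and the sink-finding argument from the JSJ case of the proof of Proposition~\ref{prop:ObtainedByFilling} shows that $X(\alpha)$ is obtained by filling some component $X_0$ of the torus decomposition of $X$. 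There are only finitely many such components, $|T(X_0)|<|T(X)|$ by Lemma~\ref{lem:TreeOfJSJ}, and induction plus a finite union completes the proof. These cases are exhaustive, since a prime manifold with non-empty toral boundary is hyperbolic, Seifert fibered, or JSJ.

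The main obstacle, and the reason $T(X)$ is needed here, is that $\partial X$ may have arbitrarily many components, so there can be infinitely many multislopes $\alpha$ with $X(\alpha)$ hyperbolic and without a short geodesic; the content of the theorem is that these nevertheless yield only finitely many manifolds. Proposition~\ref{prop:UsingT(X)forHyperbolicFilling} controls the totally hyperbolic part of this phenomenon, while every other filling is forced strictly down $T(X)$ via a \mnh\ partial filling, a prime decomposition, or a torus decomposition. I expect the only delicate point to be the JSJ case, where atoroidality of $X(\alpha)$ must be invoked to exclude cycles in the dual graph so that the sink-finding step of Proposition~\ref{prop:ObtainedByFilling} is available; the induction itself is routine once the case split is in place.
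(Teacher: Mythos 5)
Your proof is correct and follows essentially the same route as the paper: induction on $|T(X)|$ with the same case split (Seifert/sol vacuous, non-prime via prime factors, JSJ via the argument of Proposition~\ref{prop:ObtainedByFilling}, hyperbolic via finiteness of totally hyperbolic fillings plus descent along \mnh\ partial fillings using Proposition~\ref{pro:mnhIsFinite}). The only difference is cosmetic: you cite Proposition~\ref{prop:UsingT(X)forHyperbolicFilling} for the finiteness of totally hyperbolic fillings (and for the second statement), whereas the paper re-derives that finiteness inline by the same subsequencing/Thurston argument; since that proposition is proved independently earlier, your citation is legitimate.
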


\begin{proof}
We induct on $|T(X)|$.

\bigskip\noindent
{\bf Assume that $X$ is Seifert fibered or sol.}  Then no filling of $X$ yields a hyperbolic manifold.

We assume from now on that $X$ is not Seifert fibered or sol.

\bigskip\noindent
{\bf Assume that $X$ is reducible.}  Let $X_{1},\dots,X_{n}$ be the factors of the
prime decomposition of $X$.  Then in any filling of $X$ that gives a hyperbolic manifold (say $M$),
exactly one $X_{i}$ fills to give $M$, and every other $X_{i'}$ fills to a ball.  Thus every hyperbolic 
manifold obtained by filling $X$ is obtained by filling $X_{i}$ for some $i$.
Up-to finite ambiguity we fix a factor $X_{i}$.  Since $X_{i}$ corresponds to a direct descendant
of $X$, $|T(X_{i})| < |T(X)|$.  By induction there are only finitely
many hyperbolic manifolds obtained by filling  $X_{i}$ that do not admit a geodesic of 
length less than $\epsilon$.  The proposition follows in this case.

We assume from now on that $X$ is prime and not Seifert fibered or a solv manifold.

\bigskip\noindent
{\bf Assume that $X$ is JSJ.}  Let $X_{1},\dots,X_{n}$ be the components of the
prime decomposition of $X$. By Proposition~\ref{prop:ObtainedByFilling} any hyperbolic
manifold that is obtained from $X$ by filling is obtained by filling some $X_{i}$.
Up-to finite ambiguity we fix a component of the torus decomposition of $X$ which
we will denote as $X_{i}$.  By Lemma~\ref{lem:TreeOfJSJ}, $|T(X_{i})| < |T(X)|$.  By induction there are only finitely
many hyperbolic manifolds obtained by filling  $X_{i}$ that do not admit a geodesic of 
length less than $\epsilon$.  The proposition follows in this case.

We assume from now on that $X$ is prime, not Seifert fibered or sol, and not 
JSJ.

\bigskip\noindent
{\bf Assume that $X$ is hyperbolic.}  Let $\mathcal{A}$ be an infinite set of multislopes
of $\partial X$ so that $X(\alpha)$ is hyperbolic and does not contain
a geodesic shorter than $\epsilon$ for every $\alpha \in \mathcal{A}$.  
(Note that if no such set exists there is nothing to prove.)
We will first establish conclusion~(2) of the theorem by showing that some multislope
$\alpha \in \mathcal{A}$ is not totally hyperbolic.
We will denote the components of $\partial X$ as $T_{1},\dots,T_{n}$.
After subsequencing and reordering if necessary we assume as we may that for some $0 \leq k \leq n+1$ we have:
\begin{enumerate}
\item For every $1 \leq i \leq k$ and every $j \neq j'$, $\ \alpha_{i}^{j} \neq \alpha_{i}^{j'}$ 
and $\alpha_{i}^{j}\neq\nos$.
\item For every $k+1 \leq i \leq n+1$ and every $j, j'$, $\ \alpha_{i}^{j} = \alpha_{i}^{j'}$.
\end{enumerate}

To avoid overly complicated notation we do not rename $\mathcal{A}$.  
Let $\alpha_{0}$ be the restriction $\alpha|_{T_{k+1},\dots,T_{n}}$
for some $\alpha \in \mathcal{A}$ (by construction $\alpha_{0}$ is independent of choice),
$\widehat X = X(\alpha_{0})$ (so $\partial \widehat{X} = T_{1},\dots,T_{k}$),
and $\widehat{\mathcal{A}} = \{ \alpha|_{\partial \widehat{X}} \ | \ \alpha \in \mathcal{A}\}$. 
We claim that $\widehat{X}$ is not hyperbolic; assume for a contradiction that it is.
By truncating the cusps of $\widehat X$ we obtain a Euclidean metric on every $T_{i}$ ($1 \leq i \leq k$).
Since $\widehat{\mathcal{A}}$ is infinite and the values
$\{\widehat\alpha|_{T_{i}}\ | \ \widehat\alpha \in {\widehat{\mathcal A}}\ \}$ are distinct, for any $l$ there is
a multislope $\widehat\alpha \in\widehat{\mathcal{A}}$ so that $\widehat\alpha|_{T_{i}}$
is longer than $l$ for all $i$.  By Thurston's Dehn
Surgery Theorem, for large enough $l$, $\widehat{X}(\widehat\alpha)$ is hyperbolic and
the cores of the attached solid tori are geodesics of length less
than $\epsilon$, contradicting our assumptions.  Thus $\widehat X$ is not 
hyperbolic.  Since $\mathcal{A}$ is infinite, $k\geq 1$.  By condition~(1) above,
for every $\alpha \in \mathcal{A}$, $\alpha_{0}$ is a strict partial filling of $\alpha$.
This shows that $\alpha$ is not totally hyperbolic, establishing the second conclusion 
of the theorem.

Let $\mathcal{A}$ be the set of all multislopes
of $\partial X$ so that $X(\alpha)$ is hyperbolic and does not contain
a geodesic shorter than $\epsilon$.  
We will denote the set of totally hyperbolic fillings in $\mathcal{A}$ as 
$\mathcal{A}_{-}$ 
and $\mathcal{A} \setminus \mathcal{A}_{-}$ as $\mathcal{A}_{+}$.
Every $\alpha \in \mathcal{A}_{+}$ admits
a \mnh\ partial filling, and the \mnh\ fillings of $X$ correspond to the direct descendants of the root
of $T(X)$; up-to finite ambiguity we fix a direct descendant of  the root of $X$ that
we will denote as $X_{i}$.  Then $|T(X_{i})| < |T(X)|$.  By induction there are only finitely
many hyperbolic manifolds obtained by filling  $X_{i}$ that do not admit a geodesic of 
length less than $\epsilon$.  The theorem follows from this and finiteness of $\mathcal{A}_{-}$
that was established above.

This completes the proof of Theorem~\ref{thm:HyperbolicFillingShortGeos}.
\end{proof}

\section{Cosmetic surgery on $S^{3}$}
\label{section:CosmeticSurgeryOnS3}

\bigskip\noindent
We now to turn to one of the more interesting applications of $T(X)$, concerning
cosmetic surgery on $S^{3}$.  Recall that a {\it cosmetic surgery} is $S^{3}$ 
is a surgery on a link $L \subset S^{3}$ with multislope $\alpha$
so that $L(\alpha) \cong S^{3}$.  Note that following examples:
	\begin{enumerate}
	\item Let $L = K_{1} \cup K_{2}$ be the Whitehead link.  Then infinitely many slopes on $K_{1}$ can be completed to
	a cosmetic surgery, namely: $1/m$ can be completed to the cosmetic surgery given by $(1/m,1/0)$,
	where here and in the examples below we are using the standard meridian--longitude.  That
	is not a real problem: $\{1/m\}$ is a bounded set.
	\item Worse is the Hopf link $H = K_{1} \cup K_{2}$.  It is easy to see that $H(p/q,r/s) \cong S^{3}$
	if and only if $ps-rq=\pm1$.
	Thus {\it every} slope on $K_{1}$ can be completed to a cosmetic surgery. 
	\item Kawauchi~\cite{kawauchi} constructed a two component link $L = K_{1} \cup K_{2} \subset S^{3}$ that admits a non-trivial
	cosmetic surgery.  By Teragaito \cite{teragaito} we may assume that $L$, $K_{1}$ and 
	$K_{2}$ are all hyperbolic (this was also announced by Kawauchi~\cite{kawauchi2}).
	For a detailed discussion see the introduction to~\cite{teragaito}.
	Let $L' \subset S^{3}$ be the core of the attached solid tori
	after this surgery, and let $H' \subset S^{3}$ be the Hopf link.  By isotopy of $H'$ (where we allow $H'$ to intersect $L'$)
	we place $H'$ in a ``very complicated'' position relative to $L'$.   There is a surgery on $L'$ which ``undoes'' the surgery
	gets back to $S^{3}$.  Denote the image of $H'$ under this surgery by $H = K_{3} \cup K_{4}$.  
	For any slope $\alpha_{3}$ there exists infinitely many slopes $\alpha_{4}$ so that 
	$L \cup H(\alpha_{1},\alpha_{2},\alpha_{3},\alpha_{4}) \cong S^{3}$.
	However, we expect that $H$ is no longer the Hopf link; in fact, it is quite likely that the components of $H$ 
	are no longer unknotted, as the disks bound by the components of $H'$ are likely to be destroyed by the surgery on $L'$, 
	and new disks are unlikely to appear.  
	We do not prove these claims, but in light of this discussion we expect the following to be true:
	there exists a four component link in $S^{3}$ (such as $L \cup H = K_{1} \cup K_{2} \cup K_{3} \cup K_{4}$ 
	above) that contains no Hopf sublink (perhaps even no 
	unknotted components), yet every slope on $K_{3}$ can be completed to a cosmetic surgery.
	The moral is this: it is our aim to prove that {\it not} any slope can be competed to a cosmetic surgery,
	but one must beware to Hopf links, including those that are invisible in the original link but manifest
	themselves after surgery.  
	\end{enumerate}

\bigskip\noindent
We are now ready to state:	

\begin{thm}
\label{thm:cosmeticSurgeryOnS3}
Let $L \subset S^{3}$ be a link and denote its components by $K_{1},\dots,K_{n}$.  Let $\mathcal{A}$ be a set
of multislopes of $L$ so that every $\alpha \in \mathcal{A}$ fulfills the following two conditions:
	\begin{enumerate}
	\item $L(\alpha) \cong S^{3}$.
	\item For every $\alpha' \pf \alpha$ with $\alpha'|_{T_{1}} = \nos$, $L(\alpha')  \not\cong T^{2} \times [0,1]$.
	\end{enumerate}
Then the restrictions $\mathcal{A}_{1} = \{\alpha|_{T_{1}} \ |\ \alpha \in \mathcal{A} \}$ form a bounded set.
\end{thm}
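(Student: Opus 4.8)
The plan is to induct on $|T(X)|$, where $X=E(L)=S^{3}\setminus\mathrm{int}\,N(L)$ and $T_{1}=\partial N(K_{1})$ is kept as a distinguished boundary component throughout. The reason this induction should close is that every reduction replaces $X$ by a manifold $Z$ that is again a link exterior in $S^{3}$ (it admits a filling diffeomorphic to $S^{3}$), still carries $T_{1}$ as a boundary component with the same fixed basis, and still receives a family of multislopes obeying the analogue of condition~(2): any partial filling of an induced multislope that leaves $T_{1}$ unfilled and is diffeomorphic to $T^{2}\times[0,1]$ would, after re-gluing whatever pieces the reduction removed, give a partial filling of the original $\alpha$ that leaves $T_{1}$ unfilled and is diffeomorphic to $T^{2}\times[0,1]$ — contradicting~(2). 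So the statement of the theorem is exactly in the shape needed to recurse, and $|T(Z)|<|T(X)|$ at each step by the construction of $T(X)$ and Lemma~\ref{lem:TreeOfJSJ}. One may assume $\mathcal{A}\neq\emptyset$.

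I would first dispose of the routine cases. If $X$ is not prime, write $X=X'\#X''$ with $X'$ prime and $T_{1}\subset X'$; since $S^{3}$ is prime, $\alpha$ fills every prime factor other than $X'$ to a ball, so $X'(\alpha|_{\partial X'})\cong S^{3}$, condition~(2) passes to $X'$, and we apply induction ($|T(X')|<|T(X)|$ as $X'$ is a direct descendant of the root). If $X$ is hyperbolic, then $X(\alpha)\cong S^{3}$ is non-hyperbolic, so every $\alpha\in\mathcal{A}$ admits a \mnh\ partial filling, and by Proposition~\ref{pro:mnhIsFinite} there are only finitely many \mnh\ fillings $\alpha_{1},\dots,\alpha_{k}$ of $X$. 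For those $\alpha_{j}$ with $\alpha_{j}|_{T_{1}}\neq\nos$, any $\alpha$ with $\alpha_{j}\pf\alpha$ has $\alpha|_{T_{1}}=\alpha_{j}|_{T_{1}}$, contributing only finitely many slopes; for those $\alpha_{j}$ with $\alpha_{j}|_{T_{1}}=\nos$, we apply induction to $X(\alpha_{j})$ (again a direct descendant of the root) with the multislopes induced by the relevant $\alpha$, condition~(2) being inherited exactly as above. The union over $j$ is bounded.

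The Seifert fibered and solv cases I would handle directly. A solv manifold with torus boundary admits no filling yielding $S^{3}$, so assume $X$ is Seifert fibered and fix a fibration, with regular fiber slope $\phi_{i}$ on $T_{i}$. Filling a fiber on any $T_{i}$ produces a manifold with an $S^{2}\times S^{1}$ summand, which no further filling removes; since $L(\alpha)\cong S^{3}$ is irreducible, no $\alpha|_{T_{i}}$ is a fiber, so the fibration extends over $X(\alpha)\cong S^{3}$, which is Seifert fibered over $S^{2}$ with at most two exceptional fibers. If $K_{1}$ is a regular fiber then $\Delta(\alpha|_{T_{1}},\phi_{1})=1$. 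If $K_{1}$ becomes an exceptional fiber while the second exceptional fiber of $X(\alpha)$ is one already lying in the interior of $X$ (hence fixed), the Seifert-invariant condition for the filled manifold to be $S^{3}$ forces $\alpha|_{T_{1}}$ to meet a fixed slope of $T_{1}$ exactly once. The only remaining possibility is that $K_{1}$ and some other filled component become the two exceptional fibers while $X$ has none in its interior; but then un-filling those two components leaves an annulus$\times S^{1}\cong T^{2}\times[0,1]$ with $T_{1}$ unfilled, contradicting condition~(2). In every case $\alpha|_{T_{1}}$ has intersection at most $2$ with a fixed slope, so $\{\alpha|_{T_{1}}\}$ is bounded by Proposition~\ref{prop:PropertiesOfBoundedSets}(4).

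The remaining case, $X$ a JSJ manifold, is where the real work lies, and I would run it in parallel with the proof of Theorem~\ref{thm:SlopesOnSolidTorus}. Let $X_{0}$ be the component of the torus decomposition of $X$ containing $T_{1}$, and let $X_{F}$ denote the component on the far side of a JSJ torus $F\subset\partial X_{0}$. Since $X(\alpha)\cong S^{3}$ is irreducible and atoroidal, for each $F$ (as in Proposition~\ref{prop:ObtainedByFilling}) the filling $X_{F}(\alpha|_{\partial X_{F}})$ is either a solid torus or a non-trivial knot exterior contained in a ball; after splitting $\mathcal{A}$ into finitely many subsets fixing which alternative occurs at each $F$, Lemma~\ref{lem:KnotExteriorsAreDisjoint} lets us take the relevant balls disjoint. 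Replacing each $X_{F}(\alpha|_{\partial X_{F}})$ by a solid torus — its meridian, respectively a slope meeting the meridian of the knot exterior once, the ``unknotting in a ball'' move of Figure~\ref{fig:unknotting} — changes nothing ambiently and produces a multislope $\hat\alpha_{0}$ of $\partial X_{0}$ with $X_{0}(\hat\alpha_{0})\cong S^{3}$ and $\hat\alpha_{0}|_{T_{1}}=\alpha|_{T_{1}}$; condition~(2) is inherited by $X_{0}$ by the re-gluing argument of the first paragraph. Since $|T(X_{0})|<|T(X)|$ by Lemma~\ref{lem:TreeOfJSJ}, induction bounds $\{\alpha|_{T_{1}}\}$. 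The hard part will be precisely this JSJ step — keeping track of how the satellite pieces sit inside $S^{3}$, of the finitely many sub-cases, and of the survival of condition~(2) through the re-gluing — together with establishing the Seifert-fibered dichotomy above, where condition~(2) is exactly what rules out $K_{1}$ becoming one of two freshly created exceptional fibers.
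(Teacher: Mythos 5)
Your overall skeleton (induction on $|T(X)|$, splitting into non-prime, Seifert, JSJ and hyperbolic cases, with the non-prime and hyperbolic reductions exactly as in the paper) is the right one, but the JSJ step as you describe it has a genuine gap, and it is precisely the part the paper cannot do by inducting on $X_{0}$ alone. Your reduction only covers the paper's ``Case One''. The missing situation is when, after filling all of $\partial X$ except $T_{1}$, some JSJ torus $F_{j}\subset\partial X_{0}$ becomes parallel to $T_{1}$, i.e.\ the piece of $X(\alpha|_{T_{2},\dots,T_{n}})$ between $T_{1}$ and $F_{j}$ is $T^{2}\times[0,1]$. In that situation your claim that condition~(2) is inherited by $(X_{0},\hat\alpha_{0})$ fails: the partial filling of $\hat\alpha_{0}$ leaving $T_{1}$ and $F_{j}$ unfilled is exactly that $T^{2}\times[0,1]$, while the original $\alpha$ need not violate condition~(2), because a partial filling of $X$ never deletes the far-side piece $X_{j}$ --- the product region only appears after cutting along $F_{j}$, which is not a filling of $X$. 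And without condition~(2) the inductive statement for $X_{0}$ is simply false: the core of the solid torus at $T_{1}$ and the core of the solid torus glued along $F_{j}$ form a Hopf pair in $X_{0}(\hat\alpha_{0})\cong S^{3}$, so every slope of $T_{1}$ completes to an $S^{3}$-filling of $X_{0}$ and $\{\hat\alpha_{0}|_{T_{1}}\}$ is unbounded over such fillings. The paper treats this as a separate case: it applies the inductive hypothesis to the far-side manifold $N_{j}$ (which also fills to $S^{3}$, with $F_{j}$ playing the role of $T_{1}$, and does inherit condition~(2)) to bound the slope induced on $F_{j}$, and then transports that bounded set across the product region using the $T^{2}\times[0,1]$ Cosmetic Surgery Theorem~\ref{thm:CosmeticSurgeryOnT2XI}; your outline never invokes that theorem, and some such mechanism is indispensable here. (Even in the Case-One situation, inheriting condition~(2) through the ``unknotting in a ball'' replacement needs care, since re-gluing a knot exterior in place of the replacement solid torus changes the manifold.)

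Separately, in the Seifert case your justification ``filling a fiber on any $T_{i}$ produces a manifold with an $S^{2}\times S^{1}$ summand, which no further filling removes'' is false: fiber-filling $T^{2}\times[0,1]$ gives a solid torus; fiber-filling $(\mbox{pair of pants})\times S^{1}$ gives the exterior of a two-component unlink, $(D^{2}\times S^{1})\#(D^{2}\times S^{1})$, with no closed summand at all; and when exceptional fibers are present the closed summands are lens spaces, not $S^{2}\times S^{1}$. So ``no $\alpha|_{T_{i}}$ is a fiber'' does not follow from irreducibility of $S^{3}$; in the no-exceptional-fiber case excluding it genuinely needs condition~(2) (the remaining fillings are forced to meet the fiber once, and filling only those components yields $T^{2}\times[0,1]$ with $T_{1}$ unfilled). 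The paper does not exclude this case at all: if some $\alpha|_{T_{i}}$ ($i\geq 2$) is the fiber, a vertical annulus capped with the meridian disk compresses $T_{1}$ along the fiber slope, forcing $\Delta(\alpha|_{T_{1}},\mbox{fiber})=1$, which is a bounded subfamily; and in your exceptional-fiber subcase the correct bound is the paper's two-step intersection estimate via Proposition~\ref{prop:PropertiesOfBoundedSets}(4), not intersection with a single fixed slope. These are repairable, but as written this step rests on a false lemma.
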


\begin{rmks}
\begin{enumerate}
\item There exist $\alpha' \pf \alpha$ with $\alpha'|_{T_{1}} = \nos$ and $L(\alpha')  \cong T^{2} \times [0,1]$
if and only if the cores of the solid tori attached to $\partial N(K_{1})$ and 
$\partial N(K_{i})$ form a Hopf link (for aome $2 \leq i \leq n$).
The cores of the solid tori attached along a multislope in $\mathcal{A}$ may, in fact, contain a Hopf sublink $H$; 
our assumption only requires that the core of the solid torus attached to 
$\partial N(K_{1})$ is not a component of $H$.
\item If there exist $\alpha' \pf \alpha$ with $\alpha'|_{T_{1}} = \nos$ and $L(\alpha')  \cong T^{2} \times [0,1]$
then obviously 
$\mathcal{A}_{1}$ may contain of all the slopes of $T_{1}$.
\end{enumerate}
\end{rmks}

\begin{proof}[Proof of Theorem~\ref{thm:cosmeticSurgeryOnS3}]
We will denote $\mbox{cl}(S^{3} \setminus N(L))$ as $X$ and $\partial N(K_{i})$ as $T_{i}$.
Although the theorem
was phrased in terms of surgery, we will prove the equivalent statement for fillings of $X$.
We induct on $|T(X)|$.   

\bigskip\noindent
{\bf Assume that $X$ is not prime.}  Let $X = X' \# X''$, where here $X'$ is the factor of the prime 
decomposition of $X$ that contains $T_{1}$.
By renumbering the components of $\partial X$ if necessary we assume as we may that $\partial X_{1} = T_{1},\dots,T_{k}$,
for some $1 \leq k \leq n$.  For any multislope $\alpha \in \mathcal{A}$ we have
$$X(\alpha) \cong X'(\alpha|_{\partial X'}) \# X''(\alpha|_{\partial X''}).$$
Thus $X'(\alpha|_{\partial X'}) \cong S^{3} \cong X''(\alpha|_{\partial X''})$.   
If, for some $2 \leq i \leq k$, 
$$X'(\nos,\alpha_{2},\dots,\alpha_{i-1},\nos,\alpha_{i+1},\dots,\alpha_{k}) \cong T^{2} \times [0,1],$$
then 
\begin{eqnarray*}
L(\nos,\alpha_{2},\dots,\alpha_{i-1},\nos,\alpha_{i+1},\dots,\alpha_{n}) &\cong& 
X'(\nos,\alpha_{2},\dots,\alpha_{i-1},\nos,\alpha_{i+1},\dots,\alpha_{k}) \# X''(\alpha|_{\partial X''}) \\
&\cong& T^{2} \times [0,1] \# S^{3} \\
&\cong& T^{2} \times [0,1].
\end{eqnarray*}
This contradicts the second assumption of the theorm.  
Thus $X'$ and $\mathcal{A}' = \{\alpha|_{\partial X'} | \alpha \in \mathcal{A}\}$
fulfill the assumptions of the theorem.  Since $X'$ corresponds to a direct descendant of the root of $T(X)$,
$|T(X')| < |T(X)|$.  By induction,  $\mathcal{A}_{1}' = \{\alpha'|_{T_{1}} \ |\ \alpha' \in \mathcal{A}'  \}$ is bounded.
It is easy to see that $\mathcal{A}_{1} = \mathcal{A}_{1}'$; the theorem follows in this case.

We assume from now on that $X$ is prime.

\bigskip\noindent
{\bf Assume that $X$ is Seifert fibered a sol manifold.}  Clearly we may ignore sol manifolds.
If $n=1$ then $L$ is a knot and the result is well known; 
assume from now on $n\ge2$.  We fix a Seifert fiberation on $X$.  Then the fibers on $T_{1}$
define a slope which we denote by $\alpha_{1}^{f}$.  
For convenience we will denote $\alpha|_{T_{i}}$ as $\alpha_{i}$.
Define $\mathcal{A}_{f},\ \mathcal{A}_{0}, \ \mathcal{A}_{1} \subset \mathcal{A}$ by:
	\begin{enumerate}
	\item  $\alpha \in \mathcal{A}_{f}$ if $\alpha_{{1}} = \alpha_{1}^{f}$.
	\item $\alpha \in \mathcal{A}_{0}$ if for some $2 \leq i \leq n$, $\alpha_{{i}}$ is the fiber on $T_{i}$.
	\item $\alpha \in \mathcal{A}_{1}$ if $\alpha \not\in \mathcal{A}_{f} \cup \mathcal{A}_{0}$.
	\end{enumerate}
Clearly,  $\mathcal{A} = \mathcal{A}_{f} \cup \mathcal{A}_{0} \cup \mathcal{A}_{1}$.	
	
If $\alpha \in \mathcal{A}_{f}$ then $\alpha_{1} \in B_{f}$, where $B_{f}$ is the bounded set of slopes of $T_{1}$ 
defined by $B_{f} = \{\alpha_{1}^{f}\}.$

For $\alpha \in \mathcal{A}_{0}$, let $D$ be the 
disk obtained by attaching a vertical annulus connecting $T_{i}$ and $T_{1}$ to the meridian disk of the solid 
torus attached to $T_{i}$ (where here $i$ is as in the definition of $\mathcal{A}_{0}$).  
Thus we see that $D$ is a compressing disk for $T_{1}$ and the slope defined by $\partial D$
is $\alpha_{1}^{f}$.  Since $X(\alpha_{1},\dots,\alpha_{n}) \cong S^{3}$,
$\Delta(\alpha_{1},\alpha_{1}^{f}) = 1$ (recall that $\Delta$ denotes the geometric intersection
number).  Thus $\alpha_{1} \in B_{0}$, where $B_{0}$ is the bounded set of slopes of $T_{1}$ defined by:
$$B_{0} = \{\alpha | \ \Delta(\alpha,\alpha_{1}^{f}) = 1\}.$$

If $\alpha \in \mathcal{A}_{1}$ then 
the fiberation of $X$ extends to a fiberation of $X(\nos,\alpha_{2},\dots,\alpha_{n})$,
and the fiberation of $X(\nos,\alpha_{2},\dots,\alpha_{n})$ extends to a fibration of
$X(\alpha) \cong S^{3}$.  
Thus $X(\nos,\alpha_{2},\dots,\alpha_{n})$ is a Seifert fibered
space over $D^{2}$ with at most two exceptional fibers and the cores of the solid tori
attached to $T_{1},\dots,T_{n}$ are fibers.

Assume first that  $X(\nos,\alpha_{2},\dots,\alpha_{n})$ is a Seifert fibered
space over $D^{2}$ with exactly two exceptional fibers.  Then $\alpha_{1} \in B_{0}$.

Next assume that  $X(\nos,\alpha_{2},\dots,\alpha_{n})$ is a Seifert fibered
space over $D^{2}$ with at most one exceptional fiber, and the exceptional fiber
(if exists) is the core of the solid torus attached to $T_{i}$; 
by renumbering $T_{2},\dots,T_{n}$ if necessary we may assume that
$i=2$.  Then $X(\nos,\nos,\alpha_{3},\dots,\alpha_{n}) \cong T^{2} \times [0,1]$, 
contradicting our assumption.

Thus we have reduced the proof to the case where $X(\nos,\alpha_{2},\dots,\alpha_{n})$ is a Seifert fibered
space over $D^{2}$ with exactly one exceptional fiber, and the exceptional fiber
is not the core of a solid torus attached to $T_{i}$ ($2 \leq i \leq n$).
Then the exceptional fiber is contained in $X$ and its multiplicity, which we will denote as $d$,
does not depend on $\alpha \in \mathcal{A}_{1}$.  
Since $X(\nos,\alpha_{2},\dots,\alpha_{n})$ is a Seifert fibered space over $D^{2}$ with exactly one exceptional fiber, 
$X(\nos,\alpha_{2},\dots,\alpha_{n}) \cong D^{2} \times S^{1}$; 
we will denoting the slope defined by the boundary of its meridian disk as $\alpha'$.  Then 
$\Delta(\alpha_{1}^{f},\alpha') = d$.  Since $X(\alpha) \cong S^{3}$, we also have that
$\Delta(\alpha',\alpha_{1})=1$.
Thus $\alpha_{1}\in B_{1}$,  
where $B_{1}$ is the set of slopes of $T_{1}$ defined by:
$$B_{1} = \{\alpha\ | \ (\exists \alpha')\  \Delta(\alpha_{1}^{f},\alpha') = d \mbox{ and }\Delta(\alpha',\alpha) = 1\}.$$
By applying Proposition~\ref{prop:PropertiesOfBoundedSets}~(4) twice we see that $B_{1}$ is bounded.

Since $\mathcal{A} = \mathcal{A}_{f} \cup \mathcal{A}_{0} \cup \mathcal{A}_{1}$, for any
$\alpha \in \mathcal{A}$, $\alpha_{1} \in B_{f} \cup B_{0} \cup B_{1}$.  This completes the proof 
for Seifert fibered and sol manifolds.

We assume from now on that $X$ is prime and not Seifert fibered or a  slov manifold.

\bigskip\noindent
{\bf Assume that $X$ is JSJ.}  Let $X_{0}$ be the component of the torus decomposition of $X$
that contains $T_{1}$ and denote the remaining components of $\partial X$ as $\{F_{j}\}_{j=1}^{k}$,
see Figure~\ref{fig:thm10JSJ}.
\begin{figure}
\includegraphics[width=4in]{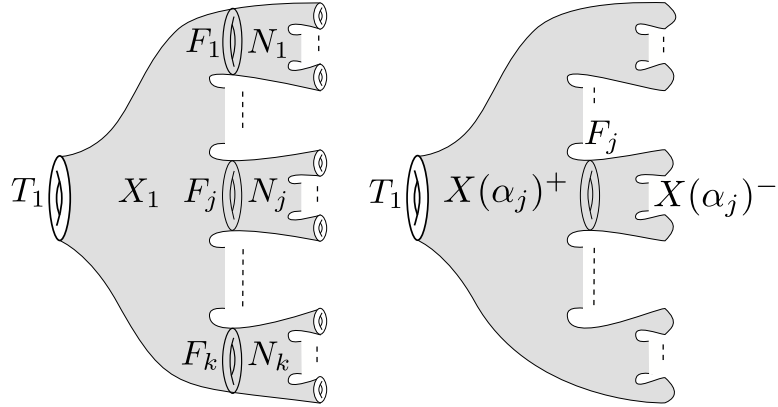}
\caption{Notation used when $X$ is JSJ}
\label{fig:thm10JSJ}
\end{figure}
Given $\alpha \in \mathcal{A}$, we will denote the components of $X(\alpha|_{T_{2},\dots,T_{n}})$
cut open along $F_{j}$ as follows (see the right side of Figure~\ref{fig:thm10JSJ}):
\begin{enumerate}
\item The component whose boundary is $T_{1} \cup F_{j}$ will be denoted as $X(\alpha)_{j}^{+}$.
\item The component whose boundary is $F_{j}$ will be denoted as $X(\alpha)_{j}^{-}$.
\end{enumerate}
Since $S^{3}$ does not admit a non separating torus, we assume as we may that
$X(\alpha)_{j}^{-} \cap X(\alpha)_{j'}^{-} = \emptyset$ for $j \neq j'$.

There are two cases to consider:

\bigskip\noindent
{\bf Case One.}  Let $\mathcal{A}_{1}$ be the multislopes $\alpha \in \mathcal{A}$ for which 
$X(\alpha)_{j}^{+} \not\cong T^{2} \times [0,1]$ for all $j$.  
Then $\alpha\in \mathcal{A}_{1}$ induces a multislope on $\partial X_{0}$,
which we will denote as $\alpha_{0}$, defined as follows:
	\begin{enumerate}
	\item $\alpha_{0}|_{T_{1}} = \alpha|_{T_{1}}$.
	\item If $X(\alpha)_{j}^{-} \cong D^{2} \times S^{1}$ then $\alpha_{0}|_{F_{j}}$ is the slope 
	of the meridian of the solid torus $X(\alpha)_{j}^{-}$.
	\item If $X(\alpha)_{j}^{-} \neq D^{2} \times S^{1}$ then $X(\alpha)_{j}^{-} \cong E(K_{j})$ 
	for some non trivial knot $K_{j} \subset S^{3}$.
	We take $\alpha_{0}|_{F_{j}}$ to be any slope that intersects the meridian of $E(K_{j})$ exactly once.
	\end{enumerate}
	
By Lemma~\ref{lem:KnotExteriorsAreDisjoint}, we may assume that the components  $X(\alpha)_{j}^{-}$
in Case~(3) are contained in disjointly embedded balls; hence removing $X(\alpha)_{j}^{-}$ and attaching a solid
torus along $\alpha_{0}|_{F_{j}}$ does not change 
$X(\alpha)$ or $X(\alpha)_{j'}^{+}$ (for $1 \leq j' \leq k$), recall
Figure~\ref{fig:unknotting}.  Thus $X_{0}$ and the induced slopes 
$\{\alpha_{0}\ | \ \alpha \in \mathcal{A}_{1}\}$ satisfy the conditions of the Theorem
(condition~(1) follows from the corresponding assumption for $X$, and condition~(2)
follows from the defining assumption of case one).               
By Lemma~\ref{lem:TreeOfJSJ},  $|T(X_{0})| < |T(X)|$.  
By induction, $\{\alpha_{0}|_{T_{1}} \ | \ \alpha \in \mathcal{A}_{1}\}$ is a bounded set which
we will denote as $B_{1}$. By construction $\alpha_{0}|_{T_{1}} = \alpha|_{T_{1}}$. 

Hence $\{\alpha|_{T_{1}} \ | \ \alpha \in \mathcal{A}_{1} \} = B_{1}$ is bounded.

\bigskip\noindent
{\bf Case Two.}  Fix $1 \leq j \leq k$.  Let $\mathcal{A}_{2,j} \subset \mathcal{A}$ be the multislopes $\alpha \in \mathcal{A}$ 
for which $X(\alpha)_{j}^{+} \cong T^{2} \times [0,1]$.  The definitions immediately imply that the following two conditions
hold:
\begin{enumerate}
\item For any $\alpha \in \mathcal{A}_{2,j}$, $X(\alpha)_{j}^{+}(\alpha|_{T_{1}}) \cong D^{2} \times S^{1}$.
\item  $\mathcal{A} = \mathcal{A}_{1} \cup (\cup_{j=1}^{k} \mathcal{A}_{b,j})$.
\end{enumerate}
We will denote the component of $X$ cut open along $F_{j}$ that does not contain $X_{1}$ as $N_{j}$
and $\partial X \cap N_{j}$ as $\mathcal{T}_{j}$; to avoid the situation $N_{j} = \emptyset$,
if $F_{j} \subset \partial X$ we push it slightly into the interior so that $N_{j} \cong T^{2} \times [0,1]$
in this case.  Note that $\partial N_{j} = F_{j} \cup \mathcal{T}_{j}$.

Every $\alpha \in \mathcal{A}_{2,j}$ induces the multislope on $\partial N_{j}$, 
which we will denote as  $\alpha_{j}$, defined by 
the slope of the meridian of the solid torus $X(\alpha)_{j}^{+}(\alpha|_{T_{1}})$
on $F_{j}$ and the restriction $\alpha|_{\mathcal{T}}$ on $\mathcal{T}_{j}$.
We show that the following two conditions hold:
	\begin{enumerate}
	\item $N_{j}(\alpha_j) \cong S^{3}$: by construction, $N_{j}(\alpha_{j}) \cong X(\alpha)$
	and by assumption, $X(\alpha) \cong S^{3}$.
	\item Let $\alpha_{j}' \pf \alpha_{j}$ be a partial filling for which $\alpha_{j}'|_{F_{j}} = \nos$.
	Then $N_{j}(\alpha_{j}') \not\cong D^{2} \times S^{1}$: assume for a 
	contradiction that $N_{j}(\alpha_{j}') \cong D^{2} \times S^{1}$. 
	Let $T_{i}$ be the component of $\partial N_{j} \cap X$ for which $\alpha_{j}'|_{T_{i}} = \nos$.
	Let $\alpha' \pf \alpha$ be the partial filling giving by setting $\alpha'|_{T_{1}}$ and
	$\alpha'|_{T_{i}}$ to $\nos$.  Then 
	\begin{eqnarray*}
	X(\alpha') &\cong& X(\alpha)_{j}^{+} \cup_{F_{j}} N_{j}(\alpha_{j}') \\
	&\cong&  T^{2} \times [0,1] \cup_{F_{j}} T^{2} \times [0,1] \\ 
	&\cong& T^{2} \times [0,1],
	\end{eqnarray*}
	violating assumption~(2) of the theorem.
	\end{enumerate}
Thus the assumptions of the theorem are satisfied by $N_{j}$ and
$\{\alpha_{j}   \ | \ \alpha \in \mathcal{A}_{2,j} \}$.  By Lemma~\ref{lem:TreeOfJSJ},
$|T(N_{j})| < |T(X)|$.  By induction,
$\{\alpha_j|_{F_{j}}\}$ is bounded.  For each $\alpha \in \mathcal{A}_{2,j}$, 
$\alpha|_{T_{1}}$ is the image of $\alpha_{j}|_{F_{j}}$ under the projection induced by the product structure
$X(\alpha)_{j}^{+} \cong T^{2} \times [0,1]$.  By the $T^{2} \times [0,1]$ Cosmetic Surgery 
Theorem~(\ref{thm:CosmeticSurgeryOnT2XI}),  the union of the images of $\{\alpha_j|_{F_{j}}\}$
under the projections given by all possible filling of $\partial X(\alpha)_{j}^{+} \setminus (T_{1} \cup F_{j})$
for which $X(\alpha)_{j}^{+} \cong T^{2} \times [0,1]$ is a bounded set of slopes of $T_{1}$
that we will denote as $B_{2,j}$.  We conclude that $\alpha|_{T_{1}} \in B_{2,j}$.
This completes case two.

\bigskip\noindent
Since $\mathcal{A} = \mathcal{A}_{1} \cup (\cup_{j} \mathcal{A}_{2,j})$ we have
$$\{\alpha|_{T_{1}} \ | \ \alpha \in \mathcal{A} \} \subset B_{1} \cup (\cup_{j}B_{2,j}).$$   
The theorem follows for JSJ manifolds, and we assume from now on that $X$ is prime, not Seifert fibered,
sol, or JSJ.  Thus $X$ is hyperbolic.

\bigskip\noindent
{\bf Assume that $X$ is hyperbolic.}  Since $S^{3}$ is not hyperbolic any $\alpha \in \mathcal{A}$
admits a \mnh\ partial filling.  Up-to finite ambiguity we fix a \mnh\ filling which we
will denote as $\alpha'$.
If $\alpha'|_{T_{1}} \neq \nos$ then for any $\alpha$ with $\alpha' \pf \alpha$,
$\alpha|_{T_{1}} = \alpha'|_{T_{1}}$ is in the finite (and hence bounded) set $\{\alpha'|_{T_{1}}\}$.
Otherwise, any $\alpha$ with $\alpha' \pf \alpha$ factors through $X(\alpha')$:
$$X \stackrel{\alpha'}\to X(\alpha') \stackrel{\alpha|_{\partial X(\alpha')}}\longrightarrow X(\alpha')(\alpha|_{\partial X(\alpha')}).$$
It is straightforward to see that we can apply induction to $X(\alpha')$ and 
$\{\alpha|_{\partial X(\alpha')} \ | \ \alpha \in \mathcal{A} \}$, showing that
$\{\alpha|_{\partial X(\alpha')}|_{T_{1}} \ | \ \alpha \in \mathcal{A} \}$ is bounded.
Thus for any $\alpha$ with $\alpha' \pf \alpha$,
$\alpha|_{T_{1}} = \alpha'|_{T_{1}}$ is in this bounded set.

This completes the proof in this final case.
\end{proof}

\section{Proof of Theorem~\ref{thm:main}}
\label{sec:SetUpOfProof}

\bigskip\noindent
In this section we prove Theorem~\ref{thm:main} assuming the results of the next three sections.
We decided to present the proof before Sections~\ref{sec:FillingsThatDontFactorThroughM},
\ref{sec:FillingsOfEthatDontFactorThroughTXI}, and~\ref{sec:BothFillingsFactor}
in order to help the reader understand the motivation behind the exact statements proved in those
sections.

\bigskip\noindent
As in the statement of the theorem, let $M$ be a hyperbolic manifold so that
$\partial M$ is a single torus that we will denote as $T$and $V>0$ a fixed number.  
Consider $\beta$, a slope on $T$, so that $\lv[M(\beta)] < V$.  
By the Structure Theorem of~\cite{rieckyamashita}, there exist
finitely many covers $\phi:X \to E$, with $X$ and $E$ hyperbolic, so that the following 
diagram commutes:

\bigskip

\begin{center}
\begin{picture}(200,60)(0,0)
\label{diagram}
  \put(  0,  0){\makebox(0,0){$E$}}
  \put(  0,50){\makebox(0,0){$X$}}
  \put(  0, 40){\vector(0,-1){30}}   
  \put( 10,  0){\vector(1,0){73}}
  \put(100,  0){\makebox(0,0){$(S^3,L)$}}
  \put(100, 40){\vector(0,-1){30}}
%  \put(10,28){\makebox(0,0){$/\phi$}}
%  \put(108,28){\makebox(0,0){$/{\hat{\phi}}$}}
  \put(108,28){\makebox(0,0)}
  \put(100,50){\makebox(0,0){$M(\beta)$}}
  \put( 10,50){\vector(1,0){75}}
\end{picture}
\end{center}

\bigskip\noindent
Here, the horizontal arrows represent inclusions induced by fillings and the vertical arrows represent covering projections;
$\phi:X \to E$ is an unbranched covering projection between hyperbolic manifolds and $\hat\phi:M \to S^{3}$ is a branched 
cover realizing $\lv[M(\beta)]$.  Up-to finite ambiguity we fix one cover $\phi:X \to E$.  We will denote the components
of $\partial X$ as $T_{1},\dots,T_{n}$.

\bigskip\noindent
{\bf Case One.}   We first consider fillings $X \to M(\beta)$ that do not factor though $M$, that is, slopes $\beta$ so that
for some multislope $\alpha$ of $\partial X$ 
the following two conditions hold:
	\begin{enumerate}
	\item $X(\alpha) \cong M(\beta)$.
	\item There is no $\alpha' \pf \alpha$ so that $X(\alpha') \cong M$.
	\end{enumerate}
In Section~\ref{sec:FillingsThatDontFactorThroughM} we prove Theorem~\ref{thm:fillingsThatDontFactor},
showing that the set of slopes $\beta$ 
that arise in this way, which we will denote as $B_{1}$, is bounded.  
We remark that this is a general fact about fillings $X \to M(\beta)$
and does not use the covers $\phi:X \to E$ and $\hat{\phi}:M(\beta) \to S^{3}$.

\bigskip\noindent
{\bf Case Two.}  
We next consider fillings $X(\alpha) \cong M(\beta)$ that do factor though $M$, that is, fillings that admit a partial
filling $M$.   Up to finite ambiguity, we may assume that the component of $\partial X$ 
that corresponds to $\partial M$ is $T_{1}$.  Thus we are considering multislopes $\alpha$
so that the diffeomorphism $X(\alpha) \to M(\beta)$ induces, by restriction, a diffeomorphism 
$$X(\alpha|_{T_{2},\dots,T_{n}}) \to M.$$

Denote the components of $\partial E$ by $T'_{1},\dots,T'_{m}$.  By renumbering  $T'_{1},\dots,T'_{m}$
if necessary we may assume
that $T'_{1} = \phi(T_{1})$.  The diagram above implies that
$\alpha$ induces a multislope of $\partial E$, which we will denote as $\alpha^{E} = (\alpha_{1}^{E},\dots,\alpha^{E}_{m})$.   
In Case Two we only consider fillings on $E$ that do not factor through $T^{2} \times [0,1]$; more precisely:
$$(\forall \alpha' \pf \alpha^{E}\mbox{ with } \alpha'|_{T_{1}'} = \nos) \hspace{10pt} E(\alpha') \not\cong T^{2} \times [0,1].$$
In that case, the strategy is as follows: applying the $S^{3}$ Cosmetic Surgery Theorem~(\ref{thm:cosmeticSurgeryOnS3}) we see that the
possibilities for $\alpha_{1}^{E}$ are bounded; the covering projection $\phi:X \to E$ induces a bilipschitz bijection between the 
slopes of $T_{1}'$ and those of $T_{1}$ (Lemma~\ref{lem:CorrespondingSlopesFeray}); hence the possibilities for slopes on $T_{1}$ are bounded.
The argument is worked out in detail in 
Section~\ref{sec:FillingsOfEthatDontFactorThroughTXI}.

\medskip\noindent
{\bf Case Three.}  The last and most exciting case is when the filling of $X$ factors through $M$ and the filling of $E$
factors through $T^{2} \times [0,1]$.  The proof in this case is given in Section~\ref{sec:BothFillingsFactor}.
Again, we conclude that $\{\alpha|_{T_{1}}\}$ is bounded.

\bigskip
\bigskip\noindent
Assuming the results of the following sections, we deduce Theorem~\ref{thm:main} as follows:

\begin{proof}[Proof of Theorem~\ref{thm:main}]
Let ${\mathcal{A}}$ be the set of all multislopes of $\partial X$ so that
for each $\alpha \in {\mathcal{A}}$ there is a slope $\beta$ of $\partial M$ so that
$X(\alpha) \cong M(\beta)$.

Since $M$ is hyperbolic there is a finite set of slopes of $\partial M$, which we will denote
as $B_{F}'$, so that for any $\beta \not \in B_{F}'$, $M(\beta)$ is hyperbolic and
the core of the attached solid torus is its unique shortest geodesic (Lemma~\ref{lem:HyperbolicDehnSurgery}).  
Let $B_{F}$ be the set of slopes of $T$ defined as
$$B_{F} = \{ \beta \ | \ (\exists \beta' \in B_{F}')\ M(\beta) \cong M(\beta')\}.$$
Since $M$ is hyperbolic and $|\partial M|=1$, no manifold is obtained by filling infinitely 
many distinct slopes of $\partial M$; hence $B_{F}$ is finite.  
We will only consider multislopes $\alpha \in \mathcal{A}$ for which 
X$(\alpha) \cong M(\beta)$ for $\beta \not\in B_{F}$.  
To void overly complicated notation we will not rename $\mathcal{A}$.

We now consider Cases Two and Three.  We will denote as ${\mathcal{A}}_{2,3}$
the multislopes of ${\mathcal{A}}$ in these cases, so that every $\alpha \in \mathcal{A}_{2,3}$
admits a partial filling $\alpha' \pf \alpha$ so that $X(\alpha') \cong M$.   Up-to finite ambiguity we may assume
that $\alpha' = \alpha|_{T_{2},\dots,T_{n}}$.
 We will denote the set of restrictions $\{\alpha|_{T_{1}}\ | \ \alpha \in {\mathcal{A}}_{2,3}\}$ as $B_{M}'$.
By Propositions~\ref{pro::FillingsOfEthatDontFactorThroughTXI}
and~\ref{pro:TheLastCase} and Lemma~\ref{lem:BoundedSetsOnMathcalT} 
(see also Remark~\ref{rmk:LastCase}), $B_{M}'$ is bounded.  

Let $\beta$ be a slope
of $\partial M$ so that $M(\beta) \cong X(\alpha)$ for some $\alpha \in {\mathcal{A}}_{2,3}$.  
We will consider $X(\alpha)$ 
as $X(\alpha|_{T_{2},\dots,T_{n}})(\alpha|_{T_{1}})$,
the manifold obtained by filling $X(\alpha|_{T_{2},\dots,T_{n}})$ along slope $\alpha|_{T_{1}}$.
Then we see that $X(\alpha|_{T_{2},\dots,T_{n}}) \cong M$ and
$$X(\alpha|_{T_{2},\dots,T_{n}})(\alpha|_{T_{1}}) \cong M(\beta).$$
Let $f:X(\alpha|_{T_{2},\dots,T_{n}}) \to M$ be a diffeomorphism.  Denote the image of 
$\alpha|_{T_{1}}$ under $f$ as $\alpha^{M}$ and 
the image of $B_{M}'$ under $f_{\alpha|_{T_{2},\dots,T_{n}}}$ as 
$B_{M,f}$; note that $\alpha^{M} \in B_{M,f}$.  Let $i$ be the isometric
involution on the slopes of $T$ given by Lemma~\ref{lem:MCGisometryOfSlopes}; we will denote 
$B_{M,f} \cup i(B_{M,f})$ 
as $B_{M}$.  Clearly $B_{M}$ is bounded and $\alpha^{M} \in B_{M}$.

Since $M(\alpha^{M}) \cong X(\alpha|_{T_{2},\dots,T_{n}})(\alpha|_{T_{1}})$
and $M(\beta)$ are diffeomorphic, by Mostow's Rigidity there is an isometry
$f:M(\alpha^{M}) \to M(\beta)$.   Since $\beta \not\in B_{F}$, the cores of the attached solid tori are the shortest geodesics 
of $M(\alpha^{M})$ and $M(\beta)$.  
Thus $f$ carries the core of the solid torus attached to $M(\alpha^{M})$ 
to the core of the solid torus attached to $M(\beta)$ and hence by restriction $f$
induces an isometry $M \to M$ that maps $\alpha^{M}$ to $\beta$.
By Lemma~\ref{lem:MCGisometryOfSlopes}, $\alpha^{M} = \beta$
or $\alpha^{M} = i(\beta)$.  Since $\alpha^{M} \in B_M$ and $B_{M} = i(B_{M})$,
$\beta \in B_{M}$.

Recall that we denoted the set of slopes of $\partial M$ that is realized in Case One 
as $B_{1}$; in Theorem~\ref{thm:fillingsThatDontFactor}
we show that $B_{1}$ is bounded.  Combining all the possibilities we see that
$$\beta \in B_{M}  \cup B_{1}\cup B_{F}.$$  
Theorem~\ref{thm:main} follows.
\end{proof}

\section{Case One: fillings of $X$ that do not factor through $M$}
\label{sec:FillingsThatDontFactorThroughM}

\bigskip\noindent
In this section we consider two manifolds, denoted as $X$ and $M$, where $M$ is a one cusped hyperbolic
manifolds.  If $\alpha'$ is a multislope of $X$ so that $X(\alpha') \cong M$, then (trivially) for any slope $\beta$
of $\partial M$ there is a multislope $\alpha$ so that $\alpha' \pf \alpha$ and $X(\alpha) \cong M(\beta)$.
In Theorem~\ref{thm:fillingsThatDontFactor} we show that if one considers only
multislopes $\alpha$ that do not admit 
$\alpha' \pf \alpha$ with $X(\alpha') \cong M$, 
then the set of slopes of $\partial M$ that give rise to manifolds that are also obtained
by filling $X$ is bounded.  This is purely a result about filling and
is independent of the covers considered in this paper.   The precise statement is:

\begin{thm}
\label{thm:fillingsThatDontFactor}
Let $X$ be a compact orientable connected manifold so that $\partial X$ consists of tori
and $M$ a hyperbolic manifold with $\partial M$  a single torus.  
Let ${\mathcal{A}}$ be a set of multislopes of $\partial X$ 
and ${\mathcal{B}}$ a set of slopes of $\partial M$ 
fulfilling the following two conditions:
	\begin{enumerate}
	\item For every $\beta \in {\mathcal{B}}$ there is a multislope 
	$\alpha \in {\mathcal{A}}$ so that $X(\alpha) \cong M(\beta)$.
	\item For every $\alpha \in \mathcal{A}$ and every $\alpha' \pf \alpha$, $X(\alpha) \not\cong M$.  
	\end{enumerate}
Then $\mathcal{B}$ is bounded.
\end{thm}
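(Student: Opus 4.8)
The plan is to induct on $|T(X)|$, following the case analysis by the type of $X$ that is used throughout the paper. First, by Thurston's Dehn Surgery Theorem $M(\beta)$ is closed and hyperbolic for all but finitely many $\beta$, and by Lemma~\ref{lem:HyperbolicDehnSurgery} for all but finitely many $\beta$ it has a unique shortest geodesic, namely the core of the $\beta$--filling, and drilling that geodesic recovers $M$. We may therefore discard a finite (hence bounded) set of slopes from $\mathcal{B}$ and assume every $\beta\in\mathcal{B}$ has these properties; in particular $\beta\neq\nos$, since $M(\nos)=M$ while condition~(2) forbids $X(\alpha)\cong M$.

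If $X$ is Seifert fibered or solv, no filling of $X$ is hyperbolic, so after the above reduction $\mathcal{B}=\emptyset$. If $X$ is reducible, then for each $\alpha\in\mathcal{A}$ exactly one prime factor $X'$ of $X$ fills to $M(\beta)$ and the others fill to balls; up to finite ambiguity fix $X'$, and observe that the induced data on $X'$ satisfies~(1) and~(2), because filling the other factors to balls is a connected sum with copies of $S^{3}$, so a partial filling of $X'$ onto $M$ would yield a partial filling of $X$ onto $M$. Since $X'$ is a direct descendant of the root, $|T(X')|<|T(X)|$, and induction plus a finite union settles this case. If $X$ is a JSJ manifold, then $X(\alpha)\cong M(\beta)$ is closed, irreducible and atoroidal, so the sink argument inside the proof of Proposition~\ref{prop:ObtainedByFilling} presents $X(\alpha)$ as a filling of a single component $X_{0}$ of the torus decomposition of $X$; I would fix $X_{0}$ up to finite ambiguity, form the induced multislope $\alpha_{0}$ on $\partial X_{0}$ (meridians of the solid tori, and once--intersecting slopes for the knot--exteriors--in--balls, which change nothing since $M(\beta)$ is atoroidal), and check that $(X_{0},\{\alpha_{0}\},\mathcal{B})$ satisfies the hypotheses. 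Here the delicate point is that a partial filling of $\alpha_{0}$ producing $M$ forces a genuine partial filling of $\alpha$ producing $M$; this uses that $M(\beta)$ is closed and atoroidal to rule out leaving a JSJ torus of $X_{0}$ unfilled. By Lemma~\ref{lem:TreeOfJSJ}, $|T(X_{0})|<|T(X)|$, and induction plus a finite union closes this case.

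The heart is the case $X$ hyperbolic. Split $\mathcal{A}$ into the $\alpha$ that are not totally hyperbolic and the $\alpha$ that are. If $\alpha$ is not totally hyperbolic it admits a \mnh\ partial filling $\tilde\alpha$; since $X(\alpha)\cong M(\beta)$ is a filling of $X(\tilde\alpha)$ and is irreducible and atoroidal, while $X(\tilde\alpha)$ is reducible or JSJ (it is non--hyperbolic, and cannot be Seifert or solv as it further fills to the hyperbolic $M(\beta)$), Proposition~\ref{prop:ObtainedByFilling} presents $X(\alpha)$ as a filling of a descendant $X_{0}$ of $X(\tilde\alpha)$ on $T(X)$. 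By Proposition~\ref{pro:mnhIsFinite} there are finitely many \mnh\ fillings and finitely many such descendants, so up to finite ambiguity fix $X_{0}$, note $|T(X_{0})|<|T(X)|$, transfer the hypotheses as in the JSJ case, and invoke induction. For $\alpha$ totally hyperbolic I would run a secondary induction on the number $n$ of components of $\partial X$. If every coordinate of $\alpha$ is ``long'' (outside a finite set depending on $X$), then by the multi--cusp form of Thurston's Dehn Surgery Theorem (the natural extension of Lemma~\ref{lem:HyperbolicDehnSurgery}) every geodesic of $X(\alpha)$ shorter than a fixed $\epsilon$ is a power of a core; hence the unique shortest geodesic of $M(\beta)=X(\alpha)$ is the core on some $T_{i}$, so $X(\hat\alpha_{i})\cong M$ with $\hat\alpha_{i}\pf\alpha$, contradicting~(2). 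Thus some coordinate $\alpha_{i}$ lies in a finite set; fixing it and filling $T_{i}$ accordingly produces a hyperbolic manifold with $n-1$ cusps whose induced data is still totally hyperbolic and still satisfies~(1) and~(2) (the new partial fillings are among the old ones), and the secondary induction applies. The base case $n=1$ is handled directly: a long slope again forces $X\cong M$ by drilling, against~(2), so $\alpha$ ranges over a finite set and $\mathcal{B}$ is finite. Assembling the finitely many bounded pieces from the four cases with the finitely many discarded slopes shows $\mathcal{B}$ is bounded.

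I expect the main obstacle to be twofold, both parts in the hyperbolic case: making precise the geometric limiting statement that, after absorbing the constant coordinates (via a subsequencing argument of the kind used in Proposition~\ref{pro:mnhIsFinite}), the unique shortest geodesic of $M(\beta)$ must be a core of the filling, which requires the multi--cusp Dehn surgery input; and the bookkeeping needed to show condition~(2) is inherited under the reductions, since the induced multislopes on JSJ components and on $T(X)$--descendants carry ``meridian'' slopes that are not literally restrictions of $\alpha$, so a hypothetical partial filling onto $M$ must be pulled back to a genuine partial filling of $\alpha$ onto $M$ using that $M(\beta)$ is closed and atoroidal.
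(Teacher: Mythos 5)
Your initial reduction (discarding finitely many $\beta$ so that $M(\beta)$ is hyperbolic with the core as unique shortest geodesic), the Seifert/sol and reducible cases, and the induction on $|T(X)|$ all match the paper, and your secondary induction for totally hyperbolic fillings is a workable variant of the paper's finiteness claim (modulo a uniform multi-cusp Dehn surgery statement that the paper avoids by a subsequencing argument). The genuine gap is in your JSJ case, and it propagates into your hyperbolic case, where you again pass through Proposition~\ref{prop:ObtainedByFilling} and ``transfer the hypotheses as in the JSJ case.'' The claim that a partial filling of the induced multislope $\alpha_{0}$ producing $M$ forces a genuine partial filling of $\alpha$ producing $M$ is false, and atoroidality of $M(\beta)$ does not rescue it. The induced multislope $\alpha_{0}$ carries, on each JSJ torus $F$ of $X$, a meridian-type slope that is \emph{not} a restriction of $\alpha$; a partial filling $\alpha_{0}' \pf \alpha_{0}$ with $X_{0}(\alpha_{0}') \cong M$ and $\alpha_{0}'|_{F} = \nos$ corresponds to the \emph{full} filling $X(\alpha)$, not to any $\alpha' \pf \alpha$: it merely says that $F$ sits in $M(\beta)$ as a copy of $\partial M$ bounding a solid torus on the outside, so that $M(\beta) \cong M(\mu_{F})$ where $\mu_{F}$ is the meridian induced on $F$ by the complementary piece. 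This is perfectly compatible with $M(\beta)$ being closed, irreducible and atoroidal --- it is exactly the ``filling factors through $M$ inside $X$'' phenomenon the theorem is meant to control --- so hypothesis~(2) does not descend to $(X_{0},\alpha_{0})$ and your inductive step cannot be invoked for those multislopes.

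That excluded scenario is in fact the heart of the paper's proof. There, the induced multislopes on the fixed JSJ component (your $X_{0}$) are split into those still satisfying hypothesis~(2), handled by induction as you propose, and those admitting $\alpha_{0}' \pf \alpha_{0}$ with $X_{0}(\alpha_{0}') \cong M$. For the latter, one fixes the torus $F$ with $\alpha_{0}'|_{F} = \nos$, observes $F \not\subset \partial X$, writes $X = X^{+} \cup_{F} X^{-}$, shows $X^{-}(\alpha^{-}) \cong D^{2} \times S^{1}$, and applies the Solid Torus Cosmetic Surgery Theorem~(\ref{thm:SlopesOnSolidTorus}) to $X^{-}$ --- hypothesis~(2) is used there precisely to rule out a partial filling of $X^{-}$ yielding $T^{2} \times [0,1]$ --- to conclude that the meridians $\mu_{F}$ lie in a bounded set of slopes of $F$; then Mostow rigidity together with the unique-shortest-geodesic normalization shows $\beta$ is the image of $\mu_{F}$ under a diffeomorphism $X^{+}(\alpha^{+}) \to M$, with the ambiguity controlled via Lemma~\ref{lem:MCGisometryOfSlopes}, so these $\beta$ also form a bounded set. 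Your proposal contains none of this machinery because the scenario was declared impossible, so the bound on $\mathcal{B}$ is not established. Relatedly, in the hyperbolic case you can avoid Proposition~\ref{prop:ObtainedByFilling} entirely, as the paper does: for a \mnh\ partial filling $\tilde\alpha \pf \alpha$ apply the inductive hypothesis directly to $X(\tilde\alpha)$, where $\alpha|_{\partial X(\tilde\alpha)}$ genuinely is a restriction of $\alpha$ and hypothesis~(2) transfers verbatim.
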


\begin{proof}
By Thurston's Dehn Surgery Theorem (see Lemma~\ref{lem:HyperbolicDehnSurgery}) we may fix an $\epsilon>0$
and a finite set of slopes of $T$, which we will
denote as $B_{f}'$, so that for every $\beta \not\in B_{f}'$ the following 
three conditions hold:
	\begin{enumerate}
	\item $M(\beta)$ is hyperbolic.
	\item The core of the attached solid torus, which we will denote as $\gamma$, is a geodesic and $l(\gamma) < \epsilon$.
	\item Any geodesic $\delta \subset M(\beta)$ with $l(\delta) < \epsilon$ is a power of $\gamma$.
	\end{enumerate}
Since no manifold is obtained by filling along infinitely many distinct slopes of $M$, the set 
$B_{f} = \{ \beta \ | \ (\exists \beta' \in B_{f}') \ M(\beta) \cong M(\beta') \}$ is finite.
For the remainder of the proof we
only consider $\beta \not \in B_{f}$.  Accordingly, we remove the multislopes $\alpha \in \mathcal{A}$
for which $X(\alpha) \cong M(\beta)$ for $\beta \in B_{f}$.  To avoid overly complicated notation we
do not rename $\mathcal{A}$ and $\mathcal{B}$.

\bigskip
\noindent
We induct on $|T(X)|$.

\bigskip\noindent
{\bf Assume that $X$ is Seifert fibered or sol.}  Then $X$ cannot be filled to give a hyperbolic manifold.

We assume from now on that $X$ is not Seifert fibered or solv.

\bigskip\noindent
{\bf Assume that $X$ is not prime.} Let $X_{1},\dots,X_{n}$ be the factors of the prime
decomposition of $X$.  For every $\alpha \in \mathcal{A}$ 
we will denote the restriction $\alpha|_{\partial X_{i}}$ as $\alpha_{i}$.
By considering the image of decomposing spheres for $X$ in $X(\alpha)$ we see that
for every $\alpha \in \mathcal{A}$: 
\begin{equation}
\label{eq:FillingsThatDontFactor1}
X(\alpha) = X_{1}(\alpha_{1}) \# \cdots \#  X_{n}(\alpha_{n}).
\end{equation}
For $1 \leq i \leq n$ we will denote as $\mathcal{A}_{i} \subset \mathcal{A}$ the multislopes $\alpha$ for which
$X_{i}(\alpha_{i}) \cong M(\beta)$ (for some $\beta \in \mathcal{B}$)
and for $i' \neq i$, $X_{i'}(\alpha_{i'}) \cong S^{3}$.
Since $\beta \not \in B_{f}$, $X(\alpha)$ is hyperbolic and hence prime.  
Thus by Equation~(\ref{eq:FillingsThatDontFactor1}) 
$$\mathcal{A} = \cup_{i=1}^{n} \mathcal{A}_{i}.$$
Fix $1 \leq i \leq n$.
Suppose, for a contradiction, that for some $\alpha \in \mathcal{A}_{i}$ there is
$\alpha_{i}' \pf \alpha_{i}$ so that $X_{i}(\alpha_{i}') \cong M$.
Then there is a unique component $T$ of $\partial X_{i}$ for which $\alpha_{i}'|_{T} = \nos$.
Let $\alpha' \pf \alpha$ be the partial filling defined by setting $\alpha'|_{T} = \nos$ and
$\alpha'|_{T'} = \alpha|_{T'}$ for any other component $T'$ of $\partial X$.  Then by
Equation~(\ref{eq:FillingsThatDontFactor1}),
\begin{eqnarray*}
X(\alpha') &\cong& X_{i}(\alpha'|_{\partial X_{i}}) \ \# \ (\#_{i' \neq i}X_{i'}(\alpha'|_{\partial X_{i'}})) \\
 &\cong& X_{i}(\alpha'_{i}) \ \# \ (\#_{i' \neq i} X_{i'}(\alpha_{i'})) \\
  &\cong& M \ \# \ (\#_{i' \neq i}S^{3})  \\
  &\cong& M.
\end{eqnarray*}
(Here we used that $ X_{i'}(\alpha_{i'})) \cong S^{3}$, which holds by the definition of $\mathcal{A}_{i}$.)  
This contradicts the assumptions of the theorem.  Hence $X_{i}$ and 
$\{\alpha_{i} \ | \ \alpha \in \mathcal{A}_{i}\}$ fulfill the assumptions of the theorem.  Since $X_{i}$
corresponds to a direct descendant of the root of $T(X)$, $|T(X_{i})| < |T(X)|$.  We will denote as $B_i$
the set of slopes of $T$ so that for every $\beta \in B_{i}$ there is $\alpha_{i} \in \mathcal{A}_{i}$
with $M(\beta) \cong X_{i}(\alpha_{i})$.  By induction, $B_{i}$ is bounded.
As $i$ was arbitrary, we obtain bounded sets $B_{1},\dots,B_{n}$.
Since $\mathcal{A} = \cup_{i=1}^{n} \mathcal{A}_{i}$,
$$\mathcal{B} \subset B_{f} \cup (\cup_{i=1}^{n} B_{i}).$$
The theorem follows for non prime manifolds. 

We assume from now on that $X$ is prime and not Seifert fibered or sol.

\bigskip\noindent
{\bf Assume that $X$ is hyperbolic.}  
Since there is no lower bound on the length of geodesics in $\{M(\beta) \ | \ \beta \in \mathcal{B} \}$, 
Theorem~\ref{thm:HyperbolicFillingShortGeos} does no apply directly;
however, the proof here is similar to the proof of that theorem where more details can be found.  We start with:

\medskip\noindent
{\bf Claim.}  There are only finitely many totally hyperbolic fillings in $\mathcal{A}$.

\medskip\noindent
When proving the claim we may obviously assume that $\mathcal{A}$ is infinite.
Let $\{ \alpha^{j} \}_{j \in \mathbb N} \subset \mathcal{A}$ be an infinite set.
We will prove the claim by showing that some multislope of   $\{ \alpha^{j} \}_{j \in \mathbb N}$
is not totally hyperbolic.  
We denote the components of $\partial X$ as $T_{1},\dots,T_{n}$ and $\alpha^{j}|_{T_{i}}$ as $\alpha_{i}^{j}$.
After subsequencing and reordering if necessary we assume as we may that for some $0 \leq k \leq n+1$ we have:
\begin{enumerate}
\item For every $1 \leq i \leq k$ and every $j \neq j'$, $\ \alpha_{i}^{j} \neq \alpha_{i}^{j'}$ and $\alpha_{i}^{j} \neq \nos$.
\item For every $k+1 \leq i \leq n+1$ and every $j, j'$, $\ \alpha_{i}^{j} = \alpha_{i}^{j'}$.
\end{enumerate}
Since $\{ \alpha^{j} \}_{j \in \mathbb N}$ is infinite, $k \geq 1$.
Let $\alpha' = (\nos,\dots,\nos,\alpha^{j}_{k+1},\dots,\alpha^{j}_{n})$; by construction
$\alpha'$ does not depend on $j$.  
We claim that $X(\alpha')$ is not hyperbolic.  Assume, for a contradiction, that it is.
Then for any $l>0$ we may choose $j$ so that the slopes $\alpha_{1}^{j},\dots,\alpha^{j}_{m}$
are all longer than $l$ (where here the lengths are measured in the Euclidean metrics induced on the
boundary components after some truncation of the cusps).  By Thurston's Dehn Surgery Theorem if $l$ is sufficiently large then
$X(\alpha')(\alpha_{1}^{j},\dots,\alpha^{j}_{m})$ is hyperbolic and 
admits at least $k$ geodesics that are all shorter than $\epsilon$; since $M(\beta)$
admits only one such geodesic, $k=1$.  Thus $X(\alpha')$ and $M$ are one cusped hyperbolic manifolds
that admit infinitely many diffeomorphic fillings.  A standard application of Mostow's Regidity shows 
that $X(\alpha') \cong M$.  Since $\alpha' \pf \alpha^{j}$ this contradicts the second assumption
of the theorem, showing that $X(\alpha')$ is not hyperbolic.  Hence $\alpha' \pf \alpha^{j}$
is a non hyperbolic partial filling, showing that $\alpha^{j}$ is not totally hyperbolic.
Hence there are only finitely many \mnh\ fillings, as claimed.

\bigskip\noindent
By the claim there are only finitely many $\alpha \in \mathcal{A}$ that are totally hyperbolic.
We remove these multislopes from $\mathcal{A}$ and remove from $\mathcal{B}$ the finite set
of slopes $\beta$ for which $M(\beta) \cong X(\alpha)$ only for totally hyperbolic fillings; 
to avoid overly complicated
notation we do not rename $\mathcal{A}$ and $\mathcal{B}$.  Hence every multislope
of $\mathcal{A}$ admits a \mnh\ partial filling.  Up-to finite ambiguity we fix one
\mnh\ partial filling of $X$ and denote it as $\alpha'$.  By definition of partial filling,
if $\alpha' \pf \alpha$ then $X(\alpha)$ is obtained by filling $X(\alpha')$ along
slopes $\alpha|_{\partial X(\alpha')}$.  By assumption, $\alpha$ does not admit 
a partial filling that yields $M$; hence the same holds for $\alpha|_{\partial X(\alpha')}$.
Since $X(\alpha')$ corresponds to
a direct descendant of the root of $T(X)$, $|T(X(\alpha'))| < |T(X)|$.  By induction
the set of slopes of $T$
$$\{\beta \ | \ M(\beta) \cong X(\alpha')(\alpha|_{\partial X(\alpha')}), \ \alpha \in \mathcal{A}\}$$
is bounded.  The theorem follows.

We assume from now on that $X$ is prime and not Seifert fibered, sol, or hyperbolic.

\bigskip\noindent
{\bf Assume that $X$ is JSJ.}  By Proposition~\ref{prop:ObtainedByFilling} for every
$\beta \in \mathcal{B}$, $M(\beta)$ is obtained by filling some component of the torus 
decomposition of $X$.  Up-to finite ambiguity we fix such component which we will
denote as $X_{1}$ and consider only slopes $\beta$ (and corresponding multislopes $\alpha$) 
so that $M(\beta)$ is obtained by filling $X_{1}$; 
to avoid overly complicated notation we do not rename $\mathcal{A}$
and $\mathcal{B}$.  Recall that in Proposition~\ref{prop:ObtainedByFilling} for every 
$\alpha \in \mathcal{A}$ we constructed a multislope for $X_{1}$, which we will
call the {\it induced} multislope and denote as $\alpha_{1}$, so that $M(\beta) \cong X_{1}(\alpha_{1})$.
We will denote the set of induced multislopes thus obtained
as $\mathcal{A}_{1}$; thus for every $\beta \in \mathcal{B}$
there is $\alpha_{1} \in \mathcal{A}_{1}$ so that $M(\beta) \cong X_{1}(\alpha_{1})$.

Let $\mathcal{A}_{1}^{+}$ be the multislopes $\alpha_{1} \in \mathcal{A}_{1}$ for which there is no 
$\alpha_{1}' \pf \alpha_{1}$ so that $X_{1}(\alpha_{1}') \cong M$.  Then $X_{1}$ and
$\mathcal{A}_{1}^{+}$  fulfill the assumptions of the theorem.   By Lemma~\ref{lem:TreeOfJSJ}, 
$|T(X_{1})| < |T(X)|$.  By induction, the set
$$\{\beta \ | \ (\exists \alpha_{1} \in \mathcal{A}_{1}^{+}) M(\beta) \cong X_{1}(\alpha_{1}) \},$$
which we will denote as $B_{1}^{+}$, is bounded.  We will return to $B_{1}^{+}$ at the end of the proof.

We will denote $\mathcal{A}_{1} \setminus \mathcal{A}_{1}^{+}$ as $\mathcal{A}_{1}^{-}$. 
Then every $\alpha_{1} \in \mathcal{A}_{1}^{-}$ admits 
$\alpha_{1}' \pf \alpha_{1}$ so that $X_{1}(\alpha_{1}') \cong M$.  Up-to finite
ambiguity we fix one boundary component of $\partial X_{1}$, 
which we will denote as $F$, and consider only the 
multislopes $\alpha_{1} \in \mathcal{A}_{1}^{-}$ for which 
$\alpha_{1}'|_{F} = \nos$ (in other words, $F \subset \partial X_{1}$ 
corresponds to $T$ after filling along $\alpha_{1}'$).  
%To avoid overly complicated notation we do not rename $\mathcal{A}_{1}^{-}$.  
Note that $F \not\subset \partial X$, for otherwise
$\alpha_{1}$ would correspond to a multislope $\alpha \in \mathcal{A}$ that admits
$\alpha' \pf \alpha$ for which $X(\alpha') \cong M$, contradicting the assumptions 
of the theorem.  We will denote the components of $X$ cut open along
$F$ as $X^{+}$ and $X^{-}$, with $X_{0} \subset X^{+}$.

In the remainder of the proof we work directly with the set of multislopes of $\mathcal{A}$ that
induce multislopes of $\mathcal{A}_{1}^{-}$ fulfilling the assumptions above,
which we will denote as $\mathcal{A}^{-} \subset \mathcal{A}$.
Given $\alpha \in \mathcal{A}^{-}$, we will denote the multislope $\alpha|_{\partial X^{+}}$ induced on $\partial X^{+}$
as  $\alpha^{+}$ and the multislope $\alpha|_{\partial X^{-}}$ induced on $\partial X^{-}$ as $\alpha^{-}$.  
By defining assumption of $\mathcal{A}^{-}_{1}$, $X^{+}(\alpha^{+}) \cong M$
and by Proposition~\ref{prop:ObtainedByFilling} either $X^{-}(\alpha^{-}) \cong D^{2} \times S^{1}$
or $X^{-}(\alpha^{-}) \cong E(K)$ for a non-trivial knot $K \subset S^{3}$.  But if the latter occured,
$F$ would be an incompressible torus in $M(\beta)$, which is absurd as $M(\beta)$ is hyperbolic.
Thus $X^{-}(\alpha^{-}) \cong D^{2} \times S^{1}$.  We will denote the slope defined by the 
meridian of $X^{-}(\alpha^{-})$ on $F$ as $\mu_{F}$ and consider $X(\alpha)$ as $X^{+}(\alpha^{+})(\mu_{F})$,
the manifold obtained by filling $X^{+}(\alpha^{+})$ along slope $\mu_{F}$.

Let $f:X^{+}(\alpha^{+})(\mu_{F}) \to M(\beta)$ be a diffeomorphism.   Then,
since we assumed that $\beta \not\in B_{f}$ (recall the definition of $B_{f}$ in the beginning
of the proof), the cores of the solid tori attached to $X^{+}(\alpha^{+})$ and $M$
are the unique shortest geodesics in $X^{+}(\alpha^{+})(\mu_{F})$ and $M(\beta)$.
By Mostow's Rigidity we may assume
that $f$ is an isometry, and thus $f$ carries the core of the solid torus attached to $X^{+}(\alpha^{+})$
to the core of the solid torus attached to $M$.  The restriction of
$f$ induces a diffeomorphism which we will denote as $f^{+}:X^{+}(\alpha^{+}) \to M$.   
Note that $f^{+}$ maps $\mu_{F}$ to $\beta$.

Turning our attention to the solid torus $X^{-}$, we claim that for every $\alpha \in \mathcal{A}^{-}$,
the induced multislope $\alpha^{-}$ satisfies the following conditions:
\begin{enumerate}
\item $X^{-}(\alpha^{-}) \cong D^{2} \times S^{1}$: this was established above.
\item For any partial filling $\hat\alpha \pf \alpha^{-}$, $X^{-}(\hat\alpha) \not\cong T^{2} \times [0,1]$:
otherwise, let $\alpha' \pf \alpha$ be the partial filling 
defined by setting $\alpha'|_{\partial X^{-}} = \hat\alpha$
and $\alpha'|_{\partial X^{+}} = \alpha|_{\partial X^{+}}$.  Then we have:
$$X(\alpha') \cong X^{-}(\hat\alpha) \cup_{F} X^{+}(\alpha^{+}|_{\partial X^{+}}) \cong (T^{2} \times [0,1]) \ \cup_{F} \ M \cong M,$$
contradicting the assumption of the theorem.  
\end{enumerate}

Thus $X^{-}$ and the multislopes $\mathcal{A}^{-}_{1}$
fulfill the requirements of the Solid Torus Cosmetic Surgery Theorem (\ref{thm:SlopesOnSolidTorus}), 
showing that the set of meridians of the solid tori $\{X^{-}(\alpha^{-}) \ | \ \alpha^{-} \in \mathcal{A}^{-}_{1} \}$, 
which we will denote as $B^{-}$, is bounded.

By the discussion above, if $X(\alpha) \cong M(\beta)$ then $\beta$ is the image of $\mu_{F} \in B^{-}$
under the diffeomorphism denoted above as $f^{+}:X^{+}(\alpha^{+}) \to M$.  We will denote the image
of $B^{-}$ under $f^{+}$ as $B^{-}_{f^{+}}$. By Lemma~\ref{lem:MCGisometryOfSlopes}
there is an isometric involution $i$ on the slopes of $F$ so that 
if $g^{+}:X^{+}(\alpha^{+}) \to M$ is a diffeomorphism, the image of $B^{-}$ under
$g^{+}$ is either $B^{-}_{f^{+}}$ or $i(B^{-}_{f^{+}})$.
We will denote $B^{-}_{f^{+}} \cup i(B^{-}_{f^{+}})$ as $B_{M}$.
Clearly, $B_{M}$ is bounded.  The theorem follows for JSJ manifolds,
as $\mathcal{B} \subset B_{f} \cup B_{1}^{+} \cup B_{M}$.

\bigskip\noindent
This completes the proof of Theorem~\ref{thm:fillingsThatDontFactor}.
\end{proof}

\section{Case Two: fillings of $E$ that do not factor through $T^{2} \times [0,1]$}
\label{sec:FillingsOfEthatDontFactorThroughTXI}

\bigskip\noindent
In this section we prove the following proposition, that is used in Case Two of the proof of Theorem~\ref{thm:main}:

\begin{prop}
\label{pro::FillingsOfEthatDontFactorThroughTXI}
Let $X$ and $E$ be compact orientable connected manifolds with toral boundary 
and $\phi:X \to E$ a branched cover so that $\phi|_{\partial X}:\partial X \to \partial E$
is a cover. Let $\hat{\mathcal{A}}$ be a set of multislopes of $X$ so that every 
$\hat\alpha \in \hat{\mathcal{A}}$ induces a multislope on $\partial E$ which we will
denote as $\alpha^{E}$.  Fix a component of $\partial X$ (which we will denote we $T_{1}$)
and denote $\phi(T_{1})$ as $T_{1}'$.  Suppose that every $\alpha^{E}$ fulfills the following conditions:
	\begin{enumerate}
	\item $E(\alpha^{E}) \cong S^{3}$.
	\item There does not exist $\alpha' \pf \alpha^{E}$ fulfilling the following two conditions:
		\begin{enumerate}
		\item  $\alpha'|_{T_{1}'} = \nos$.
		\item $E(\alpha') \cong T^{2} \times [0,1]$.
		\end{enumerate}
	\end{enumerate}

Then the set of restrictions $\{\hat\alpha|_{T_{1}} \ | \ \hat\alpha \in \hat{\mathcal{A}}\}$ is bounded.
\end{prop}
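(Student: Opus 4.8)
The plan is to deduce the proposition directly from the $S^{3}$ Cosmetic Surgery Theorem~(\ref{thm:cosmeticSurgeryOnS3}), together with the fact (Lemma~\ref{lem:CorrespondingSlopesFeray}) that the slope correspondence induced by a covering of tori is bilipschitz. We may assume $\hat{\mathcal{A}} \neq \emptyset$. Set $\mathcal{A}^{E} = \{\alpha^{E} \ | \ \hat\alpha \in \hat{\mathcal{A}}\}$, the collection of induced multislopes of $\partial E$. Since $E(\alpha^{E}) \cong S^{3}$ is closed, no coordinate of $\alpha^{E}$ equals $\nos$; in particular $\alpha^{E}|_{T_{1}'}$ is a genuine slope of $T_{1}'$, and correspondingly $\hat\alpha|_{T_{1}}$ is a genuine slope of $T_{1}$. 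Fixing one $\alpha^{E}_{0} \in \mathcal{A}^{E}$ and a diffeomorphism $E(\alpha^{E}_{0}) \to S^{3}$, the cores of the attached solid tori form a link $L \subset S^{3}$, and this diffeomorphism identifies $E$ with $\mbox{cl}(S^{3} \setminus N(L))$, carrying each boundary torus of $E$ to a component of $\partial N(L)$; after reordering the components of $L$ we may assume $T_{1}'$ is carried to $\partial N(K_{1})$.

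First I would observe that $\mbox{cl}(S^{3} \setminus N(L))$ (identified with $E$ as above) and the set $\mathcal{A}^{E}$ satisfy precisely the two hypotheses of Theorem~\ref{thm:cosmeticSurgeryOnS3} with its distinguished boundary component taken to be $T_{1}'$: condition~(1) of that theorem is our hypothesis~(1), and condition~(2)---that there is no $\alpha' \pf \alpha^{E}$ with $\alpha'|_{T_{1}'} = \nos$ and $E(\alpha') \cong T^{2} \times [0,1]$---is our hypothesis~(2). Applying Theorem~\ref{thm:cosmeticSurgeryOnS3} then yields that
$$\mathcal{A}^{E}_{1} = \{\alpha^{E}|_{T_{1}'} \ | \ \hat\alpha \in \hat{\mathcal{A}}\}$$
is a bounded set of slopes of $T_{1}'$.

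It remains to pull this boundedness back along $\phi$. Since $\phi|_{\partial X}:\partial X \to \partial E$ is a cover, $\phi|_{T_{1}}:T_{1} \to T_{1}'$ is a cover of tori, and by the discussion of Subsection~\ref{subsection:FareyAndCovering} it induces a bijection between the slopes of $T_{1}$ and those of $T_{1}'$. Saying that $\hat\alpha$ \emph{induces} the multislope $\alpha^{E}$ on $\partial E$ means exactly that on each component of $\partial E$ the slope of $\hat\alpha$ (on any preimage torus) and the corresponding slope of $\alpha^{E}$ are corresponding slopes in the sense of Subsection~\ref{subsection:FareyAndCovering}---this is precisely the condition under which the branched cover $\phi$ extends over the fillings. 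Hence $\hat\alpha|_{T_{1}}$ is the slope of $T_{1}$ corresponding to $\alpha^{E}|_{T_{1}'}$ under $\phi|_{T_{1}}$. By Lemma~\ref{lem:CorrespondingSlopesFeray} the slope correspondence is bilipschitz and therefore carries the bounded set $\mathcal{A}^{E}_{1}$ to a bounded set of slopes of $T_{1}$; that image is exactly $\{\hat\alpha|_{T_{1}} \ | \ \hat\alpha \in \hat{\mathcal{A}}\}$, which is thus bounded, as desired.

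The only slightly delicate points are bookkeeping ones: realizing $E$ concretely as a link exterior in $S^{3}$ so that Theorem~\ref{thm:cosmeticSurgeryOnS3}, which is stated for link exteriors, applies verbatim; and matching the notion of ``multislope induced on $\partial E$'' in the covering setting with the notion of ``corresponding slopes'' of Subsection~\ref{subsection:FareyAndCovering}. Neither is a genuine obstacle---any manifold that can be filled to $S^{3}$ is such an exterior, and the two notions of correspondence coincide by construction. In particular, unlike the other results in the paper, this proposition requires no induction on $|T(X)|$: all of that work has already been carried out inside the proof of Theorem~\ref{thm:cosmeticSurgeryOnS3}.
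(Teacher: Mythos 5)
Your proof is correct and takes essentially the same route as the paper's: apply the $S^{3}$ Cosmetic Surgery Theorem~(\ref{thm:cosmeticSurgeryOnS3}) to the induced multislopes on $\partial E$ to bound $\{\alpha^{E}|_{T_{1}'}\}$, then transport this bounded set to $T_{1}$ via the bilipschitz slope correspondence of Lemma~\ref{lem:CorrespondingSlopesFeray}. The additional bookkeeping (realizing $E$ as a link exterior in $S^{3}$ and matching induced multislopes with corresponding slopes) is harmless elaboration of what the paper leaves implicit.
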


\begin{proof}
Let $\mathcal{A}_{E}$ be the set multislopes $\{ \alpha^{E} \}$ above.
By the $S^{3}$ Cosmetic Surgery  Theorem (\ref{thm:cosmeticSurgeryOnS3}), the set of restrictions 
$\{ \alpha^{E}|_{T_{1}'} \ | \ \alpha^{E} \in \mathcal{A}_{E}\}$ is bounded.  The set of restrictions 
$\{\hat\alpha|_{T_{1}} \ | \ \hat\alpha \in \hat{\mathcal{A}}\}$ is
contained in the image of $\{ \alpha^{E}|_{T_{1}'} \ | \ \alpha^{E} \in \mathcal{A}_{E}\}$
under the bilipschitz bijection induced
by $\phi$ (Lemma~\ref{lem:CorrespondingSlopesFeray}).
The proposition follows.
\end{proof}

\section{Case Three: fillings of $X$ that factor through $M$ and fillings of $E$ that factor through $T^{2} \times [0,1]$}
\label{sec:BothFillingsFactor}

\bigskip\noindent
In this section we tackle Case Three, the final case of Theorem~\ref{thm:main}.
Below we denote as $\mathcal{A}_{3}$ the multislopes in $\mathcal{A}$ that correspond to
this case.  After general analysis of the situation, we show that certain conditions must
hold (up-to finite ambiguity).  These conditions are summarized in Lemma~\ref{lem:CoresNotInBall}.
Theorem~\ref{thm:main} then follows from Proposition~\ref{pro:TheLastCase} and
Lemma~\ref{lem:BoundedSetsOnMathcalT}.

We begin by fixing our notation.   
Let $X$, M$(\beta)$, $E$, $L$, $\phi$, and $\hat\phi$ be as in the
diagram in Section~\ref{sec:SetUpOfProof} (see Page~\pageref{diagram}).  We will denote the 
multislope of $X$ that corresponds to the filling $X \to M(\beta)$ as $\alpha$ and the 
multislope of $E$ that corresponds to $E \to S^{3}$ as $\alpha^{E}$.  By construction, $\alpha$
and $\alpha^{E}$ are corresponding multislopes (in the corresponds defined by the restriction of 
$\phi:X \to E$ to the boundary; recall Subsection~\ref{subsection:FareyAndCovering}).
Up-to finite ambiguity we fix two components of $\partial E$ which we will
denote as $T_{1}'$ and $T_{2}'$ 
(note that since $E \to S^{3}$ is assumed to factor through $T^{2} \times [0,1]$, $|\partial E| \geq 2$)
and a component of $\phi^{-1}(T_{1}')$ which we will denote as $T_{1}$.
We will denote the remaining components of  $\partial X$ as $T_{2},\dots,T_{n}$ and the remaining components of
$\partial E$ as $T_{3}',\dots,T_{k}'$.  
In the two preceding sections we have reduced the proof of Theorem~\ref{thm:main}
to multislopes $\alpha \in \mathcal{A}$ fulfilling 
the following conditions:
	\begin{enumerate}
	\item $X(\alpha) \cong M(\beta)$ (for some slope $\beta$ of $\partial M$).
	\item $X(\alpha|_{T_{2},\dots,T_{n}}) \cong M$.
	\item $E(\alpha^{E}|_{T_{3}',\dots,T_{k}'}) \cong T^{2} \times [0,1]$.
	\end{enumerate}
Thus we obtain the following commutative diagram, where here horizontal
arrows represent inclusions induced by fillings, vertical arrows represent
covering projections, $T^{2} \times [0,1]$ represents $E(\alpha^{E}|_{T_{3}',\dots,T_{k}'})$,
$\phi^{-1}(T^{2} \times [0,1])$ is denoted as $X'$, and $\phi|_{X'}$ is denoted as $\phi'$:

\bigskip

\begin{center}
\begin{picture}(200,60)(0,0)
  \put(  0,  0){\makebox(0,0){$E$}}
  \put(  0,50){\makebox(0,0){$X$}}
  \put(  0, 40){\vector(0,-1){30}}   
  \put(160,  0){\makebox(0,0){$S^3,L$}}
  \put(160, 40){\vector(0,-1){30}}
  \put(10,28){\makebox(0,0){$/\phi$}}
  \put(70,28){\makebox(0,0){$/\phi'$}}
  \put(170,28){\makebox(0,0){$/{\hat{\phi}}$}}
  \put(108,28){\makebox(0,0)}
  \put(160,50){\makebox(0,0){$M(\beta)$}}
  \put(115,50){\makebox(0,0){$M$}}
  \put(60,50){\makebox(0,0){$X'$}}
  \put( 7,50){\vector(1,0){45}}
  \put( 67,50){\vector(1,0){40}}
  \put( 123,50){\vector(1,0){23}}
  \put(60, 40){\vector(0,-1){30}}   
  \put(7,0){\vector(1,0){33}}
  \put( 90,0){\vector(1,0){55}}
  \put(65,0){\makebox(0,0){$T^{2} \times [0,1]$}}  
\end{picture}
\end{center}
\bigskip	
We will denote the set of all multislopes fulfilling these conditions as $\mathcal{A}_{3} \subset  \mathcal{A}$.
For $\alpha \in \mathcal{A}_{3}$, we will denote  the restriction $\alpha|_{T_{2},\dots,T_{n}}$ as $\hat\alpha$, 
and the set or restriction $\{\hat\alpha \ | \ \alpha \in \mathcal{A}_{3}\}$ as $\hat{\mathcal{A}}$.

Our goal is to show that the set of slopes induced on $T_{1}$ by restricting the multislopes $\mathcal{A}_{3}$
is bounded.  However, in the course of the proof we sometimes end up with a bounded set of slopes
of a different component of $\partial X$, say $T_{2}$.  It will always be the case that $\phi(T_{2}) =T_{1}'$
or $\phi(T_{2}) = T_{2}'$.  If $\phi(T_{2}) = T_{1}'$, then the covering map $\phi$ induces
a bijection between the slopes of $T_{2}$ and those of $T_{1}'$, and a bijection between
the slopes of $T_{1}'$ and those of $T_1$; both bijections are bilipschitz maps
of the Farey graph (Lemma~\ref{lem:CorrespondingSlopesFeray}).
Composing the two bijections, we readily see a bounded set of slopes on $T_{1}$.
If instead $\phi(T_{2}) = T_{2}'$, let $B_{2}$ be the bounded set of slopes of $T_{2}$ and
let $B_{2}'$ be its image (in the slopes of $T_{2}'$) under the bijection induced by $\phi$;
by Lemma~\ref{lem:CorrespondingSlopesFeray}, $B_{2}'$ is bounded.
Since $E(\alpha^{E}|_{T_{3}',\dots,T_{k}'}) \cong T^{2} \times [0,1]$, we can then project $B_{2}'$ from $T_{2}'$ 
to a set of slopes on $T_{1}'$ using the product structure.  As this set depends on $\alpha^{E}$, we denote it as 
$B_{1,\alpha^{E}}'$.  By the $T^{2} \times [0,1]$ 
Cosmetic Surgery Theorem~(\ref{thm:CosmeticSurgeryOnT2XI}), 
$\cup_{\alpha^{E}} B_{1,\alpha^{E}}'$ is a bounded set of slopes on $T_{1}'$.
Using  Lemma~\ref{lem:CorrespondingSlopesFeray} once more, we obtain 
a bounded set of slopes on $T_{1}$.  

We summarize this:

\begin{lem}
\label{lem:BoundedSetsOnMathcalT}
With the notation above, suppose that for each component $T_{i}$ of $\phi^{-1}(T_{1}' \cup T_{2}')$,
there is a bounded set of slopes of $T_{i}$, that we will denote as $B_{i}$, so that that for each
$\alpha \in \mathcal{A}_{3}$, there is some component $T_{i}$ of $\phi^{-1}(T_{1}' \cup T_{2}')$ 
so that $\alpha|_{T_{i}} \in B_{i}$.
 
Then $\{ \alpha|_{T_{1}} \ | \ \alpha \in \mathcal{A}_{3}\}$  is bounded.
\end{lem}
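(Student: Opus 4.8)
The plan is to break $\mathcal{A}_{3}$ into finitely many pieces according to which component of $\phi^{-1}(T_{1}'\cup T_{2}')$ witnesses the hypothesis, and then to transport each piece to a bounded set of slopes of $T_{1}$ using the bilipschitz correspondences that $\phi$ induces on boundary tori (Lemma~\ref{lem:CorrespondingSlopesFeray}), invoking the $T^{2}\times[0,1]$ Cosmetic Surgery Theorem~(\ref{thm:CosmeticSurgeryOnT2XI}) in the case where a ``wrong'' cusp is hit. First I would note that, since $\phi$ is a finite branched cover, $\phi^{-1}(T_{1}'\cup T_{2}')$ has only finitely many components, say $S_{1},\dots,S_{r}$, with associated bounded sets $B_{1},\dots,B_{r}$. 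For each $\alpha\in\mathcal{A}_{3}$ choose one index $s$ with $\alpha|_{S_{s}}\in B_{s}$ (which exists by hypothesis) and put $\alpha$ into $\mathcal{A}_{3}^{(s)}$; then $\mathcal{A}_{3}=\bigcup_{s=1}^{r}\mathcal{A}_{3}^{(s)}$, so by Proposition~\ref{prop:PropertiesOfBoundedSets}~(2) it suffices to bound $\{\alpha|_{T_{1}}\ | \ \alpha\in\mathcal{A}_{3}^{(s)}\}$ for each fixed $s$. Write $S=S_{s}$ and $B=B_{s}$, and recall that, since $\alpha$ and $\alpha^{E}$ are corresponding multislopes, $\alpha|_{S}$ corresponds to $\alpha^{E}|_{\phi(S)}$ and $\alpha|_{T_{1}}$ corresponds to $\alpha^{E}|_{T_{1}'}$, where $\phi(S)\in\{T_{1}',T_{2}'\}$.

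If $\phi(S)=T_{1}'$ then $\alpha|_{S}$ and $\alpha|_{T_{1}}$ correspond to the same slope $\alpha^{E}|_{T_{1}'}$ of $T_{1}'$. Composing the correspondence from the slopes of $S$ to those of $T_{1}'$ with the correspondence from the slopes of $T_{1}'$ to those of $T_{1}$ gives a bilipschitz bijection $\psi$ (Lemma~\ref{lem:CorrespondingSlopesFeray}) with $\psi(\alpha|_{S})=\alpha|_{T_{1}}$, so $\{\alpha|_{T_{1}}\ | \ \alpha\in\mathcal{A}_{3}^{(s)}\}\subseteq\psi(B)$ is bounded. If instead $\phi(S)=T_{2}'$, let $B'$ be the image of $B$ under the correspondence from the slopes of $S$ to those of $T_{2}'$; this is bounded (Lemma~\ref{lem:CorrespondingSlopesFeray}) and $\alpha^{E}|_{T_{2}'}\in B'$ for every $\alpha\in\mathcal{A}_{3}^{(s)}$. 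Since $E(\alpha^{E}|_{T_{3}',\dots,T_{k}'})\cong T^{2}\times[0,1]$, the manifold $E$ is the exterior of a link in $T^{2}\times[0,1]$ having $T_{1}',T_{2}'$ as its two product boundary tori, and the product structure of $E(\alpha^{E}|_{T_{3}',\dots,T_{k}'})$ gives a projection $\pi_{\alpha^{E}}$ from the slopes of $T_{2}'$ to those of $T_{1}'$. By the $T^{2}\times[0,1]$ Cosmetic Surgery Theorem~(\ref{thm:CosmeticSurgeryOnT2XI}), applied with the fixed bounded set $B'$, the set $\widehat{B}:=\bigcup_{\alpha}\pi_{\alpha^{E}}(B')$ is a bounded set of slopes of $T_{1}'$; in particular $\pi_{\alpha^{E}}(\alpha^{E}|_{T_{2}'})\in\widehat{B}$.

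Next I would use that $E(\alpha^{E})\cong S^{3}$ forces $\alpha^{E}|_{T_{1}'}$ and $\alpha^{E}|_{T_{2}'}$ to be different from $\nos$ and to satisfy $\Delta(\alpha^{E}|_{T_{1}'},\pi_{\alpha^{E}}(\alpha^{E}|_{T_{2}'}))=1$, since filling both ends of $T^{2}\times[0,1]$ along slopes of geometric intersection $\Delta$ yields the lens space of order $\Delta$, which is $S^{3}$ precisely when $\Delta=1$. Hence $\alpha^{E}|_{T_{1}'}\in\{\gamma\ | \ \exists\,\delta\in\widehat{B},\ \Delta(\gamma,\delta)\le1\}$, which is bounded by Proposition~\ref{prop:PropertiesOfBoundedSets}~(4); applying Lemma~\ref{lem:CorrespondingSlopesFeray} once more to the correspondence from the slopes of $T_{1}'$ to those of $T_{1}$ then bounds $\{\alpha|_{T_{1}}\ | \ \alpha\in\mathcal{A}_{3}^{(s)}\}$. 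Taking the (finite) union over $s$ completes the argument.

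I expect the only genuinely delicate point to be the case $\phi(S)=T_{2}'$: one must notice that the slope we control, $\alpha^{E}|_{T_{2}'}$, and the slope we need, $\alpha^{E}|_{T_{1}'}$, are not equal but only of geometric intersection one apart (this is exactly what the $S^{3}$ hypothesis provides), and that the dependence on $\alpha^{E}$ of both the boundary correspondence induced by $\phi$ and the projection $\pi_{\alpha^{E}}$ is absorbed, respectively, by those correspondences being bilipschitz and by Theorem~\ref{thm:CosmeticSurgeryOnT2XI}, after which Proposition~\ref{prop:PropertiesOfBoundedSets}~(4) absorbs the ``off by one'' slippage. Everything else is bookkeeping about which boundary torus of $X$ maps onto which boundary torus of $E$.
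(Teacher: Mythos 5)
Your proof is correct, and it follows the same route as the paper's own argument: split according to which component of $\phi^{-1}(T_{1}'\cup T_{2}')$ witnesses the hypothesis (finitely many, so finite unions are harmless), transfer the bounded set across the boundary coverings via the bilipschitz correspondence of Lemma~\ref{lem:CorrespondingSlopesFeray}, and, when the witnessing torus lies over $T_{2}'$, push the resulting bounded set of slopes of $T_{2}'$ across the product region $E(\alpha^{E}|_{T_{3}',\dots,T_{k}'})\cong T^{2}\times[0,1]$ and control the union over all $\alpha^{E}$ by Theorem~\ref{thm:CosmeticSurgeryOnT2XI}. The one place where you genuinely go beyond the paper's write-up is the end of that second case: the paper simply projects $B_{2}'$ to $T_{1}'$ and declares the union bounded, but that projected set contains $\pi_{\alpha^{E}}(\alpha^{E}|_{T_{2}'})$, not $\alpha^{E}|_{T_{1}'}$ itself, and the lemma needs the latter (equivalently $\alpha|_{T_{1}}$) to lie in a bounded set. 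Your observation that $E(\alpha^{E})\cong S^{3}$ forces $\Delta\bigl(\alpha^{E}|_{T_{1}'},\pi_{\alpha^{E}}(\alpha^{E}|_{T_{2}'})\bigr)=1$ (two solid tori glued to $T^{2}\times[0,1]$ give the lens space of order $\Delta$, which is $S^{3}$ exactly when $\Delta=1$), followed by Proposition~\ref{prop:PropertiesOfBoundedSets}~(4), supplies precisely the ``off by one'' step that the paper's exposition suppresses, after which the final application of Lemma~\ref{lem:CorrespondingSlopesFeray} is the same in both arguments. So: same decomposition, same key lemmas, with your version being the more complete account of the $\phi(S)=T_{2}'$ case.
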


\begin{rmk}
It is unfortunate that this set up does not lend itself well to induction.  There are many problems, and 
here is perhaps the best example: when considering a JSJ manifold $X$ we apply 
Proposition~\ref{prop:ObtainedByFilling} and conclude that $M(\beta)$ is obtained by filling some component of the torus
decomposition of $X$; however, the cover $\phi':X' \to E'$ is nowhere to be found.  We must therefore
first identify the essential information that is preserved in the inductive step.
This is done in the following two lemmas:
\end{rmk}

\begin{lem}
\label{lem:X'IsIrreducible}
Any prime factor of $X'$ has at least two boundary components.
\end{lem}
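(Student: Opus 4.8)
The plan is to exploit the branched covering $\phi':X'\to T^{2}\times[0,1]$ together with the hyperbolicity of $E$. First I would record the structure of the situation. Write $\ell$ for the sublink of $L$ consisting of the components lying in $\operatorname{int}(T^{2}\times[0,1])$; then $\phi'$ is a branched covering of $T^{2}\times[0,1]$, branched over $\ell$ and unbranched over $\partial(T^{2}\times[0,1])=T'_{1}\sqcup T'_{2}$, and $(T^{2}\times[0,1])\setminus N(\ell)=E$ is hyperbolic. Hence, if $X'_{0}$ is a component of $X'$ and $\widetilde\ell_{0}=(\phi')^{-1}(\ell)\cap X'_{0}$, the manifold $X_{0}:=X'_{0}\setminus\operatorname{int}N(\widetilde\ell_{0})$ equals $\phi^{-1}(E)\cap X'_{0}$; it is connected, being the complement of a link in the connected manifold $X'_{0}$, and it is a finite unbranched cover of $E$, hence hyperbolic — in particular irreducible, atoroidal, acylindrical (it contains no essential annulus), and with incompressible boundary tori. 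Finally $X'_{0}$ is recovered from $X_{0}$ by Dehn filling the cusps over $\widetilde\ell_{0}$ along their meridians, since the filling slopes are the slopes corresponding to the meridians of $\ell$ and such a slope pulls back to (a union of) meridians.

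Second, I would show each $X'_{0}$ has at least two boundary tori. As $\phi'|_{X'_{0}}$ maps onto the connected base and is unbranched over $\partial(T^{2}\times[0,1])$, the set $\partial X'_{0}$ is a nonempty union of tori containing at least one torus over $T'_{1}$ and at least one over $T'_{2}$. (If there is no branching, i.e.\ $\ell=\emptyset$, then $X'_{0}\cong T^{2}\times[0,1]$ and the lemma holds outright, so assume $\ell\neq\emptyset$.)

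Now suppose, for contradiction, that some prime factor $P$ of some $X'_{0}$ has at most one boundary torus, and let $S\subset X'_{0}$ be an essential $2$-sphere realizing the corresponding summand. Isotope $S$ to meet $\widetilde\ell_{0}$ transversely in a minimal number of points; then $S\cap N(\widetilde\ell_{0})$ may be taken to consist of meridian disks. If this number is $0$, then $S\subset X_{0}$ is an essential sphere, contradicting irreducibility of the hyperbolic manifold $X_{0}$. If it is $1$, then $S_{0}:=S\cap X_{0}$ is a meridian-bounded disk, i.e.\ a compressing disk for a cusp torus of $X_{0}$ — impossible. If it is $2$, then $S_{0}$ is an annulus with both boundary curves meridians; minimality makes it incompressible, $\partial$-incompressibility follows from incompressibility of $\partial X_{0}$, and it cannot be boundary parallel (else $S$ would be isotopic onto the boundary of a tubular neighbourhood of a component of $\widetilde\ell_{0}$ and so would bound a ball) — so $S_{0}$ is an essential annulus in $X_{0}$, contradicting acylindricity. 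In general, one argues that $S_{0}$ is an essential planar surface in $X_{0}$, and that the side of $S_{0}$ corresponding to the bad summand $P$ carries at most one boundary torus over $T'_{1}\cup T'_{2}$; a standard innermost-disk and outermost-arc analysis on that ``small'' side — using incompressibility of $S_{0}$, incompressibility of the cusp tori of $X_{0}$, and acylindricity — yields either a compressing disk for a cusp, an essential annulus in $X_{0}$, or an isotopy of $S$ lowering the intersection number, in each case a contradiction.

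The main obstacle is exactly this last step — excluding a ``knot-exterior-like'' prime summand with a single torus boundary when the reducing sphere is forced to meet $\widetilde\ell_{0}$ in more than two points. Irreducibility of $X_{0}$ alone does not suffice, since reducible fillings of hyperbolic manifolds genuinely occur; the decisive facts are that the link exterior $X_{0}$ is hyperbolic — hence atoroidal and acylindrical, not merely irreducible — and that $X'_{0}$ arises from $X_{0}$ only by meridional refilling. Arranging the reducing sphere's intersections with $N(\widetilde\ell_{0})$ so that the contradiction can be read off cleanly from acylindricity is where the real work lies.
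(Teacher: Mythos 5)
Your reduction to the hyperbolic link exterior $X_{0}$ is fine as far as it goes (modulo a small inaccuracy: $E$ is the complement in $T^{2}\times[0,1]$ of \emph{all} the cores of the solid tori attached in the filling $E \to E(\alpha^{E}|_{T_{3}',\dots,T_{k}'})$, not just of the components of $L$; some cores may be unbranched, so you should drill the preimages of all of them to get an unbranched cover of $E$ -- this is easily repaired). The surjectivity argument giving one boundary torus over each of $T_{1}'$ and $T_{2}'$, and the cases $|S\cap\widetilde{\ell}_{0}|=0,1,2$, are also fine. The genuine gap is the ``general case,'' which is exactly where the content of the lemma lies. When $S$ meets the filling cores in $\geq 4$ points, $S_{0}$ is a planar surface with $\geq 4$ meridional boundary circles, and the trichotomy you assert (compressing disk for a cusp, essential annulus, or an intersection-reducing isotopy) is simply not forced: hyperbolic manifolds routinely contain essential meridional planar surfaces with four or more boundary components (essential Conway/tangle spheres), so incompressibility of the cusp tori, irreducibility and anannularity of $X_{0}$ give no contradiction. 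Worse, no argument that uses only ``$X_{0}$ is hyperbolic and $X'_{0}$ is a Dehn filling of it'' can succeed: there are hyperbolic manifolds whose fillings are reducible with a prime summand having exactly one boundary torus, so the hypothesis you never actually use -- that the bad summand sees at most one of the two ends of $T^{2}\times[0,1]$ -- must enter in an essential way, and you never convert it into a contradiction.

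The paper closes precisely this hole by a degree argument rather than a cut-and-paste argument: if a prime factor $X''$ had exactly one boundary torus (say mapping to $T^{2}\times\{0\}$), one takes the corresponding piece $X^{*}$ of $X'$ cut along the decomposing sphere $S$, uses $\pi_{2}(T^{2}\times[0,1])=0$ to extend $\phi'|_{S}$ over a ball $D$, and pastes to get a map $\psi:X''=X^{*}\cup_{S}D\to T^{2}\times[0,1]$ that is a covering of nonzero degree near the end $T^{2}\times\{0\}$ but misses $T^{2}\times(1-\epsilon,1]$ entirely, so has degree zero there -- a contradiction; the closed-factor case is handled separately by noting a closed prime factor of $X'$ would be a closed prime factor of the hyperbolic manifold $M$ with nonempty boundary. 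If you want to salvage your approach, you would need to import this two-ended/degree (or an equivalent homological) input; the surface-theoretic analysis on the ``small'' side alone cannot do it.
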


\begin{proof}
If $X'$ is prime then the lemma follows from the fact that  $T \times \{0\}$  and 
$T \times \{1\}$ (as components of $\partial E(\alpha^{E}|_{T_{3}',\dots,T_{k}'}) \cong T^{2} \times [0,1]$)
have at least
one preimage each.  Otherwise, let $S \subset X'$ be a decomposing sphere that realizes the decomposition 
$X' = X'' \# X'''$, where $X''$ is a prime factor of $X'$.  We will prove the lemma by showing that $|\partial X''| \geq 2$.
\begin{enumerate}
\item Suppose that $|\partial X''| = 0$.  Equivalently, $X''$ is closed.  Then $X''$ is a prime factor of $M$. 
But $M$ is a hyperbolic manifold and $\partial M \neq \emptyset$,  
and so it has no closed factors.  Thus $|\partial X''| \neq 0$. 
\item Suppose that $|\partial X''|=1$.  Without loss of generality we may assume that 
$\phi'(\partial X'') = T^{2} \times \{0\}$.
We will denote the component of $X'$ cut open along $S$ that
corresponds to $X''$ as $X^{*}$; note that $\partial X^{*}$ has two components: a torus (which we
can naturally identify with $\partial X''$) and a sphere (which we
can naturally identify with $S$).   Since $\pi_{2}(T^{2} \times [0,1])$ is trivial,
$\phi'|_{S}:S \to T^{2} \times [0,1]$ can be extended to a map from the closed
ball $D$, which we will denote as $\phi''$.  After a homotopy of $\phi'$ if necessary,
we assume as we may that for a sufficiently small fixed $\epsilon>0$
the following three conditions hold:
\begin{enumerate}
\item $\phi''(D) \cap (T^{2} \times [0,\epsilon)) = \emptyset$.  
\item $\phi''(D) \cap (T^{2} \times (1-\epsilon,1]) = \emptyset$.  
\item $\phi'(X^{*}) \cap (T^{2} \times (1-\epsilon,1]) = \emptyset$.  
\end{enumerate}

We paste $\phi'':D \to T^{2} \times [0,1]$ and $\phi':X^{*} \to T^{2} \times [0,1]$ to obtain a map
which we will denote as $\psi:X'' = X^{*} \cup_{S} D \to T^{2} \times [0,1]$.
By conditions~(a) and~(b) above the following holds:
$$\psi|_{\psi^{-1}(T^{2} \times [0,\epsilon))} = \phi'|_{\psi^{-1}(T^{2} \times [0,\epsilon))}.$$
Therefore 
$\psi|_{\psi^{-1}(T^{2} \times [0,\epsilon))}:\psi^{-1}(T^{2} \times [0,\epsilon)) \to T^{2} \times [0,\epsilon)$
is a cover and has non-zero degree.  On the other hand, $\psi^{-1}(T^{2} \times (1-\epsilon,1]) = \emptyset$ so
$\psi|_{\psi^{-1}(T^{2} \times (1-\epsilon,1])}:\psi^{-1}(T^{2} \times (1-\epsilon,1]) \to T^{2} \times (1-\epsilon,1]$ 
has degree zero.  This contradiction shows that $|\partial X''|\neq1$.
\end{enumerate}
\end{proof}

In light of this lemma, we define:

\begin{notation}
We will denote the prime factor of $X'$
that contains $T_{1}$ as $X''$ and the components of $\partial X'' \setminus T_{1}$
as $\mathcal{T}$.  Given  $\alpha \in \mathcal{A}_{3}$,
the link in $M$ consisting of the cores of the solid
tori attached to $\mathcal{T}$ will be denoted as $\mathcal{L}_{\alpha}$,
or simply $\mathcal{L}$ when no confusion may arise.
\end{notation}

Next we prove:

\begin{lem}
\label{lem:CoresNotInBall}
$\mathcal{L}$ fulfills:
	\begin{enumerate}
	\item $\mathcal{L} \neq \emptyset$.
	\item $E(\mathcal{L})$ is irreducible.
	\end{enumerate}
\end{lem}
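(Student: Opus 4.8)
The plan is to deduce both statements quickly from Lemma~\ref{lem:X'IsIrreducible}, together with the facts that filling distributes over connected sum and that $M$ is prime. First I would observe that $\partial X'$ consists of tori: since the branch set is disjoint from the boundary, $\phi'$ restricts to an honest covering $\partial X' \to (T^{2}\times\{0\}) \sqcup (T^{2}\times\{1\})$, so each component of $\partial X'$ covers a torus and is itself a torus. Consequently the prime decomposition of $X'$ involves only interior spheres (capped by balls), so every prime factor of $X'$ has boundary equal to a subset of the tori of $\partial X'$; in particular $\partial X'' \subset \partial X'$ is a non-empty union of tori. By Lemma~\ref{lem:X'IsIrreducible}, $|\partial X''| \ge 2$, and since $T_{1}$ is one of these components, $\mathcal{T} = \partial X'' \setminus T_{1}$ is a non-empty union of tori. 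Going from $X$ to $X'$ we filled only the components of $\phi^{-1}(T_{3}' \cup \cdots \cup T_{k}')$, so $\mathcal{T} \subset \partial X' \subset \partial X \setminus \{T_{1}\}$; hence every component of $\mathcal{T}$ is filled by $\hat\alpha$, and $\mathcal{L} \neq \emptyset$. This proves~(1).

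For~(2), the main step is to identify $E(\mathcal{L})$ with $X''$ up to homeomorphism. Write $X' = X'' \,\#\, Z$, where $Z$ is the connected sum of the remaining prime factors of $X'$. Since $X(\hat\alpha) \cong M$ factors as $X'(\hat\alpha|_{\partial X'})$, and filling distributes over connected sum (as in Equation~(\ref{eq:FillingsThatDontFactor1})),
\[
M \;\cong\; X''\bigl(\hat\alpha|_{\partial X''}\bigr) \;\#\; Z\bigl(\hat\alpha|_{\partial Z}\bigr).
\]
Here $\hat\alpha|_{\partial X''}$ fills exactly the tori of $\mathcal{T}$, while $\hat\alpha|_{\partial Z}$ fills every component of $\partial Z$ (again because every boundary component of $Z$ lies in $\partial X \setminus \{T_{1}\}$), so $Z(\hat\alpha|_{\partial Z})$ is closed. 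As $M$ is hyperbolic, hence prime, and its non-empty boundary (the image of $T_{1}$) must lie in the factor $X''(\hat\alpha|_{\partial X''})$, we get $Z(\hat\alpha|_{\partial Z}) \cong S^{3}$ and $M \cong X''(\hat\alpha|_{\partial X''})$. Thus $M$ is obtained from $X''$ by attaching solid tori along $\mathcal{T}$, and $\mathcal{L}$ is the union of their cores; since removing an open regular neighbourhood of the core of a solid torus yields a copy of $T^{2}\times[0,1]$, gluing these back onto $X''$ as collars gives $E(\mathcal{L}) = M \setminus \mbox{int}N(\mathcal{L}) \cong X''$. Finally, $X''$ is a prime $3$-manifold with non-empty torus boundary, hence is not $S^{1}\times S^{2}$ and is therefore irreducible; so $E(\mathcal{L})$ is irreducible.

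The substantive content is already carried by Lemma~\ref{lem:X'IsIrreducible}; what remains is bookkeeping. The one point requiring care will be checking that the components of $\mathcal{T}$ are genuinely filled when $M$ is formed — so that $\mathcal{L}$ makes sense as a link in $M$ — and that the auxiliary summand $Z$ disappears upon passing to $M$, which is exactly where primality (hyperbolicity) of $M$ is used. Once $E(\mathcal{L})$ is matched with the prime factor $X''$, both conclusions are immediate.
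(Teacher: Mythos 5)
Your argument is correct and follows essentially the same route as the paper: part~(1) is exactly the paper's appeal to Lemma~\ref{lem:X'IsIrreducible}, and for part~(2) the paper likewise uses hyperbolicity (hence irreducibility/primeness) of $M$ to show that the summand of $X'$ complementary to $X''$ fills to $S^{3}$ (a ball after puncturing), thereby identifying $E(\mathcal{L})$ with the prime factor containing $T_{1}$, which is irreducible since it has non-empty toral boundary. The only cosmetic difference is that you handle the prime and non-prime cases of $X'$ uniformly via the connected-sum decomposition and distribution of filling over $\#$, whereas the paper splits into cases and argues with the sphere $S$ and the piece $X^{*}$ from the proof of Lemma~\ref{lem:X'IsIrreducible}.
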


\begin{proof}
By Lemma~\ref{lem:X'IsIrreducible}, $X''$
has at least two boundary components.
Hence $\mathcal{T} \neq \emptyset$, and~(1) follows.

If $X'$ is prime that by construction $\mathcal{T} = \partial X' \setminus T_{1}$; hence
$E(\mathcal{L}) \cong X'$ and~(2) follows.  Otherwise,
recall the definition of $S$ and $X^{*} \subset X'$ from the construction of $\psi$ above.
Let $X^{**} = \mbox{cl}(X' \setminus X^{*})$.  Since $M$ is hyperbolic the $\partial M$ is the 
image of $T_{1} \subset X^{*}$, after filling $X^{**}$ we obtain a ball.  This shows that
$E(\mathcal{L}) \cong X^{*}$; by constrcution $X^{*}$ is a factor of the prime decomposition
of $X'$ and hence is itself prime. 

\end{proof}

\bigskip\noindent

Thus, for every $\alpha \in \mathcal{A}_{3}$, there is $I \subset \{1,\dots,n\}$,
so that the following three conditions are satisfied:
\begin{enumerate}
\item $1 \not\in I \neq \emptyset$.
\item $\{T_{i}\}_{i \in I}$ (which we will denote as $\mathcal{T}$) is a union of  components of  $\phi^{-1}(T_{1}' \cup T_{2}')$.
\item The cores of the solid tori attached to $\mathcal{T}$form a link (which we will denote as 
$\mathcal{L} \subset M$) with an irreducible exterior.  
\end{enumerate}
Up-to finite ambiguity we fix $I$ as above and consider only $\alpha \in \mathcal{A}_{3}$ 
that fulfill these conditions for the fixed index set $I$.
To avoid overly complicated notation we do not rename $\mathcal{A}_{3}$ or $\hat{\mathcal{A}}$.

\bigskip\noindent
We are now ready to state and prove the main proposition of this section:

\begin{prop}
\label{pro:TheLastCase}
Let $M$ be a hyperbolic manifold, $\partial M$ a single torus that we will denote as $T$,
$X$ a compact connected orientable manifold, $\partial X$ tori that we will denote as 
$\{T_{i}\}_{i=1}^{n}$, and
$\mathcal{A}_{3}$ a set of multislopes on $\partial X$.
Fix a non empty index set $I \subset \{2,\dots,n\}$; we will denoted $\{T_{i}\}_{i \in I}$ as $\mathcal{T}$.
Denote the link formed by the core of the solid tori attached to $\mathcal{T}$ when filling along $\alpha \in \mathcal{A}$
as $\mathcal{L}_{\alpha} \subset M$ (or simply $\mathcal{L}$ when no confusion may arise).
Assume that any $\alpha \in \mathcal{A}_{3}$ fulfills the following conditions:
	\begin{enumerate}
	\item $\alpha|_{T_{1}} = \nos$.
	\item $X(\alpha) \cong M$.
	\item $E(\mathcal{L}_{\alpha})$ is irreducible.
	\end{enumerate}
Then for each $i \in I$, there is a bounded set $B_{i}$ of slopes of $T_{i}$, 
so that for each $\alpha \in \mathcal{A}_{3}$,
there is an $i \in I$, so that $\alpha|_{T_{i}} \in B_{i}$.
\end{prop}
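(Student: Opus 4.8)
The plan is to induct on $|T(X)|$, which is finite by Proposition~\ref{prop:T(X)isFinite}, with the case division by the type of the root of $T(X)$ that is used throughout the paper. If $X$ is Seifert fibered or sol there is nothing to prove, since then no filling of $X$ is hyperbolic and $\mathcal{A}_3=\emptyset$. If $X$ is not prime, let $X_0$ be the factor of the prime decomposition of $X$ containing $T_1$; for $\alpha\in\mathcal{A}_3$ every other factor fills to $S^3$ (as $M$ is irreducible), so $X_0(\alpha|_{\partial X_0})\cong M$. I would first observe that condition~(3) forces every $T_i$ with $i\in I$ to lie on $\partial X_0$: otherwise the core attached to such a $T_i$ would be a (possibly trivial) knot inside a ball $D\subset M$, and $\partial D$ would be an essential sphere in $E(\mathcal{L}_\alpha)$. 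Hence $\mathcal{L}_\alpha$ lives in the $M$-factor, its exterior in $M$ agrees with its exterior in $X_0(\alpha|_{\partial X_0})$, and $X_0$ together with $\{\alpha|_{\partial X_0}:\alpha\in\mathcal{A}_3\}$ satisfies the hypotheses; since $X_0$ is a direct descendant of the root, $|T(X_0)|<|T(X)|$ and we conclude by induction.

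If $X$ is hyperbolic, then since $M$ is a fixed hyperbolic manifold all its geodesics have length at least some $\epsilon_0>0$; taking $\epsilon<\epsilon_0$, the set $\mathcal{A}_3$ lies inside the set considered in Proposition~\ref{prop:UsingT(X)forHyperbolicFilling}, so only finitely many $\alpha\in\mathcal{A}_3$ are totally hyperbolic, and these contribute finitely many values to each $\alpha|_{T_i}$. Any other $\alpha\in\mathcal{A}_3$ admits a \mnh\ partial filling $\alpha'$ (finitely many by Proposition~\ref{pro:mnhIsFinite}, so fixed up to finite ambiguity). If $\alpha'|_{T_i}\neq\nos$ for some $i\in I$ this again contributes one value to $B_i$; otherwise $\mathcal{T}\subset\partial X(\alpha')$, $M=X(\alpha')(\alpha|_{\partial X(\alpha')})$ with the same link $\mathcal{L}_\alpha$, and since $X(\alpha')$ is a direct descendant of the root we finish by induction.

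The main work is the JSJ case. As in the JSJ case of Theorem~\ref{thm:fillingsThatDontFactor}, Proposition~\ref{prop:ObtainedByFilling} shows $M$ is obtained by filling a component of the torus decomposition of $X$; since $\partial M=T_1$ and $M$ is hyperbolic, this component may be taken to be the one, $X_0$, containing $T_1$. Write $\partial X_0=T_1\cup\{F_1,\dots,F_m\}$, pushing inward any $T_i$ ($i\in I$) lying on $\partial X_0$ so that every such $T_i$ sits in the piece $N_j$ behind some $F_j$, and let $\mathcal{F}$ be the set of $F_j$ for which $N_j$ contains some $T_i$ with $i\in I$. For $\alpha\in\mathcal{A}_3$, irreducibility and atoroidality of $M$ force each $N_j(\alpha|_{\partial N_j})$ to be a solid torus or a nontrivial knot exterior inside a ball; for $F_j\in\mathcal{F}$ the latter contradicts condition~(3) (the bounding sphere would be essential in $E(\mathcal{L}_\alpha)$), and the pieces with $F_j\notin\mathcal{F}$ I would absorb into $X_0$ using Lemma~\ref{lem:KnotExteriorsAreDisjoint} exactly as in the proof of Proposition~\ref{prop:ObtainedByFilling}, without changing $M$ or $\mathcal{L}_\alpha$. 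So we may assume $\mathcal{F}=\{F_1,\dots,F_m\}$ and every $N_j(\alpha|_{\partial N_j})\cong D^2\times S^1$. I then split $\mathcal{A}_3$ according to whether, for some $j$, no component of $\mathcal{L}_\alpha\cap N_j(\alpha|_{\partial N_j})$ is a core of the solid torus $N_j(\alpha|_{\partial N_j})$: fixing such a $j$ up to finite ambiguity, the manifold $N_j$ with the induced multislopes and $\mathcal{T}\cap\partial N_j$ satisfies the hypotheses of Proposition~\ref{pro:SolidTorusSurgery2} (its condition~(2) follows from condition~(3) here, since a sphere essential in $N_j(\alpha|_{\partial N_j})\setminus N(\mathcal{L}_\alpha\cap N_j)$ stays essential in $E(\mathcal{L}_\alpha)$), which yields the desired bounded sets on the relevant $T_i\subset N_j$. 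Otherwise, for every $j$ some component $L_j$ of $\mathcal{L}_\alpha\cap N_j(\alpha|_{\partial N_j})$, attached to some $T_{i_j}$ with $i_j\in I$ (fixed up to finite ambiguity), is a core of $N_j(\alpha|_{\partial N_j})$; then define $\alpha_0$ on $\partial X_0$ by $\alpha_0|_{T_1}=\nos$ and $\alpha_0|_{F_j}$ the meridian slope of $N_j(\alpha|_{\partial N_j})$, so that $X_0(\alpha_0)\cong M$ and the link $\mathcal{L}_0$ of cores of the solid tori attached to the $F_j$ has exterior $E_M(\mathcal{L}_0)\cong X_0$, which is irreducible since $X_0$ is a JSJ piece. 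Thus $X_0,\mathcal{L}_0,\{\alpha_0\}$ satisfy the hypotheses, and since $|T(X_0)|<|T(X)|$ by Lemma~\ref{lem:TreeOfJSJ}, induction gives bounded sets $B'_{F_j}$ with $\alpha_0|_{F_j}\in B'_{F_j}$ for some $j$. Finally, because $L_j$ is a core of $N_j(\alpha|_{\partial N_j})$, its complement there is a product $T^2\times[0,1]$ exhibiting $N_j$ with its remaining $\mathcal{T}\cap\partial N_j$ components filled as a cosmetic surgery configuration under which $\alpha_0|_{F_j}$ is the projection of $\alpha|_{T_{i_j}}$; hence by the $T^2\times[0,1]$ Cosmetic Surgery Theorem~(\ref{thm:CosmeticSurgeryOnT2XI}) the set of all such $\alpha|_{T_{i_j}}$ is bounded, and the JSJ case is complete.

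The hard part will be the JSJ case: keeping the ``up to finite ambiguity'' reductions honest (the choice of $X_0$, of the $i_j$, and of the absorbing slopes for the pieces with $F_j\notin\mathcal{F}$) and, above all, verifying that condition~(3), which concerns $\mathcal{L}_\alpha\subset M$, transfers correctly both to the link pieces $\mathcal{L}_\alpha\cap N_j$ living in the solid tori $N_j(\alpha|_{\partial N_j})$ and to the link $\mathcal{L}_0$ in $X_0$. The Seifert/sol, non-prime, and hyperbolic cases are routine variants of the corresponding cases in Theorems~\ref{thm:SlopesOnSolidTorus} and~\ref{thm:fillingsThatDontFactor}.
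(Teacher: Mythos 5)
Your overall skeleton (induction on $|T(X)|$, the Seifert/sol, non-prime and hyperbolic cases, and the use of Propositions~\ref{pro:mnhIsFinite}, \ref{prop:UsingT(X)forHyperbolicFilling}, \ref{pro:SolidTorusSurgery2} and Theorem~\ref{thm:CosmeticSurgeryOnT2XI}) matches the paper, and those three cases are essentially correct as you wrote them. The gap is in the JSJ case, and it is not just bookkeeping. Your claim that irreducibility and atoroidality of $M$ force each piece $N_j(\alpha|_{\partial N_j})$ to be a solid torus or a nontrivial knot exterior in a ball omits the third possibility: the JSJ torus $F_j$ can become \emph{boundary parallel} in $X(\alpha)\cong M$, i.e.\ $N_j(\alpha|_{\partial N_j})\cong M$ while the side containing $T_1$ fills to $T^{2}\times[0,1]$. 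Your justification --- that by Proposition~\ref{prop:ObtainedByFilling} the component of the torus decomposition that fills to give $M$ ``may be taken to be'' $X_0$ --- is exactly what fails here: when some $F_j$ is boundary parallel the sink component lies behind $F_j$, not at $X_0$. This is the paper's Case~{\bf A}, and its subcase $j\geq m+1$ (the copy of $M$ contains none of the tori of $\mathcal{T}$, so all of $\mathcal{L}_\alpha$ sits in the $T^{2}\times[0,1]$ side) is the hardest part of the paper's argument: one must cut out the knot exteriors in balls, re-choose the ``unknotting'' slopes via Lemma~\ref{lem:TwistingToGetIrreducible}, prove that some filling slope on $F_j$ makes the relevant link irreducible with no core component (Lemma~\ref{lem:no-core}, which uses Hatcher's finiteness and Lemma~\ref{lem:CoresOfSolidTori}), and only then apply Proposition~\ref{pro:SolidTorusSurgery2}. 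Your proposal has no counterpart to any of this.

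A second, related problem is your final case. After ``absorbing'' the pieces behind the $F_j\notin\mathcal{F}$, the manifold you call $X_0$ is the JSJ piece filled along those $F_j$ (with meridian or unknotting slopes), so your assertion that $E_M(\mathcal{L}_0)\cong X_0$ ``which is irreducible since $X_0$ is a JSJ piece'' does not hold in general: the exterior of the core link in $X_0(\alpha_0)$ is a \emph{filling} of the JSJ piece and may well be reducible, which would destroy hypothesis~(3) needed for your inductive call. The paper confronts this head on: it keeps the cores over $\cup_{j\in J_2}F_j$ as an unlink $\mathcal{U}$, proves Lemma~\ref{lem:makingL1irreducible} to discard the components of $\mathcal{U}$ trapped in balls, and then uses the freedom in the unknotting slopes together with Lemma~\ref{lem:TwistingToGetIrreducible} to force irreducibility of $E(\mathcal{L}_1)$ before inducting on $X_0$. (Your concluding step --- projecting the bounded sets on the $F_j$ back to $\mathcal{T}$ through the product structure of the core complement and invoking Theorem~\ref{thm:CosmeticSurgeryOnT2XI} --- is the same as the paper's and is fine once these two issues are repaired.)
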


\begin{rmk}
\label{rmk:LastCase}
By the discussion above (in particular~\ref{lem:CoresNotInBall}), 
$X$, $T_{1}$, $\mathcal{T}$, and the multislopes of $\mathcal{A}$ considered in Case Three
of Theorem~\ref{thm:main} satisfy the conditions of the Proposition~\ref{pro:TheLastCase} for some $I$.
By Lemma~\ref{lem:BoundedSetsOnMathcalT},
proving this proposition will complete the proof of Theorem~\ref{thm:main}.
\end{rmk}

\begin{proof}[Proof of Proposition~\ref{pro:TheLastCase}]
We induct on $|T(X)|$.

\bigskip\noindent 
{\bf Assume that $X$ is Seifert fibered or sol.}  Then no filling of $X$ gives $M$.
We assume from now on that $X$ is not Seifert fibered or sol.

\bigskip\noindent 
{\bf Assume that $X$ is hyperbolic.} 
Fix $\epsilon>0$ smaller than the length of the shortest geodesic in $M$.
By Theorem~\ref{thm:HyperbolicFillingShortGeos} there are only finitely many
totally hyperbolic fillings on $X$ yielding $M$.  Given ${i} \in I$, we will denote the
set of slopes obtained by restricting totally hyperbolic multislopes of $\mathcal{A}$
to $T_{i}$ as $B_{i}^{1}$, whenever the restriction does not equal $\nos$.  
Then $B_{i}^{1}$ is finite (and hence bounded).  
We will denote as $\mathcal{A}' \subset \mathcal{A}_{3}$ the set
$$\mathcal{A}' = \{\alpha \in \mathcal{A}_{3} \ | \ (\forall i \in I) \ \alpha|_{T_{i}} \not\in B_{i}^{1}\}.$$ 
From this point on we assume as we may that $\alpha \in \mathcal{A}'$. 
Then $\alpha$ is not totally hyperbolic and hence admits a \mnh\ partial
filling.  For each $i \in I$, let $B_{i}^{2}$ be the set of restrictions $\{ \alpha_{\mbox{\tiny min}}|_{T_{i}}\}$,
where $\alpha_{\mbox{\tiny min}}$ ranges over all \mnh\ fillings %of $\partial X$
for which $\alpha_{\mbox{\tiny min}}|_{T_{i}} \neq \infty$.
By Proposition~\ref{pro:mnhIsFinite} $X$ admits only finitely many \mnh\ fillings
and so $B_{i}^{2}$ is finite (and hence bounded).
We will denote as $\mathcal{A}'' \subset \mathcal{A}'$ the set 
$$\mathcal{A}'' = \{\alpha \in \mathcal{A}_{3} \ | \ (\forall i \in I) \ \alpha|_{T_{i}} \not\in (B_{i}^{1} \cup B_{i}^{2})\}.$$
From this point on we assume as we may that $\alpha \in \mathcal{A}''$.  By deifinition,
any $\alpha \in \mathcal{A}''$ admits a \mnh\ partial filling $\alpha_{\mbox{\tiny min}}$ so that
$\alpha_{\mbox{\tiny min}}|_{T_{i}} = \infty$ for every ${i} \in I$.  

Fix a \mnh\ filling $\alpha_{\mbox{\tiny min}}$ for which
$\alpha_{\mbox{\tiny min}}|_{T_{i}} = \infty$ for every ${i} \in I$.  
We will denote $X(\alpha_{\mbox{\tiny min}})$ as $X_{1}$ and the 
the restrictions to $X_{1}$ of multislopes in $\mathcal{A}''$ that admit a partial filling 
$\alpha_{\mbox{\tiny min}}$ as $\mathcal{A}_{\alpha_{\mbox{\tiny min}}}$, that is:
$$\mathcal{A}_{\alpha_{\mbox{\tiny min}}} = 
\{\alpha|_{\partial X_{1}} \ | \ \alpha \in \mathcal{A}'' \mbox{ and } \alpha_{\mbox{\tiny min}} \pf \alpha   \}.$$
We claim that the following conditions are satisfied:
\begin{enumerate}
\item $|T(X_{1})| < |T(X)|$: this holds since $X_{1}$ corresponds to a direct descendant of the root of $T(X)$.
\item $\mathcal{T} \subset \partial X_{1}$: this follows from the definition of $\mathcal{A}''$,
where we required that $\alpha|_{T_{i}} = \nos$ (for all $i \in I$).
\item For any $\alpha_{1} \in \mathcal{A}_{\alpha_{\mbox{\tiny min}}}$, $\alpha_{1}|_{T_{1}} = \nos$:
it follows from the definitions that  $\alpha_{1}|_{T_{1}} = \alpha|_{T_{1}}$;
hence by the first assumption of the proposition, $\alpha|_{T_{1}} = \nos$.
\item For any $\alpha_{1} \in  \mathcal{A}_{\alpha_{\mbox{\tiny min}}}$, $X_{1}(\alpha_{1}) \cong M$:
by definition, $X_{1}(\alpha_{1}) = X(\alpha_{\mbox{\tiny min}})(\alpha_{1}) = X(\alpha)$.
By the second assumption of the proposition, $X(\alpha) \cong M$.
\item For any $\alpha_{1} \in  \mathcal{A}_{\alpha_{\mbox{\tiny min}}}$, 
$E(\mathcal{L}_{\alpha_{i}})$ is irreducible, where here 
$\mathcal{L}_{\alpha_{1}}$ denotes the link formed by the cores of the solid tori attached to 
$\mathcal{T} \subset \partial X_{1}$: it is straightforward to see that 
$E(\mathcal{L}_{\alpha_{1}}) = E(\mathcal{L})$.
By the third assumption of the proposition, $E(\mathcal{L})$ is irreducible.
\end{enumerate}
By~(2)--(5),  $X_{1}$, $T_{1}$, $\mathcal{T}$, and $\mathcal{A}_{\alpha_{\mbox{\tiny min}}}$ fulfill 
the assumptions of the proposition.  By~(1) we may apply induction, showing that 
for every $i \in I$, there is a bounded set of slopes of $T_{i}$, that we will denote as $B_{i,\alpha_{\mbox{\tiny min}}}^{3}$, 
so that for each $\alpha_{1} \in \mathcal{A}_{\alpha_{\mbox{\tiny min}}}$,  there is an ${i} \in I$, 
so that $\alpha_{1}|_{T_{i}} \in B_{i,\alpha_{\mbox{\tiny min}}}^{3}$.  

With the notation of the preceding paragraph, every $\alpha \in \mathcal{A}''$ admits a 
\mnh\ filling $\alpha_{\mbox{\tiny min}}$
and $\alpha|_{T_{i}} = \alpha_{1}|_{T_{i}}$ for every $i \in I$.  Hence for some $i \in I$, 
$\alpha|_{T_{i}} \in B_{i,\alpha_{\mbox{\tiny min}}}^{3}$.
By Proposition~\ref{pro:mnhIsFinite}, $X$ admits only finitely many \mnh\ fillings.  Hence the set
$$B_{i}^{3} = \bigcup_{\alpha_{\mbox{\tiny min}}} B_{i,\alpha_{\mbox{\tiny min}}}^{3}$$ 
is bounded.  The proposition follows by setting 
$$B_{i} = B_{i}^{1} \cup B_{i}^{2}\cup B_{i}^{3}.$$

We assume from now on that $X$ is not Seifert fibered, sol, or hyperbolic.

\bigskip\noindent 
{\bf Assume that $X$ is not prime.} Let $X_{1}$ be the factor of the prime decomposition
of $X$ that contains $T_{1}$; say $X = X_{1} \# X_{1}'$ (we are not assuming that
$X_{1}'$ is prime).  Then any $\alpha \in \mathcal{A}$ induces the multislopes
$\alpha_{1} = \alpha|_{\partial X_{1}}$ and $\alpha_{1}' = \alpha|_{\partial X'_{1}}$
on $\partial X_{1}$ and $\partial X_{1}'$, respectively.
Since $X_{1}(\alpha_{1}) \# X_{1}'(\alpha_{1}') = X(\alpha) \cong M$ and $M$ is hyperbolic, 
the following conditions hold:
	\begin{enumerate}
	\item  Since $T_{1} \subset \partial X_{1}$, $X_{1}(\alpha_{1}) \cong M$ (with $\alpha_{1}|_{T_{1}} = \nos$).
	\item  $X_{1}'(\alpha_{1}') \cong S^{3}$.
	\end{enumerate}
By construction the reducing sphere that gives the decomposition $X = X_{1} \# X_{1}'$ is disjoint from $\mathcal{L}$.	
Since $E(\mathcal{L})$ is irreducible, no component of it is contained in $X_{1}'(\alpha_{1}')$;
equivalently, $\mathcal{T} \subset \partial X_{1}$.
It is easy to see that $X_{1}$, $T_{1}$, $\mathcal{T}$, and $\{\alpha_{1} \ | \ \alpha \in \mathcal{A} \}$ fulfill the 
assumptions of the proposition.  
Since $X_{1}$ corresponds to a direct descendant of the root of $T(X)$, $|T(X_{1})| < |T(X)|$.
By induction, for each ${i} \in I$,
there is a bounded set of slopes $B_i$ of $T_{i}$, so that for each $\alpha \in \mathcal{A}$,
there is an ${i} \in I$ with $\alpha_{1}|_{T_{1}} \in B_{i}$.  Since for all $i \in I$, $\alpha|_{T_{i}} = \alpha_{1}|_{T_{i}}$,
the proposition follows in this case.

We assume from now on that $X$ is prime and not Seifert fibered, sol, or hyperbolic.

\bigskip\noindent 
{\bf Assume that $X$ is JSJ.}  Let $X_{0}$ be the component of the torus decomposition of $X$ that contains $T_{1}$.
Denote the components of $\partial X_{0} \setminus T_{1}$ as $\{F_{j}\}_{j=1}^{k}$.  Denote the component of 
$\mbox{cl}(X \setminus X_{0})$ containing $F_{j}$ as $X_{j}$, see Figure~\ref{fig:F1}.  
\begin{figure}
\includegraphics[height=2.5in]{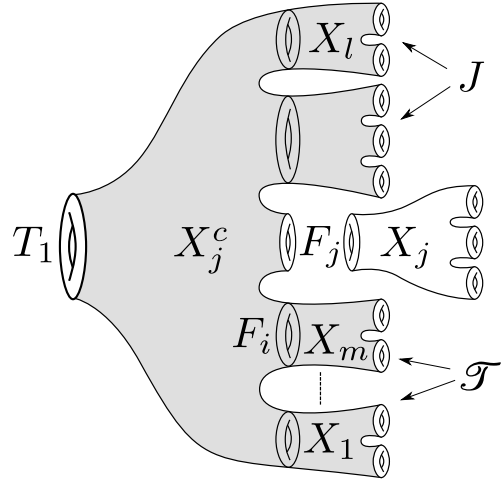}
\caption{The notation when $X$ is JSJ (the shaded region is $X_{j}^{c}$)}
\label{fig:F1}
\end{figure}  
To avoid the situation where $X_{j} = \emptyset$, if $F_{j} \subset \partial X$ we push it slightly into the interior of $X$ 
so that $X_{j} \cong T^{2} \times [0,1]$ in that case.  Since $M$ is hyperbolic
it admits no non separating tori; thus we assume as we may that  $X_{j} \neq X_{j'}$
for $j \neq j'$, for otherwise no filling of $X$ yields $M$.  By renumbering if necessary,
we assume as we may that $\partial X_{j}$ contains a component of $\mathcal{T}$ exactly when $j \leq m$,
for an appropriately chosen $m$.

For $\alpha \in \mathcal{A}$ we will denote the restriction $\alpha|_{\partial X_{j}}$ as $\alpha_{j}$ 
(by definition $\alpha_{j}|_{F_{j}} = \nos$).  Since $X(\alpha) \cong M$ is hyperbolic, every
torus in $X(\alpha)$ is either boundary parallel ({\bf A} below), 
or bounds a solid torus ({\bf B} and {\bf C} below), or bounds a non trivial knot exterior in a ball ({\bf D} below).  
Thus for every $1 \leq j \leq k$ exactly one of the following holds:
	\begin{description}
	\item[Case~{\bf A}] $X_{j}(\alpha_{j}) \cong M$ and 
	$\mbox{cl}(X(\alpha) \setminus X_{j}(\alpha_{j})) \cong T^{2} \times [0,1]$.
	\item[Case~{\bf B}] $X_{j}(\alpha_{j}) \cong D^{2} \times S^{1}$ and no component of $\mathcal{L} \cap X_{j}(\alpha_{j})$
	is a core of $X_{j}(\alpha_{j})$.
	\item[Case~{\bf C}] $X_{j}(\alpha_{j}) \cong D^{2} \times S^{1}$ 
	and some component of $\mathcal{L} \cap X_{j}(\alpha_{j})$, which we will denote as $K_{j}$, 
	is a core of $X_{j}(\alpha_{j})$.  %Note that the possibility  $X_{j} \cong T^{2} \times [0,1]$ is not excluded.
	\item[Case~{\bf B}] $X_{j}(\alpha_{j}) \cong E(K_{j})$ for a non-trivial knot $K_{j} \subset S^{3}$
	and $X_{j}(\alpha_{j}) \subset D_{j}$
	for some ball $D_{j} \subset X(\alpha)$.
	\end{description}

\bigskip

\noindent We first consider the following:

\bigskip\noindent
{\bf Case~{\bf A} happens for some $j \leq m$.}
Fix $j$ ($1 \leq j \leq m$) and let $\mathcal{A}_{j}' \subset \mathcal{A}$ be
$$\mathcal{A}_{j}' = \{\alpha \in \mathcal{A} \ | \ X_{j}(\alpha_{j}) \cong M \mbox\}.$$
Note that for any $\alpha \in \mathcal{A}$, if $F_{j}$ is as in Case~{\bf A} above then
$\alpha \in \mathcal{A}_{j}'$.  We will denote the set of restrictions 
$\{\alpha_{j} \ | \ \alpha \in \mathcal{A}_{j}' \}$ as $\mathcal{A}_{j}$.
Let $\mathcal{T}_{j} = \mathcal{T} \cap X_{j}$ and $\mathcal{L}_{j} = \mathcal{L} \cap X_{j}(\alpha_{j})$;
equivalently, $\mathcal{L}_{j}$ is the link formed by the cores of the solid tori attached to $\mathcal{T}_{j}$.
Since $j \leq m$, $\mathcal{T}_{j} \neq \emptyset$. 
If, for some $\alpha_j \in \mathcal{A}_{j}$, $\mathcal{L}_{j}$ were reducible then any reducing sphere for 
$X_{j}(\alpha_{j}) \setminus \mbox{int}N(\mathcal{L}_{j})$
would be a reducing sphere for $\mathcal{L}$; this contradicts the assumptions of the proposition.

Hence $X_{j}$, $F_{j}$, $\mathcal{T}_{j}$ and $\mathcal{A}_{j}$ fulfill the assumptions of the proposition.  
By Lemma~\ref{lem:TreeOfJSJ}, $|T(X_{j})|<|T(X)|$.
By induction on each component $T$ of $\mathcal{T}_{j}$,
there is a bounded set of slopes that we will denote as $B_{T}$, 
so that for each $\alpha_{j} \in \mathcal{A}_{j}$, 
there is a component $T$ of $\mathcal{T}_{j}$,
so that $\alpha_{j}|_{T_{i}} \in B_{T}$.  It follows that for every $\alpha \in \mathcal{A}_{j}'$,  
$\alpha|_{T} \in B_{T}$ for some $T$; the proposition follows in this case.  

We may consider from now on only multislopes from 
$\mathcal{A} \setminus (\cup_{j \leq m} \mathcal{A}_{j}')$ (and in particular, we
assume as we may that Case~{\bf A} above
does not happen for $j \leq m$).  To avoid overly complicated notation we do not rename $\mathcal{A}$.

\bigskip\noindent
Next we consider the following:

\bigskip\noindent
{\bf Case~{\bf A} happens for some $j \geq m+1$.}   Fix $j$ ($m+1 \leq j \leq k$).
We will denote $\mbox{cl}(X \setminus X_{j})$ as $X_{j}^{c}$
and the restriction $\alpha|_{\partial X_{j}^{c}}$ as $\alpha_{j}^{c}$.  
Since $j \geq m+1$, $\mathcal{T} \subset \partial X_{j}^{c}$.
Let $\mathcal{A}_{j}' \subset \mathcal{A}$ be the set:
$$\mathcal{A}_{j}' = \{\alpha \in \mathcal{A} \ | \ X_{j}^{c}(\alpha^{c}_{j}) \cong T^{2} \times [0,1]\}.$$
Note that for any $\alpha \in \mathcal{A}$, $F_{j}$ is as in Case~{\bf A} above if and only if
$\alpha \in \mathcal{A}_{j}'$.  
Given $\alpha \in \mathcal{A}'_{j}$ we will denote the restriction $\alpha|_{X_{j}^{c}}$
as $\alpha^{c}_{j}$ (by definition $\alpha_{j}^{c}|_{F_{j}}$ and $\alpha_{j}^{c}|_{T_{1}}$ are $\nos$).
We will denote the set of restrictions $\{\alpha_{j}^{c} \ | \ \alpha \in \mathcal{A}_{j}' \}$
as $\mathcal{A}_{j}^{c}$.  

Thus we have a manifold $X_{j}^{c}$ so that $\mathcal{T} \subset \partial X_{j}^{c}$ and 
multislopes $\mathcal{A}_{j}^{c}$ so that for every $\alpha_{j}^{c} \in \mathcal{A}_{j}^{c}$,
$X_{j}^{c}(\alpha_{j}^{c}) \cong T^{2} \times [0,1]$ 
and  $\partial X_{j}^{c}(\alpha_{j}^{c}) = T_{1} \cup F_{j}$.
Up-to finite ambiguity we fix $J \subset \{1,\dots,k\}$, 
and denote as $\mathcal{A}_J^{c} \subset \mathcal{A}_{j}^{c}$ the multislopes  
for which $X_{j'}(\alpha_{j'}) \cong E(K_{j'}) \subset D_{j'}$ for a non-trivial knot 
$K_{j'} \subset S^{3}$ and a ball $D_{j'} \subset X(\alpha)$ 
if and only if $j' \in J$.  
By Lemma~\ref{lem:KnotExteriorsAreDisjoint} we may assume that the balls 
$\{D_{j'}\}_{j' \in J}$ are disjointly embedded.  

\begin{figure}[h]
\includegraphics[height=2.5in]{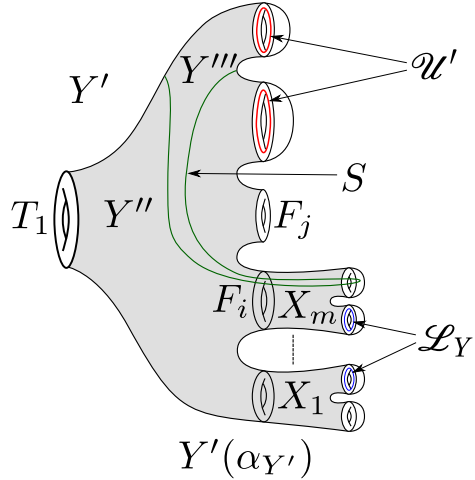}
\caption{$Y'(\alpha_{Y'})$}
\label{fig:F2}
\end{figure}
Let $Y'$ be the closure of $X_{j}^{c} \setminus (\cup_{j' \in J} E(K_{j'}))$ (see Figure~\ref{fig:F2};
in that figure $\{F_{j'}\}_{j' \in J}$ are the two tori on the top right).
By restriction, any $\alpha_{j}^{c} \in \mathcal{A}_{J}^{c}$ induces
a slope on each component of 
$\partial Y' \setminus (T_{1}  \cup F_{j} \cup (\cup_{j' \in J} F_{j'}))$.
For every $j' \in J$, we pick a slope on $F_{j'}$ that intersects the meridian of $E(K_{j'})$ exactly once.
(There are infinitely many way to do this; we will exploit this flexibility soon 
when appealing to Lemma~\ref{lem:TwistingToGetIrreducible}.)
Denote the multislope obtained as $\alpha_{Y'}$, and note that $Y'(\alpha_{Y'}) \cong T^{2} \times [0,1]$
and $\partial Y'(\alpha_{Y'}) = T_{1} \cup F_{j}$.  We will denote $\mathcal{T} \cap \partial Y'$
as $\mathcal{T}_{Y}$, the link formed by the cores of the solid tori attached to $\mathcal{T}_{Y}$
as $\mathcal{L}_{Y}$, and the link formed by the cores of the solid tori attached to $\cup_{j' \in J}F_{j'}$
as $\mathcal{U}'$ (in Figure~\ref{fig:F2}, $\mathcal{U}'$ is red).  
Since the components of $\mathcal{U}'$ are unknots contained in the disjointly
embedded balls $D_{j'}$, $\mathcal{U}'$ is an unlink.
Finally assume, for a contradiction, that $\mathcal{T}_{Y} = \emptyset$.
Then $\mathcal{L} \subset \cup_{j' \in J} D_{j'}$;
this contradicts the assumption that $E(\mathcal{L})$ is irreducible,
showing that $\mathcal{T}_{Y} \neq \emptyset$.

If $Y'(\alpha_{Y'}) \setminus \mbox{int}N(\mathcal{L}_{Y} \cup \mathcal{U}')$ is irreducible we denote
$Y'$ as $Y$, $\mathcal{U}'$ as $\mathcal{U}$, and $\alpha_{Y'}$ as $\alpha_{Y}$.  
Otherwise, let $S$ be a reducing sphere realizing the decomposition of 
$Y'(\alpha_{Y'}) \setminus \mbox{int}N(\mathcal{L}_{Y} \cup \mathcal{U}')$ as $Y'' \# Y'''$, where $Y''$ is irreducible and
$T_{1} \subset Y''$ (in Figure~\ref{fig:F2}, $S$ is green).  
Note that $Y'(\alpha_{Y'}) \setminus \mbox{int}N(\mathcal{L}_{Y} \cup \mathcal{U}')$ is obtained
from $Y'$ by filling $\partial Y' \setminus (T_{1} \cup F_{j} \cup \mathcal{T}_{Y} \cup (\cup_{j' \in J} F_{j'}))$
(in Figure~\ref{fig:F2} the components that are {\it not} filled are $T_{1}$, $F_{j}$, the reds, 
and the blues).  
Since $\mathcal{T} \subset \mathcal{T}_{Y} \cup (\cup_{j' \in J} \partial X_{j'})$,
$\mathcal{L} \cap S = \emptyset$.
We consider $S$ as a sphere in 
$$Y'(\alpha_{Y'}) \cup_{F_{j}} X_{j}(\alpha_{j}) \cong T^{2} \times [0,1] \ \cup_{F_{j}} \  M \cong M.$$ 
Since $M$ is hyperbolic, $S$ bounds a ball in $ Y'(\alpha_{Y'}) \cup_{F_{j}} X_{j}(\alpha_{j})$ 
which we will denote as $D$.  
Since $T_{1} \subset Y''$, $D = Y'''(\alpha_{Y'}|_{Y'''})$.
Clearly, $\mathcal{T}_{Y} \cap D = \emptyset$, for otherwise $S$ would be a reducing sphere for 
$E(\mathcal{L})$.  We will denote as $Y$ the manifold obtained from $Y'$ by filling 
$(\cup_{j' \in J} F_{j'}) \cap D$ along the multislope induced by $\alpha_{Y'}$.  
%Then $\partial Y$ consists of:
%\begin{enumerate}
%\item $T_{1}$,
%\item $ F_{j}$,
%\item $\mathcal{T}_{Y}$,
%\item $\cup_{j' \in J_{\mathcal{U}}} F_{j'}$,  for some $J_{\mathcal{U}} \subset J$,
%\item Components of $\partial X$.  
%\end{enumerate}
We will denote the multislope $\alpha_{Y'} | _{\partial Y}$ as $\alpha_{Y}$
and $\mathcal{U}' \setminus (\mathcal{U}' \cap D)$ as $\mathcal{U}$.
By construction $\mathcal{T}_{Y} \subset \partial Y$, and therefore 
we may consider  $\mathcal{L}_{Y}$ as the link in $Y(\alpha_{Y})$
formed by the cores of the solid tori attached to $\mathcal{T}_{Y}$.   Then the following two conditions hold:
	\begin{enumerate}
	\item $\mathcal{U}$ is an unlink.
	\item $Y(\alpha|_{Y}) \setminus \mbox{int}N(\mathcal{L}_{Y} \cup \mathcal{U}) \cong Y''$ and hence
	is irreducible.
	\end{enumerate}
We choose the slopes of $\mathcal{U}$, 
as we know we may by Lemma~\ref{lem:TwistingToGetIrreducible}, 
so that $E(\mathcal{L}_{Y})$ is irreducible.  

\begin{lem}
\label{lem:no-core}
For every $\alpha_{Y} \in \mathcal{A}_{Y}$, there is a slope $\alpha'$ of
$F_{j}$, so that $Y(\alpha_{Y})(\alpha')$ satisfies the following condition:	
	\begin{enumerate}
	\item $Y(\alpha_{Y})(\alpha') \cong D^{2} \times S^{1}$.
	\item $\mathcal{L}_{Y} \subset Y(\alpha_{Y})(\alpha')$ is irreducible. 
	\item No component of $\mathcal{L}_{Y}$ is a core of $Y(\alpha_{Y})(\alpha')$ .
	\end{enumerate}
\end{lem}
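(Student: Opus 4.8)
\textbf{Proof proposal for Lemma~\ref{lem:no-core}.}

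The plan is to fill the torus $F_{j}$ so as to ``collapse'' the $T^{2}\times[0,1]$ factor $Y(\alpha_{Y})$ onto one of its two boundary tori, namely $T_{1}$, and then exploit the fact that this kind of degenerate filling turns $Y(\alpha_{Y})$ into a solid torus while doing nothing to the embedded link $\mathcal{L}_{Y}$. Concretely, since $Y(\alpha_{Y})\cong T^{2}\times[0,1]$ with $\partial Y(\alpha_{Y})=T_{1}\cup F_{j}$, I would use the product structure to identify $F_{j}$ with $T^{2}\times\{1\}$ and $T_{1}$ with $T^{2}\times\{0\}$, and pick $\alpha'$ to be the slope on $F_{j}$ corresponding (under this product structure) to a slope that, after filling, yields $D^{2}\times S^{1}$: indeed, filling $T^{2}\times\{1\}$ along \emph{any} slope yields a solid torus $D^{2}\times S^{1}$ with $\partial(D^{2}\times S^{1})=T_{1}$, so condition~(1) is automatic once $\alpha'$ is chosen to be any slope of $F_{j}$. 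This establishes~(1) immediately. For~(2), the link $\mathcal{L}_{Y}$ sits in the interior of $Y(\alpha_{Y})$, i.e.\ in $T^{2}\times(0,1)$, which is disjoint from the solid torus glued along $F_{j}$; hence $Y(\alpha_{Y})(\alpha')\setminus \mbox{int}N(\mathcal{L}_{Y})$ is obtained from $Y(\alpha_{Y})\setminus\mbox{int}N(\mathcal{L}_{Y})$ by filling $F_{j}$. Now $Y(\alpha_{Y})\setminus\mbox{int}N(\mathcal{L}_{Y})=E(\mathcal{L}_{Y})$, which we arranged to be irreducible by the choice of slopes on $\mathcal{U}$ via Lemma~\ref{lem:TwistingToGetIrreducible} in the paragraph preceding the lemma. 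I would then argue that filling a boundary torus of an irreducible manifold that is \emph{not} a boundary-reducible configuration keeps it irreducible for all but finitely many slopes (Hatcher~\cite{hatcher}), and that a sufficiently generic choice of $\alpha'$ among the infinitely many admissible slopes achieves this; alternatively, one checks directly that a reducing sphere in $Y(\alpha_{Y})(\alpha')\setminus\mbox{int}N(\mathcal{L}_{Y})$ could be isotoped off the filling solid torus and would give a reducing sphere for $E(\mathcal{L}_{Y})$, a contradiction.

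The remaining and more delicate point is condition~(3): no component of $\mathcal{L}_{Y}$ is a core of the solid torus $Y(\alpha_{Y})(\alpha')$. This is precisely the situation governed by Lemma~\ref{lem:CoresOfSolidTori}: a component $K$ of $\mathcal{L}_{Y}$ lying in $T^{2}\times(0,1)$ is isotopic to a core of the solid torus obtained by filling $T^{2}\times\{1\}$ along the curve $c=\alpha'$ only if $[c]$ and $[K]$ generate $H_{1}(T^{2}\times[0,1];\mathbb{Z})$. So the strategy is to choose $\alpha'$ to \emph{avoid} this homological condition for every component of $\mathcal{L}_{Y}$ simultaneously. For each component $K$ of $\mathcal{L}_{Y}$, its homology class $\pm[K]\in H_{1}(T^{2}\times[0,1];\mathbb{Z})\cong\mathbb{Z}^{2}$ is fixed; if $[K]=0$ there is nothing to exclude, and if $[K]\neq 0$ then the set of slopes $[c]$ that together with $[K]$ generate $H_{1}$ is, by Lemma~\ref{lem:CoresOfSolidTori}'s criterion, exactly the slopes intersecting the primitive class underlying $[K]$ geometrically once — a \emph{single} slope modulo the obvious ambiguity, or at worst a bounded (in fact one-element-up-to-sign) set. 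Taking the union over the finitely many components of $\mathcal{L}_{Y}$, we must avoid only finitely many slopes of $F_{j}$. Since there are infinitely many slopes available and, crucially, infinitely many already compatible with the constraints imposed in the earlier construction (the ``infinitely many ways to do this'' flexibility noted when building $\alpha_{Y'}$), we can pick $\alpha'$ avoiding this finite bad set while simultaneously lying outside the finite exceptional set from Hatcher needed for~(2).

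The main obstacle I anticipate is bookkeeping: one must verify that the slope $\alpha'$ chosen on $F_{j}$ to satisfy~(2) and~(3) is genuinely compatible with (and indeed recoverable as a restriction of) a multislope in $\mathcal{A}_{Y}$ or at least does not disturb the identifications $Y(\alpha_{Y})(\alpha')\subset X(\alpha)\cong M$ established in the surrounding proof — in particular that the filling along $\alpha'$ corresponds to the genuine filling $X_{j}(\alpha_{j})$ collapsed appropriately, so that we are not introducing a spurious manifold. I would address this by recalling from the construction that $Y(\alpha_{Y})\cup_{F_{j}}X_{j}(\alpha_{j})\cong T^{2}\times[0,1]\cup_{F_{j}}M\cong M$, so filling $F_{j}$ with the meridian $\mu_{F_{j}}$ of the solid torus $X_{j}(\alpha_{j})$ (which exists precisely because $X_{j}$ is in Case~\textbf{B} or \textbf{C}, i.e.\ $X_{j}(\alpha_{j})\cong D^{2}\times S^{1}$) realizes $Y(\alpha_{Y})(\mu_{F_{j}})\cong D^{2}\times S^{1}$; but since $\alpha_{j}$ may vary, the appropriate move is to note that \emph{some} valid choice of $\alpha'$ satisfying~(1)--(3) exists for \emph{each} $\alpha_{Y}\in\mathcal{A}_{Y}$, which is all the lemma asserts. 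Once this compatibility is pinned down the argument closes cleanly, and the lemma is then exactly the input needed to feed $Y(\alpha_{Y})(\alpha')$, $\mathcal{L}_{Y}$ into Proposition~\ref{pro:SolidTorusSurgery2}.
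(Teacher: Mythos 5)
Your proposal follows essentially the same route as the paper: condition (1) is automatic since filling either boundary torus of $T^{2}\times[0,1]$ along any slope gives a solid torus; condition (2) is handled by Hatcher's finiteness of boundary slopes of essential surfaces, excluding a finite set $B_{f}$; condition (3) is handled by the homological criterion of Lemma~\ref{lem:CoresOfSolidTori}, excluding for each component $K$ of $\mathcal{L}_{Y}$ a set $B_{K}$ of slopes; and the lemma only asserts existence of some auxiliary slope $\alpha'$, so no compatibility with the ambient filling of $X$ is required -- exactly as you note at the end.

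Two corrections, though. First, your description of $B_{K}$ is wrong: for a primitive class $[K]$, the slopes $\alpha'$ such that $[\alpha']$ and $[K]$ generate $H_{1}(T^{2}\times[0,1];\mathbb{Z})$ are all slopes meeting $[K]$ geometrically once, and this is an \emph{infinite} set (e.g.\ for $[K]=(1,0)$ it is every slope $p/1$), not ``a single slope up to sign''; consequently your claim that only finitely many slopes must be avoided is false. The argument survives because this set, while infinite, is bounded in the Farey metric (it has diameter $2$, being the slopes at distance $1$ from the slope underlying $[K]$), so $B_{f}\cup(\cup_{K}B_{K})$ is a finite union of bounded sets, hence bounded, hence has nonempty complement -- this is exactly how the paper concludes, and also why the ``bounded set'' framework (Proposition~\ref{prop:PropertiesOfBoundedSets} and the fact that the Farey graph has infinite diameter) is invoked rather than mere finiteness. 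You should also record that a non-primitive class $[K]$ (including $[K]=0$) contributes $B_{K}=\emptyset$, since no slope can generate together with a non-primitive class. Second, your proposed ``alternative'' for (2) -- isotoping a reducing sphere of $Y(\alpha_{Y})(\alpha')\setminus\mbox{int}N(\mathcal{L}_{Y})$ off the filling solid torus -- does not work in general: a reducing sphere created by the filling can meet the attached solid torus essentially, and this is precisely why Hatcher's theorem (finitely many boundary slopes of essential punctured spheres) is the right tool; keep that as the actual argument, as in the paper.
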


\begin{proof}[Proof of Lemma~\ref{lem:no-core}]
Fix $\alpha_Y \in \mathcal{A}_{Y}$.

By construction, $Y(\alpha_{Y}) \cong T^{2} \times [0,1]$;
hence for any slope $\alpha'$,
$Y(\alpha_Y)(\alpha') \cong D^{2} \times S^{1}$.
Thus~(1) is satisfied by any slope $\alpha'$ of $F_{j}$.

Since the exterior of $\mathcal{L}_{Y}$ as a link in $Y(\alpha_{Y})$ is
irreducible, by Hatcher, there is a finite set of slopes of $F_{j}$, which we will denote as $B_{f}$,
so that for any slope $\alpha' \not\in B_{f}$, the exterior of $\mathcal{L}_{Y}$
as a link in $Y(\alpha_{Y})(\alpha')$ is irreducible.

For~(3) we fix a component $K$ of $\mathcal{L}_{Y}$.  Let $[K]$ denote the homology
class represented by $K$ in  $H_{1}(Y(\alpha_{Y});\mathbb Z)$ ($[K]$ is only defined up to sign).  
We consider two possibilities:
	\begin{description}
	\item[{$[K]$} is not primitive] Then by Lemma~\ref{lem:CoresOfSolidTori}, $K$ is not a core of 
	of $Y(\alpha_{Y})(\alpha')$ for any slope $\alpha'$; we set $B_{K} = \emptyset$.
	\item[{$[K]$} is primitive] 	By Lemma~\ref{lem:CoresOfSolidTori}, if $K$ is a core
	of $Y(\alpha_{Y})(\alpha')$ then $[K]$ and $[\alpha']$ generate
	$H_{1}(Y(\alpha_{Y}))$.  We will denote as $B_{K}$ the set of slopes
	of $F_{j}$ that correspond to homology classes fulfilling this condition.  It is easy to see that
	$B_{K}$ has diameter $2$ in the Farey graph of the slopes of $F_{j}$.
	\end{description}
Since $B_{f} \cup (\cup_{K} B_{K})$ is a finite union of bounded sets it is itself bounded;
hence its complement is not empty.  The Lemma~\ref{lem:no-core} follows by picking 
$\alpha' \not\in B_{f} \cup (\cup_{K} B_{K})$.
\end{proof}

For each $\alpha_{Y} \in \mathcal{A}_{Y}$ we pick a slope $\alpha'$ of $F_{j}$
satisfying the conditions of Lemma\ref{lem:no-core}.
By Proposition~\ref{pro:SolidTorusSurgery2}, on every component $T$
of $\mathcal{T}_{Y}$, there exists a bounded set $B_{T}$, so for that every $\alpha_{Y} \in \mathcal{A}_{Y}$,
there is a component $T$ of $\mathcal{T}_{Y}$, so that $\alpha_{Y}|_{T} \in B_{T}$.
Given any $\alpha \in \mathcal{A}$, we construct 
$\alpha_{j}^{c} \in \mathcal{A}_{J}^{c}$ and $\alpha_{Y}$ as above.
The proposition follows in this case since $\alpha|_{T} = \alpha_{j}^{c}|_{T} = \alpha_{Y}|_{T} \in B_{T}$.

We may consider from now on only multislopes from 
$\mathcal{A} \setminus (\cup_{j \geq m+1} \mathcal{A}_{j}')$.
Thus from now on we will only consider multislopes for which Case~{\bf A} 
does not happen for any $j$.
To avoid overly complicated notation
we do not rename $\mathcal{A}$.

\bigskip\noindent
Next we consider the following:

\bigskip\noindent
{\bf Case~{\bf B} happens for some $j \leq m$.}  Fix $1 \leq j \leq m$.
We will denote as $\mathcal{A}_{j}' \subset \mathcal{A}$  
the multislopes $\alpha \in \mathcal{A}$ for which
$X_{j}(\alpha_{j}) \cong D^{2} \times S^{1}$ and no component
of $\mathcal{L} \cap X_{j}(\alpha_{j})$ is a core.  
Note that Case~{\bf B} occurs if and only if $\alpha \in \mathcal{A}_{j}$.

We will denote the set of restrictions $\{\alpha_{j} \ | \ \alpha \in \mathcal{A}_{j}' \}$ as $\mathcal{A}_{j}$
and $\mathcal{T} \cap X_{j}$ as  $\mathcal{T}_{j}$.
Since $j \leq m$, $\mathcal{T}_{j} \neq \emptyset$.  
Given $\alpha_{j} \in \mathcal{A}_{j}$,
we will denote the link formed by the cores of the solid tori attached
to $\mathcal{T}_{j}$ as $\mathcal{L}_{j}$.  It is easy to see that
if $X_{j}(\alpha_{j}) \setminus \mbox{int}N(\mathcal{L}_{j}$) were reducible 
then $E(\mathcal{L})$ would be reducible, 
contradicting the third assumption of the proposition.  By the assumption of Case~{\bf B}, no
component of $\mathcal{L}_{j}$ is a core of the solid torus $X_{j}(\alpha_{j})$.
Therefore by Proposition~\ref{pro:SolidTorusSurgery2}, for each component $T$ of $\mathcal{T}_{j}$,
there is a bounded set of slopes of $T$ which we will denote as $B_{T}$, so that for each $\alpha_{j} \in \mathcal{A}_{j}$, 
there is a component $T$ of $\mathcal{T}_{j}$ so that $\alpha_{j}|_{T} \in B_{T}$.  
The proposition follows in this case since for any $\alpha \in \mathcal{A}_{j}'$, 
$\alpha|_{T} = \alpha_{j}|_{T}$.

We may consider from now on only multislopes from 
$\mathcal{A} \setminus (\cup_{j \leq m} \mathcal{A}_{j}')$.
To avoid overly complicated notation
we do not rename $\mathcal{A}$.

\bigskip\noindent
We have reduced the proof to the following:

\bigskip\noindent
{\bf Cases~{\bf A} never happens and Case~{\bf B} never happens for $j \leq m$.}  
Consider $J_1,J_2 \subset \{1,\dots,k\}$ fulfilling the following conditions:
	\begin{enumerate}
	\item $\emptyset \neq J_{1} \subset \{1,\dots,m\}$.
	\item $J_{1} \cap J_{2} = \emptyset$.
	\item $\{1,\dots,m\} \subset J_{1} \cup J_{2}$.
	\end{enumerate}
Let $\mathcal{A}_{J_{1},J_{2}} \subset \mathcal{A}$ be the multislopes $\alpha$ fulfilling the following conditions:
	\begin{enumerate}
	\item For every $j \in J_{1}$, $X_{j}(\alpha_{j}) \cong D^{2} \times S^{1}$.  
	\item For any $1 \leq j \leq k$, $X_{j}(\alpha_{j}) \cong E(K_{j})$ (for a non-trivial knot $K_{j} \subset S^{3}$)
	if and only if $j \in J_{2}$.
	\end{enumerate}
Using the fact the Case~{\bf A} does not happen (that is, every $F_{j}$ bounds a solid torus or
a knot exterior contained in a ball) and irreducibility of $\mathcal{L}$ and 
Lemma~\ref{lem:KnotExteriorsAreDisjoint} (which together imply that $J_{1} \neq \emptyset$),
it is easy to see that for any $\alpha \in \mathcal{A}$ there is a 
choice of $J_{1},J_{2}$ as above for which $\alpha \in \mathcal{A}_{J_{1},J_{2}}$; thus 
   $$\mathcal{A} = \bigcup_{J_{1},J_{2}} \mathcal{A}_{J_{1},J_{2}}.$$
Up-to finite ambiguity we fix $J_1,J_2 \subset \{1,\dots,k\}$ fulfilling the conditions above
and consider only multislopes from $\mathcal{A}_{J_{1},J_{2}}$.

Fix a multislope $\alpha \in \mathcal{A}_{J_{1},J_{2}}$.

By Lemma~\ref{lem:KnotExteriorsAreDisjoint}
we may fix disjointly embedded balls $\{ D_{j}\}_{j \in J_{2}}$ so that $E(K_{j}) \subset D_{j}$.
Since Case~{\bf B} does not happen for $j \leq m$, for every $j \in J_1$, at least one component of 
$\mathcal{L}$ is a core of $X_{j}(\alpha_{j})$; we choose one and denote it as $K_{j}$.   
We will denote $\cup_{j \in J_{1}} K_{j}$ as $\mathcal{L}_{1}$ and $\cup_{j \in J_{1}} T_{j}$ 
as $\mathcal{T}_{1}$.

The multislope $\alpha$ induces a multislope on $\partial X_{0}$
as follows: for$j \not\in J_{2}$, the slope induced on $F_{j}$ is the meridian of solid torus
$X_{j}(\alpha_{j})$.
For $j \in J_{2}$, the slope induced on $F_{j}$ is any slope that intersects
the meridian of $E(K_{j})$ once (we will exploit this flexibility soon, when appealing
to Lemma~\ref{lem:makingL1irreducible}).  We will denote the multislope induced by $\alpha$
on $\partial X_{0}$  as $\alpha_{0}$.  By construction
$X_{0}(\alpha_{0}) \cong M$.  For $j \in J_{1}$, we will denote the core
of the solid torus attached to $F_{j}$
as $K_{j}$ and the link formed by the cores of the solid tori attached to $\mathcal{T}_{1}$
as $\mathcal{L}_{1}$; no confusion should arise from this notation, as $K_{j}$
and $\mathcal{L}_{1}$ are isotopic to the knots and link denote that way previosly.
We will denote the link formed by the solid tori attached to $\cup_{j \in J_{2}} F_{j}$
as $\mathcal{U}$.  By construction, the components of $\mathcal{U}$
are unknots embedded in the the disjoint balls $D_{j}$, and hence
$\mathcal{U}$ is an unlink.

In order to apply Lemma~\ref{lem:TwistingToGetIrreducible} we need to know that
$\mathcal{L}_{1}$ is irreducible in the complement of $\mathcal{U}$;
this is not quite the case, but we can obtain this by considering only some of the 
components of $\mathcal{U}$.  To that end we prove:

\begin{lem}
\label{lem:makingL1irreducible}
Suppose $S$ is a reducing sphere for $\mathcal{L}_{1}$ in the complement of
$\mathcal{U}$.  Then $S$ bounds a ball $D \subset X_{0}(\alpha_{0})$
so that $D \cap \mathcal{L}_{1} = \emptyset$ and $D$ contains 
at least one component of $\mathcal{U}$.
\end{lem}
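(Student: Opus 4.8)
\textbf{Proof plan for Lemma~\ref{lem:makingL1irreducible}.}
The plan is to analyze a reducing sphere $S$ for $\mathcal{L}_{1}$ in the complement of $\mathcal{U}$ by viewing it inside the hyperbolic manifold $M \cong X(\alpha) \supset X_{0}(\alpha_{0}) \cong M$, where the last identification uses that Cases~{\bf A} and~{\bf B} do not occur. First I would recall that $S$ is disjoint from both $\mathcal{L}_{1}$ and $\mathcal{U}$; since $\mathcal{L}_{1} \subset \mathcal{L}$ and the components of $\mathcal{U}$ are cores of solid tori attached along $\cup_{j\in J_{2}}F_{j}$ which sit inside the disjointly embedded balls $D_{j}$, the sphere $S$ can be regarded as a sphere in $X_{0}(\alpha_{0}) \cong M$ that is disjoint from $\mathcal{L}_{1}$ (after isotoping it out of the solid tori attached to $\cup_{j \in J_{2}} F_{j}$ along the core, which does not change the complement). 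Because $M$ is hyperbolic, hence irreducible, $S$ bounds a ball $D \subset X_{0}(\alpha_{0})$. That $D$ can be chosen disjoint from $\mathcal{L}_{1}$ follows from the fact that $S$ was a \emph{reducing} sphere for $\mathcal{L}_{1}$ in the complement of $\mathcal{U}$: if $D$ met $\mathcal{L}_{1}$, then since $\mathcal{L}_{1}$ is a union of cores of solid tori attached to $\mathcal{T}_{1}$, a component of $\mathcal{L}_{1}$ entirely on one side of $S$ would make $S$ inessential in $E(\mathcal{L}_{1})$ relative to that component, contradicting the choice of $S$; more precisely, $S$ being a reducing sphere means neither side of $S$ in $E(\mathcal{L}_{1} \cup \mathcal{U})$ is a punctured ball, so in particular $D$ (a genuine ball once we pass to $M$) must avoid $\mathcal{L}_{1}$.

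The heart of the matter is the last assertion: $D$ contains at least one component of $\mathcal{U}$. Here I would argue by contradiction. Suppose $D \cap \mathcal{U} = \emptyset$. Then $D$ is a ball in $X_{0}(\alpha_{0})$ disjoint from $\mathcal{L}_{1} \cup \mathcal{U}$, so $S = \partial D$ is an inessential sphere in $X_{0}(\alpha_{0}) \setminus \mbox{int}N(\mathcal{L}_{1} \cup \mathcal{U})$ — it cuts off a punctured ball (namely $D$ itself, since $D$ meets nothing). But that contradicts the hypothesis that $S$ is a reducing sphere for $\mathcal{L}_{1}$ in the complement of $\mathcal{U}$. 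Hence $D$ must contain at least one component of $\mathcal{U}$.

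The step I expect to be the main obstacle is keeping careful track of which ambient manifold $S$ lives in and ensuring that passing between $X_{0}(\alpha_{0})$, $M$, and the link exteriors does not lose or gain components of $\mathcal{L}_{1}$ or $\mathcal{U}$ on the wrong side of $S$; in particular I want to make sure that when I isotope $S$ off the solid tori glued along $\cup_{j\in J_{2}}F_{j}$ I really do end up with a sphere disjoint from $\mathcal{U}$ \emph{only if} $D$ contained no $K_{j}$, and that components of $\mathcal{U}$ inside a ball $D_{j}$ are not accidentally separated from their parallel copies. The balls $D_{j}$ being disjointly embedded (Lemma~\ref{lem:KnotExteriorsAreDisjoint}) is what makes this bookkeeping manageable: each component of $\mathcal{U}$ is an unknot in its own $D_{j}$, so whether $S$ encloses it is simply a question of which side of $S$ the ball $D_{j}$ lies on, and that is detected by whether $D$ (the ball bounded by $S$ in $M$) contains $D_{j}$. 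Once this is set up cleanly, the contradiction in the previous paragraph closes the argument, and this is precisely the setup needed to apply Lemma~\ref{lem:TwistingToGetIrreducible} to the sublink of $\mathcal{U}$ consisting of those components that can appear in such a $D$.
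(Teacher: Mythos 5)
There is a genuine gap at the step where you claim the ball $D$ bounded by $S$ in $X_{0}(\alpha_{0}) \cong M$ is automatically disjoint from $\mathcal{L}_{1}$. Your justification --- that ``$S$ being a reducing sphere means neither side of $S$ in $E(\mathcal{L}_{1}\cup\mathcal{U})$ is a punctured ball'' --- is not the definition used in this paper and is false in general: a reducing sphere here is simply an essential sphere in the link exterior, i.e.\ one bounding no ball in $E(\mathcal{L}_{1}\cup\mathcal{U})$, and such a sphere may perfectly well bound a ball in the ambient manifold containing components of $\mathcal{L}_{1}$ (think of a sphere splitting off some components of a split link). Nothing in the hypotheses on $S$ alone rules this out; ruling it out is precisely the content of the lemma, and it requires the global hypothesis~(3) of Proposition~\ref{pro:TheLastCase} --- irreducibility of the exterior of the \emph{full} link $\mathcal{L}$ in $X(\alpha)\cong M$ --- which your argument never actually invokes at this step.

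The paper's route is: first isotope $S$ off every solid torus attached along $F_{j}$ for $1\le j\le m$ (each such $j$ lies in $J_{1}\cup J_{2}$; this is possible because for $j\in J_{1}$ the core $K_{j}$ lies in $\mathcal{L}_{1}$ and for $j\in J_{2}$ the core lies in $\mathcal{U}$, and $S$ misses both), so that $S$ becomes a sphere in $X_{0}$ which, viewed in $X(\alpha)$, is disjoint from all of $\mathcal{L}$. Note that your proposal only isotopes $S$ off the tori with $j\in J_{2}$, which is not enough: under the identification $X_{0}(\alpha_{0})\cong X(\alpha)$ the solid torus attached along $F_{j}$, $j\in J_{1}$, corresponds to $X_{j}(\alpha_{j})$, which contains components of $\mathcal{L}$ other than $K_{j}$ that $S$ could still meet. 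Then irreducibility of $E(\mathcal{L})$ yields a ball $D$ with $D\cap\mathcal{L}=\emptyset$; since each region $X_{j}(\alpha_{j})$, $j\le m$, is connected, disjoint from $S$, and meets $\mathcal{L}$, it lies entirely outside $D$, so $D$ is disjoint from $\mathcal{T}_{1}$ and hence from $\mathcal{L}_{1}$. Your final step --- if $D$ also missed $\mathcal{U}$ then $S$ would bound a ball in the exterior, contradicting essentiality --- is correct and agrees with the paper, but it only becomes available once disjointness from $\mathcal{L}_{1}$ has been established as above.
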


\begin{proof}[Proof of Lemma~\ref{lem:makingL1irreducible}]
Let $S$ be a reducing sphere
for $\mathcal{L}_{1}$ in $X_0(\alpha_{0}) \setminus N(\mathcal{U})$;
equivalently, $S$ is a reducing sphere for 
$X_0(\alpha_{0}) \setminus N(\mathcal{U} \cup \mathcal{L}_{1})$.
Fix $1 \leq j \leq m$;
note that either $j \in J_{1}$ or $j \in J_{2}$.  If $j \in J_{2}$,
then by construction $S$ is disjoint from $F_{j}$.  If $j \in J_{1}$,
then $S \cap K_{j} = \emptyset$ (since $K_{j} \subset \mathcal{L}_{1}$).
Since $K_{j}$ is a core of the solid torus attached to $F_{j}$, we may isotope $S$ out of
this solid torus without intersecting $K_{j}$.  After performing this isotopy 
(if necessary) for each $1 \leq j \leq m$,  the reducing sphere $S$
is disjoint from $F_{j}$ for every $1 \leq j \leq m$.  Since $X_{0}(\alpha_{0}) \cong X(\alpha)$,
we may consider $S$ as
a sphere in $X(\alpha)$, where we see that $\mathcal{L} \cap S = \emptyset$.
Hyperbolicity of $X(\alpha)$ and irreducibility of $E(\mathcal{L})$ imply that
$S$ bounds a ball $D \subset X(\alpha)$ so that $\mathcal{L} \cap D = \emptyset$.
It follows that $X_{j}(\alpha_{j}) \cap D = \emptyset$ for $1 \leq j \leq m$;
therefore $\mathcal{T}_{1} \cap D = \emptyset$.  
%For $j \in J_{2}$, by choice of the slopes on $F_{j}$, replacing $X_{j}(\alpha_{j})$ by a solid torus
%does not change the resulting manifold (recall Figure~\ref{fig:unknotting}).  
Hence $S$ bounds a ball in $X_{0}(\alpha_{0})$ that is disjoint from 
$\mathcal{T}_{1}$ and hence from $\mathcal{L}_{1}$.  
On the other hand, $S$ does not bound a ball in $X_0(\alpha_{0}) \setminus N(\mathcal{U})$;
hence $D$ must contain at least one component of $\mathcal{U}$.

This completes the proof of Lemma~\ref{lem:makingL1irreducible}.
 \end{proof}

Let $D$ be as in Lemma~\ref{lem:makingL1irreducible}.  We remove the components of 
$\mathcal{U} \cap D$ from $\mathcal{U}$; to avoid overly complicated notation we do not rename $\mathcal{U}$.
We repeat this process as long as we can; it terminates since the number of components of $\mathcal{U}$ is reduced.  
When it does, $\mathcal{L}_{1}$ is irreducible in the complement
of the unlink $\mathcal{U}$.  By Lemma~\ref{lem:TwistingToGetIrreducible}
we may change the slopes $\alpha_{0}|_{\mathcal{U}}$
so that the exterior of $\mathcal{L}_{1}$ is irreducible.
To avoid overly complicated notation we do not rename $\alpha_{0}$

We will denote as $\mathcal{A}_{0}$ the multislopes induced on $\partial X_{0}$ by 
multislopes of $\mathcal{A}_{J_{1},J_{2}}$ via the procedure described above.
We see that $X_{0}$, $T_{1}$, $\mathcal{T}_{1}$, and $\mathcal{A}_{0}$
fulfill the assumptions
of the proposition.  By Lemma~\ref{lem:TreeOfJSJ}, $|T(X_{0})| < |T(X)|$.  By induction, for each component
$F_{j}$ of $\mathcal{T}_{1}$, 
there is a bounded set of slopes of $F_{j}$ which we will denote as
$B_{F_{j}}$, so that for each $\alpha_{0} \in \mathcal{A}_{0}$,
there is some component $F_{j}$ of $\mathcal{T}_{1}$ with $\alpha_{0}|_{F_{j}} \in B_{F_{j}}$.

Given $j\in J_{1}$, let $S \neq F_{j}$ be a component of $\partial X_{j}$. 
Let $\beta$ be a multislope of $\partial X_{j}$ so that $\beta|_{F_{j}}$
and $\beta|_{S}$ are both $\nos$ and $X_{j}(\beta) \cong T^{2} \times [0,1]$.
We will denote as $B_{\beta}$ the projection of $B_{F_{j}}$ to the slopes of $S$ 
induced by the product structure on $X_{j}(\beta)$.  By the 
$T^{2} \times [0,1]$ Cosmetic Surgery Theorem (\ref{thm:CosmeticSurgeryOnT2XI}),
the set $\cup_{\beta} B_{\beta}$ is bounded (where here the union is taken over all possible multislopes
$\beta$ as above; if there is no  such multislope then $\cup_{\beta} B_{\beta} = \emptyset$).  
We will denote $\cup_{\beta} B_{\beta}$ as $B_{S}$.

Given $\alpha \in \mathcal{A}_{J_{1},J_{2}}$, let $\alpha_{0}$ be the induced multislope
on $X_{0}$ as above.  Let $F_{j}$ be the component for which $\alpha_{0}|_{F_{j}} \in B_{F_{j}}$.
Recall that $K_{j}$ is a core of a solid torus attached to $X_{j}$ which is also a core
of $X_{j}(\alpha_{j})$.  Let $S$ be the component of $\partial X_{j}$ that corresponds
to $K_{j}$.  By definition of $\alpha_{0}|_{F_{j}}$, it is the projection of $\alpha|_{S}$
under the natural projection give by the product structure on $X_{j}(\alpha_{j}) \setminus N(K_{j})$.
Thus $\alpha|_{S} \in B_{S}$.  This show that for every $\alpha \in \mathcal{A}_{J_{1},J_{2}}$,
there is a boundary component $S$, so that $\alpha|_{S} \in B_{S}$.  Since $B_{S}$ are bounded,
this completes the proof of Proposition\ref{pro:TheLastCase}.
\end{proof}

\nocite{*}
\bibliographystyle{alpha} % 
\bibliography{LVvsVol} %
\end{document}